\numberwithin{equation}{section}
\newcommand{\prob}{\stackrel{P}{\rightarrow}}
\newcommand{\one}{{\bf 1}}
\newcommand{\R}{{\mathbb R}}
\newcommand{\E}{\mathbb{E}}
\renewcommand{\P}{\mathbb{P}}
\newcommand{\cF}{{\mathcal F}}
\newcommand{\cH}{{\mathcal H}}
\newcommand{\ra}{\rightarrow}
\newcommand{\Spec}{\mathrm{Spec}}
\newcommand\Aut{\mathrm{Aut}}
\newcommand\dto{\overset{D}{\rightarrow}}
\newtheorem{theorem}{Theorem}[section]
\newtheorem{corollary}{Corollary}[section]
\newtheorem{prop}{Proposition}[section]
\newtheorem{lemma}{Lemma}[section]
\theoremstyle{definition} 
\newtheorem{definition}{Definition}[section]
\newtheorem{remark}{Remark}[section]
\newtheorem{example}{Example}[section]
\def\Cov{{\rm Cov}}
\def\Var{{\rm Var}}
\begin{document}

\title[ Higher-Order Graphon Theory ]{ Higher-Order Graphon Theory: Fluctuations, Degeneracies, and Inference }
\author[Chatterjee, Dan, and Bhattacharya ]{Anirban Chatterjee, Soham Dan, and Bhaswar B. Bhattacharya}
\address{Department of Statistics\\ University of Pennsylvania\\ Philadelphia\\ PA 19104\\ United States}
\email{anirbanc@wharton.upenn.edu}
\address{IBM Research\\ Yorktown Heights\\ NY 10598\\ United States}
\email{Soham.Dan@ibm.com} 
\address{Department of Statistics\\ University of Pennsylvania\\ Philadelphia\\ PA 19104\\ United States}
\email{bhaswar@wharton.upenn.edu}

\keywords{Inhomogeneous random graphs, network analysis, generalized $U$-statistics, subgraph counts.} 

\subjclass[2010]{05C80, 60F05, 05C60}

\begin{abstract} 
Exchangeable random graphs, which include some of the most widely studied network models, have emerged as the mainstay of statistical network analysis in recent years. Graphons, which are the central objects in graph limit theory, provide a natural way to sample exchangeable random graphs. It is well known that network moments (motif/subgraph counts) identify a graphon (up to an isomorphism), hence,  understanding the sampling distribution of subgraph counts in random graphs sampled from a graphon is pivotal for nonparametric network inference. Although there are a few results regarding the asymptotic normality of subgraph counts in graphon models, for many commonly appearing graphons this distribution is degenerate.  This degeneracy phenomenon was overlooked until very recently and its consequences in network inference have remained unexplored. Towards this, we obtain the following results: 

\begin{itemize}

\item We derive the joint asymptotic distribution of any finite collection of network moments in random graphs sampled from a graphon, that includes both the non-degenerate case (where the distribution is Gaussian) as well as the degenerate case (where the distribution has both Gaussian or non-Gaussian components). This provides the higher-order fluctuation theory for subgraph counts in the graphon model. 

\item We develop a novel multiplier bootstrap for graphons that consistently  approximates the limiting distribution of the network moments (both in the Gaussian and non-Gaussian regimes). Using this and a procedure for testing degeneracy, we construct joint confidence sets for any finite collection of motif densities. This provides a general framework for statistical inference based on network moments in the graphon model. 

\end{itemize}
Examples and simulations are provided to validate the general theory.  To illustrate the broad scope of our results we also consider the problem of detecting global structure (that is, testing whether the graphon is a constant function) based on small subgraphs. We propose a consistent test for this problem, invoking celebrated results on quasi-random graphs, and derive its limiting distribution both under the null and the alternative. 
\end{abstract}

\maketitle

\section{Introduction} 
Networks provide a convenient way to represent complex relational data. 
The ubiquitous presence of network data in recent years has led to the  development of several probabilistic models for random graphs that aim to capture  various features of real-world networks. 
One of the most extensively studied models for network data are exchangeable random graphs \cite{aldous1981representations,bickel2009nonparametric,bickel2011method,crane2018probabilistic,diaconisjanson,hoover1982row,lovasz2012large}, where the distribution of the network, given the location of the nodes, remains
unchanged under permutations of the node labels. The celebrated Aldous-Hoover theorem \cite{aldous1981representations,hoover1982row} shows that any exchangeable random graph of infinite size can be generated by first sampling independent node variables $\{U_i\}_{i \geq 1}$
uniformly on $[0, 1]$, and then connecting each pair of nodes $(i, j)$ independently with probability $W(U_i, U_j)$, for some measurable function $W: [0,1]^2\rightarrow[0,1]$ which is symmetric, that is, $W(x,y)=W(y,x)$, for all $x,y\in[0,1]$. 
The function $W$ is commonly referred to as a {\it graphon}. Graphons arise as limits of sequences of dense graphs and is the fundamental object in graph limit theory \cite{borgs2008convergent,borgs2012convergent,lovasz2012large}. 
The theory of graph limits has been extensively studied since its inception and is the backbone of several beautiful results in combinatorics, probability, statistics and related areas (see \cite{lovasz2012large} for a book length treatment). 
As mentioned before, graphons provide a natural way for sampling finite exchangeable random graphs, a concept that has appeared independently in various 
contexts (see \cite{bollobas2007phase,diaconis1981statistics,lovasz2006limits,boguna2003class,bickel2011method,hoff2002latent} among others). We describe this formally in the following definition: 

\begin{definition}[Graphon random graph model]\label{definition:W} Given a graphon $W:[0,1]^{2}\rightarrow[0,1]$, a $W$-{\it random graph} on the set of vertices $[n]:=\{1,2, \ldots, n\}$, hereafter denoted by $G(n, W)$, is obtained by connecting the vertices $i$ and $j$ with probability $W(U_{i},U_{j})$ independently for all $1 \leq i < j \leq n$, where $\{U_{i}: 1 \leq i \leq n\}$ is an i.i.d. sequence of $U[0,1]$ random variables. An alternative way to achieve this sampling is to generate i.i.d. sequences $\{U_{i}: 1 \leq i \leq n\}$ and $\{Y_{ij}: 1\leq i<j \leq n\}$ of $U[0,1]$ random variables and then assigning the edge $(i, j)$ whenever $\{Y_{ij}\leq W(U_{i},U_{j})\}$, for $1 \leq i < j \leq n$. 
\end{definition} 

The model in Definition \ref{definition:W} will be referred to as the $W$-random model or the graphon random graph model. This includes many well-known network models such as, the classical Erd\H{o}s-R\'enyi random graph model (where $W = W_p \equiv p\in [0,1]$ is the constant function), the stochastic block model \citep{bickel2009nonparametric,holland1983stochastic} (and its many variations), smooth graphons \cite{gao2015rate}, random dot-product graphs \citep{dotproduct2018statistical,rubin2022statistical} (see also \citet{lei2021}), and random geometric graphs \citep{penrose2003}, among others.

Network moments or motif counts are the frequencies of particular patterns (subgraphs) in a network, such as the number/density of edges, triangles, or stars in a network \cite{experimental2007network,milo2002network,shen2002network}. Motif counts encode structural information about the geometry of a network and are important summary statistics for potentially large networks. They are the building blocks of network models, such as Exponential Random Graph Models (ERGMs) \cite{hunter2008ergm,chatterjee2013estimating,mukherjee2020degeneracy,mukherjee2023statistics,shalizi2013consistency,xu2021signal,xu2021stein}, and many features of a network of practical interest can be derived from the motif counts, such as clustering coefficient \cite{watts1998collective}, degree distribution \cite{networkdegree}, and transitivity \cite{holland1971transitivity} (see \cite{rubinov2010complex} for others).  
This has propelled the fast growing literature on counting and estimating network motifs under various sampling models (see \cite{BDM,triangletripartite,eden2017approximately,gonen2011counting,klusowski2018counting,lyu2023sampling} and the references therein).

In the framework of the graphon model, network method of moments, introduced in the seminal papers \cite{bickel2011method,borgs2010moments}, is an important tool for inferring properties of the underlying graphon based on the motif counts of the observed network. 
This makes understanding the asymptotic properties of subgraph counts in $W$-random graphs a problem of central importance in network analysis. To this end, suppose $G_n$ is the observed graph sampled from the $W$-random model $G(n, W)$. Then for a finite simple graph\footnote{ A graph is said to be {\it simple} if it has no self-loops and does not contain more than one edge between a pair of vertices. } (motif) $H=(V(H), E(H))$, with $V(H) = \{1,2, \ldots, |V(H)|\}$ such that $|V(H)| \geq 2$, the {\it $H$-th empirical network moment} is the number of copies of $H$ in $G_n$. This will be denoted by $X(H, G_n)$, which can be expressed more formally as: 
\begin{align}\label{eq:XHW}
X(H, G_n)=\sum_{1\leq i_{1}<\cdots<i_{|V(H)|}\leq n}\sum_{H'\in \mathscr{G}_H(\{i_{1},\ldots, i_{|V(H)|} \}) } \prod_{(i_{s}, i_{t}) \in E(H')}\one\left\{Y_{i_{a}i_{b}}\leq W(U_{i_{a}}U_{i_{b}})\right\}, 
\end{align}
where, for any set $S \subseteq [n]$, $\mathscr G_H(S)$ denotes the collection of all subgraphs of the complete graph $K_{|S|}$ on the vertex set $S$ which are isomorphic to $H$.\footnote{Note that we count unlabelled copies of $H$. Several
other authors count labelled copies, which multiplies $X(H, G_n)$ by $|\text{Aut}(H)|$.} Note that
$$| \mathscr{G}_H(\{1,\ldots, |V(H)| \}) | = \dfrac{|V(H)|!}{|\text{Aut}(H)|}, $$
where $\text{Aut}(H)$ is the set of all automorphisms of the graph $H$, that is, the collection of permutations of the vertex set $V (H)$ such that $(x, y) \in E(H)$ if and only if $(\sigma(x), \sigma(y)) \in E(H)$. Therefore, by exchangeability, 
\begin{align}\label{eq:EXHW}
\mathbb E[X(H, G_n)] & =\sum_{1\leq i_{1}<\cdots<i_{|V(H)|}\leq n}\sum_{H'\in \mathscr{G}_H(\{i_{1},\ldots, i_{|V(H)|} \}) } \prod_{(i_{s}, i_{t}) \in E(H')} \P(Y_{i_{a}i_{b}}\leq W(U_{i_{a}}U_{i_{b}}) ) \nonumber \\ 
&= \dfrac{(n)_{|V(H)|}}{|\text{Aut}(H)|}t(H,W) , 
\end{align}
where $(n)_{|V(H)|}:=n(n-1)\cdots (n-|V(H)|+1)$ and 
\begin{align}\label{eq:tHW}
t(H,W)=\int_{[0,1]^{|V(H)|}}\prod_{(a, b) \in E(H)}W(x_{a},x_{b})\prod_{a=1}^{|V(H)|}\mathrm{d} x_{a}  
\end{align}
is the {\it homomorphism density} of the graph $H$ in the graphon $W$. The homomorphism density $t(H,W)$ can be interpreted as the probability that a $W$-random graph on $|V(H)|$ vertices contains the graph $H$, that is,
\begin{align*}
    t(H,W) = \mathbb{P}\left[H\subseteq G(|V(H)|,W)\right].
\end{align*} 
One of the fundamental results in graph limit theory is that the homomorphism densities identify a graphon up to a measure-preserving transformation. The computation in \eqref{eq:EXHW} shows that 
\begin{align}\label{eq:tHGncount}
\hat t(H, G_n) := \frac{|\text{Aut}(H)|}{(n)_{|V(H)|}} X(H, G_n) 
\end{align}
is an unbiased estimate of the homomorphism density $t(H, W)$. To assess the uncertainty and confidence of this estimate it is essential to understand the fluctuations (asymptotic distribution) of $\hat t(H, G_n)$ (equivalently that of $X(H, G_n)$). In fact, many inferential tasks in network analysis, such as estimating the clustering coefficient or testing for global structure, require understanding the joint distribution of multiple (more than 1) subgraph counts. This raises following natural questions:

\begin{enumerate}[label=\textrm{(Q\arabic*)}]\label{enum: Over-Dim Settings}
    \item\label{item:jointdistribution} Given a collection of $r$ graphs $\cH= \{H_1, H_2, \ldots, H_r\}$, what is limiting joint distribution of $\bm X(\cH, G_n) := (X(H_1, G_n), X(H_2, G_n), \ldots, X(H_r, G_n))^\top$?

\item \label{item:confidenceinterval} How can one construct asymptotically valid joint confidence sets for the homomorphism densities 
$\bm t(\cH, W) = (t(H_1, W), t(H_2, W), \ldots, t(H_r, W))^\top$ based on a single realization of the sampled graph $G_n$?  

\end{enumerate}

Despite the growing interest in the random graphon model and the network method of moments, existing results provide only a limited understanding of  these questions. In this paper we develop a framework for studying the asymptotic properties of network moments, which resolves the questions above in its full generality and closes several gaps in the existing literature. We summarize our results in the following sections.

\subsection{Joint Distribution of Network Moments}
\label{sec:distribution}


The asymptotic distribution of subgraph counts in the Erd\H{o}s R\'enyi  model, where $W = W_p  \equiv p$ is a constant function, has been classically studied, using various tools such as $U$-statistics \cite{Nowicki1989,nowicki1988subgraph}, 
method of moments \cite{rucinski1988small},  Stein's method \cite{barbour1989central},  and martingales 
\cite{SJ79, SJ94} (see also \cite[Chapter 6]{JLR}). 
In particular, when $p \in (0, 1)$ is fixed and $G_n \sim G(n, W_p)$, $\bm{X}(\cH, G_n)$ is known to be asymptotically  jointly normal for any finite collection $\cH$ of non-empty graphs (see \cite[Section 9]{janson1991asymptotic}).  For general graphons $W$, the fluctuations of $X(H, G_n)$ (or that of the empirical homomorphism density $t(H, G_n)$ (see \eqref{eq:tFG} for the definition) has received significant attention recently. This began with the work of \citet{bickel2011method}, where the asymptotic Gaussian distribution for subgraph counts was established, under certain sparsity assumptions. Later, using the framework of mod-Gaussian convergence,  F{\'e}ray, M{\'e}liot, and Nikeghbali~\cite{feray2020graphons} derived the asymptotic normality, moderate deviations, and local limit theorems for the empirical homomorphism density. 
The joint Gaussian convergence of a finite collection of empirical homomorphism densities was established in \citet{delmas2021asymptotic}. Recently, \citet{zhang2021berryesseen} derived rates of convergence to
normality (Berry--Esseen type bounds), \citet{zhang2022edgeworth} obtained Edgeworth expansions, and \citet{invariant2022limit} studied connections to exchangeability, for $X(H, G_n)$ (or its related variations). Other related results include central limit theorems with rates of convergence for centered subgraph counts \cite{kaur2021higher}, analysis of localized subgraph counts \cite{maugis2020central}, and motif counts in bipartite exchangeable networks \cite{le2023networkstatistics}.

One interesting feature that has escaped attention is that the limiting normal distribution of the subgraph counts obtained in the aforementioned works can be degenerate depending on the structure of the graphon $W$. For instance, in a planted bisection model \cite{mossel2016consistency} (a stochastic block model with two equal-sized communities and 
 connection probabilities $p$ and $q$ within and between blocks, respectively),  the limiting distribution of network moments such as edges and triangles are degenerate (see Case 4 in Example \ref{example:edgetriangle}). 
This degeneracy phenomenon was noted in \citet{feray2020graphons}, and first systemically studied by \citet{hladky2019limit} when $H=K_R$ is the $R$-clique (the complete graph on $R$ vertices), for some $R \geq 2$. This was extended to general subgraphs $H$ by \citet{BCJ}. Here, it was shown that the usual Gaussian limit of $X(H, G_n)$ is degenerate when a certain regularity function, which encodes the homomorphism density of $H$ incident to a given `vertex' of $W$, is constant almost everywhere. In this case, the graphon $H$ is said to be $H$-regular (see Definition \ref{defn:tabxyHW}) and the asymptotic distribution of $X(H, G_n)$ (with another normalization, differing by a factor $n^{\frac{1}{2}}$) can have two components: a Gaussian component and another independent (non-Gaussian) component which is a (possibly) infinite weighted sum of centered chi-squared random variables. This degeneracy phenomenon also appears in the subsequent work of \citet{chatterjee2024fluctuation} on the fluctuations of the largest eigenvalue. Very recently, \citet{huang2024gaussian} established an invariance principle for $X(H, G_n)$ that encompasses higher-order degeneracies.

In this paper we generalize the above results, which only considers the marginal distribution of a single subgraph count, to joint distributions (recall \ref{item:jointdistribution}). Specifically, we derive the limiting joint distribution of $\bm X(\cH, G_n) := (X(H_1, G_n), X(H_2, G_n), \ldots, X(H_r, G_n))^\top$ (appropriately centered and scaled), when $W$ is irregular with respect to $H_{1},\cdots, H_{q}$ for some $1\leq q \leq r$, and regular with respect to $H_{q+1}, H_{q+2}, \ldots, H_r$.  This is significantly more delicate than marginal convergence, because of the non-Gaussian dependencies between and within the irregular  and regular marginals. Towards this, using the asymptotic theory of generalized $U$-statistics developed by \citet{janson1991asymptotic} and the framework of multiple stochastic integrals we show the following (see Theorem \ref{thm:asymp-joint-dist} for the formal statement):  
\begin{itemize}

\item The limiting distribution of $((X(H_i, G_n)))_{1 \leq i \leq q}$ (the irregular marginals) is a linear stochastic integral in terms of the regularity function. 

\item The limiting distribution of $((X(H_i, G_n)))_{q+1 \leq i \leq r}$ (the regular marginals) is the sum of two independent components; one of which is a multivariate Gaussian and the other is a bivariate stochastic integral in terms of the 2-point conditional kernel of $H_i$ in $W$. 
\end{itemize}
The stochastic integrals are with respect to the same underlying Brownian motion on $[0, 1]$, which captures the dependence between the different marginals. This result goes beyond the well-known sampling convergence (law of large numbers) for subgraph densities (see \cite[Corollary 10.4]{lovasz2012large}) and also the first-order  Gaussian fluctuations. Hence, our results can be thought of as the higher-order fluctuation theory for subgraph counts in the random graphon model. 
The formal statement of the results are given in Section \ref{sec:distributionH}. In Section \ref{sec:examples} we illustrate the general theory in some examples.

\subsection{Joint Confidence Sets}

To use the results described in the previous section for statistical inference (recall \ref{item:confidenceinterval}), one needs to estimate the quantiles of the limiting distribution of the subgraph counts (which depend on the unknown graphon $W$). This is particularly relevant because network moments commonly appear in inferential tasks such as goodness-of-fit and two-sample problems (see \cite{ghoshdastidar2017networkstatistics,levin2019bootstrapping,shao2022higher,lei2016,tang2017nonparametric,ouadah2022motif,bravo2023quantifying,maugis2017statistical,wegner2018identifying} among several others), which requires one to approximate the quantiles of the sampling distribution of the subgraph counts. Towards this different network bootstrap and subsampling methods have been proposed (see \cite{subsamplingnetwork,levin2019bootstrapping,lin2020bootstrap,lunde2023subsampling,green2022bootstrapping,zhang2022edgeworth} and the references therein). However, most of the  existing results on bootstrap consistency are restricted to the regime where the subgraph count has a non-degenerate Gaussian distribution (and some of them also require the network to be sparse). The literature is surprisingly silent in the case where the Gaussian distribution is degenerate. The recent paper \cite{shao2022higher} appears to be the only one that directly address the degeneracy issue in the context of network two-sample testing. However, their result requires the network to be sparse (in addition to other technical conditions) and, hence, does not directly apply to the dense regime.

In this paper we develop a multiplier bootstrap method for approximating the limiting joint distribution of the network moments that remains valid even if the Gaussian distribution is degenerate. On a high level, this entails replacing the graphon $W$ in the limiting distribution with its empirical counterpart (obtained from the observed graph $G_n$) and introducing random Gaussian multipliers (which are independent of $G_n$). For the irregular marginals (where the limiting distribution is Gaussian), the estimate takes the form a linear combination of Gaussians with weights given by an empirical estimate of the regularity function. On the other hand, for the regular marginals, the estimate is a quadratic form in Gaussians in terms of an empirical estimate of the 2-point conditional kernel (see \eqref{eq:ZHestimate} for the formal definition). We show that this estimate, interestingly, converges to the joint distribution of the network moments, conditional on the observed network $G_n$, with no additional assumptions on the graphon $W$ (Theorem \ref{thm:ZnHGn}). We refer to this as the {\it graphon multiplier bootstrap}. Details are given in Section \ref{sec:estimatedistribution}. 

The graphon multiplier bootstrap, however, cannot be directly used for constructing confidence sets for the homomorphism densities, because we do not know which of the subgraphs in $\cH$ are regular with respect to $W$. For this we develop a test for regularity based on a consistent estimate of the variance of the limiting Gaussian distribution (Proposition \ref{prop:H01}). Combining this with the graphon multiplier bootstrap we construct joint confidence sets for the homomorphism densities that are asymptotically valid for any finite collection of subgraphs (Theorem \ref{thm:LHW}). 
To validate the theoretical results, we also study the finite sample performance of the proposed method in simulations. Details are given in Section \ref{sec:LHW}.

\subsection{Testing for Global Structure}

The framework for analyzing the asymptotic properties network moments discussed above, readily applies to many problems in network inference.  To illustrate, here we consider the problem of detecting global structure based on small subgraphs.  
Different variations of this problem have appeared in the literature. For instance, \citet{gao2017testing} considered testing whether a degree-corrected block model has any structure, that is, whether it has a single community (which corresponds to no structure) versus it has more than 1 community (see  also \cite{gao2017subgraph} for related results). In the graphon framework, detecting global structure corresponds to testing the null hypothesis: 
\begin{align}\label{eq:H0Wp}
    H_0: W = p \text{ almost everywhere for some } p \in (0, 1) , 
\end{align}
based on a single observed network $G_n$ from the $W$-random model. For this problem, \citet{fangandrollin2015} proposed a universally consistent test based on the densities of the edge and the 4-cycle, invoking the celebrated result of Chung, Graham, and Wilson \cite{chung1989cycle} about quasi-random graphs.  In this paper, using the same quasi-randomness result, we propose a simpler test statistic which also gives a universally consistent test. Our proposal relies on the observation that $H_0$ in \eqref{eq:H0Wp} holds if and only if $f(W):=t(K_{2},W)^{4} - t(C_{4},W) =0$, where $K_2$ denotes the edge and $C_4$ denotes the 4-cycle. Consequently, a test which rejects for large values of $\hat f(G_n) := \hat t(K_{2},G_n)^{4} - \hat t(C_{4}, G_n)$ (recall \eqref{eq:tHGncount}) will be universally consistent. In Section \ref{sec:structure} we derive the limiting distribution of $\hat f(G_n)$ under both the null and the alternative, using the techniques employed in Section \ref{sec:distribution}. This allows us to obtain a test with precise asymptotic level (unlike the test in \cite{fangandrollin2015} which is conservative) and also understand its fluctuations under the alternative. 
\section{Asymptotic Joint Distribution of Network Moments} 
\label{sec:distributionH}

We begin by introducing the notion of regularity, the conditional 2-point kernel, and other related concepts in Section \ref{subsection: Hom_and_Cond_Hom}. In Section \ref{sec:vertexedgeab} we define the graph join operations. The asymptotic joint distribution of the subgraph counts are given in Section \ref{subsection: Joint_Distribution}.

\subsection{Conditional Homomorphism Density}\label{subsection: Hom_and_Cond_Hom}

Recall the definition of homomorphism density for a simple graph from \eqref{eq:tHW}. This extends easily to multigraphs as follows: The homomorphism density of a fixed multigraph $F=(V(F), E(F))$ (without loops) in a graphon $W$ is defined as: 
\begin{align}\label{eq:tFW}
t(F,W)=\int_{[0,1]^{|V(F)|}}\prod_{(a, b) \in E(F)}W(x_{a},x_{b})\prod_{a=1}^{|V(F)|}\mathrm{d} x_{a}. 
\end{align}
Note that \eqref{eq:tFW} is a natural continuum extension of the homomorphism density of a  fixed graph $F=(V(F), E(F))$ into  finite (unweighted) graph $G=(V(G), E(G))$ defined as: 
\begin{align}\label{eq:tFG}
t(F, G) :=\frac{|\hom(F,G)|}{|V (G)|^{|V (F)|}},
\end{align}
where  $|\hom(F,G)|$ denotes the number of homomorphisms of $F$ into $G$. In fact, $t(F, G)$ is the proportion of maps $\phi: V (F) \rightarrow V (G)$ which define a graph homomorphism. Defining the {\it empirical graphon} associated with the graph $G$ as: 
\begin{align}\label{eq:emp_graph}
	W^G(x, y) :=\boldsymbol 1\{(\lceil |V(G)|x \rceil, \lceil |V(G)|y \rceil)\in E(G)\} , 
\end{align} 
it can be easily verified that $t(F,G) = t(F, W^{G})$. 
(In other words, to obtain the empirical graphon $W^G$ from the graph $G$, partition $[0, 1]^2$ into $|V(G)|^2$ squares of side length $1/|V(G)|$, and let $W^G(x, y)=1$ in the $(i, j)$-th square if $(i, j)\in E(G)$, and 0 otherwise.)

We now introduce the notion of conditional homomorphism densities and $H$-regularity of graphons.

\begin{definition}\label{defn:tabxyHW}(1-point conditional homomorphism density and $H$-regularity)
Fix $a \in V(H)$ and $x \in [0, 1]$. Then 1-{\it point conditional homomorphism density function} of $H$ in a graphon $W$ given the vertex $a$ is defined as: 
    \begin{align*}
        t_a(x,H,W) &:=\mathbb{E}\left[\prod_{(a,b)\in E(H)}W(U_{a},U_{b})\Bigm|U_a = x \right] .
    \end{align*}
   In other words,  $t_a(x, H, W)$ is the homomorphism density of $H$ in the graphon $W$ when the vertex $a \in V(H)$ is marked with the value $x \in [0, 1]$. A graphon $W$ is said to be $H$-\emph{regular} if
\begin{align}\label{eq:H_regular}
 \overline{t}(x,H,W) :=   \frac{1}{|V(H)|}\sum_{a=1}^{|V(H)|}t_a(x,H,W)=t(H,W),
\end{align}
for almost every $x \in [0, 1]$. We say $W$ is $H$-{\it irregular} if it is not $H$-regular. 
\end{definition}

To illustrate the notion of regularity, we consider the following 3 examples: (1) $H=K_2$ is the edge, (2) $H=K_3$ is the triangle, and (3) $H=K_{1, 2}$ is the 2-star. These 3 choices of $H$ will be the running examples throughout the paper. 

\begin{itemize} 

\item {\it $H=K_2$ is the edge}: In this case, for any $a \in \{1, 2\}$, by symmetry, 
$$t_a(x, K_2, W) = \mathbb E[W(U_1, U_2)|U_a=x] = \int_0^1 W(x, y) \mathrm d y := d_W(x) , $$
is the {\it degree function} of $W$. Hence, a graphon $W$ is $K_2$-regular if and only the degree function $d_W$ is constant almost everywhere, that is, $W$ is {\it degree-regular}.

\item {\it $H=K_3$ is the triangle}: Again, by symmetry, for all $1 \leq a \leq 3$, 
$$t_a(x, K_3, W) = \int_0^1 \int_0^1 W(x, y) W(y, z) W(x, z) \mathrm d y \mathrm d z,$$
which is the homomorphism density of triangles incident at the point $x \in [0, 1]$.

\item {\it $H=K_{1, 2}$ is 2-star}: Suppose the vertices of $K_{1, 2}$ are labeled $\{ 1, 2, 3\}$ with the central vertex labeled 1. Then we have the following: 

\begin{itemize}

\item For $a=1$, $t_1(x, K_{1, 2}, W) = \int_0^1 \int_0^1 W(x, y) W(x, z) \mathrm d y \mathrm d z = d_W(x)^2$. 

\item For $a \in \{2, 3\}$, $t_a(x, K_{1, 2}, W) = \int_0^1 \int_0^1 W(x, y) W(y, z) \mathrm d y \mathrm d z$. 

\end{itemize} 
Hence, 
$$\bar t(x, K_{1, 2}, W) = \frac{1}{3} \left( d_W(x)^2 + 2 \int_0^1 \int_0^1 W(x, y) W(y, z) \mathrm d y \mathrm d z \right) . $$
 
\end{itemize}

Next, we define the 2-point conditional homomorphism density and the kernel derived from it. This kernel will arise in the non-Gaussian component of the limiting distribution of $X(H, G_n)$ in the regular regime.

\begin{definition}\label{defn:WH}(2-point conditional homomorphism density) 
Fix $a \ne b \in V(H)$ and $x \in [0, 1]$. Then the 2-{\it point conditional homomorphism density function} of $H$ in a graphon $W$ given the vertices $a$ and $b$ is defined as: 
    \begin{align*}
        t_{a, b}(x, y,H,W) &:=\mathbb{E}\left[\prod_{(a,b)\in E(H)}W(U_{a},U_{b})\Bigm|U_a = x, U_b = y \right] .
    \end{align*}
Further, the {\it 2-point conditional
      kernel of $H$} is defined as:  
  \begin{align}\label{eq:WH}
  W_{H}(x,y)=\frac{1}{2|\Aut(H)|}\sum_{1\leq a\neq b\leq |V(H)|}t_{a, b}(x,y,H,W).
  \end{align}
   \end{definition}

For illustration, as before, we consider the following examples: 
\begin{itemize} 
\item {\it $H=K_2$ is the edge}: In this case, $t_{1, 2}(x, y, K_2, G_n) = t_{2, 1}(x, y, K_2, G_n) = W(x, y)$.  Hence, $W_H(x, y) = \frac{W(x,y)}{2}$, that is, the 2-point conditional kernel is the scaled graphon $W$. 
\item {\it $H=K_3$ is the triangle}: By symmetry, $t_{a,b}(x,y,K_{3},W) = t_{1,2}(x,y,K_{3},W)$ for all $1\leq a\neq b\leq 3$. Hence, the 2-point conditional kernel is given by,
\begin{align*} 
    W_{K_{3}}(x,y) = \frac{1}{2}t_{1,2}(x,y,K_{3},W) = \frac{1}{2}W(x,y)\int_{0}^{1}W(x,z)W(z,y)\mathrm{d}z, 
\end{align*} 
since $|\Aut(K_3)| = 3! =6$. 
\item {\it $H=K_{1, 2}$ is 2-star}: Suppose the vertices of $K_{1, 2}$ are labeled $\{ 1, 2, 3\}$ with the central vertex labeled 1. Then we have the following: 
\begin{itemize}
    \item For $a=1$ and $b\in \{2,3\}$, $$t_{a,b}(x,y,K_{1,2},W) = W(x,y)d_W(x) \text{ and } t_{b,a}(x,y,K_{1,2},W) = W(x,y)d_W(y).$$
    \item For the remaining vertex pairs $(2,3)$ and $(3,2)$, $$t_{2,3}(x,y,K_{1,2},W) = t_{3,2}(x,y,K_{1,2},W) = \int_{0}^{1}W(x,z)W(z,y)\mathrm d z.$$
\end{itemize}
Hence, the 2-point conditional kernel is given by,
\begin{align*}
    W_{K_{1,2}}(x,y) = \frac{1}{2}\left[\int_{0}^{1}W(x,z)W(z,y)\mathrm d z + W(x,y)(d_W(x) + d_{W}(y))\right]. 
\end{align*}
\end{itemize}

\begin{remark}\label{remark:regular} 
Note that a graphon $W$ is $H$-regular (see Definition \ref{defn:tabxyHW}) 
if and only if the 2-point conditional kernel $W_H$ is degree regular. 
This is because, for all $x \in [0, 1]$,  
\begin{align}\label{eq:degreeWH}
\int_0^1 W_{H}(x,y) \mathrm d y  
& = \frac{|V(H)|-1}{2|\Aut(H)|}\sum_{a=1}^{|V(H)|}t_{a}(x,H,W), 
\end{align}
and the RHS of \eqref{eq:degreeWH} is a constant if and only if $W$ is $H$-regular. In fact, if $W$ is $H$-regular, then $\frac{1}{|V(H)|} \sum_{a=1}^{|V(H)|}t_{a}(x,H,W) = t(H, W)$ almost everywhere. Hence, the degree function of $W_H$ becomes 
\begin{align}\label{eq:degreeH}
\int_0^1 W_{H}(x,y) \mathrm d y = \frac{|V(H)|(|V(H)|-1)}{2|\Aut(H)|} \cdot t(H, W) & := d_{W_H}, 
\end{align} 
for almost every $x \in [0, 1]$. 
\end{remark}

Note that $|W_H| \leq \frac{|V(H)|(|V(H)|-1)}{ 2 |\mathrm{Aut}(H)| } := K$. Hence,  $W_H$ defines an operator $T_{W_H}:L^{2}[0,1]\rightarrow L^{2}[0, K]$ as follows:  
\begin{equation}
(T_{W_H}f)(x)=\int_0^1W_H(x, y)f(y) \mathrm d y, 
\label{eq:TW}
\end{equation} 
for each $f\in L^{2}[0,1]$. $T_{W_H}$ is a symmetric Hilbert--Schmidt operator;
thus it  is compact and has a discrete spectrum, that is, it has a countable
multiset of non-zero real eigenvalues, which we denote by 
$\mathrm{Spec}(W_H)$, such that 
\begin{align*}
\sum_{\lambda\in \Spec(W_H)}\lambda^2=\iint W_H(x,y)^2 \mathrm d x \mathrm d y<\infty. 
\end{align*} 

Note that if $W$ is $H$-regular,
then $d_{W_H}$ is an eigenvalue of the operator $T_{W_H}$ (recall
\eqref{eq:TW}) and $\phi \equiv 1 $ is a corresponding eigenvector. In this
case, we will use $\mathrm{Spec}^{-}(W_{H})$ to denote the collection
$\mathrm{Spec}(W_H)$ with the multiplicity of the eigenvalue $d_{W_H}$
decreased by $1$.

\begin{figure}
    \centering
    \includegraphics[scale=0.65]{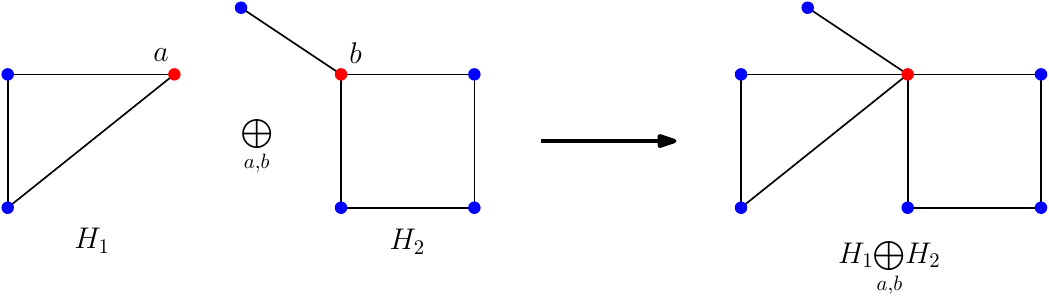}
    \caption{The $(a, b)$-{\it vertex join} of the graphs $H_1$ and $H_2$.}
    \label{fig:v_join}
\end{figure}

\subsection{Graph Join Operations} 
\label{sec:vertexedgeab}

The variance of the subgraph count $X(H, G_n)$ involves different graphs obtained by joining 2 isomorphic copies of $H$. To describe the  asymptotic variance succinctly it is convenient to define some basic graph join operations (as in \cite{BCJ}). To this end, suppose $H=(V(H),E(H))$ 
is a graph with vertex set $V(H) = \{1, 2, \ldots, |V(H)|\}$. Denote by $E^+(H)$ the ordered pairs of edges in $H$, that is, $E^{+}(H)=\{(a,b):1\leq a\neq b\leq r, (a,b) \text{ or } (b,a)\in E(H)\}$.

\begin{definition}\label{defn:H1H2ab}
Suppose $H_1 = (V(H_1), E(H_1))$ and $H_2 = (V(H_2), E(H_2))$ be two graphs with vertex sets $V(H_1) = \{1, 2, \ldots, |V(H_1)|\}$ and $V(H_2) = \{1, 2, \ldots, |V(H_2)|\}$ and edge sets $E(H_1)$ and $E(H_2)$, respectively.

\begin{itemize}

\item {\it Vertex Join}: For $a \in V(H_1)$ and $b \in V(H_2)$, the $(a, b)$-{\it vertex join} of $H_1$ and $H_2$, denoted by 
\begin{align*}
H_{1}\bigoplus_{a,b}H_{2}, 
\end{align*}
is the graph obtained by identifying the $a$-th vertex of $H_1$ with the $b$-th vertex of $H_2$ (see Figure \ref{fig:v_join}).

\item {\it Weak Edge Join}: For $(a,b) \in E^{+}(H_1)$ and $(c,d) \in E^{+}(H_2)$, with $1 \leq a\neq b\leq r$ and $1 \leq c\neq d \leq r$, the $(a, b), (c, d)$-{\it weak edge join} of $H_1$ and $H_2$, denote by 
\begin{align*}
    H_{1}\bigominus_{(a,b),(c,d)}  H_{2} , 
\end{align*}
is the graph obtained identifying the vertices $a$ and $c$ and the vertices $b$ and $d$ and keeping a single edge between the two identified vertices (see Figure \ref{fig:e}).

\item {\it Strong Edge Join}: For $(a,b) \in E^{+}(H_1)$ and $(c,d) \in E^{+}(H_2)$, with $1 \leq a\neq b\leq r$ and $1 \leq c\neq d \leq r$, the $(a, b), (c, d)$-{\it strong edge join} of $H_1$ and $H_2$, 
\begin{align*}
H_{1}\bigoplus_{(a,b),(c,d)}  H_{2} , 
\end{align*} 
is the multi-graph obtained identifying the vertices $a$ and $c$ and the vertices $b$ and $d$ and keeping both the edges between the two identified vertices (see Figure \ref{fig:e}). 
\end{itemize}
\end{definition}

\begin{figure}
    \centering
    \includegraphics[scale=0.65]{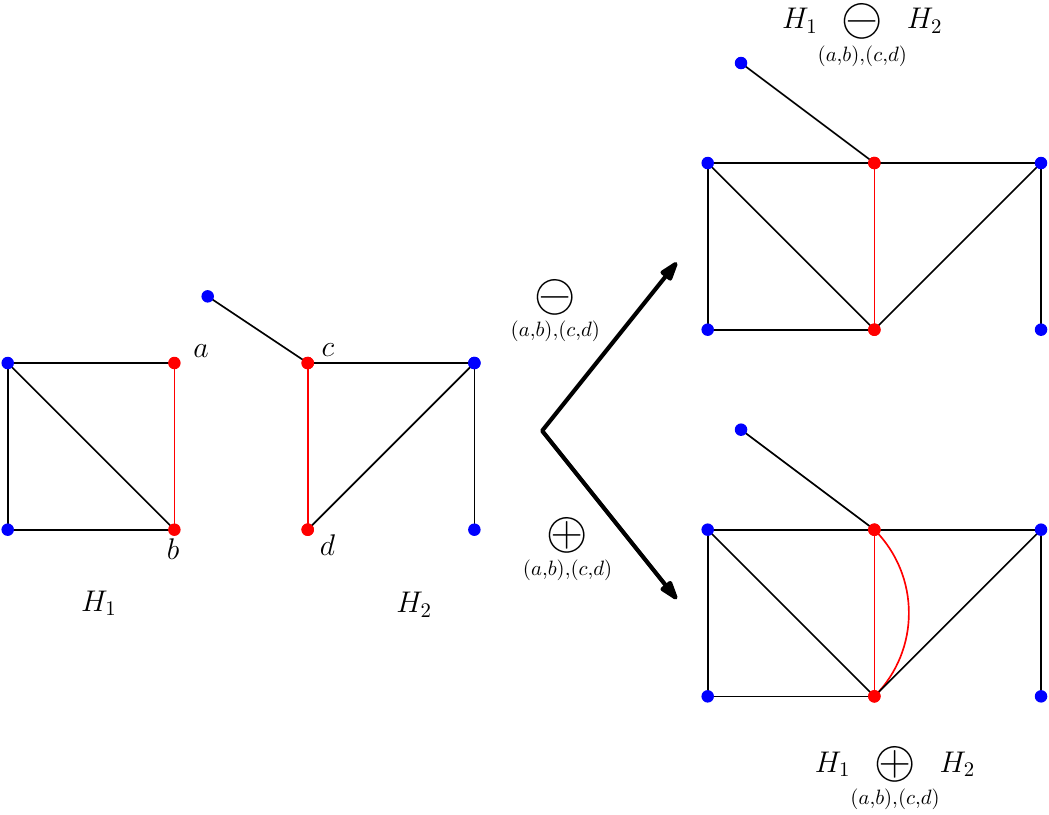}
    \caption{ \small{ The weak and strong edge joins of the graphs $H_1$ and $H_2$. } } 
    \label{fig:e}
\end{figure}

\subsection{Joint Distribution of Subgraph Counts}\label{subsection: Joint_Distribution}

Suppose $\cH=\{H_{1},\cdots, H_{r}\}$ is a collection of finite simple graphs, where $H_i= (V(H_i), E(H_i))$ with vertices labeled $V(H_{i}) = \left\{1,2,\cdots,|V(H_{i})|\right\}$ and $|V(H_{i})|\geq 2$, for $1 \leq i \leq r$. To begin with, for any finite simple graph $H= (V(H), E(H))$, with $|V(H)| \geq 2$, define 
\begin{align}\label{eq:ZnHW}
Z(H, G_n) = 
\left\{
\begin{array}{cc}
  \dfrac{X(H, G_n) - \dfrac{(n)_{|V(H)|}t(H,W)}{|\mathrm{Aut}(H)|}}{n^{|V(H)|-\frac{1}{2}}} & \text{ if } W \text{ is } H\text{-irregular} , \\ \\ 
 \dfrac{X(H, G_n) - \dfrac{(n)_{|V(H)|}t(H,W)}{|\mathrm{Aut}(H)|}}{n^{|V(H)|-1}} & \text{ if } W\text{ is } H\text{-regular} . 
\end{array}
\right. 
\end{align}
Our goal is to derive the limiting distribution of 
\begin{align}\label{eq:graphW}
\bm Z(\mathcal H, G_n) = (Z(H_1, G_n), Z(H_2, G_n), \ldots, Z(H_r, G_n))^\top . 
\end{align}
For this we need to define the following covariance matrix: 

\begin{definition}\label{def:Sigma}
Given a graphon $W$ and finite collection of graphs $\{F_1, F_2, \ldots, F_p\}$, such that $W$ is regular with respect to $F_1, F_2, \ldots, F_p$. Then define a $p \times p$ matrix $\Sigma:=(\sigma_{ij})_{1 \leq i, j \leq p}$ as follows: 

\begin{align}\label{eq:Gvariance}
    \sigma_{ij} = \dfrac{1}{2|\mathrm{Aut}(F_{i})||\mathrm{Aut}(F_{j})|}\sum_{\substack{(a,b)\in E^{+}(F_{i})\\ (c,d)\in E^{+}(F_{j})}}\left[t\left(F_{i}\bigominus_{(a,b),(c,d)}F_{j},W\right) - t\left(F_{i}\bigoplus_{(a,b),(c,d)}F_{j},W\right)\right] , 
\end{align}
for all $1\leq i, j \leq p$.  
\end{definition}

We are now ready to state our result about the limiting distribution of subgraph counts. To this end, denote by $\{B_{t}: t\in [0,1]\}$ the standard Brownian motion on $[0,1]$ and recall the framework of multiple Weiner-It\^{o} stochastic integrals from Section \ref{sec:stochasticintegral}.

\begin{theorem}\label{thm:asymp-joint-dist}
Fix a graphon $W$ and a finite collection of non-empty graphs 
$\mathcal H = \{ H_1, H_2, \ldots, H_r \}$,
such that $W$ is irregular with respect to $H_{1},\cdots, H_{q}$ for some $1\leq q \leq r$ and regular with respect to $H_{q+1}, H_{q+2}, \ldots, H_r$. Then 
\begin{align}\label{eq:ZnHWlimit}
\bm Z(\mathcal H, G_n) \stackrel{D} \rightarrow  \bm{Z}\left(\mathcal H, W\right):=(Z(H_1, W), Z(H_2, W), \ldots, Z(H_r, W))^\top , 
\end{align}
such that 
\begin{itemize}

\item for $1\leq i\leq  q$,
\begin{align*}
  Z(H_i, W):= \int_{0}^{1} \left\{ \dfrac{1}{|\mathrm{Aut}(H_{i})|}\sum_{a=1}^{|V(H_{i})|}t_{a}(x,H_{i},W) - \dfrac{|V(H_{i})|}{|\mathrm{Aut}(H_{i})|}t(H_{i},W) \right\} \mathrm d B_{x} , 
\end{align*} 

\item for  $q+1 \leq i \leq r$,  
\begin{align*}
    Z(H_i, W)    & := G_{i}
    + \int_{0}^{1}\int_{0}^{1} \left\{ W_{H_{i}}(x,y) - \dfrac{|V(H_{i})|\left(|V(H_{i})-1|\right)}{2|\mathrm{Aut}(H_{i})|} t(H_{i},W) \right\} \mathrm d B_{x}\mathrm d B_{y} , 
\end{align*}
where $\bm G= \left(G_{q+1},\cdots, G_{r}\right)\sim N_{r-q} \left(\bm{0}, \Sigma\right)$, with $\Sigma = ((\sigma_{ij}))_{q+ 1 \leq i, j \leq r}$ as in \eqref{eq:Gvariance},  is independent of $\{B_{t}\}_{t\in [0,1]}$.

\end{itemize} 

\end{theorem}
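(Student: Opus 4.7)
My plan is to perform a Hoeffding (ANOVA) decomposition of each centered count $X(H_i,G_n)-\E X(H_i,G_n)$ with respect to the two independent sources of randomness---the vertex labels $\{U_k\}_{k\in[n]}$ and the edge labels $\{Y_{k\ell}\}_{1\le k<\ell\le n}$---identify the dominant projection under each scaling, and then package the joint limit as multiple Wiener-It\^o integrals against a single underlying Brownian motion $B$, plus an independent multivariate Gaussian from the edge noise. This reduces the theorem to a generalized $U$-statistic computation in the spirit of \cite{janson1991asymptotic}, where the dominant kernels are of two types (purely vertex-based kernels and a conditional edge-noise kernel) whose joint convergence can be handled by the invariance principle for sums of i.i.d.\ $U[0,1]$ functionals together with a conditional CLT for the edge part.

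\noindent\textbf{Hoeffding decomposition and scaling.} For each $H$, write $X(H,G_n)-\E X(H,G_n)=\sum_{S\subseteq[n],\,T\subseteq\binom{S}{2}}\Psi_{S,T}(H)$, where $\Psi_{S,T}(H)$ is the orthogonal projection onto the $\sigma$-algebra generated by $\{U_k:k\in S\}\cup\{Y_{k\ell}:\{k,\ell\}\in T\}$ with all strictly smaller projections subtracted. A direct variance computation yields $\Var(\Psi_{S,T}(H))=O\bigl(n^{2|V(H)|-|S|-|T|}\bigr)$. The singleton-vertex kernel works out to
\[
g_H(x)=\frac{1}{|\Aut(H)|}\sum_{a=1}^{|V(H)|}t_a(x,H,W)-\frac{|V(H)|}{|\Aut(H)|}t(H,W),
\]
which is nonzero in $L^2$ precisely when $W$ is $H$-irregular. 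In the irregular case, $\sum_k\Psi_{\{k\},\emptyset}(H)$ has variance $\asymp n^{2|V(H)|-1}$ and is the unique dominant contribution under the $n^{|V(H)|-1/2}$ scaling. When $W$ is $H$-regular, $g_H\equiv 0$ in $L^2$, and the next-order projections take over: the pure-vertex projection $\Psi_{\{k,\ell\},\emptyset}(H)$, whose kernel is $W_H(U_k,U_\ell)-d_{W_H}$ by \eqref{eq:WH} and Remark~\ref{remark:regular}, and the pure-edge projection $\Psi_{\emptyset,\{\{k,\ell\}\}}(H)$, whose kernel is $(Y_{k\ell}-W(U_k,U_\ell))\,h_H(U_k,U_\ell)$, with $h_H$ encoding how an edge at $(k,\ell)$ embeds into a copy of $H$. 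Both sums have variance $\asymp n^{2|V(H)|-2}$, matching the $n^{|V(H)|-1}$ scaling, while every other $\Psi_{S,T}(H)$ is of strictly smaller order and is absorbed into an $o_{L^2}(1)$ remainder.

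\noindent\textbf{Convergence of the dominant terms and identification of the limit.} For each irregular $H_i$, the invariance principle for sums of i.i.d.\ mean-zero functionals on $[0,1]$ gives $n^{-1/2}\sum_k g_{H_i}(U_k)\dto \int_0^1 g_{H_i}(x)\,\mathrm d B_x$, and the resulting first-chaos limits share the same Brownian motion $B$ because they are continuous functionals of the common empirical process of the $U_k$'s. For each regular $H_i$, the degenerate second-order $U$-statistic $n^{-1}\sum_{k<\ell}\bigl(W_{H_i}(U_k,U_\ell)-d_{W_{H_i}}\bigr)$ converges, by the Wiener-It\^o isometry for degenerate $U$-statistics, to the second-chaos stochastic integral displayed in the theorem, again with respect to the same $B$. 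The edge-noise projections, conditional on $\{U_k\}$, are sums of independent centered Bernoullis whose conditional variance stabilizes by the law of large numbers; the multivariate Lindeberg CLT then delivers a Gaussian vector $\bm G$ that is independent of $B$, since $\{Y_{k\ell}\}$ is independent of $\{U_k\}$. Its covariance is computed from $\Cov(\one\{Y_{k\ell}\le W(U_k,U_\ell)\}\mid U_k,U_\ell)=W(U_k,U_\ell)(1-W(U_k,U_\ell))$ for a shared edge and zero for distinct edges: after integrating over the remaining $H_i$ and $H_j$ vertices, the $W$-contribution assembles into the weak-edge-join term $t(H_i\bigominus_{(a,b),(c,d)}H_j,W)$ and the $W^2$-contribution subtracts the strong-edge-join term $t(H_i\bigoplus_{(a,b),(c,d)}H_j,W)$, reproducing \eqref{eq:Gvariance} exactly.

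\noindent\textbf{Main obstacle.} The delicate point is the joint bookkeeping in the regular case: one must show that the three limiting ingredients---first-chaos integrals (from irregular marginals), second-chaos integrals (from the vertex part of regular marginals), and the conditional Gaussian $\bm G$ (from the edge noise)---assemble into a single joint limit with the stated independence structure. The pure-vertex components live in the Wiener chaoses generated by the common Brownian motion $B$, whereas $\bm G$ arises from the conditionally independent edge variables $Y_{k\ell}-W(U_k,U_\ell)$ and is therefore genuinely independent of $B$ in the limit (one checks that the conditional characteristic function of the edge-noise sums, given the $U_k$'s, converges a.s.\ to that of $\bm G$, and then applies the product-convergence lemma for mixed joint limits). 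Matching the combinatorial constants---the $\tfrac{1}{2|\Aut(H_i)||\Aut(H_j)|}$ factor and the summation over $E^+(H_i)\times E^+(H_j)$ in \eqref{eq:Gvariance}---requires careful accounting of oriented-edge placements in each motif, which is the genuinely laborious part of the argument.
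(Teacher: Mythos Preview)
Your approach is essentially the same as the paper's: both proceed via the orthogonal (Hoeffding/Janson--Nowicki) decomposition of the generalized $U$-statistic, identify the leading projections as $K_{\{1\}}$ (your $g_H$) in the irregular case and $E_{\{1,2\}}$ plus $K_{\{1,2\}}$ (your vertex-pair and edge-noise kernels) in the regular case, and then package the joint limit as first- and second-order Wiener--It\^o integrals against a common Brownian motion together with an independent Gaussian from the edge component, with the covariance \eqref{eq:Gvariance} emerging from the weak/strong edge-join identity. One minor slip: your stated order $\Var(\Psi_{S,T}(H))=O(n^{2|V(H)|-|S|-|T|})$ should depend only on the number of vertices of the indexing subgraph (so $E_{\{1,2\}}$ and $K_{\{1,2\}}$ are both at level $n^{2|V(H)|-2}$, as you in fact use), but this does not affect the argument.
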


The proof of Theorem \ref{thm:asymp-joint-dist} uses the asymptotic theory of generalized $U$-statistics developed in Janson and Nowicki \cite{janson1991asymptotic}. This allows us to decompose $\bm{X}(\cH, G_n)$ over sums of increasing complexity using a projection method (see also \cite[Chapter 11]{gaussianhilbert}). The terms in the expansion are indexed by the vertices and edges subgraphs of the complete graph of increasing sizes, and the asymptotic behavior of $\bm{X}(\cH, G_n)$ is determined by the joint distribution of non-zero terms indexed by the smallest size graphs. Then the machinery of multiple stochastic integral provides a convenient way to express the dependence among the irregular and regular marginals. The proof is given in Section \ref{sec:distributionpf}.

Theorem \ref{thm:asymp-joint-dist} recovers as special cases a number of existing results. For instance, when $\cH= \{H\}$ is a singleton, we get the marginal distribution of $Z(H, G_n)$, which was proved for cliques in \cite{hladky2019limit} and for general subgraphs in \cite{BCJ}. In this case the limiting distribution can be alternately expressed as in the following corollary, in terms of the graph join operations and the eigenvalues of the kernel $W_H$ (recall the discussion following Remark \ref{remark:regular}).  We show how to derive Corollary \ref{cor:marginaldist} from Theorem \ref{thm:asymp-joint-dist} in Section \ref{sec:proofofmarginaldist}.

\begin{corollary}[{\cite[Theorem 2.9]{BCJ}}] \label{cor:marginaldist}
    Fix a graphon $W$ and a non-empty graph $H=(V(H), E(H))$. Then as $n\rightarrow \infty$, the following hold:
    \begin{itemize}
        \item If $W$ is $H$-irregular,
        \begin{align}\label{eq:ZHGvariance}
            Z(H, G_n)\dto N(0, \tau_{H,W}^2),
        \end{align}
        where 
        \begin{align}\label{eq:deftHW2}
            \tau_{H,W}^2 = \frac{1}{|\Aut(H)|^{2}}\left[\sum_{1\leq a, b\leq
            |V(H)|}t\left(H\bigoplus_{a,b}H,W\right) - |V(H)|^{2}t(H,W)^{2}\right].
        \end{align}
        
        \item If $W$ is $H$-regular, that is, $\tau_{H,W}^2 = 0$, 
        \begin{align}\label{eq:Hregularmarginal}
            Z(H, G_n)\dto \sigma_{H,W} \cdot Z+\sum_{\lambda\in \mathrm{Spec}^{-}(W_{H})}\lambda(Z_{\lambda}^{2}-1)
        \end{align}\label{eq:defsigmaHW2}
        where $Z, \{Z_{\lambda}:\lambda\in \mathrm{Spec}^{-}(W_{H})\}$ are independent $N(0,1)$, 
        \begin{align*}
            \sigma_{H,W}^{2}:=\frac{1}{2|\Aut(H)|^{2}}\sum_{(a,b),(c,d)\in
      E^{+}(H)}\left[t\left(H\bigominus_{(a,b),(c,d)}
      H,W\right)-t\left(H\bigoplus_{(a,b),(c,d)} H,W\right)\right],
        \end{align*}
        and  $\mathrm{Spec}^{-}(W_{H})$ is the multiset $\mathrm{Spec}(W_H)$ with multiplicity of the eigenvalue $d_{W_H}$ decreased by $1$.
    \end{itemize}
\end{corollary}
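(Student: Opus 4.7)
The plan is to deduce the corollary from Theorem \ref{thm:asymp-joint-dist} applied to the singleton collection $\mathcal{H}=\{H\}$, and then evaluate the resulting Wiener--It\^o integrals in closed form. Once the limit $Z(H,W)$ is read off from the theorem, the only remaining work is (i) computing the variance of a single stochastic integral in the irregular case, and (ii) evaluating a double stochastic integral via its spectral decomposition in the regular case.

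For the irregular case, Theorem \ref{thm:asymp-joint-dist} specializes to $Z(H,W)=\int_{0}^{1} f(x)\,\mathrm{d}B_{x}$ with
\[ f(x)=\frac{1}{|\mathrm{Aut}(H)|}\sum_{a=1}^{|V(H)|}t_{a}(x,H,W)-\frac{|V(H)|}{|\mathrm{Aut}(H)|}\,t(H,W). \]
Since $f$ is deterministic and in $L^{2}[0,1]$, the integral is centered Gaussian with variance $\int_{0}^{1} f(x)^{2}\,\mathrm{d}x$. To identify this variance with $\tau_{H,W}^{2}$ in \eqref{eq:deftHW2}, I would use two elementary identities: $\int_{0}^{1} t_{a}(x,H,W)\,\mathrm{d}x=t(H,W)$, which is Fubini applied to the definition of $t_{a}$, and the ``gluing'' identity
\[ \int_{0}^{1} t_{a}(x,H,W)\,t_{b}(x,H,W)\,\mathrm{d}x \;=\; t\!\left(H\bigoplus_{a,b}H,\,W\right), \]
which is proved by writing each factor as an integral over an independent copy of the non-$a$ (resp.\ non-$b$) vertex variables conditional on the shared vertex being at $x$, and then recognizing that fixing $u_{a}=x=u'_{b}$ identifies vertex $a$ of one copy of $H$ with vertex $b$ of the other. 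Expanding $f^{2}$ and applying these identities gives $\|f\|_{L^{2}}^{2}=\tau_{H,W}^{2}$ after cancellation of the cross terms.

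For the regular case, Theorem \ref{thm:asymp-joint-dist} yields $Z(H,W)=G+I_{2}(K)$, where $G$ is a mean-zero Gaussian independent of the Brownian motion with variance given by the single entry $\sigma_{11}$ of $\Sigma$ in Definition \ref{def:Sigma}, which by inspection coincides with $\sigma_{H,W}^{2}$; and by Remark \ref{remark:regular}, $K(x,y)=W_{H}(x,y)-d_{W_{H}}$. Since $W$ is $H$-regular, $\phi\equiv 1$ is a unit-norm eigenfunction of $T_{W_{H}}$ with eigenvalue $d_{W_{H}}$, so the centered kernel $K$ admits the spectral expansion $K(x,y)=\sum_{\lambda\in\mathrm{Spec}^{-}(W_{H})}\lambda\,\phi_{\lambda}(x)\phi_{\lambda}(y)$ with $\{\phi_{\lambda}\}$ orthonormal and perpendicular to $\phi\equiv 1$. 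I would then invoke the standard identity from the theory of multiple Wiener--It\^o integrals, $I_{2}(\phi_{\lambda}\otimes\phi_{\lambda})=Z_{\lambda}^{2}-1$ where $Z_{\lambda}:=\int_{0}^{1}\phi_{\lambda}\,\mathrm{d}B$, together with the fact that $\{Z_{\lambda}\}$ are i.i.d.\ standard normal (pairwise uncorrelatedness plus joint Gaussianity), and apply this termwise; $L^{2}$ convergence of the resulting series is guaranteed by $\sum\lambda^{2}<\infty$. The main obstacle is bookkeeping rather than conceptual: carefully tracking the combinatorial prefactors $|\mathrm{Aut}(H)|$ and $|V(H)|$, and the passage between ordered edge pairs in $E^{+}(H)$ and unordered ones, to ensure that the final expressions match \eqref{eq:deftHW2} and the stated formula for $\sigma_{H,W}^{2}$ exactly.
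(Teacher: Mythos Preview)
Your proposal is correct and follows essentially the same approach as the paper's own proof: the irregular case is handled via the variance computation of the linear stochastic integral using the gluing identity $\int_{0}^{1} t_{a}(x,H,W)\,t_{b}(x,H,W)\,\mathrm{d}x = t\bigl(H\bigoplus_{a,b}H,W\bigr)$ (which the paper packages as Corollary~\ref{cor:irregjoint}), and the regular case is handled by spectrally decomposing the centered kernel $W_{H}-d_{W_{H}}$ and applying the identity $I_{2}(\phi_{\lambda}\otimes\phi_{\lambda})=Z_{\lambda}^{2}-1$ termwise, exactly as the paper does via Proposition~\ref{prop:I2-f_E} and \eqref{eq:fgstochasticintegral}.
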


\begin{remark} 

An interesting question that arises from Corollary \ref{cor:marginaldist}, is whether the distribution in \eqref{eq:Hregularmarginal} always non-degenerate? This is known to be true when $H$ is the clique \cite{hladky2019limit} and if $H$ is the 2-star or the 4-cycle \cite{BCJ}. However,  there are non-trivial cases where the limit in \eqref{eq:Hregularmarginal} is degenerate (see \cite[Example 4.6]{BCJ}). Instances where one (but not both) of the two
components of the distribution in \eqref{eq:Hregularmarginal} is degenerate also has interesting combinatorial properties (see \cite[Section 4]{BCJ}). Additional degeneracies appear in the multivariate case. For instance, the matrix $\Sigma$ in Theorem \ref{thm:asymp-joint-dist} can be singular. This is the case, for example, in the Erd\H{o}s-R\'{e}nyi model where the matrix $\Sigma$ has rank 1 for any finite collections of graphs (see Example \ref{example:randomgraph}). 
\end{remark}

Another case which has appeared in prior work is when all the graph in $\cH$ are irregular with respect to $W$ (see \cite{feray2020graphons} for the univariate case and \cite{delmas2021asymptotic} for the multivariate case). In this case, since a linear stochastic integral has a Gaussian distribution, the limiting distribution of $\bm{Z}(\cH, G_n)$ is multivariate Gaussian (see Theorem 1.5 in \cite{gaussianhilbert}). The covariance matrix of this Gaussian distribution can be expressed in terms of the graph join operations as follows: 

\begin{corollary} [{\cite[Corollary 7.6]{delmas2021asymptotic}}] \label{cor:irregjoint} 
    Fix a graphon $W$ and a finite collection of non-empty graphs $\cH = \{H_{1},\ldots, H_{r}\}$, such that $W$ is $H_i$-irregular for all $1 \leq i \leq r$. Then 
    \begin{align*}
        \bm Z(\cH, G_n)\dto N_{r}(\bm 0, \Gamma) , 
    \end{align*} 
    where $\Gamma = ((\tau_{ij}))_{1 \leq i,j \leq r}$, with 
    \begin{align*}
        \tau_{ij} = \frac{1}{|\Aut(H_i)||\Aut(H_j)|}\left[\sum_{a=1}^{|V(H_i)|}\sum_{b=1}^{|V(H_j)|}t\left(H_i\bigoplus_{a,b}H_j,W\right) - |V(H_i)||V(H_j)|t(H_i,W)t(H_j,W)\right] .
    \end{align*}  
 $($Note that $\tau_{ii} = \tau^2_{H_i, W}$, for $\tau^2_{H_i, W}$ as defined in \eqref{eq:deftHW2} with $H$ replaced by $H_i$.$)$
 \end{corollary}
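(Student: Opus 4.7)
The plan is to derive this corollary as a direct specialization of Theorem \ref{thm:asymp-joint-dist} to the case $q = r$, coupled with an Itô-isometry computation that converts the integrand formula into the vertex-join expression.

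First I would observe that, since $W$ is $H_i$-irregular for every $1 \le i \le r$, Theorem \ref{thm:asymp-joint-dist} (applied with $q=r$) immediately yields that $\bm Z(\cH,G_n) \dto \bm Z(\cH,W)$, where each coordinate $Z(H_i,W)$ is the \emph{linear} Wiener--Itô integral
\begin{align*}
Z(H_i,W) = \int_0^1 f_i(x)\,\mathrm d B_x, \qquad f_i(x) := \frac{1}{|\mathrm{Aut}(H_i)|}\sum_{a=1}^{|V(H_i)|} t_a(x,H_i,W) - \frac{|V(H_i)|}{|\mathrm{Aut}(H_i)|}\,t(H_i,W).
\end{align*}
A vector of first-order stochastic integrals with respect to a common Brownian motion lies in the first Wiener chaos, hence is jointly centered Gaussian (see \cite[Theorem 1.5]{gaussianhilbert}). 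Thus $\bm Z(\cH,W) \sim N_r(\bm 0,\Gamma)$, where $\Gamma_{ij} = \mathbb{E}[Z(H_i,W)Z(H_j,W)] = \int_0^1 f_i(x) f_j(x)\,\mathrm d x$ by the Itô isometry. This reduces the corollary to an algebraic identity.

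The second step is to simplify $\int_0^1 f_i(x) f_j(x)\,\mathrm d x$. Expanding the product and using the marginal identity $\int_0^1 t_a(x,H_i,W)\,\mathrm d x = t(H_i,W)$, the three cross/constant terms collapse to $-\frac{|V(H_i)||V(H_j)|}{|\mathrm{Aut}(H_i)||\mathrm{Aut}(H_j)|}\,t(H_i,W)t(H_j,W)$. The remaining main term requires the key identity
\begin{align*}
\int_0^1 t_a(x,H_i,W)\,t_b(x,H_j,W)\,\mathrm d x \;=\; t\!\left(H_i\bigoplus_{a,b} H_j,\,W\right),
\end{align*}
which follows from the definition of the vertex join together with conditional independence: given $U=x$, the edge-product integrals defining $t_a(x,H_i,W)$ and $t_b(x,H_j,W)$ involve disjoint families of uniform vertex variables, so their product, integrated against the distribution of the shared vertex $U$, produces exactly the homomorphism density of the graph obtained by gluing $H_i$ and $H_j$ at the $(a,b)$ vertex pair. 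Summing over $a,b$ and dividing by the automorphism factors yields the formula for $\tau_{ij}$ in the statement.

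The only step requiring care is the conditional-independence identity above; I would verify it by writing $t_a(x,H_i,W)$ as an integral over the uniforms attached to $V(H_i)\setminus\{a\}$ and likewise for $H_j$, relabelling so that the vertex sets are disjoint apart from the identified vertex, and recognising the combined integrand as exactly $\prod_{(u,v)\in E(H_i\oplus_{a,b} H_j)} W(x_u,x_v)$. Everything else (the Gaussianity, the expectation/variance computation, the reduction of constants) is routine. Note finally that the diagonal case $i=j$ recovers $\tau_{ii} = \tau^2_{H_i,W}$ of \eqref{eq:deftHW2}, as claimed.
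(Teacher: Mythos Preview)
Your proposal is correct and follows essentially the same approach as the paper's proof: apply Theorem~\ref{thm:asymp-joint-dist} with $q=r$, use that first-order Wiener--It\^o integrals are jointly Gaussian, and compute the covariance via the isometry together with the identity $\int_0^1 t_a(x,H_i,W)\,t_b(x,H_j,W)\,\mathrm{d}x = t(H_i\bigoplus_{a,b}H_j,W)$. The paper's argument is identical in structure and detail.
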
 

\section{Examples} 
\label{sec:examples}

In this section we compute the limiting distribution of $\bm{Z}(\mathcal{H}, G_n)$ in a few examples. We begin with the joint distribution of the counts of edges and triangles.  

\begin{example}\label{example:edgetriangle} (Edges and triangles) Fix a graphon $W$ and suppose $\cH= \{ K_2, K_3\}$ be the edge and the triangle. There are 4-cases depending on whether or not $W$ is $K_2$ or $K_3$-regular.

        \begin{figure}
            \centering
            \includegraphics[scale=0.825]{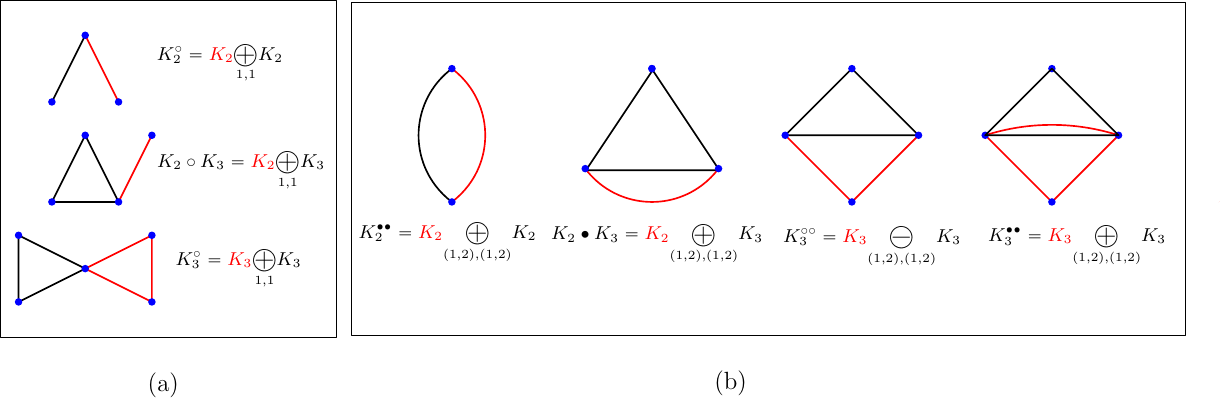}
            \caption{\small{ Graphs obtained from (a) vertex join operations and (b) edge join operations, between a copy of $K_2$ and a copy $K_3$. }} 
            \label{fig:ETEjoinirr}
        \end{figure}

       \begin{itemize} 

\item[{\it Case} 1:] {\it $W$ is irregular with respect to $K_2$ and $K_3$}: In this case, Corollary \ref{cor:irregjoint} applies. To this end, as shown in Figure \ref{fig:ETEjoinirr} (a), denote by $K_2^\circ$, $K_3^\circ$, and $K_2 \circ K_3$ the graphs obtained by the vertex joins of 2 copies of $K_2$, 2 copies of $K_3$, and one copy $K_2$ and one copy of $K_3$, respectively.  Then by Corollary \ref{cor:irregjoint},  
    \begin{align*}
        \begin{pmatrix}
            Z(K_2, G_n)\\
            Z(K_3, G_n)
        \end{pmatrix}\dto
        N_2\left(\bm 0, \begin{pmatrix} 
        \tau_{11} & \tau_{12} \\ 
        \tau_{21} & \tau_{22} 
        \end{pmatrix} \right) , 
    \end{align*}
    where $\tau_{11} := t(K_2^\circ,W) - t(K_2,W)^2$, $\tau_{22} := \frac{1}{4}[t\left(K_3^\circ, W\right) - t(K_3,W)^2]$, and $$\tau_{12} = \tau_{21} := \frac{1}{2}[t( K_2 \circ K_3, W ) - t(K_2,W)t(K_3,W)].$$   For a specific example of a graphon which is irregular with respect to $K_2$ and $K_3$, consider 
    \begin{align}\label{eq:K2irregular}
    \tilde{W}_1(x, y) := \tfrac{1}{2}(x+y),
    \end{align}  for $x , y \in  [0, 1]$. In this case, $t_a(x, K_2, \tilde{W}_1)= d_{\tilde{W}_1}(x) = \frac{1}{2}(x+ \frac{1}{2})$, for $a \in \{1, 2\}$, and $t_b(x, K_3, \tilde{W}_1) = \frac{1}{8}(x^2 + \frac{7x}{6} + \frac{1}{3})$, for $b \in \{1, 2, 3\}$, are both non-constant functions, hence, $\tilde{W_1}$ is $K_2$ and $K_3$-irregular.

\item[{\it Case} 2:] {\it $W$ is regular with respect to $K_2$ and  irregular with respect to $K_3$}: In this case, Theorem \ref{thm:asymp-joint-dist} shows that,
    \begin{align*}  
        \begin{pmatrix} 
        Z(K_2, G_n) \\ 
        Z(K_3, G_n)
        \end{pmatrix}
         \stackrel{D} \rightarrow \begin{pmatrix}
            G + \frac{1}{2} \int_{0}^{1}\int_{0}^{1} \left(W(x,y) - t(K_2,W)\right) \mathrm d B_{x}\mathrm d B_{y}\\
            \frac{1}{2}\int_{0}^{1}\left(\int_0^1\int_0^1W(x,y)W(y,z)W(z,x)\mathrm d y\mathrm d z - t(K_3,W)\right)\mathrm d B_{x}
        \end{pmatrix} , 
        \end{align*}
        where $G\sim N(0,\sigma^2)$ is independent of the Brownian motion $\{B_{t}\}_{t\in [0,1]}$ and
        $$\sigma^2 = \frac{1}{2}\left\{t\left(K_2,W\right) - t\left(K_2^{\bullet \bullet},W\right)\right\} . $$ 
        Here, $K_2^{\bullet \bullet}$ is the graph obtained by the strong edge join of 2 copies of $K_2$, as shown in Figure \ref{fig:ETEjoinirr}(b). For a concrete example of a graphon which is $K_2$-regular and $K_3$-irregular, consider the graphon $\tilde{W}_2$ shown in Figure \ref{fig:K2regularK3irregular}(a). This can be expressed more formally as: 
    \begin{align}\label{eq:K2regular}
        \tilde{W}_2(x, y)=
            \begin{cases}
            1 & \text{ if }(x,y)\in \left[0, \frac{1}{3}\right] \times  \left[\frac{2}{3}, 1 \right] \bigcup  \left[\frac{2}{3}, 1\right] \times  \left[0, \frac{1}{3} \right] , \\
            1 & \text{ if }(x,y)\in \left[\frac{1}{3}, \frac{2}{3}\right] \times  \left[\frac{1}{3}, \frac{2}{3}\right]   , \\ 
            0 & \text{ otherwise} .
            \end{cases}
    \end{align}
        The `graph' representation of this graphon is shown in Figure \ref{fig:K2regularK3irregular}(b), which corresponds to a clique and a disjoint complete bipartite graph of equal block sizes. In this case, the degree function $d_{\tilde{W}_2}(x) = \frac{1}{3}$, for all $x \in [0, 1]$, hence, $\tilde{W}_2$ is $K_2$-regular. Further, for $b \in \{ 1, 2, 3 \}$,   
                \begin{align*}
        t_b(x, K_3, \tilde{W} )=
            \begin{cases}
            0 & \text{ if }(x,y)\in \left[0, \frac{1}{3}\right]  \bigcup  \left[\frac{2}{3}, 1\right] , \\
            \frac{1}{9} & \text{ if }(x,y)\in \left[\frac{1}{3}, \frac{2}{3}\right] , 
            \end{cases} 
    \end{align*} 
    which means $\tilde{W}_2$ is $K_3$-irregular.
    
\begin{figure}[!ht] 
    \centering
    \begin{subfigure}[c]{0.45\textwidth}
        \centering
       \includegraphics[width=0.75\linewidth]{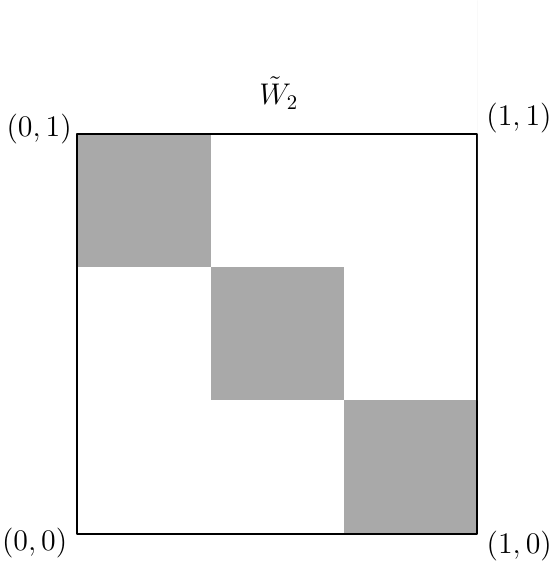} 
       \caption*{(a)}
    \end{subfigure}
    \begin{subfigure}[c]{0.45\textwidth}
        \centering
       \includegraphics[width=0.75\linewidth]{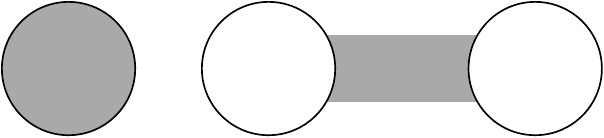} 
          \caption*{(b)}
    \end{subfigure} 
    \caption{ \small{ (a) A $K_2$-regular and $K_3$-irregular graphon $\tilde{W}_2$ and (b) its `graph' representation. } }
    \label{fig:K2regularK3irregular}
\end{figure}

\item[{\it Case} 3:] {\it $W$ is irregular with respect to $K_2$ and regular with respect to $K_3$:} In this case, from Theorem \ref{thm:asymp-joint-dist} we have, 
    \begin{align*}  
        \begin{pmatrix} 
        Z(K_2, G_n) \\ 
        Z(K_3, G_n)
        \end{pmatrix}
         \stackrel{D} \rightarrow \begin{pmatrix}
        \int_{0}^{1} \left(\int_0^1 W(x,y)\mathrm dy - t(K_2,W)\right)\mathrm d B_{x} \\ 
         G + \frac{1}{2} \int_{0}^{1}\int_{0}^{1} \left(\int_0^1 W(x,y)W(y,z)W(z,x)\mathrm d z - t(K_3,W)\right) \mathrm d B_{x}\mathrm d B_{y}
        \end{pmatrix} , 
        \end{align*}
        where $G\sim N(0,\sigma^2)$ is independent of the Brownian motion $\{B_{t}\}_{t\in [0,1]}$ and
$$\sigma^2 = \frac{1}{2}\left\{t\left(K_3^{\circ \circ}, W\right) - t\left(K_3^{\bullet \bullet}, W\right)\right\}.$$ 
Here, $K_3^{\circ \circ}$ and $K_3^{\bullet \bullet}$ are the graphs obtained by the weak and strong edge joins of 2 copies of $K_3$, as shown in Figure \ref{fig:ETEjoinirr}(b), respectively.
For an example of a graphon which is $K_2$-irregular and $K_3$-regular,  consider the graphon $\tilde W_3$ in Figure \ref{fig:K2irregularK3regular}(a). This is a $6 \times 6$ block graphon taking values 1, $\frac{1}{2}$, and 0 in the gray, green, and white blocks, respectively. The `graph' representation of this graphon is shown in Figure \ref{fig:K2irregularK3regular}(b), which corresponds to 2 disjoint complete tri-partite graphs with equal block sizes and a random bipartite graph with edge probability $\frac{1}{2}$ between 2 blocks of the tri-partite graphs. The bipartite connections change the degrees of the corresponding vertices, but do not change their 1-point triangle densities, hence, $\tilde W_3$ is $K_2$-irregular but $K_3$-regular. 

\begin{figure}[!ht] 
    \centering
    \begin{subfigure}[c]{0.45\textwidth}
        \centering
       \includegraphics[width=0.85\linewidth]{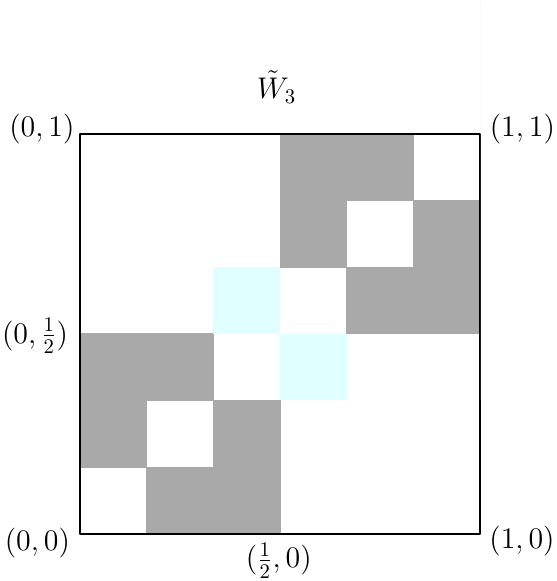} 
       \caption*{(a)}
    \end{subfigure}
    \begin{subfigure}[c]{0.45\textwidth}
        \centering
       \includegraphics[width=0.95\linewidth]{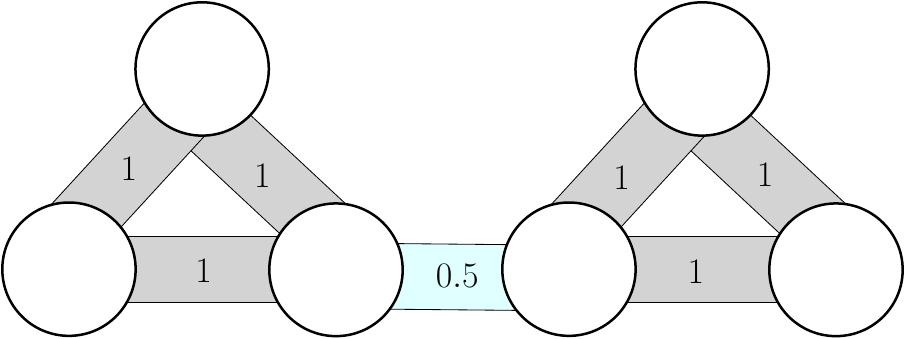} 
          \caption*{(b)}
    \end{subfigure} 
    \caption{ \small{ (a) A $K_2$-irregular and $K_3$-regular graphon $\tilde{W}_3$ and (b) its `graph' representation. } }
    \label{fig:K2irregularK3regular}
\end{figure}

\item[{\it Case} 4:] {\it $W$ is regular with respect to $K_2$ and $K_3$}: Once again, an application of Theorem \ref{thm:asymp-joint-dist} gives, 
        \begin{align*}
            \begin{pmatrix} 
                Z(K_2, G_n) \\ 
                Z(K_3, G_n)
                \end{pmatrix}
                 \stackrel{D} \rightarrow \begin{pmatrix}
                    G_1 + \frac{1}{2} \int_{0}^{1}\int_{0}^{1} \left(W(x,y) - t(K_2,W)\right) \mathrm d B_{x}\mathrm d B_{y}\\
                    G_2 + \frac{1}{2} \int_{0}^{1}\int_{0}^{1} \left(\int_0^1 W(x,y)W(y,z)W(z,x)\mathrm d z - t(K_3,W)\right) \mathrm d B_{x}\mathrm d B_{y}
                \end{pmatrix} .  
        \end{align*}
        Here, $\{B_{t}\}_{t\in [0,1]}$ is the standard Brownian motion and independently 
        $$\begin{pmatrix} 
        G_1 \\ G_2 
        \end{pmatrix} \sim N_2 \left( \bm 0, \begin{pmatrix} 
        \sigma_{11} & \sigma_{12} \\ 
        \sigma_{21} & \sigma_{22} 
        \end{pmatrix} \right) , $$ 
        where $\sigma_{11} := \frac{1}{2}\{t\left(K_2,W\right) - t\left(K_2^{\bullet \bullet},W\right)\}$, $\sigma_{22} := \frac{1}{2}\left\{t\left( K_3^{\circ \circ},W\right) - t\left(K_3^{\bullet \bullet},W\right)\right\}$, and $$\sigma_{12} = \sigma_{21} := \frac{1}{2}\left\{t\left( K_2 \circ K_3 \right) - t\left(K_2 \bullet K_3\right)\right\},$$ 
           with $K_2^{\bullet \bullet}$, $K_3^{\circ \circ}$, $K_3^{\bullet \bullet}$, $K_2 \circ K_3$, and $K_2 \bullet K_3$ as shown in Figure \ref{fig:ETEjoinirr}(b). 
    A simple example of a graphon which is $K_2$ and $K_3$ regular is the constant graphon $W_p \equiv p$, which, incidentally, is $H$-regular, for all finite graphs $H$. More generally, consider the $R$-block graphon, for some $R \geq 1$, with equal block sizes, taking values $a \in [0, 1]$ in the diagonal blocks and $b \in [0, 1]$ in the off-diagonal blocks. This graphon is also $K_2$ and $K_3$ regular. 
    \end{itemize}

\end{example}

Next, we consider the case when $W_p \equiv p$ is the constant function $p \in (0, 1)$, that is, $G_n \sim G(n, p)$ is the Erd\H{o}s-R\'enyi random graph. In this case, it is well-known that the joint asymptotic distribution of the subgraph counts is a multivariate Gaussian (see \cite[Section 9]{janson1991asymptotic}). In the following example we show how to obtain this classical result from our general theorem.

\begin{example}\label{example:randomgraph}(Erd\H{o}s-R\'enyi random graph) 
Suppose $W = W_p \equiv p$, that is, $G_n \sim G(n, p)$ is an Erd\H{o}s-R\'enyi random graph with edge probability $p$. In this case, for any  collection of finite subgraphs $\mathcal{H} = \{H_{1},H_{2}, \cdots, H_{r}\}$, the limiting joint distribution of $Z(\cH, G_n)$ is known to be a multivariate Gaussian. Moreover, the covariance matrix of the Gaussian has rank 1 \cite[Section 9]{janson1991asymptotic}. Here, we show how to derive this result from Theorem \ref{thm:asymp-joint-dist}. Note that $W_p$ is regular with respect to $H_i$, for all $1\leq i\leq r$ (recall \eqref{eq:H_regular}). Also, 
\begin{align*}
    W_{H_{i}}(x,y) = \frac{|V(H_{i})|(|V(H_{i})| - 1)}{2|\Aut{(H_{i})}|}p^{|E(H_{i})|} =\frac{|V(H_{i})|(|V(H_{i})| - 1)}{2|\Aut{(H_{i})}|} t(H_{i},W) , 
\end{align*}
for $1\leq i\leq r$. Hence, the bivariate stochastic integral in Theorem \ref{thm:asymp-joint-dist} vanishes, and the limiting distribution is given by,
\begin{align}\label{eq:ErdosRenyidistconvg}
    \bm{Z}(\mathcal{H}, G_n) \dto N_r(\bm 0, \Sigma) , 
\end{align}
where $\Sigma = (\sigma_{ij})_{1\leq i,j\leq r}$ with 
\begin{align}\label{eq:ErdosRenyicovmat}
    \sigma_{ij} = \dfrac{2 |E(H_{i})||E(H_{j})|}{|\Aut{(H_{i})}||\Aut{(H_{j})}|}p^{|E(H_{i})| + |E(H_{j})| - 1}(1-p).
\end{align} 
Now, for every $2 \leq i \leq r$ and $1 \leq j \leq r$ observe that 
\begin{align*}
    \frac{\sigma_{1j}}{\sigma_{ij}} = \dfrac{|E(H_{1})|}{|E(H_{i})|}\dfrac{|\Aut(H_{i})|}{|\Aut{(H_{1})}|}p^{|E(H_{1})| - |E(H_{i})|} .
\end{align*}
Hence, the $i$-th column of $\Sigma$ is a multiple of the first column of $\Sigma$, for $2 \leq i \leq r$, that is, the matrix $\Sigma$ has rank 1. 

\end{example}

In the next example we discuss the global clustering coefficient, which can be expressed in terms of the counts of 2-stars and triangles.

\begin{example}(Global clustering coefficient/transitivity) 
The {\it global clustering coefficient} of a graph $G$ is defined as (see \cite{luce1949method}): 
\begin{align}\label{eq:tG}
\eta(G) := \frac{3 \times \text{number of triangles in } G}{ \text{the number of 2-stars in } G } = \dfrac{3 X(K_{3}, G)}{X(K_{1, 2}, G)} = \frac{\hat{t}(K_{3}, G)}{\hat{t}(K_{1, 2},G)},
\end{align} 
where $\hat{t}(\cdot, G)$ is defined in \eqref{eq:tHGncount}.
    This is a measure of clustering in the graph $G$ and is also known as the {\it transitivity ratio} (see \cite[Page 243]{wasserman1994social}). Extending \eqref{eq:tG}, one can define the {\it global clustering coefficient} of a graphon $W$ as follows: 
\begin{align*}
    \eta(W) := \dfrac{t(K_{3},W)}{t(K_{1, 2},W)} = \P\left(\text{the nodes (1, 2, 3) are connected} \mid \text{(1, 2) and (1, 3) are connected} \right), 
    \end{align*}
    assuming $t(K_{1, 2},W) > 0$. Clearly, when $G_n \sim G(n, W)$, then $\eta(G_n)$ is a consistent estimate of $\eta(W)$. Using the asymptotic joint distribution of $(X(K_{1, 2}, G_n), X(K_3, G_n) ) $ from Theorem \ref{thm:asymp-joint-dist} and the delta method, we can derive the asymptotic distribution of $\eta(G_n)$. The limit depends on whether or not the graphon $W$ is $K_{1, 2}$ and $K_3$ regular, hence, 4 cases can arise, similar to Example \ref{example:edgetriangle}. We can also quantify the uncertainty of $\eta(G_n)$ in estimating $\eta(W)$, using the results on joint confidence sets in Section \ref{sec:estimatedistribution}.   
        \end{example}

\section{Graphon Multiplier Bootstrap}
\label{sec:estimatedistribution}

Note that the asymptotic distribution of the subgraph counts obtained in Theorem \ref{thm:asymp-joint-dist} depends on the graphon $W$. Hence, to use this result for statistical inference of the homomorphism densities, one needs to estimate quantiles of the asymptotic distribution. When the limit is Gaussian, that is, $W$ is $H$-irregular, this entails estimating the asymptotic variance consistently. However, if the limit is non-Gaussian, which is the case when $W$ is $H$-regular,  this is more delicate. This becomes even more challenging in the multivariate regime, when there is a combination of irregular and regular components.

In this section, we introduce the {\it graphon multiplier bootstrap}, a method for estimating the quantiles of the limiting distribution $\bm{Z}(\cH, G_n)$ (recall \eqref{eq:ZnHW}), based on the observed network $G_n$ itself and additional external randomness. To begin with,
denote by $\bm A_{G_n} = ((w_{st}))_{s,t = 1}^{n}$ the adjacency matrix of $G_n$ and, as before, let $W^{G_n}$ be the empirical graphon corresponding to $G_n$ (recall \eqref{eq:emp_graph}).    
Then the empirical homomorphism density of a graph $H = (V(H), E(H))$ in $G_n$
can be expressed as (recall \eqref{eq:tFW}):  
\begin{align}\label{eq:tHWGn}
    t(H, W^{G_n})=\frac{1}{n^{|V(H)|}}\sum_{\bm s\in [n]^{|V(H)|}} \prod_{(i,j)\in E(H)} w_{s_is_j}.
\end{align}
Moreover, the number of copies of $H$ in the observed in $G_n$ as defined in \eqref{eq:XHW} can be alternatively expressed as: 
\begin{align}\label{eq:XHWGn}
X(H, G_n):=\frac{1}{|\mathrm{Aut}(H)|}\sum_{\bm s\in ([n])_{|V(H)|}} \prod_{(i,j)\in E(H)} w_{s_is_j} ,
\end{align}
where $([n])_{|V(H)|}$ is the set of all $|V(H)|$-tuples ${\bm s}=(s_1,\ldots, s_{|V(H)|})\in  [n]^{|V(H)|}$ with distinct indices.\footnote{For a set $S$, the set $S^N$ denotes the $N$-fold cartesian product $S\times S \times \ldots \times S$.} Note that the cardinality of $(n)_{|V(H)|}$ is $\frac{n!}{(n-|V(H)|)!} = (n)_{|V(H)|}$. To obtain the bootstrap estimate of the asymptotic distribution of $X(H, G_n)$ we need to define the empirical counterparts of the 
1-point and 2-point conditional homomorphism densities (recall \eqref{defn:tabxyHW}). (Hereafter, for simplicity, we will assume $H$ has no isolated vertex.)

\begin{definition}\label{def:Xav} (Empirical 1-point subgraph density) 

Fix $a \in V(H)$ and $v \in V(G_n)$. Denote by $X_a(v, H, G_n)$  the number of injective homomorphism $\phi: V(H) \rightarrow V(G_n)$ such that $\phi(a)=v$. More formally, 
$$X_a(v, H, G_n)= \sum_{\substack{\bm s_{\{a\}^c}}} \prod_{y \in N_H(a)} w_{ v s_y}(G_n) \prod_{(x, y) \in E(H\backslash \{a\})} w_{s_x s_y}(G_n),$$ 
where the sum is over tuples $\bm s_{\{a\}^c} := (s_x)_{x\in V(H)\setminus \{a\} } \in ([n]\setminus \{v\})_{|V(H)|-1}$ and $N_H(a)$ denotes the neighbors of $a$ in the graph $H$. Then the empirical 1-point  subgraph density function is defined as: 
\begin{align}\label{eq:tHGnestimate}
\hat t(v, H, G_n) := \frac{1}{|\Aut(H)|} \sum_{a=1}^{|V(H)|} \frac{X_a(v, H, G_n)}{n^{|V(H)| -1 }} . 
\end{align}
\end{definition} 

Note that \eqref{eq:tHGnestimate} counts (up to constant factors depending on the automorphisms of $H$) the fraction of copies of $H$ in $G_n$ passing through the vertex $v \in V(G_n)$. To illustrate we consider the following examples: 

\begin{itemize} 

\item {\it $H=K_2$ is the edge}: In this case, $\hat t(v, H, G_n) = \frac{d_v}{n}$, where $d_v$ is the degree of the vertex $v$ in $G_n$. 

\item {\it $H=K_3$ is the triangle}: Suppose the vertices of $K_3$ are 
labeled $\{ 1, 2, 3\}$. By symmetry, for all $1 \leq a \leq 3$, $$X_a(v, K_3, G_n) = \sum_{1 \leq s_1 \ne s_2 \leq n} w_{v s_1}  w_{v s_2}w_{s_1 s_2},$$ which is twice the number of triangles in $G_n$ with $v$ as one of the vertex. Therefore,  
$$\hat t(v, K_3, G_n)  = \frac{1}{2n^2} \sum_{1 \leq s_1 \ne s_2 \leq n} w_{v s_1}  w_{v s_2}w_{s_1 s_2}.$$

\item {\it $H=K_{1, 2}$ is the 2-star}: Suppose the vertices of $K_{1, 2}$ are labeled $\{ 1, 2, 3\}$ with the central vertex labeled 1. Then we have the following: 

\begin{itemize}

\item For $a=1$, $X_1(v, K_{1, 2}, G_n) = \sum_{1 \leq s_1 \neq s_2 \leq n} w_{v s_1}  w_{v s_2}$, is twice the number of 2-stars in $G_n$ with $v$ as the central vertex. 

\item For $a \in \{2, 3\}$, $X_a(v, K_{1, 2}, G_n) = \sum_{s_1=1}^n w_{v s_1} (d_{s_1} - 1)$,  is the number of 2-star in $G_n$ where $v$ is a leaf vertex. 

\end{itemize}
Hence, 
$$\hat t(v, K_{1, 2}, G_n)  = \frac{1}{2n^2} \left(\sum_{1 \leq s_1 \ne s_2 \leq n} w_{v s_1}  w_{v s_2} + 2\sum_{s_1=1}^n w_{v s_1} (d_{s_1} - 1) \right).$$ 

\end{itemize}

Next, we define the 2-point subgraph density of $G_n$, which is the empirical analogue of 2-point conditional kernel \eqref{eq:WH}.

\begin{definition}\label{def:Xabuv} (Empirical 2-point subgraph density) Fix $a \ne b \in V(H)$ and $u, v \in V(G_n)$. Denote by $X_{a, b}(u, v, H, G_n)$  the number of injective homomorphism $\phi: V(H) \rightarrow V(G_n)$ such that $\phi(a)=u$ and $\phi(b) = v$. More formally, 
$$X_{a, b}(u, v, H, G_n)= w_{uv}^+ \sum_{\substack{\bm s_{\{a, b\}^c}}}  \prod_{y \in N_H(a) \backslash\{b\}} w_{ u s_y}(G_n) \prod_{y \in N_H(b) \backslash\{a\}} w_{ v s_y}(G_n) \prod_{(x, y) \in E(H\backslash \{a, b\})} w_{s_x s_y}(G_n),$$
where the sum is over tuples $\bm s_{\{a, b\}^c} := (s_x)_{x\in V(H)\setminus \{a, b\} } \in ([n]\setminus \{ u, v \})_{|V(H)|-2}$ and $w_{uv}^+ = w_{uv}$ if $(a, b) \in E(H)$ and $w_{uv} = 1$ otherwise. The 2-point subgraph density is then defined as: 
\begin{align}\label{eq:WGnuv}
\hat W^{G_n}_{H}(u, v) = \frac{1}{2 |\mathrm{Aut}(H)|} \sum_{ 1 \leq a \ne b \leq |V(H)|} \frac{ X_{a, b}(u, v, H, G_n) }{ n^{|V(H)| - 2} } . 
\end{align}
By convention we define $\hat W^{G_n}_{H}(u,u) = 0$ for all $1\leq u\leq n$.
\end{definition} 

Note that $((\hat W^{G_n}_{H}(u, v)))_{1 \leq u, v \leq n}$ is a $n \times n$ matrix which counts (up to constant factors depending on the automorphisms of $H$) the fraction of copies of $H$ in $G_n$ passing through the vertices $u, v \in V(G_n)$.  To illustrate we consider the following examples: 

\begin{itemize} 

\item {\it $H=K_2$ is the edge}: In this case, $\hat W^{G_n}_{H}(u, v) = \frac{1}{2} w_{uv} $, is the scaled adjacency matrix of $G_n$. 

\item {\it $H=K_3$ is the triangle}: Suppose the vertices of $K_3$ are 
labeled $\{ 1, 2, 3\}$. By symmetry, for all $1 \leq a, b \leq 3$, $$X_{a, b}(u, v, K_3, G_n) = \sum_{s_1=1}^n w_{u v}  w_{u s_1}w_{v s_1},$$ which is the number of triangles in $G_n$ with $u$ and $v$ as vertices. Therefore,  
$$\hat W^{G_n}_{K_3}(u, v)  = \frac{1}{2 n} \sum_{1 \leq s_1 \leq n} w_{u v}  w_{u s_1}w_{v s_1} .$$

\item {\it $H=K_{1, 2}$ is the 2-star}: Suppose the vertices of $K_{1, 2}$ are labeled $\{ 1, 2, 3\}$ with the central vertex labeled 1. Then we have the following for $1 \leq u \ne v \leq n$: 

\begin{itemize}

\item For $a=1$ and $b \in \{2, 3\}$, $X_{1, b}(u, v, K_{1, 2}, G_n) = w_{u v} \sum_{s_1 = 1}^n  w_{u s_1} - w_{uv} = w_{uv}(d_u-1)$, is the number of 2-stars in $G_n$ with $u$ as the central vertex and $v$ as the leaf vertex. Similarly, $X_{1, b}(v, u, K_{1, 2}, G_n) = w_{v u} \sum_{s_1 = 1}^n  w_{v s_1} - w_{vu} = w_{vu}(d_v-1)$, is the number of 2-stars in $G_n$ with $v$ as the central vertex and $u$ as the leaf vertex. Also, note that $X_{b, 1}(u, v, K_{1, 2}, G_n) = X_{1, b}(v, u, K_{1, 2}, G_n)$.

\item For $a, b \in \{2, 3\}$, $X_{a, b}(u, v, K_{1, 2}, G_n) = \sum_{s_1=1}^n w_{u s_1} w_{v s_1}$,  is the number of 2-star in $G_n$ with $u, v$ as leaf vertices. 

\end{itemize} 
Hence, 
$$\hat W^{G_n}_{K_{1, 2}} (u, v) = \frac{1}{2n} \left[ w_{u v} \left(d_u + d_v - 2\right) + \sum_{s_1=1}^n w_{u s_1} w_{v s_1}  \right].$$ 

\end{itemize}

With the above definitions we can now describe multiplier bootstrap estimates of the limiting distribution $\bm Z(\cH, W)$. For this, recall that $\mathcal{H} = \{ H_1, H_2, \ldots, H_r \}$ is such that $W$ is irregular with respect to $H_1, H_2, \ldots, H_q$ and $W$ is regular with respect to $H_{q+1}, \ldots, H_{r}$. Suppose 
 $Z_1, Z_2, \ldots, Z_n$ are i.i.d. $N(0, 1)$ independent of the graph $G_n$. Then define 
\begin{align}\label{eq:ZHestimate}
    \hat Z(H_{i}, G_{n}) = 
    \begin{cases}
        \frac{1}{\sqrt n}\sum_{v=1}^{n} ( \hat t(v, H_i, G_n) -  \bar t(H_i, G_n) ) Z_v  & \text{ if }  1 \leq i \leq q, \\ \\ 
   \frac{1}{n} \sum_{1 \leq u, v \leq n} ( \hat W^{G_n}_{H_i}(u, v) - \bar W^{G_n}_{H_i} ) \left(Z_{u}Z_{v} - \delta_{u, v}\right) & \text{ if } q+1 \leq i \leq  r , 
    \end{cases}
\end{align} 
where $\delta_{u, v} = \bm{1}\{u=v\}$, $ \bar t(H_i, G_n) = \frac{1}{n} \sum_{v=1}^n \hat t(v, H_i, G_n)$, and 
\begin{align}\label{eq:averageW}
\bar W^{G_n}_{H_i} = \frac{1}{n^2} \sum_{1 \leq u, v \leq n} \hat W^{G_n}_{H_i}(u, v) . 
\end{align}
Denote 
\begin{align}\label{eq:ZHGnestimate}
\hat {\bm Z}(\cH, G_n) = ( \hat{Z}(H_1, G_n), \hat{Z}(H_2, G_n), \ldots, \hat{Z}(H_r, G_n) )^\top . 
\end{align} 
Note that $\hat {\bm Z}(\cH, G_n) $ depends only on the observed graph $G_n$ and the Gaussian multipliers $Z_1, Z_2, \ldots, Z_n$, but not on the graphon $W$. In the following theorem we show that $\hat {\bm Z}(\cH, G_n) $, conditional on the graph $G_n$, converges to $\bm{Z}(\mathcal{H},W)$ as in Theorem \ref{thm:asymp-joint-dist}.

\begin{theorem}\label{thm:ZnHGn} 
Fix a graphon $W$ and a finite collection of non-empty graphs $\mathcal H = \{ H_1, H_2, \ldots, H_r \}$ such that $W$ is irregular with respect to $H_{1},\cdots, H_{q}$, for some $1\leq q \leq r$, and regular with respect to $H_{q+1}, H_{q+2}, \ldots, H_r$. Suppose $G_n$ is a realization from $G(n, W)$ and $\hat{\bm Z}_n(\mathcal H, G_n)$ be as defined in \eqref{eq:ZHGnestimate}. Then, almost surely as $n\rightarrow\infty$,
\begin{align}\label{eq:ZHGnestimatedistribution}
\hat{\bm Z}_n(\mathcal H, G_n)|G_n \stackrel{D} \rightarrow \bm{Z}(\mathcal{H},W) , 
\end{align} 
where $\bm{Z}(\mathcal{H},W)$ is as in \eqref{eq:ZnHWlimit}. 
\end{theorem}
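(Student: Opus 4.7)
The plan is to work conditionally on $G_n$ (and the underlying latent variables $U_1,\ldots,U_n$), under which $\hat{\bm Z}(\mathcal H, G_n)$ is a deterministic function of the external Gaussian multipliers $Z_1,\ldots,Z_n$ alone: its irregular entries are linear forms in $\{Z_v\}$ and its regular entries are centered quadratic forms in $\{Z_u Z_v\}$. For each realization of $G_n$, the conditional distribution of $\hat{\bm Z}(\mathcal H, G_n)$ therefore belongs to the second Wiener chaos generated by $Z_1,\ldots,Z_n$, so it suffices to show almost surely that the coefficient arrays driving this chaos converge to the limiting kernels of Theorem \ref{thm:asymp-joint-dist}; a Wiener-chaos / CLT step then transports this to the distributional conclusion.

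For the irregular block $1\le i\le q$, $\hat Z(H_i,G_n) = n^{-1/2}\sum_v c_{v,i} Z_v$ with $c_{v,i} := \hat t(v,H_i,G_n) - \bar t(H_i,G_n)$ is centered Gaussian conditional on $G_n$. Joint Gaussianity is automatic, so the claim reduces to the a.s.\ covariance convergence
\begin{align*}
\frac{1}{n}\sum_{v=1}^n c_{v,i}\,c_{v,j} \;\xrightarrow{\text{a.s.}}\; \int_0^1 f_i(x)\,f_j(x)\,\mathrm dx,\qquad 1\le i,j\le q,
\end{align*}
where $f_i(x) := |\Aut(H_i)|^{-1}\sum_a t_a(x,H_i,W) - |V(H_i)|\,|\Aut(H_i)|^{-1}\,t(H_i,W)$ is the integrand in the stochastic-integral representation of $Z(H_i,W)$. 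I would obtain this from (a) a uniform a.s.\ approximation $\hat t(v,H_i,G_n) = |\Aut(H_i)|^{-1}\sum_a t_a(U_v,H_i,W) + o(1)$, via Hoeffding/Efron--Stein concentration for the bounded $U$-statistic $\hat t(v,H_i,G_n)$ conditional on $U_v$ (each coordinate $U_j$, $j\ne v$, or edge variable $Y_{ij}$ perturbs the statistic by $O(1/n)$) combined with a Borel--Cantelli argument over $v$; and (b) the strong law of large numbers applied to $n^{-1}\sum_v f_i(U_v)f_j(U_v)$. A Lindeberg CLT then delivers joint Gaussian convergence of the irregular block with the correct covariance.

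For the regular block $q+1\le i\le r$, $\hat Z(H_i,G_n) = n^{-1}\{Z^\top K_i Z - \Tr(K_i)\}$ with $K_i := \hat W^{G_n}_{H_i} - \bar W^{G_n}_{H_i}\,J$, where $J$ is the $n\times n$ all-ones matrix. I would diagonalize $K_i = \sum_k \mu_k^{(i)}\phi_k^{(i)}(\phi_k^{(i)})^\top$ and split the spectrum into a macroscopic part (finitely many eigenvalues of order $n$) and a bulk part (individually $o(n)$). The subtraction of $\bar W^{G_n}_{H_i}J$ is engineered to cancel the top eigenvector $\approx \mathbf 1/\sqrt n$ associated to the degree-regular eigenvalue $d_{W_{H_i}}$ of $T_{W_{H_i}}$, so the remaining macroscopic eigenvalues $\mu_k^{(i)}/n$ converge a.s.\ to $\Spec^-(W_{H_i})$ by Hilbert--Schmidt convergence of $\hat W^{G_n}_{H_i}/n$ to $T_{W_{H_i}}$. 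The macroscopic block then contributes $\sum_{\lambda\in\Spec^-(W_{H_i})}\lambda(\tilde Z_\lambda^2-1)$, which is exactly the bivariate stochastic integral of $W_{H_i} - d_{W_{H_i}}$ against $\mathrm dB_x\,\mathrm dB_y$ in Theorem \ref{thm:asymp-joint-dist}. The bulk block contributes $\sum_{\text{bulk}}(\mu_k^{(i)}/n)(\tilde Z_k^2-1)$, with vanishing individual weights but second moment
\begin{align*}
\frac{2}{n^2}\sum_{\text{bulk}}\bigl(\mu_k^{(i)}\bigr)^2 \;\xrightarrow{\text{a.s.}}\; \sigma_{H_i,W}^2,
\end{align*}
obtained from $(2/n^2)\|K_i\|_F^2 \to 2\|W_{H_i}-d_{W_{H_i}}\|_{L^2}^2 + \sigma_{H_i,W}^2$ by expanding $\|K_i\|_F^2$ into subgraph-density counts and applying the SLLN, then subtracting the macroscopic contribution $2\sum_{\lambda\in\Spec^-(W_{H_i})}\lambda^2 = 2\|W_{H_i}-d_{W_{H_i}}\|_{L^2}^2$. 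A Lindeberg-type CLT for weighted $\chi^2$ sums with vanishing individual weights then yields an independent Gaussian component of variance $\sigma_{H_i,W}^2$, matching the $G_i$ in Theorem \ref{thm:asymp-joint-dist}.

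Joint convergence across all $r$ coordinates assembles these pieces. Cross-covariances between an irregular and a regular marginal vanish conditionally on $G_n$, because they involve $\mathbb E[Z_u Z_v Z_{v'}] = 0$, reproducing the independence of the first and second Wiener chaoses visible in the target. Cross-covariances between distinct regular marginals satisfy $n^{-2}\Tr(K_i K_j) \to \sigma_{ij}$, which on expansion into vertex/edge-join subgraph densities matches Definition \ref{def:Sigma}, recovering the covariance of $\bm G$. The main obstacle is the regular case, specifically: (i) a spectral separation argument ensuring a.s.\ that the bulk eigenvalues do not contaminate the finite macroscopic spectrum, and (ii) the joint bulk-to-Gaussian CLT across $i,j$ with the precise cross-covariance $\sigma_{ij}$; the most delicate step is sharpening the near-cancellation between the top empirical eigenvector of $\hat W^{G_n}_{H_i}$ and the all-ones vector implicit in $\bar W^{G_n}_{H_i}J$ so as not to leak any macroscopic mass into the bulk.
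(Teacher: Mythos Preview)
Your approach is genuinely different from the paper's and captures the right intuition, but it has an internal inconsistency that undermines the regular-block argument. You invoke ``Hilbert--Schmidt convergence of $\hat W^{G_n}_{H_i}/n$ to $T_{W_{H_i}}$'' to deduce convergence of the macroscopic eigenvalues, yet two lines later you (correctly) compute that $(2/n^2)\|K_i\|_F^2 \to 2\|W_{H_i}-d_{W_{H_i}}\|_{L^2}^2 + \sigma_{H_i,W}^2$, which is \emph{strictly larger} than the HS norm of the limiting kernel whenever $\sigma_{H_i,W}^2>0$. These two statements are incompatible: HS convergence would force the Frobenius norms to match. What you actually need is something weaker---convergence of the operator $K_i/n$ in a topology (operator norm, or weak operator convergence plus trace-class control on tails) that pins down each fixed eigenvalue while leaving room for the bulk to carry the extra $\sigma^2$ mass. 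This is not standard for general $H_i$ and would require substantial additional work.

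There is a second gap in the joint convergence. You argue that irregular--regular cross-covariances vanish by $\mathbb E[Z_uZ_vZ_{v'}]=0$, which gives uncorrelatedness but not the joint law. More seriously, your spectral decomposition is with respect to the eigenbasis of $K_i$, which differs across $i$; the ``bulk'' for $H_i$ and the ``bulk'' for $H_j$ are not aligned, and the irregular linear forms $n^{-1/2}\sum_v c_{v,i}Z_v$ have nonzero projections onto every bulk direction. Showing that the bulk-generated Gaussian $\bm G$ is asymptotically independent of the Brownian-motion-driven parts (both the irregular linear integrals and the regular macroscopic chi-squared sums) is the crux, and your outline does not address it.

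The paper avoids spectral arguments entirely. It rewrites $\hat{\bm Z}(\mathcal H,G_n)\mid G_n$ as a stochastic integral with respect to Brownian motion using the empirical graphon $W^{G_n}$, and then proves convergence of the conditional MGF of $\bm\alpha^\top\hat{\bm Z}$ via an explicit expansion in terms of $L$-cycle densities $\int U^{(L)}_{\bm\alpha,G_n}(x,x)\,\mathrm dx$ and $\|U_{\bm\alpha,G_n}\|_2^2$. The cycle densities converge by the counting lemma and almost-sure cut-metric convergence $\delta_\square(W^{G_n},W)\to 0$. The key insight---equivalent to your ``bulk gives $\sigma^2$''---is that $W^{G_n}$ is $\{0,1\}$-valued, so weak and strong edge joins coincide in the empirical graphon; this forces $2\|U_{\bm\alpha,G_n}\|_2^2$ to converge to $2\|U_{\bm\alpha}\|_2^2 + \bm\alpha_+^\top\Sigma\bm\alpha_+$, and the Gaussian component appears automatically in the MGF limit without any eigenvalue separation.
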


The proof of Theorem \ref{thm:ZnHGn} is given in Section \ref{sec:limitWGnpf}. It shows the asymptotic distribution of $\hat{\bm Z}_n(\mathcal H, G_n)|G_n$ is the same as that of the subgraph counts $\bm{Z}(H, G_n)$ (recall \eqref{eq:ZnHW}). Hence, we can use the distribution $\hat{\bm Z}_n(\mathcal H, G_n)|G_n$, which depends only on the observed graph $G_n$, to approximate the quantiles of the limiting distribution $\bm{Z}(\mathcal{H},W)$. This allow us to construct joint confidence sets for the homomorphism densities as described in Section \ref{sec:LHW}.

\begin{remark} 
Recently, \citet{lin2020bootstrap} proposed a bootstrap method for approximating the sampling distribution of a network moment in the sparse regime (where the networks have $o(n^2)$ edges), which bears some similarity to our approach. Specifically, the authors use a multiplier bootstrap to estimate the terms in the Hoeffding decomposition of a network moment and also approximates the local subgraph counts based on sampling for fast computation. However, as in most prior work, the bootstrap consistency essentially requires the network moment to be have a non-degenerate Gaussian limit. Moreover, the result only applies in the sparse regime and for the marginal distribution a single network moment that is either acyclic or a cycle. 
\end{remark}

\section{Joint Confidence Sets} 
\label{sec:LHW}
Suppose $\mathcal H = \{ H_1, H_2, \ldots, H_r \}$ is a collection of non-empty graphs, with $H_i= (V(H_i), E(H_i))$ and $|V(H_{i})|\geq 2$, for $1 \leq i \leq r$. In this section, we construct a joint confidence set for the collection of homomorphism densities 
$$\bm t(\cH, W) = (t(H_1, W), t(H_2, W), \ldots, t(H_r, W))^\top, $$ 
given a sample $G_n$ from $G(n, W)$. Note that, although Theorem \ref{thm:ZnHGn} provides a way to estimate the quantiles of the limiting distribution $\bm Z(\cH, W)$, this result cannot be directly applied for constructing a confidence set, because it is a-priori unknown whether or not $W$ is $H_i$-regular for some $1 \leq i \leq r$. For this, we propose a {\it testimation} strategy for constructing joint confidence sets, which first tests for $H_i$-regularity based on the observed graph $G_n$, for $1 \leq i \leq r$, and then uses Theorem \ref{thm:ZnHGn} to estimate the appropriate quantiles. The rest of this section is organized as follows: In Section \ref{sec:HWR} we discuss the test for regularity. Using this and the graphon multiplier bootstrap from the previous section we provide an algorithm for constructing confidence sets in Section \ref{sec:methodLHW}. We illustrate the performance of the algorithm in simulations in Section \ref{sec:simulations}.

\subsection{Testing for Regularity} 
\label{sec:HWR}

Given a graphon $W$ and finite simple graph $H = (V(H), E(H))$, with $|V(H)| \geq 2$, the regularity testing problem for the pair $(H, W)$ can be formulated as follows: 
\begin{align}\label{eq:secondtest}
	\mathrm{H}_{0} :W \text{ is } H \text{-regular} \quad \text{ versus } \quad \mathrm{H}_{1}: W \text{ is not } H\text{-regular} . 
\end{align}
Recall that $W$ is $H$-regular if and only if the asymptotic variance $\tau_{H, W} = 0$ (recall \eqref{eq:deftHW2}). For notational convinience define, 
\begin{align}\label{eq:defRHW}
	R(H,W) = |\text{Aut}(H)|^2 \tau_{H,W}^2 = \sum_{1\leq a,b\leq |V(H)|}t\left(H\bigoplus_{a,b}H,W\right)-|V(H)|^2t(H,W)^2 . 
\end{align}
Clearly, $W$ is $H$-regular if and only if $R(H,W) = 0$. Note that, since the vertex joins to 2 simple graphs is another simple graph, $R(H,W)$ is a function of homomorphism densities of simple graphs. 
Hence,  $R(H,W)$  can be consistently estimated from $G_n$ based on the following simple estimate: 
\begin{align}\label{eq:RGn}
	R(H,G_{n})=\sum_{1\leq a,b\leq |V(H)|}\hat{t}\left(H\bigoplus_{a,b}H,G_{n}\right)-|V(H)|^2\hat{t}(H,G_{n})^2.
\end{align}
The following lemma shows that $R(H,G_{n})$ converges to zero at rate faster than $\sqrt n$ when $W$ is $H$-regular. 
\begin{prop}\label{prop:H01} Suppose $R(H,G_{n})$ be as defined in \eqref{eq:RGn}. Then the following hold: 
\begin{itemize}
 
\item[$(1)$] When $W$ is $H$-regular, $\sqrt{n}R(H,G_{n})\overset{P}{\rightarrow} 0$. 

\item[$(2)$] When $W$ is not $H$-regular, $\sqrt{n}R(H,G_{n})\overset{P}{\rightarrow} \infty$. 

\end{itemize}
\end{prop}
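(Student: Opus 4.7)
$R(H, W)$ is a fixed polynomial in the finitely many homomorphism densities $\{t(F_{a,b}, W)\}$ and $t(H, W)$, where $F_{a,b} := H \bigoplus_{a,b} H$. Since each $\hat{t}(F, G_n)$ is unbiased with variance $O(1/n)$ (an easy second-moment consequence of Corollary \ref{cor:marginaldist}), the continuous mapping theorem gives $R(H, G_n) \overset{P}{\rightarrow} R(H, W)$. When $W$ is $H$-irregular, $R(H, W) > 0$, hence $\sqrt{n}\, R(H, G_n) \overset{P}{\rightarrow} \infty$.

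\textbf{Part (1), the delicate direction.} Write $m := |V(H)|$. Using $R(H,W) = 0$ under $H$-regularity, my plan is to split
\begin{align*}
\sqrt{n}\, R(H, G_n) = \underbrace{\sqrt{n}\sum_{a,b}\bigl(\hat{t}(F_{a,b}, G_n) - t(F_{a,b}, W)\bigr)}_{=: A_n} - \underbrace{m^2 \sqrt{n}\bigl(\hat{t}(H, G_n)^2 - t(H, W)^2\bigr)}_{=: B_n},
\end{align*}
and show $A_n, B_n \to 0$ in probability separately. Disposing of $B_n$ is immediate: $H$-regularity upgrades the rate of $\hat{t}(H, G_n)$ via Corollary \ref{cor:marginaldist} to $\hat{t}(H, G_n) - t(H, W) = O_P(1/n)$, hence $B_n = O_P(1/\sqrt{n}) = o_P(1)$. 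The remaining task is $A_n$, which is the crux of the argument.

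For $A_n$, I would apply Theorem \ref{thm:asymp-joint-dist} jointly to the collection $\{F_{a,b}\}_{1 \leq a, b \leq m}$. The point is that for an $F$-regular $F$ in the collection, $\sqrt{n}(\hat{t}(F, G_n) - t(F, W)) = O_P(1/\sqrt{n}) \to 0$, while for an $F$-irregular $F$ the scaled centered statistic converges to the linear stochastic integral $\int_0^1[\sum_c t_c(x,F,W) - |V(F)|\,t(F,W)]\mathrm{d}B_x$; both cases are unified by the fact that the latter integrand is zero a.e.\ in the regular case. Summing the joint limits against the common Brownian motion yields
\begin{align*}
A_n \dto \int_0^1 \biggl[\sum_{a,b}\sum_{c \in V(F_{a,b})} t_c(x, F_{a,b}, W) - (2m - 1)\sum_{a,b} t(F_{a,b}, W)\biggr] \mathrm{d} B_x.
\end{align*}
The main obstacle is proving that this integrand vanishes for a.e.\ $x$ under $H$-regularity; once established, $A_n \to 0$ in distribution (and therefore in probability, since the limit is deterministic), finishing the proof.

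To verify this vanishing identity I would compute $t_c(x, F_{a,b}, W)$ vertex by vertex. For the identified vertex $c^\star$, conditional independence of the two copies of $H$ given $U_{c^\star} = x$ yields $t_{c^\star}(x, F_{a,b}, W) = t_a(x, H, W)\, t_b(x, H, W)$. For a non-identified vertex $v$ on the $H_1$-side, conditioning additionally on $U_{c^\star} = y$ and integrating gives $t_v(x, F_{a,b}, W) = \int_0^1 t_{v,a}(x, y, H, W)\, t_b(y, H, W) \mathrm{d} y$, with the symmetric formula on the $H_2$-side. Summing over the $m^2$ pairs $(a, b)$ and repeatedly invoking the regularity identity $\sum_a t_a(\cdot, H, W) \equiv m\, t(H, W)$ together with the marginalization $\int t_{v,a}(x, y, H, W) \mathrm{d} y = t_v(x, H, W)$, I expect both $\sum_{a,b}\sum_c t_c(x, F_{a,b}, W)$ and $(2m-1)\sum_{a,b} t(F_{a,b}, W)$ to collapse to the common $x$-free constant $m^2(2m - 1)\, t(H, W)^2$, which is exactly the cancellation needed.
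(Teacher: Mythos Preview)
Your proof is correct, but it takes a substantially different route from the paper for Part~(1). The paper exploits a simple structural fact you did not use: $R(H,W^{G_n})\geq 0$ for \emph{every} graphon, since $R(H,\cdot)=|\Aut(H)|^2\tau_{H,\cdot}^2$ is a nonnegative variance-type quantity. Given this, the paper bounds the expectation directly: using $|t(F,W^{G_n})-\hat t(F,G_n)|=O(1/n)$ to pass from $R(H,G_n)$ to $R(H,W^{G_n})$, and then the sampling lemma $|\E[t(F,W^{G_n})]-t(F,W)|\leq \binom{|V(F)|}{2}/n$ to conclude $\E[R(H,W^{G_n})]\leq R(H,W)+O(1/n)=O(1/n)$ under $H$-regularity. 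Markov's inequality then yields $R(H,G_n)=O_P(1/n)$, which is in fact stronger than the $o_P(1/\sqrt n)$ you obtain.

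Your approach instead invokes the full joint distribution Theorem~\ref{thm:asymp-joint-dist} for the collection $\{F_{a,b}\}$ and then checks the combinatorial identity
\[
\sum_{a,b}\sum_{c\in V(F_{a,b})} t_c(x,F_{a,b},W)=(2m-1)\sum_{a,b}t(F_{a,b},W)\quad\text{a.e.}
\]
under $H$-regularity. The computation you sketch is sound (the three pieces---identified vertex, $H_1$-side, $H_2$-side---do collapse to the common constant $m^2(2m-1)t(H,W)^2$), so the integrand vanishes and $A_n\to 0$ in distribution, hence in probability. This is a valid argument, and it has the virtue of explaining \emph{structurally} why the first-order fluctuations of the individual $\hat t(F_{a,b},G_n)$ cancel. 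But it is considerably heavier machinery than needed: the paper's nonnegativity-plus-Markov trick is a two-line argument once you notice $R(H,\cdot)\geq 0$, and as a bonus it gives the sharper rate $O_P(1/n)$.
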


The proof of Proposition \ref{prop:H01} is given in Section \ref{sec:H01pf}. Now, consider the test function
\begin{align}\label{eq:testHW}
	\phi(H, G_{n})= \bm 1 \left\{\sqrt{n}R(H,G_{n})> 1 \right\} . 
\end{align} 
Proposition \ref{prop:H01} implies that under $\mathrm H_{0}$ as in \eqref{eq:secondtest}, $\mathbb{P}(\phi(H, G_{n}))\rightarrow 0$, and under $\mathrm H_{1}$, $\mathbb{P}(\phi_{n}(H,G_{n}))\rightarrow 1$. Hence, the test \eqref{eq:testHW} is consistent for the regularity testing problem \eqref{eq:secondtest}. 

\subsection{Constructing Confidence Sets} 
\label{sec:methodLHW}
Using the test for regularity, we can now describe our algorithm for constructing a joint confidence set for $\bm t(\cH, W)$ as follows: 
\begin{itemize} 
\item For each $1 \leq i \leq r$, consider the hypothesis testing problem: 
\begin{align*}
	\mathrm{H}_{0, i} :W \text{ is } H_i \text{-regular} \quad \text{ versus } \quad \mathrm{H}_{1, i}: W \text{ is not } H_i \text{-regular} . 
\end{align*}
Let $S(\cH, G_n) := \{ 1 \leq i \leq r: \sqrt n R(H_i, G_n)  > 1 \} $, 
be the set of indices where the hypothesis of $H_i$-regularity is rejected.
\item Define 
\begin{align}\label{eq:def-QWGn}
    \bm{Q}(\mathcal{H}, G_{n}) = (Q(H_{1}, G_{n}),\cdots, Q(H_{r}, G_{n})) , 
\end{align}
where (recall \eqref{eq:ZHestimate}) 
\begin{align*}
    Q (H_i, G_{n}) = 
    \begin{cases}
        \frac{1}{\sqrt n}\sum_{v=1}^{n} ( \hat t(v, H_i, G_n) -  \bar t(H_i, G_n) ) Z_v  & \text{ if }  i \in S(\cH, G_n), \\ \\ 
   \frac{1}{n} \sum_{1 \leq u, v \leq n} ( \hat W^{G_n}_{H_i}(u, v) - \bar W^{G_n}_{H_i} ) \left(Z_{u}Z_{v} - \delta_{u, v}\right) & \text{ if } i \notin S(\cH, G_n) , 
    \end{cases} 
\end{align*} 
    with $Z_1, Z_2, \ldots, Z_n$ are i.i.d. $N(0, 1)$ independent of the graph $G_n$.  Denote by $\hat{q}_{1-\alpha, \mathcal H, G_n}$ the $(1-\alpha)$-th quantile of distribution of $\| \bm{Q}(\mathcal{H}, G_{n}) \|_2|G_n$. (Note that the distribution of $\bm{Q}(\mathcal{H}, G_{n})$ given $G_n$ does not depend on the graphon $W$, it only depends on the randomness of the Gaussian multipliers $\{Z_u\}_{1 \leq u \leq n}$, Hence, in practice, $\hat{q}_{1-\alpha, \mathcal H, G_n}$ will be computed from the empirical quantiles of $\| \bm{Q}(\mathcal{H}, G_{n}) \|_2|G_n$ obtained by repeatedly sampling the Gaussian multipliers.) 
\item Define 
\begin{align}\label{eq:jointZHGn}
\tilde{\bm Z}(\mathcal H, G_n) = ( \tilde{Z}(H_1, G_n), \tilde{Z}(H_2, G_n), \ldots, \tilde{Z}(H_r, G_n))^\top , 
\end{align}
with 
\begin{align}\label{eq:ZHconfidenceset}
\tilde Z(H_i, G_n) = 
\left\{
\begin{array}{cc}
  \dfrac{X(H_i, G_n) - \dfrac{(n)_{|V(H_i)|}t(H_i,W)}{|\mathrm{Aut}(H_i)|}}{n^{|V(H_i)|-\frac{1}{2}}} & \text{ if } i \in  S(\cH, G_n), \\ \\ 
 \dfrac{X(H_i, G_n) - \dfrac{(n)_{|V(H_i)|}t(H_i,W)}{|\mathrm{Aut}(H_i)|}}{n^{|V(H_i)|-1}} & \text{ if } i \in S(\cH, G_n).
\end{array}
\right. 
\end{align}
Report the confidence set 
\begin{align}\label{eq:Hconfidenceset}
\mathcal C(\cH, G_n) = \{ \bm t(\cH, W)  : \| \tilde{\bm Z}(\mathcal H, G_n) \|_2 \leq \hat{q}_{1-\alpha, \mathcal H, G_n} \} , 
\end{align}
where $\bm t(\cH, W) = (t(H_1, W), t(H_2, W), \ldots, t(H_r, W))^\top$. 
\end{itemize}

The following theorem shows that the set $C(\cH, G_n)$ is a confidence set for the vector of homomorphism densities $\bm t(\cH, W)$ with asymptotically $\alpha$ coverage probability.

\begin{theorem}\label{thm:LHW} Let $C(\cH, G_n)$ be as defined in \eqref{eq:Hconfidenceset}. Then $\lim_{n \rightarrow \infty} \mathbb P( C(\cH, G_n) ) = 1-\alpha$.
\end{theorem}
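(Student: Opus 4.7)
The plan is to establish coverage in four steps: (i) restrict to the event where the regularity test correctly identifies the set of irregular indices; (ii) use Theorem \ref{thm:asymp-joint-dist} to read off the weak limit of $\|\tilde{\bm Z}(\cH,G_n)\|_2$; (iii) use Theorem \ref{thm:ZnHGn} to show that the bootstrap quantile $\hat q_{1-\alpha,\cH,G_n}$ converges in probability to the true quantile of $\|\bm Z(\cH,W)\|_2$; and (iv) combine these via Slutsky's theorem.

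Let $I_* = \{1 \leq i \leq r : W \text{ is } H_i\text{-irregular}\}$ and $E_n = \{S(\cH,G_n) = I_*\}$. Proposition \ref{prop:H01}, together with a union bound over $1 \leq i \leq r$, gives $\mathbb{P}(E_n) \to 1$. On $E_n$, the normalization used in $\tilde{\bm Z}(\cH,G_n)$ from \eqref{eq:ZHconfidenceset} coincides with the correct one in \eqref{eq:ZnHW}, so that $\tilde{\bm Z}(\cH,G_n) = \bm Z(\cH,G_n)$; analogously, the bootstrap vector $\bm Q(\cH,G_n)$ in \eqref{eq:def-QWGn} reduces to $\hat{\bm Z}(\cH,G_n)$ from \eqref{eq:ZHGnestimate}. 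It therefore suffices to prove
\[
\mathbb{P}\bigl(\|\bm Z(\cH,G_n)\|_2 \leq \hat q_{1-\alpha,\cH,G_n}\bigr) \longrightarrow 1-\alpha,
\]
where $\hat q_{1-\alpha,\cH,G_n}$ may be interpreted as the conditional $(1-\alpha)$-quantile of $\|\hat{\bm Z}(\cH,G_n)\|_2$ given $G_n$.

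By Theorem \ref{thm:asymp-joint-dist} and the continuous mapping theorem, $\|\bm Z(\cH,G_n)\|_2 \dto \|\bm Z(\cH,W)\|_2$. By Theorem \ref{thm:ZnHGn}, the conditional law of $\|\hat{\bm Z}(\cH,G_n)\|_2$ given $G_n$ converges weakly (almost surely in $G_n$) to the law of $\|\bm Z(\cH,W)\|_2$. Provided the cumulative distribution function of $\|\bm Z(\cH,W)\|_2$ is continuous at its $(1-\alpha)$-quantile $q_{1-\alpha}$, Polya's theorem and the almost-sure weak convergence of the conditional bootstrap law yield $\hat q_{1-\alpha,\cH,G_n} \prob q_{1-\alpha}$. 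A final application of Slutsky's theorem then delivers the desired convergence.

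The main obstacle is verifying the continuity of the cumulative distribution function of $\|\bm Z(\cH,W)\|_2$ at $q_{1-\alpha}$. When $I_* \neq \emptyset$, the linear stochastic integral component in Theorem \ref{thm:asymp-joint-dist} is a centered Gaussian with strictly positive variance (otherwise the corresponding $H_i$ would be regular by Corollary \ref{cor:marginaldist}), which renders the joint law absolutely continuous on $\mathbb{R}^r$. In the fully regular regime, continuity follows either from the multivariate Gaussian component $\bm G$ having a non-trivial covariance direction or from at least one of the bivariate Wiener--It\^{o} integrals being non-zero; the pathological case where the entire limit collapses to a Dirac mass (e.g., degenerate Erd\H{o}s-R\'enyi-like configurations, cf.\ Example \ref{example:randomgraph}) can be handled as a boundary case by noting that $\hat q_{1-\alpha,\cH,G_n} \to 0$ and the confidence set still has trivial asymptotic coverage $1$, or ruled out by a mild non-degeneracy assumption on $(W, \cH)$. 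A secondary routine point is translating the almost-sure conditional weak convergence of Theorem \ref{thm:ZnHGn} into convergence in probability of the bootstrap quantile, which proceeds by the standard inversion argument for continuous limiting distribution functions.
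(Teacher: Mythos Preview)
Your proposal is correct and follows essentially the same route as the paper's proof: both reduce to the event where the regularity test correctly classifies each $H_i$ (via Proposition \ref{prop:H01} and a union bound), then combine Theorem \ref{thm:asymp-joint-dist} and Theorem \ref{thm:ZnHGn} with the continuous mapping theorem and Polya's theorem to obtain convergence of the bootstrap quantile and hence the coverage probability. You are in fact slightly more careful than the paper in flagging the continuity of the limiting CDF of $\|\bm Z(\cH,W)\|_2$ at $q_{1-\alpha}$ as a needed hypothesis; the paper leaves this implicit when invoking Polya's theorem.
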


The proof of Theorem \ref{thm:LHW} is given in Section \ref{sec:confidencesetpf}. The proof involves showing that $\bm{Q}(\mathcal{H}, G_{n})|G_n$ and $\tilde{\bm Z}(\mathcal H, G_n)$ (recall \eqref{eq:def-QWGn} and \eqref{eq:jointZHGn}, respectively), both converge to the distribution of $\bm Z(\cH, W)$ asymptotically.

\begin{remark}\label{remark:intervalH} (Marginal Confidence Intervals) 
The algorithm for constructing joint confidence sets described above takes a simpler form when $\cH= \{H\}$ is a singleton. In other words, suppose, we want to construct a (marginal) confidence interval for $t(H,W)$. Then, recalling Corollary \ref{cor:marginaldist},  we proceed as follows: 

\begin{itemize}
    
    \item If $\sqrt{n}R(H,G_{n}) > 1$ (that is, $\mathrm{H}_0$ in \eqref{eq:secondtest} is rejected), then define 
    \begin{align*}
        L(H,G_n) = \left[\hat t(H, G_n) - z_{\alpha/2}\frac{|\Aut(H)|\hat{\tau}_{H,G_n}}{\sqrt{n}}, \hat t(H, G_n) + z_{\alpha/2}\frac{|\Aut(H)|\hat{\tau}_{H,G_n}}{\sqrt{n}}\right] ,  
    \end{align*}
    where $\hat t(H, G_n) = \frac{|\Aut(H)|}{(n)_{|V(H)|}}X(H, G_n)$ (as defined in \eqref{eq:tHGncount}), $$\hat{\tau}_{H,G_n}^2 = \frac{1}{n}\sum_{v=1}^n\left( \hat t(v, H, G_n) -  \bar t(H, G_n) \right)^2,$$
    and $z_{\alpha}$ is the $(1-\alpha)$-th quantile of standard Gaussian distribution.
   
    \item If $\sqrt{n}R(H,G_{n}) \leq 1$ (that is, $\mathrm{H}_0$ in \eqref{eq:secondtest} is accepted), then define 
    \begin{align*}
        L(H,G_n) = \left[\hat t(H, G_n) - \hat q_{\alpha/2, H, G_n}\frac{|\Aut(H)|}{n}, \hat t(H, G_n) - \hat q_{1-\alpha/2, H, G_n}\frac{|\Aut(H)|}{n}\right].
    \end{align*}
    Here, $\hat q_{1-\alpha,H,G_n}$ is the $\alpha$-th quantile of the random variable $\frac{1}{n}\sum_{i=1}^{n} \lambda_i(H,G_n)(Z_i^2-1)|G_n$, where $\{\lambda_{i}(H,G_n)\}_{1\leq i\leq n}$ are the eigenvalues of the matrix $((\hat W_{H}^{G_n}(u, v) - \bar W_{H}^{G_n}))_{1 \leq u, v \leq n}$ (recall \eqref{eq:WGnuv} and \eqref{eq:averageW}) and $\{Z_i\}_{1\leq i\leq n}$ are i.i.d. $N(0, 1)$. 
\end{itemize}
From Corollary \ref{cor:marginaldist} and the proof of Theorem \ref{thm:LHW}, it easily follows that $\lim_{n\ra\infty}\P\left(L(H,G_n)\right) = 1-\alpha$, that is, $L(H,G_n)$ is an asymptotically valid confidence interval for $t(H, W)$. 
\end{remark}

\subsection{Simulations} 
\label{sec:simulations} 

In this section we evaluate the performance of the algorithm for constructing joint confidence sets in simulations. 

\subsubsection{Confidence Interval for the Edge Density} 

For the confidence interval of the edge density $t(K_2,W)$ we consider  the following 2 choices of $W$:

\begin{itemize}

\item $W = W_{-}(x,y) := xy$, for $x, y \in [0, 1]$. This graphon is $K_2$-irregular (the degree function $d_{W_-}(x) = \frac{x}{2}$ ) and $t(K_2, W_{-}) = \frac{1}{4}$. 

\item Next, we  consider the $K_2$-regular graphon 
$$W= W_{+}(x,y) :=  \begin{cases}
            \frac{1}{2} & \text{ if }(x,y)\in \left[0, \frac{1}{2}\right] \times  \left[\frac{1}{2}, 1\right] \bigcup  \left[\frac{1}{2}, 1\right] \times  \left[0, \frac{1}{2} \right] , \\
           0 & \text{ otherwise}.   
            \end{cases} 
            $$
            Note that this graphon corresponds to the random bipartite graph with equal block sizes and edge probability $\frac{1}{2}$.  Note that $t(K_2, W_{+}) = \frac{1}{4}$. 
\end{itemize}

\begin{figure}[!ht] 
    \centering
    \begin{subfigure}[c]{0.45\textwidth}
       \includegraphics[width=1\linewidth]{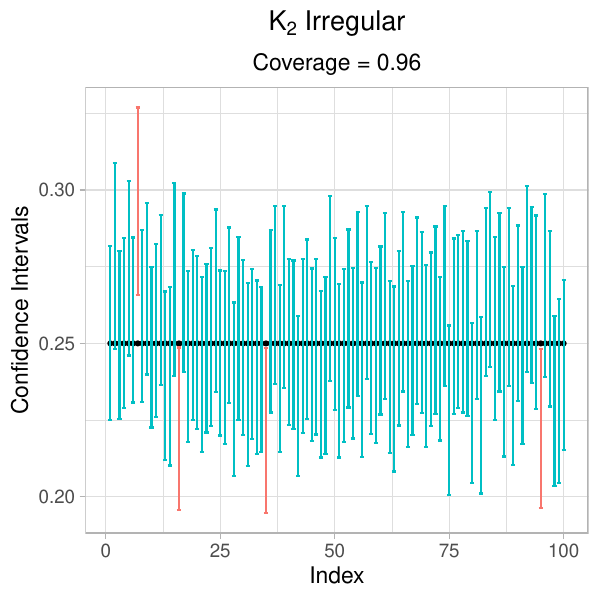} 
    \end{subfigure}
    \begin{subfigure}[c]{0.45\textwidth}
       \includegraphics[width=1\linewidth]{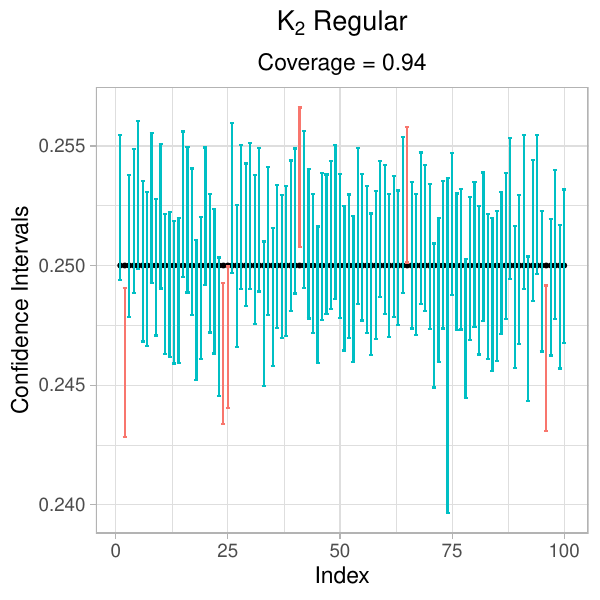} 
    \end{subfigure} 
    \caption{\small{ 100 instances of the $95\%$ confidence interval for edge density $t(K_2, W)$ with (a) $W=W_{-}$ and (b) $W= W_{+}$. } }
    \label{fig:K2CI}
\end{figure}
Using the method described in Remark \ref{remark:intervalH} we construct 100 instances of the 95\% confidence interval for $t(K_2, W)$, when $W = W_{-}$ (Figure \ref{fig:K2CI}(a)) and $W = W_+$ (Figure \ref{fig:K2CI}(b)).  Each interval is computed based on a graph of size $n=400$ sampled from the model $G(n, W)$, for $W  \in \{ W_{-}, W_{+} \}$, and the quantiles are estimated using $1000$ resamples from the conditional distribution. The black horizontal line represents the population edge density $t(K_2, W) = \frac{1}{4}$ (in both cases) and the intervals not containing $\frac{1}{4}$ are shown in red. Observe that in both cases the fraction of intervals containing $\frac{1}{4}$ (the empirical coverage probability) is very close to 0.95, as predicted by the asymptotic theory.

\subsubsection{Joint Confidence Sets for Edge and Triangle Densities}

We now use our algorithm to construct the joint confidence set for the edge and the triangle densities $(t(K_2, W), t(K_3, W))$. Here, 4 possible cases can arise depending on whether or not $W$ is $K_2$ or $K_3$-regular (recall Example \ref{example:edgetriangle}). For each of the 4 graphons considered in Example \ref{example:edgetriangle}, we show below the heatmap of 100 instances of the 95\% confidence ellipsoid  (recall \eqref{eq:Hconfidenceset}). In all the simulations, the confidence sets are computed based on graphs of size $n=400$ and the quantiles are estimated using $1000$ resamples from the conditional distribution. The empirical coverage is given by the fraction of confidence ellipsoids that contain the true homomorphism densities (which is marked by the black point).

\begin{itemize}

\item Figure \ref{fig:K2K3irregK2regK3irreg}(a) shows the joint confidence sets for $(t(K_2, W), t(K_3, W))$ when $W= \tilde{W}_1(x, y) = \frac{1}{2}(x+y)$ (recall \eqref{eq:K2irregular}). This graphon is both $K_2$ and $K_3$-irregular. Also, for this graphon $(t(K_2, \tilde{W}_1), t(K_3, \tilde{W}_1)) =  (\frac{1}{2}, \frac{5}{32})$. In this case, the empirical coverage is 94\%. 

    \begin{figure}[h]
        \centering 
        \begin{subfigure}[c]{0.45\textwidth}
           \includegraphics[width=1\linewidth]{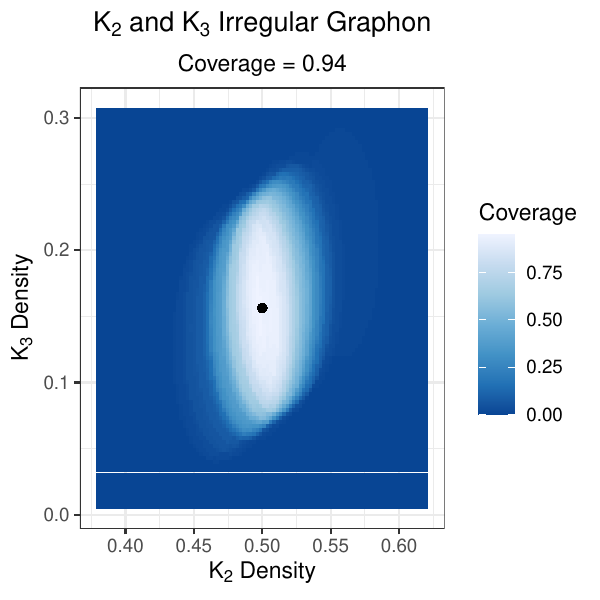} 
           \caption*{\small{(a)}}
        \end{subfigure} 
        \begin{subfigure}[c]{0.45\textwidth}
           \includegraphics[width=1\linewidth]{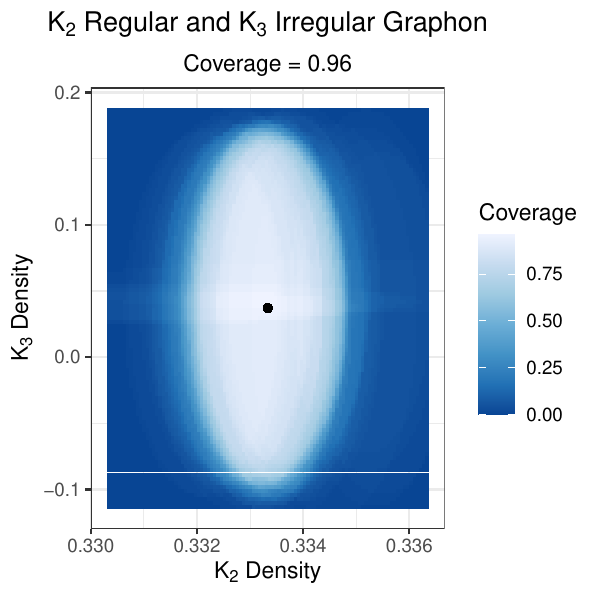}
              \caption*{\small{(b)}} 
        \end{subfigure} 
        \caption{\small{100 instances of the $95\%$ confidence sets for $(t(K_2, W), t(K_3, W))$ with (a) $W= \tilde{W}_1$ and (b) $W= \tilde{W}_2$. }}
        \label{fig:K2K3irregK2regK3irreg}
        \end{figure}

\item Figure \ref{fig:K2K3irregK2regK3irreg}(b) shows the joint confidence sets for $(t(K_2, W), t(K_3, W))$ when $W= \tilde{W}_2(x, y)$ is the graphon in \eqref{eq:K2regular}. This graphon is $K_2$-regular and $K_3$-irregular.  
Furthermore, $(t(K_2,W_3), t(K_3,W_3)) = (\frac{1}{3}, \frac{1}{27})$. In this case, the empirical coverage is 96\%. 

\item Figure \ref{fig:K2K3regK2irregK3reg}(a) shows the joint confidence sets for $(t(K_2, W), t(K_3, W))$ when $W= \tilde{W}_3(x, y)$ is the graphon shown in Figure \ref{fig:K2irregularK3regular}. This graphon is $K_2$-irregular and $K_3$-regular.
Furthermore, a direct computation shows that $(t(K_2, \tilde{W}_3), t(K_3, \tilde{W}_3)) = (\frac{13}{36}, \frac{1}{18})$. In this case, the empirical coverage is 95\%.

     \begin{figure}[h]
    \centering 
    \begin{subfigure}[c]{0.45\textwidth}
       \includegraphics[width=1\linewidth]{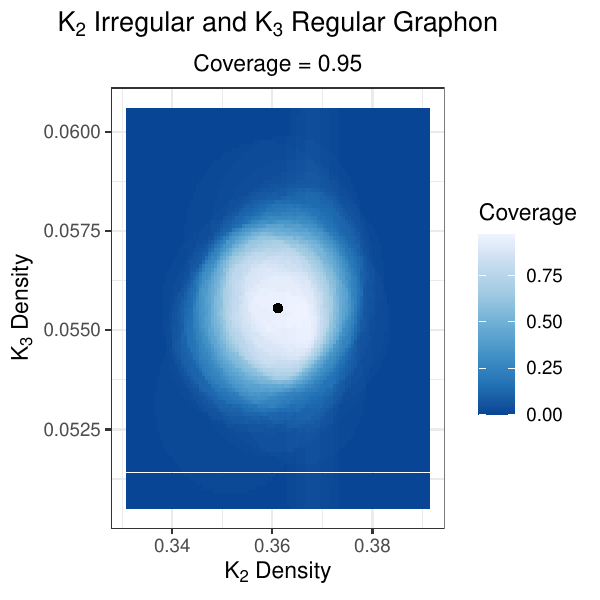} 
       \caption*{\small{(a)}}
    \end{subfigure} 
    \begin{subfigure}[c]{0.45\textwidth}
       \includegraphics[width=1\linewidth]{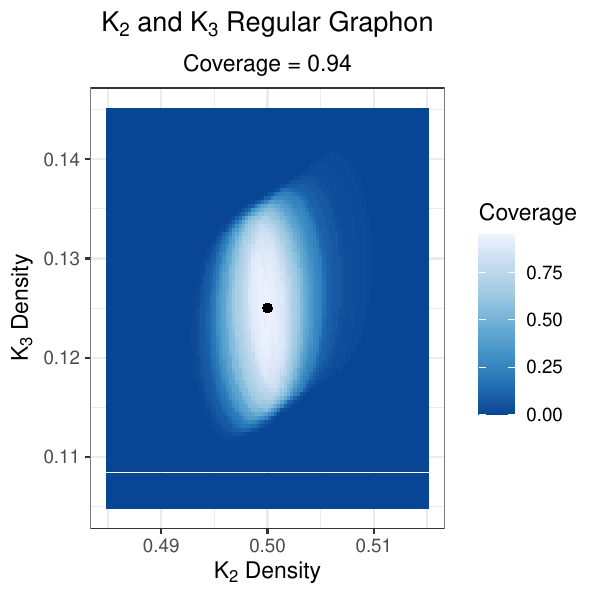}
          \caption*{\small{(b)}} 
    \end{subfigure} 
    \caption{\small{100 instances of the $95\%$ confidence sets for $(t(K_2, W), t(K_3, W))$ with (a) $W= \tilde{W} _3$ the graphon in Figure \ref{fig:K2irregularK3regular} and (b) $W = W_{\frac{1}{2}} \equiv \frac{1}{2}$ the constant graphon $\frac{1}{2}$. }}
    \label{fig:K2K3regK2irregK3reg}
    \end{figure} 
    
\item Figure \ref{fig:K2K3regK2irregK3reg}(b) shows the joint confidence sets for $(t(K_2, W), t(K_3, W))$ when $W= W_{\frac{1}{2}} \equiv \frac{1}{2}$ is the constant function $\frac{1}{2}$ (which corresponds to the Erd\H{o}s-R\'enyi graph $G(n, \frac{1}{2})$. This graphon is both $K_2$ and $K_3$-regular. Also, $(t(K_2,W), t(K_3,W) )  = ( \frac{1}{2}, \frac{1}{8} )$. In this case, the empirical coverage is 94\%. 

\end{itemize}

The results above show that the proposed method achieves the desired coverage in different simulation settings. It is worth recalling our method does have any prior knowledge about whether or not $W$ is $K_2$ or $K_3$-regular. We first test for the presence of regularity and construct the confidence sets depending on the outcome of the tests as described in Section \ref{sec:methodLHW}. 

\section{Testing for Global Structure} 
\label{sec:structure}

Testing global network properties based on counts of subgraphs is a central theme in many statistical network analysis problems.  
A basic problem in this direction is to test whether the network is generated completely at random or whether it has some additional structure. In the context of stochastic models this entails testing whether or not the network has any community structure \cite{gao2017subgraph,gao2017testing}. For graphon models, global structure testing can be formulated as the following hypothesis (recall \eqref{eq:H0Wp}): 
\begin{align}\label{eq:structureH0Wp}
    H_0: W = p \text{ almost everywhere for some } p \in (0, 1) \text{ versus } H_1: W \text{ is non-constant} . 
\end{align}
To find a consistent test for this hypothesis, we need to find a functional $f: \mathcal W \rightarrow \mathbb{R}$ which has the property that $f(W) = 0$ if and only if $W$ is a constant function almost everywhere. A classical result of Chung, Graham, and Wilson about quasi-random graphs \cite{chung1989cycle} implies that the function $f(W) = t(K_{2},W)^{4} - t(C_{4},W)$ satisfies this property (see \cite[Claim 11.53]{lovasz2012large}). Hence, one can construct a consistent test for \eqref{eq:structureH0Wp} by estimating this functional based on the observed graph $G_n$. 
To this end, define,
\begin{align}\label{eq:definitionfGn}
    \hat f(G_n) = \hat t(K_{2}, G_n)^{4} - \hat t(C_{4},G_n),
\end{align}
where, for any graph $H$, $\hat t(H, G_n) = \frac{|\mathrm{Aut}(H)|}{(n)_{|V(H)|}}X(H, G_n)$ (as defined in \eqref{eq:tHGncount}).

In the following theorem we derive the asymptotic distribution of $\hat f(G_n)$ under the null hypothesis.  

\begin{prop}\label{ppn:C4}
Under $H_0$ as in \eqref{eq:H0Wp}, 
\begin{align}\label{eq:cltfGn}
n^\frac{3}{2} \hat f(G_n) \overset{D} \rightarrow N\left(0, \vartheta^2 \right) . 
\end{align} 
where $\vartheta^2 := 32t(K_2, W)^{6}(1-t(K_2, W))^{2}$. 
\end{prop}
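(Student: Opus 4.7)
The plan is to derive the limit via a Taylor expansion of $\hat f(G_n)$ around the null graphon $W_p \equiv p$, a Hoeffding-type expansion of $\hat t(C_4, G_n)$ exposing an exact first-order cancellation against $\hat t(K_2, G_n)^4$, and finally a central limit theorem for the resulting second-order degenerate piece. Throughout I will write $\tilde Y_{ij}:=Y_{ij} - p$, $T_{K_2} := \hat t(K_2, G_n) - p$, and $T_{C_4} := \hat t(C_4, G_n) - p^4$; the classical CLT for subgraph counts in $G(n,p)$ gives $T_{K_2} = O_P(1/n)$ and $T_{C_4} = O_P(1/n)$. Alternatively, Corollary \ref{cor:marginaldist} applied to $W_p$ (together with Example \ref{example:randomgraph}) shows that $(Z(K_2,G_n),Z(C_4,G_n))$ converges to a bivariate Gaussian with a rank-one covariance, and the null direction in that rank-one covariance is precisely $8p^3 Z(K_2,W) - 8 Z(C_4,W)$; it is this degeneracy that forces the sharper scaling $n^{3/2}$ rather than $n$.

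Expanding $(p + T_{K_2})^4$ yields
\begin{align*}
\hat f(G_n) = 4p^3 T_{K_2} - T_{C_4} + 6 p^2 T_{K_2}^2 + O_P(n^{-3}),
\end{align*}
and since $n^{3/2} T_{K_2}^2 = O_P(n^{-1/2})$ the claim reduces to $n^{3/2}(4p^3 T_{K_2} - T_{C_4}) \dto N(0, \vartheta^2)$. Expanding each factor $Y_{ij} = p + \tilde Y_{ij}$ in the ordered sum defining $\hat t(C_4, G_n)$ yields the Hoeffding-type decomposition $T_{C_4} = T_{C_4}^{(1)} + T_{C_4}^{(2)} + T_{C_4}^{(3)} + T_{C_4}^{(4)}$, where $T_{C_4}^{(k)}$ is a homogeneous polynomial of degree $k$ in the $\tilde Y_{ij}$. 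A direct count of labeled $4$-cycles through a fixed edge (there are exactly $8(n-2)(n-3)$ of them) gives $T_{C_4}^{(1)} = \tfrac{8p^3}{n(n-1)} \sum_{i<j} \tilde Y_{ij} = 4p^3 T_{K_2}$; this identity is the probabilistic shadow of the Chung--Graham--Wilson equality $t(K_2, W_p)^4 = t(C_4, W_p)$. Hence $4p^3 T_{K_2} - T_{C_4} = -(T_{C_4}^{(2)} + T_{C_4}^{(3)} + T_{C_4}^{(4)})$, and routine variance bounds (using independence of the $\tilde Y_{ij}$ together with counting the only configurations with nonzero coefficient --- paths $P_3$ for $k=3$ and copies of $C_4$ for $k=4$) give $\Var(T_{C_4}^{(k)}) = O(n^{-4})$ for $k\in\{3,4\}$, so $n^{3/2} T_{C_4}^{(k)} = o_P(1)$.

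It remains to analyse $n^{3/2} T_{C_4}^{(2)}$. Splitting the admissible pairs $\{e,f\}$ into adjacent (sharing a vertex) and disjoint configurations, the disjoint piece has coefficient $\tfrac{16p^2}{(n)_4}$ and total variance $O(n^{-4})$, hence is negligible; the adjacent piece has coefficient $\tfrac{8p^2}{n(n-1)(n-2)}$ and rewrites as
\begin{align*}
\frac{4 p^2}{n(n-1)(n-2)}\sum_{v=1}^n \left(D_v^2 - S_v\right), \qquad D_v := \sum_{u \ne v}\tilde Y_{uv}, \quad S_v := \sum_{u \ne v}\tilde Y_{uv}^2.
\end{align*}
A second-moment computation shows that $\{D_v^2 - S_v\}_{v=1}^n$ are pairwise uncorrelated and each has variance $2(n-1)(n-2) p^2 (1-p)^2$, which produces $\Var(n^{3/2} T_{C_4}^{(2)}) \to 32 p^6(1-p)^2 = \vartheta^2$. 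Asymptotic normality then follows from a CLT for degenerate quadratic forms in i.i.d.\ variables (for instance de Jong's theorem), whose hypothesis reduces to showing that the $\binom{n}{2}\times\binom{n}{2}$ edge-adjacency matrix of $K_n$ (the adjacency matrix of the triangular graph $T(n)$) has spread-out spectrum: each edge shares a vertex with exactly $2(n-2)$ others, so the ratio of maximum row-norm-squared to Frobenius norm squared is $O(1/n) \to 0$.

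The main obstacle is this last step: a degenerate second-order $U$-statistic generically has a weighted chi-square limit, not a Gaussian one; the Gaussian limit here is special and relies on the spread-out spectrum of the common-neighbour matrix. Verifying this Lindeberg-type condition --- equivalently, showing that no single edge ``dominates'' the adjacent-pair interaction --- is the only non-routine ingredient; everything else is combinatorial bookkeeping driven by the Chung--Graham--Wilson first-order cancellation.
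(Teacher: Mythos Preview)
Your argument is correct and aligns closely with the paper's proof: both Taylor-expand $\hat f(G_n)$, exhibit the exact first-order cancellation $T_{C_4}^{(1)}=4p^3 T_{K_2}$ (in the paper this is the identity $\tfrac{1}{2}p^3 f_{K_2}-\tfrac{1}{4}g_{K_2}=0$ in the Janson--Nowicki orthogonal decomposition), discard the higher-order pieces by variance bounds, and isolate the adjacent-edge (2-star) term as the sole contribution at scale $n^{3/2}$. The one substantive difference is the final CLT step. The paper packages everything in the Janson--Nowicki projection framework and simply cites \cite[Lemma~7]{janson1991asymptotic}, which gives asymptotic normality of $\tilde S_{n,4}(g_{K_{1,2}})$ directly; you instead treat the adjacent-pair sum as a quadratic form in the $\binom{n}{2}$ i.i.d.\ centered edge indicators and invoke de~Jong's theorem, checking the spectral condition on the line graph $L(K_n)=T(n)$. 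Your route is more self-contained and makes explicit \emph{why} the limit is Gaussian rather than a weighted chi-square (no eigenvalue of the triangular-graph adjacency dominates), whereas the paper's route is shorter because that fact is already baked into Janson's lemma. Minor bookkeeping remarks: your $\tilde Y_{ij}$ should be $\mathbf 1\{Y_{ij}\le p\}-p$ rather than $Y_{ij}-p$ in the paper's notation, and the row-norm-to-Frobenius ratio you quote is actually $O(1/n^2)$, not $O(1/n)$---harmless, but worth correcting.
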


The proof is given in Section \ref{sec:structureH0pf}. As in the proof of Theorem \ref{thm:asymp-joint-dist}, it uses the method of orthogonal projections. 
One interesting feature of the statistic $\hat f(G_n)$ is that it has fluctuations of order $O(n^{-\frac{3}{2}})$ under $H_0$, even though 
we know from  Example \ref{example:randomgraph} that both $\hat t(K_{2}, G_n)$ and  $\hat t(C_{4},G_n)$ have fluctuations of $O(1/n)$. This means $\hat f(G_n)$ cancels the contributions from $\hat t(K_{2}, G_n)$ and  $\hat t(C_{4},G_n)$ in the $O(1/n)$ scale and the leading asymptotic contribution of $\hat f(G_n)$ is determined from the third-order projection.  The same scaling appears in the Erd\H{o}s-Zuckerberg (EZ) statistic considered in \cite{gao2017testing}, for testing the presence of community structure in degree-corrected block models. 

To apply Proposition \ref{ppn:C4} to test the hypothesis \eqref{eq:structureH0Wp}, we need to consistently estimate the asymptotic variance in \eqref{eq:cltfGn}. Towards this, note that, since $\hat{t}(K_2, G_n) \stackrel{P} \rightarrow t(K_2, W)$ (follows from \eqref{eq:tGntWGn} and Corollary 10.4 from \cite{lovasz2012large}), by Slutsky's lemma: 
\begin{align}\label{eq:defTnstructure}
 T_n :=    n^\frac{3}{2}\dfrac{ \hat f (G_n)}{4\sqrt{2}\ \hat{t}(K_{2},G_n)^{3}(1- \hat{t}(K_{2},G_n))}\overset{D}{\rightarrow} N(0,1) , 
\end{align}
under $H_0$. Hence, the test which rejects when $|T_{n}| > z_{\alpha/2}$ is asymptotically level $\alpha$. 
In the following proposition we show that this test consistent, that is, it can detect any non-constant graphon with probability going to 1 (see Section \ref{sec:structureH1pf} for the proof).

\begin{prop}\label{prop:consistency}
For any graphon $W$ such that $|t(K_{2},W)^{4} - t(C_{4},W)|>0$ 
we have,
\begin{align*}
    \mathbb{P}\left(|T_{n}|> z_{\alpha/2} \right)\rightarrow 1. 
\end{align*} 
\end{prop}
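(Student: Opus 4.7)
The plan is to show that under the alternative, the numerator of $T_n$ concentrates around a nonzero constant while the denominator concentrates around a strictly positive constant, so that $|T_n|$ grows like $n^{3/2}$ and hence exceeds $z_{\alpha/2}$ with probability tending to one. No CLT is needed; consistency of the homomorphism density estimates together with Slutsky's theorem and the continuous mapping theorem will suffice.

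First I would establish the basic convergences of the plug-in estimators. By the standard sampling law of large numbers for graphon densities (see, e.g., \cite[Corollary 10.4]{lovasz2012large}), together with the identification $\hat t(H,G_n)$ agrees with $t(H, W^{G_n})$ up to a factor $1 + o(1)$ for any fixed simple graph $H$, we have
\begin{align*}
\hat t(K_2, G_n) \stackrel{P}{\to} t(K_2, W) \qquad \text{and} \qquad \hat t(C_4, G_n) \stackrel{P}{\to} t(C_4, W).
\end{align*}
Applying the continuous mapping theorem to the map $(x,y) \mapsto x^4 - y$ yields
\begin{align*}
\hat f(G_n) = \hat t(K_2,G_n)^4 - \hat t(C_4,G_n) \;\stackrel{P}{\to}\; f(W) := t(K_2, W)^4 - t(C_4, W),
\end{align*}
and by assumption $|f(W)| > 0$.

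The remaining subtlety is to verify that the denominator of $T_n$, namely $4\sqrt{2}\, \hat t(K_2, G_n)^3 (1 - \hat t(K_2, G_n))$, converges in probability to a strictly positive limit. For this I need $t(K_2, W) \in (0,1)$. Observe that if $t(K_2, W) = 0$ then $W \equiv 0$ almost everywhere, and if $t(K_2, W) = 1$ then $W \equiv 1$ almost everywhere; in either case $W$ is a.e.\ constant and the celebrated Chung--Graham--Wilson characterisation (as invoked in the paragraph preceding \eqref{eq:definitionfGn}) gives $f(W) = 0$, contradicting $|f(W)| > 0$. Hence $t(K_2, W) \in (0,1)$, and the continuous mapping theorem yields
\begin{align*}
4\sqrt{2}\, \hat t(K_2, G_n)^3 (1 - \hat t(K_2, G_n)) \;\stackrel{P}{\to}\; 4\sqrt{2}\, t(K_2,W)^3(1-t(K_2,W)) \;>\; 0.
\end{align*}

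Combining these two convergences via Slutsky's theorem gives
\begin{align*}
\frac{\hat f(G_n)}{4\sqrt{2}\, \hat t(K_2, G_n)^3 (1 - \hat t(K_2, G_n))} \;\stackrel{P}{\to}\; c(W) := \frac{f(W)}{4\sqrt{2}\, t(K_2,W)^3(1-t(K_2,W))},
\end{align*}
with $c(W) \neq 0$. Therefore $|T_n| = n^{3/2} |c(W)| (1 + o_P(1)) \to \infty$ in probability, which immediately implies $\mathbb{P}(|T_n| > z_{\alpha/2}) \to 1$, as required. The only nonroutine point in the argument is ruling out degeneracy of the denominator, which as discussed follows cleanly from the Chung--Graham--Wilson quasirandomness characterisation underlying the very choice of $f$.
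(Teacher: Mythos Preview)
Your proof is correct and follows essentially the same route as the paper's: consistency of $\hat t(H,G_n)$ for fixed $H$, the continuous mapping theorem to get $\hat f(G_n)\stackrel{P}{\to} f(W)$, Slutsky to handle the ratio, and then $|T_n|\to\infty$ in probability. In fact you are slightly more careful than the paper, which does not explicitly verify that $t(K_2,W)\in(0,1)$ so that the denominator has a strictly positive limit; your observation that $t(K_2,W)\in\{0,1\}$ forces $W$ constant and hence $f(W)=0$ cleanly closes that gap.
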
 

Proposition \ref{prop:consistency} provides a test for  \eqref{eq:structureH0Wp} that has precise asymptotic level and is consistent in detecting all non-constant graphons. In comparison, the asymptotic null distribution of the test statistic in \citet{fangandrollin2015} is unknown and the resulting test is conservative (see \cite[Remark 3.3]{fangandrollin2015}). The framework of orthogonal projections and the results obtained in Section \ref{sec:distributionH} allow us derive the asymptotic distribution of $\hat f(G_n)$ both under the null (as in Proposition \ref{ppn:C4}) and the alternative (see Proposition \ref{prop:H0alternative} in Section \ref{sec:structureGn}). This will allow us to approximate the asymptotic power of the test based on $T_n$ (recall \eqref{eq:defTnstructure}), and also obtain a confidence interval for $f(W)$ using the method in Section \ref{sec:LHW}.

\small

\subsection*{Acknowledgements} 
B. B. Bhattacharya was supported by NSF CAREER grant DMS 2046393, NSF grant DMS 2113771, and a Sloan Research Fellowship. Anirban Chatterjee was supported by NSF grant 1953848.

\bibliographystyle{abbrvnat}
\bibliography{graphonbib} 

\begin{thebibliography}{86}
\providecommand{\natexlab}[1]{#1}
\providecommand{\url}[1]{\texttt{#1}}
\expandafter\ifx\csname urlstyle\endcsname\relax
  \providecommand{\doi}[1]{doi: #1}\else
  \providecommand{\doi}{doi: \begingroup \urlstyle{rm}\Url}\fi

\bibitem[Aldous(1981)]{aldous1981representations}
D.~J. Aldous.
\newblock Representations for partially exchangeable arrays of random variables.
\newblock \emph{Journal of Multivariate Analysis}, 11\penalty0 (4):\penalty0 581--598, 1981.

\bibitem[Alon(2007)]{experimental2007network}
U.~Alon.
\newblock Network motifs: theory and experimental approaches.
\newblock \emph{Nature Reviews Genetics}, 8\penalty0 (6):\penalty0 450--461, 2007.

\bibitem[Athreya et~al.(2018)Athreya, Fishkind, Tang, Priebe, Park, Vogelstein, Levin, Lyzinski, Qin, and Sussman]{dotproduct2018statistical}
A.~Athreya, D.~E. Fishkind, M.~Tang, C.~E. Priebe, Y.~Park, J.~T. Vogelstein, K.~Levin, V.~Lyzinski, Y.~Qin, and D.~L. Sussman.
\newblock Statistical inference on random dot product graphs: a survey.
\newblock \emph{Journal of Machine Learning Research}, 18\penalty0 (226):\penalty0 1--92, 2018.

\bibitem[Austern and Orbanz(2022)]{invariant2022limit}
M.~Austern and P.~Orbanz.
\newblock Limit theorems for distributions invariant under groups of transformations.
\newblock \emph{The Annals of Statistics}, 50\penalty0 (4):\penalty0 1960--1991, 2022.

\bibitem[Barbour et~al.(1989)Barbour, Karo{\'n}ski, and Ruci{\'n}ski]{barbour1989central}
A.~D. Barbour, M.~Karo{\'n}ski, and A.~Ruci{\'n}ski.
\newblock A central limit theorem for decomposable random variables with applications to random graphs.
\newblock \emph{Journal of Combinatorial Theory, Series B}, 47\penalty0 (2):\penalty0 125--145, 1989.

\bibitem[Bhattacharya et~al.(2021)Bhattacharya, Bishnu, Ghosh, and Mishra]{triangletripartite}
A.~Bhattacharya, A.~Bishnu, A.~Ghosh, and G.~Mishra.
\newblock On triangle estimation using tripartite independent set queries.
\newblock \emph{Theory of Computing Systems}, 65\penalty0 (8):\penalty0 1165--1192, 2021.

\bibitem[Bhattacharya et~al.(2022)Bhattacharya, Das, and Mukherjee]{BDM}
B.~B. Bhattacharya, S.~Das, and S.~Mukherjee.
\newblock Motif estimation via subgraph sampling: The fourth-moment phenomenon.
\newblock \emph{The Annals of Statistics}, 50\penalty0 (2):\penalty0 987--1011, 2022.

\bibitem[Bhattacharya et~al.(2023)Bhattacharya, Chatterjee, and Janson]{BCJ}
B.~B. Bhattacharya, A.~Chatterjee, and S.~Janson.
\newblock Fluctuations of subgraph counts in graphon based random graphs.
\newblock \emph{Combinatorics, Probability and Computing}, 32\penalty0 (3):\penalty0 428--464, 2023.

\bibitem[Bhattacharyya and Bickel(2015)]{subsamplingnetwork}
S.~Bhattacharyya and P.~J. Bickel.
\newblock {Subsampling bootstrap of count features of networks}.
\newblock \emph{The Annals of Statistics}, 43\penalty0 (6):\penalty0 2384 -- 2411, 2015.

\bibitem[Bickel and Chen(2009)]{bickel2009nonparametric}
P.~J. Bickel and A.~Chen.
\newblock A nonparametric view of network models and newman--girvan and other modularities.
\newblock \emph{Proceedings of the National Academy of Sciences}, 106\penalty0 (50):\penalty0 21068--21073, 2009.

\bibitem[Bickel et~al.(2011)Bickel, Chen, and Levina]{bickel2011method}
P.~J. Bickel, A.~Chen, and E.~Levina.
\newblock The method of moments and degree distributions for network models.
\newblock \emph{The Annals of Statistics}, 39\penalty0 (5):\penalty0 2280--2301, 2011.

\bibitem[Bogun{\'a} and Pastor-Satorras(2003)]{boguna2003class}
M.~Bogun{\'a} and R.~Pastor-Satorras.
\newblock Class of correlated random networks with hidden variables.
\newblock \emph{Physical Review E}, 68\penalty0 (3):\penalty0 036112, 2003.

\bibitem[Bollob{\'a}s et~al.(2007)Bollob{\'a}s, Janson, and Riordan]{bollobas2007phase}
B.~Bollob{\'a}s, S.~Janson, and O.~Riordan.
\newblock The phase transition in inhomogeneous random graphs.
\newblock \emph{Random Structures \& Algorithms}, 31\penalty0 (1):\penalty0 3--122, 2007.

\bibitem[Borgs et~al.(2008)Borgs, Chayes, Lov{\'a}sz, S{\'o}s, and Vesztergombi]{borgs2008convergent}
C.~Borgs, J.~T. Chayes, L.~Lov{\'a}sz, V.~T. S{\'o}s, and K.~Vesztergombi.
\newblock Convergent sequences of dense graphs {I}: Subgraph frequencies, metric properties and testing.
\newblock \emph{Advances in Mathematics}, 219\penalty0 (6):\penalty0 1801--1851, 2008.

\bibitem[Borgs et~al.(2010)Borgs, Chayes, and Lov{\'a}sz]{borgs2010moments}
C.~Borgs, J.~Chayes, and L.~Lov{\'a}sz.
\newblock Moments of two-variable functions and the uniqueness of graph limits.
\newblock \emph{Geometric and Functional Analysis}, 19:\penalty0 1597--1619, 2010.

\bibitem[Borgs et~al.(2012)Borgs, Chayes, Lov{\'a}sz, S{\'o}s, and Vesztergombi]{borgs2012convergent}
C.~Borgs, J.~T. Chayes, L.~Lov{\'a}sz, V.~T. S{\'o}s, and K.~Vesztergombi.
\newblock Convergent sequences of dense graphs {II}. multiway cuts and statistical physics.
\newblock \emph{Annals of Mathematics}, pages 151--219, 2012.

\bibitem[Bravo-Hermsdorff et~al.(2023)Bravo-Hermsdorff, Gunderson, Maugis, and Priebe]{bravo2023quantifying}
G.~Bravo-Hermsdorff, L.~M. Gunderson, P.-A. Maugis, and C.~E. Priebe.
\newblock Quantifying network similarity using graph cumulants.
\newblock \emph{Journal of Machine Learning Research}, 24\penalty0 (187):\penalty0 1--27, 2023.

\bibitem[Chatterjee and Huang(2024)]{chatterjee2024fluctuation}
A.~Chatterjee and J.~Huang.
\newblock Fluctuation of the largest eigenvalue of a kernel matrix with application in graphon-based random graphs.
\newblock \emph{arXiv:2401.01866}, 2024.

\bibitem[Chatterjee and Diaconis(2013)]{chatterjee2013estimating}
S.~Chatterjee and P.~Diaconis.
\newblock Estimating and understanding exponential random graph models.
\newblock \emph{The Annals of Statistics}, 41\penalty0 (5):\penalty0 2428--2461, 2013.

\bibitem[Chung et~al.(1989)Chung, Graham, and Wilson]{chung1989cycle}
F.~R.~K. Chung, R.~L. Graham, and R.~M. Wilson.
\newblock Quasi-random graphs.
\newblock \emph{Combinatorica}, 9:\penalty0 345--362, 1989.

\bibitem[Crane(2018)]{crane2018probabilistic}
H.~Crane.
\newblock \emph{Probabilistic Foundations of Statistical Network Analysis}.
\newblock CRC Press, 2018.

\bibitem[Delmas et~al.(2021)Delmas, Dhersin, and Sciauveau]{delmas2021asymptotic}
J.-F. Delmas, J.-S. Dhersin, and M.~Sciauveau.
\newblock Asymptotic for the cumulative distribution function of the degrees and homomorphism densities for random graphs sampled from a graphon.
\newblock \emph{Random Structures \& Algorithms}, 58\penalty0 (1):\penalty0 94--149, 2021.

\bibitem[Diaconis and Freedman(1981)]{diaconis1981statistics}
P.~Diaconis and D.~Freedman.
\newblock On the statistics of vision: the {J}ulesz conjecture.
\newblock \emph{Journal of Mathematical Psychology}, 24\penalty0 (2):\penalty0 112--138, 1981.

\bibitem[Diaconis and Janson(2008)]{diaconisjanson}
P.~Diaconis and S.~Janson.
\newblock Graph limits and exchangeable random graphs.
\newblock \emph{Rendiconti di Matematica e delle sue Applicazioni. Serie VII}, pages 33--61, 2008.

\bibitem[Eden et~al.(2017)Eden, Levi, Ron, and Seshadhri]{eden2017approximately}
T.~Eden, A.~Levi, D.~Ron, and C.~Seshadhri.
\newblock Approximately counting triangles in sublinear time.
\newblock \emph{SIAM Journal on Computing}, 46\penalty0 (5):\penalty0 1603--1646, 2017.

\bibitem[Fang and R{\"o}llin(2015)]{fangandrollin2015}
X.~Fang and A.~R{\"o}llin.
\newblock {Rates of convergence for multivariate normal approximation with applications to dense graphs and doubly indexed permutation statistics}.
\newblock \emph{Bernoulli}, 21\penalty0 (4):\penalty0 2157 -- 2189, 2015.

\bibitem[F{\'e}ray et~al.(2020)F{\'e}ray, M{\'e}liot, and Nikeghbali]{feray2020graphons}
V.~F{\'e}ray, P.-L. M{\'e}liot, and A.~Nikeghbali.
\newblock Graphons, permutons and the {T}homa simplex: three mod-{G}aussian moduli spaces.
\newblock \emph{Proceedings of the London Mathematical Society}, 121\penalty0 (4):\penalty0 876--926, 2020.

\bibitem[Gao and Lafferty(2017{\natexlab{a}})]{gao2017subgraph}
C.~Gao and J.~Lafferty.
\newblock Testing network structure using relations between small subgraph probabilities.
\newblock \emph{arXiv:1704.06742}, 2017{\natexlab{a}}.

\bibitem[Gao and Lafferty(2017{\natexlab{b}})]{gao2017testing}
C.~Gao and J.~Lafferty.
\newblock Testing for global network structure using small subgraph statistics.
\newblock \emph{arXiv:1710.00862}, 2017{\natexlab{b}}.

\bibitem[Gao et~al.(2015)Gao, Lu, and Zhou]{gao2015rate}
C.~Gao, Y.~Lu, and H.~H. Zhou.
\newblock Rate-optimal graphon estimation.
\newblock \emph{The Annals of Statistics}, 43\penalty0 (6):\penalty0 2624--2652, 2015.

\bibitem[Ghoshdastidar et~al.(2017)Ghoshdastidar, Gutzeit, Carpentier, and von Luxburg]{ghoshdastidar2017networkstatistics}
D.~Ghoshdastidar, M.~Gutzeit, A.~Carpentier, and U.~von Luxburg.
\newblock Two-sample tests for large random graphs using network statistics.
\newblock In \emph{Conference on Learning Theory}, pages 954--977. PMLR, 2017.

\bibitem[Gonen et~al.(2011)Gonen, Ron, and Shavitt]{gonen2011counting}
M.~Gonen, D.~Ron, and Y.~Shavitt.
\newblock Counting stars and other small subgraphs in sublinear-time.
\newblock \emph{SIAM Journal on Discrete Mathematics}, 25\penalty0 (3):\penalty0 1365--1411, 2011.

\bibitem[Green and Shalizi(2022)]{green2022bootstrapping}
A.~Green and C.~R. Shalizi.
\newblock Bootstrapping exchangeable random graphs.
\newblock \emph{Electronic Journal of Statistics}, 16\penalty0 (1):\penalty0 1058--1095, 2022.

\bibitem[Hladk{\`y} et~al.(2021)Hladk{\`y}, Pelekis, and {\v{S}}ileikis]{hladky2019limit}
J.~Hladk{\`y}, C.~Pelekis, and M.~{\v{S}}ileikis.
\newblock A limit theorem for small cliques in inhomogeneous random graphs.
\newblock \emph{Journal of Graph Theory}, 97\penalty0 (4):\penalty0 578--599, 2021.

\bibitem[Hoff et~al.(2002)Hoff, Raftery, and Handcock]{hoff2002latent}
P.~D. Hoff, A.~E. Raftery, and M.~S. Handcock.
\newblock Latent space approaches to social network analysis.
\newblock \emph{Journal of The American Statistical Association}, 97\penalty0 (460):\penalty0 1090--1098, 2002.

\bibitem[Holland and Leinhardt(1971)]{holland1971transitivity}
P.~W. Holland and S.~Leinhardt.
\newblock Transitivity in structural models of small groups.
\newblock \emph{Comparative Group Studies}, 2\penalty0 (2):\penalty0 107--124, 1971.

\bibitem[Holland et~al.(1983)Holland, Laskey, and Leinhardt]{holland1983stochastic}
P.~W. Holland, K.~B. Laskey, and S.~Leinhardt.
\newblock Stochastic blockmodels: First steps.
\newblock \emph{Social Networks}, 5\penalty0 (2):\penalty0 109--137, 1983.

\bibitem[Hoover(1982)]{hoover1982row}
D.~N. Hoover.
\newblock Row-column exchangeability and a generalized model for probability.
\newblock \emph{Exchangeability in probability and statistics (Rome, 1981)}, pages 281--291, 1982.

\bibitem[Huang et~al.(2024)Huang, Austern, and Orbanz]{huang2024gaussian}
K.~H. Huang, M.~Austern, and P.~Orbanz.
\newblock Gaussian universality for approximately polynomial functions of high-dimensional data.
\newblock \emph{arXiv:2403.10711}, 2024.

\bibitem[Hunter et~al.(2008)Hunter, Handcock, Butts, Goodreau, and Morris]{hunter2008ergm}
D.~R. Hunter, M.~S. Handcock, C.~T. Butts, S.~M. Goodreau, and M.~Morris.
\newblock ergm: A package to fit, simulate and diagnose exponential-family models for networks.
\newblock \emph{Journal of Statistical Software}, 24\penalty0 (3):\penalty0 nihpa54860, 2008.

\bibitem[It{\^o}(1951)]{stochasticintegral}
K.~It{\^o}.
\newblock Multiple wiener integral.
\newblock \emph{Journal of the Mathematical Society of Japan}, 3\penalty0 (1):\penalty0 157--169, 1951.

\bibitem[Janson(1990)]{SJ79}
S.~Janson.
\newblock A functional limit theorem for random graphs with applications to subgraph count statistics.
\newblock \emph{Random Structures \& Algorithms}, 1\penalty0 (1):\penalty0 15--37, 1990.

\bibitem[Janson(1994)]{SJ94}
S.~Janson.
\newblock Orthogonal decompositions and functional limit theorems for random graph statistics.
\newblock \emph{Memoirs Amer. Math. Soc.}, 111\penalty0 (534), 1994.

\bibitem[Janson(1997)]{gaussianhilbert}
S.~Janson.
\newblock \emph{Gaussian {H}ilbert spaces}, volume 129 of \emph{Cambridge Tracts in Mathematics}.
\newblock Cambridge University Press, Cambridge, 1997.

\bibitem[Janson and Nowicki(1991)]{janson1991asymptotic}
S.~Janson and K.~Nowicki.
\newblock The asymptotic distributions of generalized {$U$}-statistics with applications to random graphs.
\newblock \emph{Probability Theory and Related Fields}, 90\penalty0 (3):\penalty0 341--375, 1991.

\bibitem[Janson et~al.(2011)Janson, {\L}uczak, and Rucinski]{JLR}
S.~Janson, T.~{\L}uczak, and A.~Rucinski.
\newblock \emph{Random Graphs}.
\newblock John Wiley \& Sons, 2011.

\bibitem[Kaur and R{\"o}llin(2021)]{kaur2021higher}
G.~Kaur and A.~R{\"o}llin.
\newblock Higher-order fluctuations in dense random graph models.
\newblock \emph{Electronic Journal of Probability}, 26:\penalty0 1--36, 2021.

\bibitem[Klusowski and Wu(2018)]{klusowski2018counting}
J.~M. Klusowski and Y.~Wu.
\newblock Counting motifs with graph sampling.
\newblock In \emph{Conference On Learning Theory}, pages 1966--2011, 2018.

\bibitem[Lax(2002)]{laxfunctional}
P.~D. Lax.
\newblock \emph{Functional Analysis}.
\newblock Pure and Applied Mathematics (New York). Wiley-Interscience [John Wiley \& Sons], New York, 2002.

\bibitem[Le~Minh(2023)]{le2023networkstatistics}
T.~Le~Minh.
\newblock {$U$}-statistics on bipartite exchangeable networks.
\newblock \emph{ESAIM: Probability and Statistics}, 27:\penalty0 576--620, 2023.

\bibitem[Lei(2016)]{lei2016}
J.~Lei.
\newblock {A goodness-of-fit test for stochastic block models}.
\newblock \emph{The Annals of Statistics}, 44\penalty0 (1):\penalty0 401 -- 424, 2016.

\bibitem[Lei(2021)]{lei2021}
J.~Lei.
\newblock {Network representation using graph root distributions}.
\newblock \emph{The Annals of Statistics}, 49\penalty0 (2):\penalty0 745 -- 768, 2021.

\bibitem[Levin and Levina(2019)]{levin2019bootstrapping}
K.~Levin and E.~Levina.
\newblock Bootstrapping networks with latent space structure.
\newblock \emph{arXiv:1907.10821}, 2019.

\bibitem[Lin et~al.(2022)Lin, Lunde, and Sarkar]{lin2020bootstrap}
Q.~Lin, R.~Lunde, and P.~Sarkar.
\newblock Trading off accuracy for speedup: Multiplier bootstraps for subgraph counts.
\newblock \emph{arXiv:2009.06170v5}, 2022.

\bibitem[Lov{\'a}sz(2012)]{lovasz2012large}
L.~Lov{\'a}sz.
\newblock \emph{Large Networks and Graph Limits}, volume~60.
\newblock American Mathematical Soc., 2012.

\bibitem[Lov{\'a}sz and Szegedy(2006)]{lovasz2006limits}
L.~Lov{\'a}sz and B.~Szegedy.
\newblock Limits of dense graph sequences.
\newblock \emph{Journal of Combinatorial Theory, Series B}, 96\penalty0 (6):\penalty0 933--957, 2006.

\bibitem[Luce and Perry(1949)]{luce1949method}
R.~D. Luce and A.~D. Perry.
\newblock A method of matrix analysis of group structure.
\newblock \emph{Psychometrika}, 14\penalty0 (2):\penalty0 95--116, 1949.

\bibitem[Lunde and Sarkar(2023)]{lunde2023subsampling}
R.~Lunde and P.~Sarkar.
\newblock Subsampling sparse graphons under minimal assumptions.
\newblock \emph{Biometrika}, 110\penalty0 (1):\penalty0 15--32, 2023.

\bibitem[Lyu et~al.(2023)Lyu, Memoli, and Sivakoff]{lyu2023sampling}
H.~Lyu, F.~Memoli, and D.~Sivakoff.
\newblock Sampling random graph homomorphisms and applications to network data analysis.
\newblock \emph{Journal of Machine Learning Research}, 24\penalty0 (9):\penalty0 1--79, 2023.

\bibitem[Maugis(2020)]{maugis2020central}
P.~Maugis.
\newblock Central limit theorems for local network statistics.
\newblock \emph{arXiv:2006.15738}, 2020.

\bibitem[Maugis et~al.(2017)Maugis, Priebe, Olhede, and Wolfe]{maugis2017statistical}
P.~Maugis, C.~E. Priebe, S.~C. Olhede, and P.~J. Wolfe.
\newblock Statistical inference for network samples using subgraph counts.
\newblock \emph{arXiv:1701.00505}, 2017.

\bibitem[Milo et~al.(2002)Milo, Shen-Orr, Itzkovitz, Kashtan, Chklovskii, and Alon]{milo2002network}
R.~Milo, S.~Shen-Orr, S.~Itzkovitz, N.~Kashtan, D.~Chklovskii, and U.~Alon.
\newblock Network motifs: simple building blocks of complex networks.
\newblock \emph{Science}, 298\penalty0 (5594):\penalty0 824--827, 2002.

\bibitem[Mossel et~al.(2016)Mossel, Neeman, and Sly]{mossel2016consistency}
E.~Mossel, J.~Neeman, and A.~Sly.
\newblock Consistency thresholds for the planted bisection model.
\newblock \emph{Electronic Journal of Probability}, 21\penalty0 (21):\penalty0 1--24, 2016.

\bibitem[Mukherjee(2020)]{mukherjee2020degeneracy}
S.~Mukherjee.
\newblock Degeneracy in sparse ergms with functions of degrees as sufficient statistics.
\newblock \emph{Bernoulli}, 26\penalty0 (2):\penalty0 1016--1043, 2020.

\bibitem[Mukherjee and Xu(2023)]{mukherjee2023statistics}
S.~Mukherjee and Y.~Xu.
\newblock Statistics of the two star ergm.
\newblock \emph{Bernoulli}, 29\penalty0 (1):\penalty0 24--51, 2023.

\bibitem[Nowicki(1989)]{Nowicki1989}
K.~Nowicki.
\newblock Asymptotic normality of graph statistics.
\newblock \emph{Journal of Statistical Planning and Inference}, 21\penalty0 (2):\penalty0 209--222, 1989.

\bibitem[Nowicki and Wierman(1988)]{nowicki1988subgraph}
K.~Nowicki and J.~C. Wierman.
\newblock Subgraph counts in random graphs using incomplete {$U$}-statistics methods.
\newblock \emph{Discrete Mathematics}, 72\penalty0 (1-3):\penalty0 299--310, 1988.

\bibitem[Ouadah et~al.(2022)Ouadah, Latouche, and Robin]{ouadah2022motif}
S.~Ouadah, P.~Latouche, and S.~Robin.
\newblock Motif-based tests for bipartite networks.
\newblock \emph{Electronic Journal of Statistics}, 16\penalty0 (1):\penalty0 293--330, 2022.

\bibitem[Penrose(2003)]{penrose2003}
M.~Penrose.
\newblock \emph{Random Geometric Graphs}, volume~5.
\newblock OUP Oxford, 2003.

\bibitem[Pr{\v{z}}ulj(2007)]{networkdegree}
N.~Pr{\v{z}}ulj.
\newblock Biological network comparison using graphlet degree distribution.
\newblock \emph{Bioinformatics}, 23\penalty0 (2):\penalty0 e177--e183, 2007.

\bibitem[Reed and Simon(1980)]{reedsimon}
M.~Reed and B.~Simon.
\newblock \emph{Methods of Modern Mathematical Physics. vol. 1. Functional Analysis}.
\newblock Academic New York, 1980.

\bibitem[Renardy and Rogers(2004)]{renardy2006introduction}
M.~Renardy and R.~C. Rogers.
\newblock \emph{An Introduction to Partial Differential Equations}, volume~13 of \emph{Texts in Applied Mathematics}.
\newblock Springer-Verlag, New York, second edition, 2004.
\newblock ISBN 0-387-00444-0.

\bibitem[Rubin-Delanchy et~al.(2022)Rubin-Delanchy, Cape, Tang, and Priebe]{rubin2022statistical}
P.~Rubin-Delanchy, J.~Cape, M.~Tang, and C.~E. Priebe.
\newblock A statistical interpretation of spectral embedding: The generalised random dot product graph.
\newblock \emph{Journal of the Royal Statistical Society Series B: Statistical Methodology}, 84\penalty0 (4):\penalty0 1446--1473, 2022.

\bibitem[Rubinov and Sporns(2010)]{rubinov2010complex}
M.~Rubinov and O.~Sporns.
\newblock Complex network measures of brain connectivity: uses and interpretations.
\newblock \emph{Neuroimage}, 52\penalty0 (3):\penalty0 1059--1069, 2010.

\bibitem[Ruci{\'n}ski(1988)]{rucinski1988small}
A.~Ruci{\'n}ski.
\newblock When are small subgraphs of a random graph normally distributed?
\newblock \emph{Probability Theory and Related Fields}, 78\penalty0 (1):\penalty0 1--10, 1988.

\bibitem[Shalizi and Rinaldo(2013)]{shalizi2013consistency}
C.~R. Shalizi and A.~Rinaldo.
\newblock Consistency under sampling of exponential random graph models.
\newblock \emph{The Annals of statistics}, 41\penalty0 (2):\penalty0 508, 2013.

\bibitem[Shao et~al.(2022)Shao, Xia, Zhang, Wu, and Chen]{shao2022higher}
M.~Shao, D.~Xia, Y.~Zhang, Q.~Wu, and S.~Chen.
\newblock Higher-order accurate two-sample network inference and network hashing.
\newblock \emph{arXiv:2208.07573}, 2022.

\bibitem[Shen-Orr et~al.(2002)Shen-Orr, Milo, Mangan, and Alon]{shen2002network}
S.~S. Shen-Orr, R.~Milo, S.~Mangan, and U.~Alon.
\newblock Network motifs in the transcriptional regulation network of escherichia coli.
\newblock \emph{Nature Genetics}, 31\penalty0 (1):\penalty0 64--68, 2002.

\bibitem[Tang et~al.(2017)Tang, Athreya, Sussman, Lyzinski, and Priebe]{tang2017nonparametric}
M.~Tang, A.~Athreya, D.~L. Sussman, V.~Lyzinski, and C.~E. Priebe.
\newblock {A nonparametric two-sample hypothesis testing problem for random graphs}.
\newblock \emph{Bernoulli}, 23\penalty0 (3):\penalty0 1599 -- 1630, 2017.

\bibitem[Wasserman and Faust(1994)]{wasserman1994social}
S.~Wasserman and K.~Faust.
\newblock \emph{Social Network Analysis: Methods and applications}.
\newblock Cambridge university press, 1994.

\bibitem[Watts and Strogatz(1998)]{watts1998collective}
D.~J. Watts and S.~H. Strogatz.
\newblock Collective dynamics of `small-world' networks.
\newblock \emph{nature}, 393\penalty0 (6684):\penalty0 440--442, 1998.

\bibitem[Wegner et~al.(2018)Wegner, Ospina-Forero, Gaunt, Deane, and Reinert]{wegner2018identifying}
A.~E. Wegner, L.~Ospina-Forero, R.~E. Gaunt, C.~M. Deane, and G.~Reinert.
\newblock Identifying networks with common organizational principles.
\newblock \emph{Journal of Complex Networks}, 6\penalty0 (6):\penalty0 887--913, 2018.

\bibitem[Xu and Reinert(2021)]{xu2021stein}
W.~Xu and G.~Reinert.
\newblock A stein goodness-of-test for exponential random graph models.
\newblock In \emph{International Conference on Artificial Intelligence and Statistics}, pages 415--423. PMLR, 2021.

\bibitem[Xu and Mukherjee(2021)]{xu2021signal}
Y.~Xu and S.~Mukherjee.
\newblock Signal detection in degree corrected {ERGMs}.
\newblock \emph{arXiv:2108.09255}, 2021.

\bibitem[Zhang and Xia(2022)]{zhang2022edgeworth}
Y.~Zhang and D.~Xia.
\newblock Edgeworth expansions for network moments.
\newblock \emph{The Annals of Statistics}, 50\penalty0 (2):\penalty0 726--753, 2022.

\bibitem[Zhang(2022)]{zhang2021berryesseen}
Z.-S. Zhang.
\newblock Berry--{Esseen} bounds for generalized {$U$}-statistics.
\newblock \emph{Electronic Journal of Probability}, 27:\penalty0 1--36, 2022.

\end{thebibliography}

\normalsize

\appendix 

\section{Proof of Theorem \ref{thm:asymp-joint-dist}} 
\label{sec:distributionpf}

For a graphon $W $ and a non-empty simple graph $H= (V(H), E(H))$, the number of copies $X(H, G_n)$ (recall \eqref{eq:XHW}) can be expressed as a generalized $U$-statistic as follows:  
\begin{align}\label{eq:XHWY}
    & X(H, G_n) - \dfrac{(n)_{|V(H)|}}{|\mathrm{Aut}(H)|}t(H,W) \nonumber \\ 
    & = \sum_{1\leq i_{1}<\cdots<i_{|V(H)|}\leq n}f^{(H)}(U_{i_{1}},\cdots,  U_{i_{|V(H)|}},Y_{i_{1}i_{2}} \cdots, Y_{i_{|V(H)|-1}i_{|V(H)|}}) , 
\end{align} 
where  
\begin{align}\label{eq:def_f}
    f^{(H)}(U_{1},\cdots, U_{|V(H)|}, & Y_{12},\cdots, Y_{|V(H)|-1~ |V(H)|}) \nonumber \\ 
    = &\sum_{H'\in \mathscr{G}_{H}}\prod_{(a, b) \in E(H')}\one\left\{Y_{ab}\leq W(U_{a},U_{b})\right\} - \left|\mathscr{G}_{H}\right|t(H, W) 
\end{align} 
and $\mathscr{G}_{H} := \mathscr{G}_{H}(\{1, 2, \ldots, |V(H)|\})$. In this section, using the representation in \eqref{eq:XHWY}, we will derive the joint distribution of 
$$\bm X(\mathcal H, G_n) :=  (X(H_1, G_n), X(H, G_n), \ldots, X(H_r, G_n)) , $$
for a collection non-empty simple graphs $\mathcal H:= \{ H_1, H_2, \ldots, H_r\}$, where $H_{i}=(V(H_{i}), E(H_{i})$ and $V(H_{i})=\{1, 2, \ldots, |V(H_{i})|\}$, for $1 \leq i \leq r$. 

We begin recalling the framework of generalized $U$-statistics developed in \cite{janson1991asymptotic} in the following section.

\subsection{Orthogonal Decomposition of Generalized $U$-Statistics}
\label{sec:generalized_U}

Suppose $\{U_{i}: 1 \leq i \leq n\}$ and $\{Y_{ij}: 1\leq i<j \leq n\}$ are
i.i.d.\ sequences of $U[0,1]$ random variables. Fix $R \geq 1$ and denote by $K_R$ the complete
graph on the set of vertices $\{1, 2, \ldots, R\}$ and let $G= (V(G), E(G))$
be a subgraph of $K_{R}$. Let $\mathcal{F}_{G}$ be the $\sigma$-algebra
generated by the collections $\{U_{i}\}_{i\in V(G)}$ and $\{Y_{ij}\}_{ij\in E(G)}$,
and let $L^{2}(G)=L^2(\mathcal{F}_G)$ 
be the space of all square integrable random
variables that are functions of $\{U_{i}: i\in V(G)\}$ and $\{Y_{ij}:
(i,j)\in E(G) \}$. Now, consider the following subspace of $L^{2}(G)$: 
\begin{align}\label{eq:def-MG}
M_{G}:=\{Z\in L^{2}(G) : \mathbb{E}[ZV]=0\text{ for every }V\in L^{2}(F)\text{ such that } F \subset G\}. 
\end{align}
(For the empty graph, $M_{\emptyset}$ is the space of all constants.) 
Equivalently,
$Z\in M_{G}$ if and only if $Z\in L^{2}(G)$ and 
\begin{align*}
\mathbb{E}\left[Z\mid X_{i},Y_{ij}: i \in V(F), (i, j) \in E(F)\right] = 0,
\quad \text{for all } F \subset G.
\end{align*}
Then, one has the following orthogonal decomposition (see \cite[Lemma 1]{janson1991asymptotic})  
\begin{align}\label{eq:LGH}
L^{2}(G)=\bigoplus_{F \subseteq G} M_F, 
\end{align}
that is, $L^{2}(G)$ is the orthogonal direct sum of $M_{F}$ for all 
subgraphs $F\subseteq G$. This allows us to decompose 
 any function in $L^{2}(G)$ as the sum of  its
projections onto $M_{F}$ for $F \subseteq G$. For any closed subspace $M$ of $L^2(K_R)$, denote the orthogonal projection onto $M$ by $P_M$.

Now, consider a {\it symmetric} function $f$ defined on $L^2(K_R)$, that is, 
\begin{align}\label{eq:def-f-general}
    f & = f(U_{1},U_{2},\cdots, U_{R}, Y_{12}, \cdots, Y_{R-1~R}) \nonumber \\ 
    &= f(U_{\sigma(1)}, U_{\sigma(2)},\cdots, U_{\sigma(R)}, Y_{\sigma(1) \sigma(2)}, \cdots, Y_{\sigma(R-1)~\sigma(R)}) . 
\end{align}
for any permutation $\sigma$ of $\{1, 2, \ldots, R\}$. 

Then $f$ can be decomposed as  

\begin{align}\label{eq:fG}
    f = \sum_{G \subseteq K_{R} } f_{G} , 
\end{align}
where $f_{G}=P_{M_G}f$ is the orthogonal projection of $f$ onto $M_{G}$. 
Further, for $1 \leq s \leq R$, define 
\begin{align}\label{eq:fs}
    f_{(s)}:=\sum_{G \subseteq K_{R}: |V(G)|=s} f_{G} .
\end{align}
The smallest positive $d$ such that $f_{(d)}\neq 0$ almost surely is called the {\it principal degree} of $f$.
It is easy to observe that for any $G\subseteq K_{R}$,
\begin{align}\label{eq:L2G-projection} 
    P_{L^{2}(G)} = \mathbb{E}\left[\cdot\middle| \mathcal{F}_{G}\right].
\end{align}
Moreover, by \eqref{eq:LGH} we have,
\begin{align}\label{eq:PL2-decomp}
    P_{L^{2}(G)} = \sum_{F \subseteq G}P_{M_{F}} . 
\end{align} 

For $f \in L^{2}(K_R)$ define  
\begin{align}\label{eq:def-Sn}
    S_{n,R}(f) = \sum_{1\leq i_{1}<i_{2}<\cdots<i_{R}\leq n}f(U_{i_{1}},U_{i_{2}},\cdots, U_{i_{R}}, Y_{i_{1} i_{2}},\cdots, Y_{i_{R-1}~i_{R}}) , 
\end{align}
and the symmetrized version  
\begin{align}\label{eq:def-Sn-star}
    \tilde{S}_{n,R}(f) = \sum_{ 1\leq i_{1} \ne i_2 \ne i_{R} \leq n }f(U_{i_{1}},U_{i_{2}},\cdots, U_{i_{R}}, Y_{i_{1}i_{2}},\cdots, Y_{i_{R-1}~i_{R}}) , 
\end{align}
where $Y_{ji}:= Y_{ij}$ for $1 \leq i < j \leq n$.
The symmetry of $f$, the decomposition \eqref{eq:fG}, and the linearity of $S_n^\star(\cdot)$ implies that  
\begin{align}\label{eq:SnfG}
S_{n,R}(f) = \frac{1}{R!} \tilde{S}_{n,R}(f) = \frac{1}{R!} \sum_{G \subseteq K_R} \tilde{S}_{n,R}(f_{G}) .  
\end{align}
The symmetry of $f$ also implies that if $G_1$ and $G_2$ are isomorphic subgraphs of $K_R$, then $\tilde{S}_{n,R}(f_{G_1}) = \tilde{S}_{n,R}(f_{G_2})$. Hence, from \eqref{eq:SnfG},  
\begin{align}
S_{n,R}(f) = \sum_{s=0}^R \sum_{G\in\Gamma_{s}}\frac{\tilde{S}_{n,R}(f_{G})}{(R-s)!|\mathrm{Aut}(G)|} , 
\label{eq:SnfGR}
\end{align}
where $\Gamma_s$ is the collection of non-isomorphic graphs with $s$ vertices.

The following result from  \cite{janson1991asymptotic} gives the leading order in the expansion \eqref{eq:SnfGR} for symmetric functions $f$ with principal degree $d$. We include the proof for the sake of completeness:

\begin{prop}[\cite{janson1991asymptotic}]
\label{ppn:Sn}
Suppose $f \in L^{2}(K_R)$ is symmetric and has principal degree $d$. Then 
\begin{align}\label{eq:limit-equiv}
   \mathbb E\left[\left|S_{n,R}(f) - \sum_{G\in\Gamma_{d}}\frac{\tilde{S}_{n,R}(f_{G})}{(R-d)!|\mathrm{Aut}(G)|} \right|^2\right] = O(n^{2R - d -1}) .  
\end{align} 
\end{prop}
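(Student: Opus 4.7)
\medskip

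\noindent\textbf{Proof proposal for Proposition \ref{ppn:Sn}.} The plan is to start from the orbit-decomposition \eqref{eq:SnfGR},
\[
S_{n,R}(f) \;=\; \sum_{s=0}^{R}\,\sum_{G\in\Gamma_{s}}\frac{\tilde{S}_{n,R}(f_{G})}{(R-s)!\,|\mathrm{Aut}(G)|},
\]
and then exploit the hypothesis that $f$ has principal degree $d$. By the definition of principal degree, $f_{(s)}=0$ almost surely for $s<d$, and hence $f_{G}=0$ for every $G$ with $|V(G)|<d$. Consequently, the difference that we are asked to estimate collapses to
\[
S_{n,R}(f)-\sum_{G\in\Gamma_{d}}\frac{\tilde S_{n,R}(f_{G})}{(R-d)!\,|\mathrm{Aut}(G)|}
\;=\;\sum_{s=d+1}^{R}\,\sum_{G\in\Gamma_{s}}\frac{\tilde S_{n,R}(f_{G})}{(R-s)!\,|\mathrm{Aut}(G)|}.
\]
Since $\Gamma_{s}$ is finite, the proposition will follow from Minkowski's inequality once we establish the uniform bound $\mathbb{E}[\tilde S_{n,R}(f_{G})^{2}] = O(n^{2R-s})$ for every $G$ with $|V(G)|=s \geq d+1$.

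\medskip

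For the variance bound, I would first use the fact that $f_{G}\in M_{G}\subseteq L^{2}(G)$ is a function only of the variables $\{U_{a}:a\in V(G)\}$ and $\{Y_{ab}:(a,b)\in E(G)\}$. Thus, for any $R$-tuple of distinct indices $\bm i = (i_{1},\ldots,i_{R})$, the evaluation $f_{G}(\bm i)$ depends on $\bm i$ only through the coordinates indexed by $V(G)$. Summing the remaining $R-s$ ``free'' coordinates under the distinctness constraint produces a factor of $(n-s)_{R-s}$, so
\[
\tilde S_{n,R}(f_{G}) \;=\; (n-s)_{R-s}\sum_{\bm k}\hat f_{G}(\bm k),
\]
where the sum is over ordered $s$-tuples $\bm k=(k_{a})_{a\in V(G)}$ of distinct indices in $[n]$ and $\hat f_{G}(\bm k):=f_{G}(U_{k_{a}}:a\in V(G),\,Y_{k_{a}k_{b}}:(a,b)\in E(G))$. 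Because $\mathbb{E}[f_{G}]=0$ for every non-empty $G$ (by orthogonality of $M_{G}$ to the constants $M_{\emptyset}$), the second moment equals the variance, and
\[
\mathrm{Var}\bigl(\tilde S_{n,R}(f_{G})\bigr)\;=\;(n-s)_{R-s}^{2}\sum_{\bm k,\bm k'}\mathrm{Cov}\bigl(\hat f_{G}(\bm k),\hat f_{G}(\bm k')\bigr).
\]

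\medskip

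The crucial step is to show that all but $O(n^{s})$ of the pairs $(\bm k,\bm k')$ contribute zero covariance. Given two $s$-tuples, let $V^{\star}=\{a\in V(G):k_{a}\in\{k'_{b}:b\in V(G)\}\}$ and let $E^{\star}\subseteq E(G)$ be the edges whose endpoint-pair in $\bm k$ coincides with some endpoint-pair in $\bm k'$. The variables shared between $\hat f_{G}(\bm k)$ and $\hat f_{G}(\bm k')$ are precisely those indexed by the subgraph $F^{\star}=(V^{\star},E^{\star})$ of $G$ (after relabeling). If $F^{\star}\subsetneq G$, then the defining property of $M_{G}$ in \eqref{eq:def-MG} gives $\mathbb{E}[f_{G}\mid \mathcal{F}_{F^{\star}}]=0$, whence the covariance vanishes after conditioning on the shared variables (and on the unshared variables of $\hat f_{G}(\bm k')$, which are independent of those of $\hat f_{G}(\bm k)$). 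Therefore a nonzero contribution requires $F^{\star}=G$, which forces $\{k_{a}\}=\{k'_{b}\}$ as sets and forces the induced bijection to be an automorphism of $G$; the number of such ordered pairs is at most $|\mathrm{Aut}(G)|\cdot (n)_{s}=O(n^{s})$. Since each non-zero covariance is uniformly bounded (as $f\in L^{2}(K_{R})$ implies $\hat f_{G}\in L^{2}$), we conclude $\mathrm{Var}(\sum_{\bm k}\hat f_{G}(\bm k))=O(n^{s})$, and hence $\mathrm{Var}(\tilde S_{n,R}(f_{G}))=O(n^{2(R-s)}\cdot n^{s})=O(n^{2R-s})$.

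\medskip

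Combining these ingredients, the dominant contribution to the $L^{2}$-norm of the remainder comes from $s=d+1$, yielding the stated bound $O(n^{2R-d-1})$. The main obstacle is the orthogonality argument in the final paragraph: one must carefully identify the ``overlap subgraph'' $F^{\star}$ of $G$ associated with each pair $(\bm k,\bm k')$, track the distinction between the abstract vertex labels in $V(G)\subseteq V(K_{R})$ and the concrete indices in $[n]$, and verify that the $M_{G}$-property indeed produces a vanishing conditional expectation whenever $F^{\star}$ is a proper subgraph. Everything else is a routine counting argument.
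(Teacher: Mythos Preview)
Your proposal is correct and follows essentially the same route as the paper: both start from the orbit decomposition \eqref{eq:SnfGR}, drop the terms with $s<d$ by the principal-degree hypothesis, and reduce the problem to the second-moment bound $\mathbb{E}[\tilde S_{n,R}(f_{G})^{2}]=O(n^{2R-s})$ for $|V(G)|=s$. The only difference is that the paper invokes \cite[Lemma~4]{janson1991asymptotic} as a black box for this variance formula (in fact obtaining the exact expression $\frac{n!(n-s)!}{(n-R)!^{2}}\mathbb{E}[f_{G}^{2}]$, up to an automorphism factor), whereas you sketch its proof directly via the overlap-subgraph orthogonality argument; your derivation is sound and recovers the same order.
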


\begin{proof} 
Since $f$ has principal degree $d$, by \eqref{eq:SnfGR} and \eqref{eq:fs}, 
\begin{align*}
S_{n,R}(f) = \sum_{s=d}^R \sum_{G\in\Gamma_{s}}\frac{\tilde{S}_{n,R}(f_{G})}{(R-s)!|\mathrm{Aut}(G)|} . 
\end{align*} 
Hence, 
\begin{align}\label{eq:SnfdG}
\mathbb E \left[\left|S_{n,R}(f) - \sum_{G\in\Gamma_{d}}\frac{\tilde{S}_{n,R}(f_{G})}{(R-d)!|\mathrm{Aut}(G)|}  \right|^2\right]  \lesssim_R  \sum_{s=d+1}^R \sum_{G\in\Gamma_{s}}\frac{\mathbb E[|\tilde{S}_{n,R}(f_{G})|^2]}{(R-s)!^2|\mathrm{Aut}(G)|^2} .  
\end{align}
By \cite[Lemma 4]{janson1991asymptotic}, for $G \in \Gamma_s$,  
$$\mathbb E[|\tilde{S}_{n,R}(f_{G})|^2]  = \frac{n! (n- s)!}{(n-R)!^2} \mathbb E[f_G^2] \lesssim_R n^{2R -s} \mathbb E[f^2], $$
where the second inequality uses $\mathbb E[f_G^2] \leq \mathbb E[f^2]$ (recall the orthogonal decomposition \eqref{eq:fG}). 
Applying the above bound in \eqref{eq:SnfdG}, the result in \eqref{eq:limit-equiv} follows. 

\end{proof}

Using the above framework, we now proceed with the proof of Theorem \ref{thm:asymp-joint-dist}. The proof is organized as follows: 
\begin{itemize} 
\item In Section \ref{sec:expansionasymptoticdistribution} we show that the asymptotic distribution of $\bm{Z}(\cH, G_n)$ can be expressed as infinite linear or quadratic forms in i.i.d. Gaussian variables depending on whether $H$ is $W$-irregular or $W$-regular, respectively (see Proposition \ref{ppn:linearTHW}). 
\item In Section \ref{sec:equivalenceasymptoticdistribution} we identify the limit obtained in Proposition \ref{ppn:linearTHW} with limit in Theorem \ref{thm:asymp-joint-dist} (see Proposition \ref{ppn:R} and Proposition \ref{ppn:QR}). 
\end{itemize}

\subsection{Asymptotic Expansion of $\textbf{\textit{Z}}(\cH, G_n)$}
\label{sec:expansionasymptoticdistribution}

Recall that $\mathcal H=\{H_1, H_2, \ldots, H_r\}$ is a collection of non-empty subgraphs such that $W$ is $H_i$-irregular for $1 \leq i \leq q$ and $W$ is $H_i$-regular for $q+1 \leq i \leq r$. Let $f_i:=f^{(H_i)}$ denote the function defined in \eqref{eq:def_f} with $H$ replaced by $H_i$, for $1 \leq i \leq r$. It follows from \cite[Lemma 5.4]{BCJ}, that 
$$\Var[(f_i)_{(1)}]=0 \text{ if and only if } W \text{ is } H_i\text{-regular}.$$ 
Since $\E[(f_i)_{(1)}]=0$, this implies, $(f_i)_{(1)}=0$ almost surely if and only if  $W$ is $H_i$-regular. Hence, the principal degree of $f_i$ is $1$ for $1 \leq i \leq q$ and the principal degree of $f_i$ is $2$ for $q+1 \leq i \leq r$. (Technically, it is possible that the principal degree of $f_i$, for some $q+1 \leq i \leq r$, is greater than 2 (see \cite[Section 4.3]{BCJ} for an example). Here, we will assume that the principal degree of $f_i$ is equal to 2 for all $q+1 \leq i \leq r$, with the understanding that the limit given by Theorem \ref{thm:asymp-joint-dist} in this case can be degenerate if the principal degree is larger.)

Note that, for $1\leq i\leq r$, $f_{i}\in L^{2}\left(K_{|V(H_{i})|}\right)$, is symmetric and has $0$ mean. In particular, 
\begin{align}\label{eq:Snfcount}
    S_{n,|V(H_i)|}(f_{i}) = X(H_{i}, G_n) - \dfrac{(n)_{|V(H_{i})|}}{|\mathrm{Aut}(H_{i})|}t(H_{i},W),
\end{align}
for $1\leq i\leq r$.  
Now, to apply Proposition \ref{ppn:Sn} note that 
\begin{align*}
    \Gamma_{1} = \{K_{\{1\}}\} \text{ and }\Gamma_{2} = \{E_{\{1,2\}},K_{\{1,2\}}\} , 
\end{align*}
where 
\begin{itemize} 

\item $K_{\{1\}}$ is the graph with a single vertex $1$, 

\item $E_{\{1,2\}} = (\{1, 2\}, \emptyset)$ is the graph with two vertices $\{1,2\}$ and no edge between them, 

\item $K_{\{1,2\}} = (\{1, 2\}, \{(1, 2)\})$ is the complete graph on two vertices $\{1,2\}$. 

\end{itemize} 
Then by Proposition \ref{ppn:Sn} and Markov's inequality the following hold: 
\begin{itemize} 

\item For $1 \leq i \leq q$, 
\begin{align}\label{eq:limit-equiv}
   \left|S_{n,|V(H_i)|}(f_i) - \frac{\tilde{S}_{n,|V(H_i)|}((f_i)_{K_{\{1\}}})}{(|V(H_i)|-1)!| } \right| = O_P(n^{|V(H_i)| - 1}) .  
\end{align} 

\item For $q+1 \leq i \leq r$, 
\begin{align}\label{eq:S_n-equiv-1}
    \left|S_{n,|V(H_i)|}(f_{i}) - \frac{\tilde{S}_{n,|V(H_i)|}((f_i)_{E_{\{1,2\}}}) + \tilde{S}_{n,|V(H_i)|}((f_i)_{{K_{\{1,2\}}}})}{2(|V(H_{i})|-2)!} \right| = O_P(n^{|V(H_i)| - \frac{3}{2}}) . 
\end{align} 

\end{itemize} 
Now, define 
\begin{align}\label{eq:TnHiW}
T(H_i, G_n) := 
\left\{
\begin{array}{cc}
  \dfrac{\tilde{S}_{n,|V(H_i)|}((f_i)_{K_{\{1\}}})}{(|V(H_i)|-1)!| n^{|V(H_i)|-\frac{1}{2}}} & \text{ for } 1 \leq i \leq q , \\ \\ 
 \dfrac{ \tilde{S}_{n,|V(H_i)|}((f_i)_{E_{\{1,2\}}}) + \tilde{S}_{n,|V(H_i)|}((f_i)_{{K_{\{1,2\}}}}) }{2(|V(H_i)|-2)!| n^{|V(H_i)|-1}} & \text{ for } q+1 \leq i \leq r.
\end{array}
\right. 
\end{align} 
Then recalling the definition of $Z(H_i, G_n)$ from \eqref{eq:ZnHW} and using from \eqref{eq:Snfcount}, \eqref{eq:limit-equiv}, and \eqref{eq:S_n-equiv-1}, it follows that 
$$Z(H_i, G_n) = T(H_i, G_n) + o_P(1).$$ 
Hence, recalling \eqref{eq:graphW} 
\begin{align}\label{eq:ZHWleadingterm}
\bm Z(\mathcal H, G_n) = \bm T(\mathcal H, G_n) + o_P(1), 
\end{align}
where 
\begin{align}\label{eq:TnHW}
 \bm T(\mathcal H, G_n) = (T(H_1, G_n), T(H_2, G_n), \ldots, T(H_r, G_n))^\top. 
\end{align}  

The result in \eqref{eq:ZHWleadingterm} shows that to obtain the asymptotic joint distribution of $\bm Z(\mathcal H, G_n)$ it suffices to obtain the joint distribution of $ \bm T(\mathcal H, G_n)$. The first step towards this is to compute the projections $(f_i)_{K_{\{1\}}}$,  $(f_i)_{E_{\{1,2\}}}$, and $(f_i)_{K_{\{1,2\}}}$. We begin with a few definitions: 
A function $f\in L^{2}(G)$ is said to be $G$-{\it symmetric} if \eqref{eq:def-f-general} holds whenever $\sigma$ is an automorphism of $G$. 
Also, for two functions $h_{1}, h_{2}$ we define,
\begin{align*}
    h_{1} \otimes h_{2}(x,y) := h_{1}(x) h_{2}(y).
\end{align*}
Now, let $\{\phi_{s}\}_{s \geq 1}$ be a orthonormal basis of $M_{K_{\{1\}}}$
Then $\{\phi_{s}\otimes\phi_{t}\}_{s, t \geq 1}$ is a orthonormal set whose span contains the $E_{\{1,2\}}$-symmetric functions in $M_{E_{\{1,2\}}}$. Also, let $\{\psi_{s}\}_{s \geq 1}$ be a orthonormal basis of the subspace of $K_{\{1,2\}}$-symmetric functions in $M_{K_{\{1,2\}}}$. Then using results in \cite{BCJ,janson1991asymptotic} we the following lemma: 

\begin{lemma}\label{lm:fHiprojection} For $1 \leq i \leq q$, the projection of $f_{(i)}$ on $K_{\{1\}}$ is given by: 
\begin{align}\label{eq:f-decompK1}
    (f_{i})_{K_{\{1\}}} = \mathbb E[f_i|U_1] = \sum_{s \geq 1}\mathbb{E}\left[f_{i} \phi_{s}\right]\phi_{s} . 
\end{align}
Moreover, for  $q+1 \leq i \leq r$, the following hold: 
\begin{itemize}
\item The projection of $f_{(i)}$ on $E_{\{1,2\}}$ is given by: 
\begin{align}\label{eq:f-decompE12}
    (f_{i})_{E_{\{1,2\}}} = \mathbb E[f_i|U_1, U_2] = \sum_{s, t \geq 1}\mathbb{E}\left[f_{i} (\phi_{s}\otimes\phi_{t}) \right]\phi_{s}\otimes\phi_{t}, 
    \end{align} 

\item The projection of $f_{(i)}$ on $K_{\{1,2\}}$ is given by: 
    \begin{align}\label{eq:f-decompK12}
    (f_{i})_{K_{\{1,2\}}} =  \mathbb{E}\left[f_{i}\middle| U_{1},U_{2},Y_{12}\right] - \mathbb{E}\left[f_{i}\middle| U_{1},U_{2}\right] = \sum_{s \geq 1}\mathbb{E}\left[f_{i}\psi_{s}\right]\psi_{s} . 
    \end{align} 
\end{itemize}
\end{lemma}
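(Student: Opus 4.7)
The plan is to derive each of the three identities by combining the identification $P_{L^2(G)} = \mathbb{E}[\cdot \mid \mathcal{F}_G]$ from \eqref{eq:L2G-projection} with the orthogonal decomposition $P_{L^2(G)} = \sum_{F\subseteq G} P_{M_F}$ from \eqref{eq:PL2-decomp}, and then invoking the principal degree of $f_i$ (established just before \eqref{eq:Snfcount} via \cite[Lemma 5.4]{BCJ}) to kill the unwanted lower-order terms. The orthonormal basis expansions are then a routine application of Parseval.

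First I would establish \eqref{eq:f-decompK1}. The only subgraphs of $K_{\{1\}}$ are $\emptyset$ and $K_{\{1\}}$ itself, so $\mathbb{E}[f_i \mid U_1] = (f_i)_\emptyset + (f_i)_{K_{\{1\}}}$. Since $\mathbb{E}[f_i]=0$ by the definition \eqref{eq:def_f}, $(f_i)_\emptyset = 0$, giving $(f_i)_{K_{\{1\}}} = \mathbb{E}[f_i \mid U_1]$. For \eqref{eq:f-decompE12} and \eqref{eq:f-decompK12}, I would use that $f_i$ has principal degree $2$ for $q+1 \le i \le r$, so that $(f_i)_F = 0$ whenever $|V(F)| \le 1$. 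The subgraphs of $E_{\{1,2\}}$ are $\emptyset, K_{\{1\}}, K_{\{2\}}, E_{\{1,2\}}$, whence $\mathbb{E}[f_i\mid U_1, U_2] = (f_i)_{E_{\{1,2\}}}$; adding $K_{\{1,2\}}$ as the only further subgraph of $K_{\{1,2\}}$ yields $\mathbb{E}[f_i\mid U_1,U_2, Y_{12}] = (f_i)_{E_{\{1,2\}}} + (f_i)_{K_{\{1,2\}}}$, and rearranging produces the stated expression.

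For the basis expansions, I would apply Parseval in each relevant subspace. Since $\phi_s \in M_{K_{\{1\}}}$ is orthogonal to every $M_F$ with $F \ne K_{\{1\}}$, the identity $\mathbb{E}[f_i \phi_s] = \mathbb{E}[(f_i)_{K_{\{1\}}}\, \phi_s]$ holds, yielding \eqref{eq:f-decompK1}. The analogous orthogonality arguments for $\phi_s\otimes \phi_t \in M_{E_{\{1,2\}}}$ and $\psi_s \in M_{K_{\{1,2\}}}$ give the remaining two expansions. The one point requiring a short verification is that $(f_i)_{E_{\{1,2\}}}$ and $(f_i)_{K_{\{1,2\}}}$ lie in the $E_{\{1,2\}}$- and $K_{\{1,2\}}$-symmetric subspaces respectively; this is inherited from the symmetry \eqref{eq:def-f-general} of $f_i$ under the transposition of labels $1$ and $2$, which passes through conditional expectation.

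I do not expect any substantial obstacle: the argument is a straightforward orchestration of the general framework of Section \ref{sec:generalized_U} specialized to the subgraphs of $K_{\{1,2\}}$, together with the principal-degree input for the functions $f^{(H_i)}$. The mildest care is needed in confirming that the symmetry of $f_i$ is preserved under the marginalizations that define the various projections, which is why the stated orthonormal bases (restricted to the symmetric subspaces) suffice to expand the projected functions.
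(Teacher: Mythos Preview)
Your proposal is correct and follows essentially the same approach as the paper: both arguments combine \eqref{eq:L2G-projection} and \eqref{eq:PL2-decomp} to express each projection as a difference of conditional expectations, use $\mathbb{E}[f_i]=0$ and the principal-degree/$H_i$-regularity condition to eliminate the lower-order terms, and then obtain the basis expansions via orthogonality. Your additional remark about verifying that the projections inherit the required symmetry is a useful clarification that the paper leaves implicit.
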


\begin{proof}
For $1 \leq i \leq q$, by \eqref{eq:L2G-projection} and \eqref{eq:PL2-decomp},
\begin{align*}
(f_{i})_{K_{\{1\}}}:= P_{ M_{K_{\{1\}}}} (f_{i})_{K_{\{1\}}} = P_{L^2(K_{\{1\}})} f_{i} -P_{M_\emptyset} f_{i}
= \mathbb E[f_i|U_1]- \mathbb E[f_i] = \mathbb E[f_i|U_1] .
\end{align*}
This proves the first equality in \eqref{eq:f-decompK1}. To establish the second equality, expand $(f_{i})_{K_{\{1\}}}$ in the basis $\{\phi_{s}\}_{s \geq 1}$ as follows (see \cite[Chapter 6]{laxfunctional}): 
\begin{align}\label{eq:fHiK1}
(f_{i})_{K_{\{1\}}} = \sum_{s \geq 1} \mathbb E[ (f_{i})_{K_{\{1\}}} \phi_s]  \phi_s . \end{align}
By the first equality in \eqref{eq:f-decompK1}, 
$$\mathbb E[ (f_{i})_{K_{\{1\}}} \phi_s] = \mathbb E[ (f_{i})_{K_{\{1\}}}(U_1) \phi_s(U_1) ] =  \mathbb E [ \mathbb E[f_i|U_1] \phi_s (U_1) ] = \mathbb E[f_i \phi].$$ 
Applying the above identity in \eqref{eq:fHiK1} the second equality in \eqref{eq:f-decompK1} follows.

Next, we will prove \eqref{eq:f-decompE12}. By \eqref{eq:L2G-projection} and \eqref{eq:PL2-decomp}, 
\begin{align}\label{eq:fE12asEU}
(f_{i})_{E_{\{1,2\}}}= P_{ M_{E_{\{1,2\}}}} f_{i} &= 
P_{L^2(E_{\{1,2\}})} f_{i} - P_{M_{K_{\{ 1 \}}}} f_{i} -  P_{M_{K_{\{ 2 \}}}} f_{i}-P_{M_\emptyset} f_{i} 
\nonumber \\
&= P_{L^2(E_{\{1,2\}})} f_{i} - P_{L^2(K_{\{ 1 \}})} f_{i} - P_{L^2(K_{\{ 2 \}})} f_{i} +P_{M_\emptyset} f_{i} \nonumber \\ 
& = \mathbb E[f_{i} | U_1, U_2] - \mathbb E[f_{i} | U_1 ] - \mathbb E[f_{i} | U_2] + \mathbb E[f_{i}] \nonumber \\ 
& = \mathbb E[f_{i} | U_1, U_2] , 
\end{align} 
where the last step follows by noting that $\mathbb E[ f_{i} ] = 0$ and $\mathbb E[f_i |U_j] =0$, for $j \in \{1, 2\}$, since $W$ is $H_i$-regular, for $q+1 \leq i \leq r$. This proves the first equality in \eqref{eq:f-decompE12}.  To establish the second equality in \eqref{eq:f-decompE12}, note that 
$$\mathbb E[ (f_{i})_{E_{\{1,2\}}} ( \phi_s \otimes \phi_t )  ] =  \mathbb E [ \mathbb E[f_i|U_1, U_2] \phi_s (U_1) \phi_t(U_2 ] = \mathbb E[f_i (\phi_s \otimes \phi_t) ].$$ 
Hence, 
$$ (f_{i})_{E_{\{1,2\}}} = \sum_{s, t \geq 1} \mathbb E[ (f_{i})_{E_{\{1,2\}}}  (\phi_s \otimes \phi_t) ]  (\phi_s \otimes \phi_t) = \sum_{s, t \geq 1} \mathbb E[ f_{i}  (\phi_s \otimes \phi_t) ]  (\phi_s \otimes \phi_t) .$$

Finally, we prove \eqref{eq:f-decompK12}. For this note that 
\begin{align}\label{eq:fK12asEU}
    (f_{i})_{K_{\{1,2\}}} = P_{M_{K_{\{1,2\}}}}f_{i} 
    & = P_{L^2(K_{\{1,2\}})}f_{i} -P_{M_{E_{\{1,2\}}}}f_{i}- P_{M_{K_{\{1\}}}}f_{i} - P_{M_{K_{\{2\}}}}f_{i} -P_{M_\emptyset}f_{i}\nonumber\\
    & = P_{L^2(K_{\{1,2\}})}f_{i} - P_{L^{2}(E_{\{1,2\}})}f_{i}\nonumber\\
    & = \mathbb{E}\left[f_{i}\middle| U_{1},U_{2},Y_{12}\right] - \mathbb{E}\left[f_{i}\middle| U_{1},U_{2}\right].
\end{align} 
This proves the first equality in \eqref{eq:f-decompK12}. For the second inequality note that $\mathbb E[ \mathbb E[f_{i}|U_1, U_2] \psi_s ] =0$, 
since $\mathbb E[f_{i}|U_1, U_2] \in M_{E_{\{1,2\}}}$ and $M_{E_{\{1,2\}}}$ is orthogonal to $M_{K_{\{1,2\}}}$. This implies, 
$$\mathbb E[ (f_{i})_{K_{\{1,2\}}} \psi_s]  = \mathbb E[ \mathbb{E}\left[f_{i}\middle| U_{1},U_{2},Y_{12}\right] \psi_s] = \mathbb E[ f_{i} \psi_s]$$ and, hence,  
\begin{align}\label{eq:fprojectionK12}
(f_{i})_{K_{\{1,2\}}} = \sum_{s \geq 1} \mathbb E[ (f_{i})_{K_{\{1,2\}}}  \psi_s ]  \psi_s = \sum_{s \geq 1} \mathbb E[ f_{i}  \psi_s ]  \psi_s. 
\end{align} 
This completes the proof of \eqref{eq:f-decompK12}. 
\end{proof}
Lemma \ref{lm:fHiprojection} and \cite[Chapter 6, Lemma 8]{laxfunctional} we now can compute the $L^{2}$ norms of the projections, denoted by $\| \cdot \|_2$, as follows: For $1 \leq i \leq q$,   
\begin{align}\label{eq:finite-expansionK1} 
	\left\|(f_{i})_{K_{\{1\}}}\right\|_{2}^{2} = \sum_{s \geq 1}\mathbb{E}\left[f_{i}\phi_{s}\right]^2 \leq \|f_{i}\|_{2}^{2}  , 
	\end{align} 
	where the last step follows by Bessel's Inequality. Similarly, for $q+1 \leq i \leq r$, 
	\begin{align}\label{eq:finite-expansionK12}
	\left\| (f_{i})_{E_{\{1,2\}}}\right\|_{2}^{2} = \sum_{s, t \geq 1}\mathbb{E}\left[f_{i}(\phi_{s}\otimes\phi_{t}) \right]^2  \leq \|f_{i}\|_{2}^{2} \text{ and }  \left\|(f_{i})_{K_{\{1,2\}}}\right\|_{2}^{2} = \sum_{s \geq 1}\mathbb{E}\left[f_{i}\psi_{s}\right]^2 \leq \|f_{i}\|_{2}^{2} . 
\end{align}

Next, using Lemma \ref{lm:fHiprojection}, the linearity of $\tilde{S}_{n,(\cdot)}$, and a standard truncation argument we obtain the expansions of $\tilde{S}_{n,|V(H_i)|}((f_{i})_{K_{\{1\}}})$, $\tilde{S}_{n,|V(H_i)|}((f_{i})_{E_{\{1,2\}}})$, and $\tilde{S}_{n,|V(H_i)|}((f_{i})_{K_{\{1,2\}}})$. Specifically, for $1\leq i\leq q$, from \eqref{eq:f-decompK1}  we have 
\begin{align}\label{eq:sum-exchange-3}
    \tilde{S}_{n,|V(H_i)|}((f_{i})_{K_{\{1\}}}) = \sum_{s \geq 1}\mathbb{E}\left[f_{i}\phi_{s}\right]\tilde{S}_{n,|V(H_i)|}\left(\phi_{s}\right),
\end{align}
for $1\leq i\leq q$, where the equality hold almost surely.
Similarly, for $q+1 \leq i \leq r$, by \eqref{eq:f-decompE12} and \eqref{eq:f-decompK12} we have the following: 
\begin{align}
    \tilde{S}_{n,|V(H_i)|}((f_{i})_{E_{\{1,2\}}}) 
    & = \sum_{s \geq 1}\mathbb{E}\left[f_{i}(\phi_{s}\otimes\phi_{s})\right]\tilde{S}_{n,|V(H_i)|}\left(\phi_{s}\otimes\phi_{s}\right)\nonumber\\
    & + 2\sum_{s < t}\mathbb{E}\left[f_{i}(\phi_{s}\otimes\phi_{t})\right]\tilde{S}_{n,|V(H_i)|}\left(\phi_{s}\otimes\phi_{t}\right), \label{eq:sum-exchange-2}\\ 
    \tilde{S}_{n,|V(H_i)|}((f_{i})_{K_{\{1,2\}}}) & = \sum_{s \geq 1}\mathbb{E}\left[f_{i}\psi_{s}\right]\tilde{S}_{n,|V(H_i)|}(\psi_{s}). \label{eq:sum-exchange-1}
\end{align}

Using the above expansions we can now derive the asymptotic distribution of $ \bm T(\mathcal H, G_n)$ and hence, that of $\bm Z(\mathcal H, G_n)$.

\begin{prop}\label{ppn:linearTHW}
For  
$ \bm T(\mathcal H, G_n)$ as defined in \eqref{eq:TnHW} and \eqref{eq:TnHiW},  the following hold as $n \rightarrow \infty$: 
\begin{align*} 
 \bm T(\mathcal H, G_n) \dto  \bm T(\mathcal H, W) , 
\end{align*} 
where $\bm T(\mathcal H, W) = (T_1, T_2, \ldots, T_r)^\top$ with 
\begin{align*}
T_i  = & \left\{
\begin{array}{*2{>{\displaystyle}l}}
 \dfrac{1}{(|V(H_{i})|-1)!} \sum_{s \geq 1}\mathbb{E}\left[f_{i}\phi_{s}\right]\eta_{s}  & \hspace{-0.15in} \text{for } 1 \leq i \leq q , \\ 
 \dfrac{ 1 }{2(|V(H_{i})|-2)!} \Bigg\{ \sum_{s \geq 1}\mathbb{E}\left[f_{i}(\phi_{s}\otimes\phi_{s})\right]\left(\eta_{s}^2 - 1\right) + 2\sum_{s < t }\mathbb{E}\left[f_{i} (\phi_{s}\otimes\phi_{t})\right]\eta_{s}\eta_{t} \\ 
 \hspace{3.05in} + \sum_{s \geq 1}\mathbb{E}\left[f_{i}\psi_{s}\right]\tilde \eta_{s} \Bigg\} & \hspace{-0.15in} \text{for } q+1 \leq i \leq r, 
\end{array} 
\right. 
\end{align*}  
and $\{\eta_{s}\}_{s \geq 1}$ and $\{\tilde \eta_{s}\}_{s \geq 1}$ are independent collections of $N(0, 1)$ and $N(0, 2)$ random variables, defined on some probability space $\left(\Omega,\mathcal{F},\mathbb{P}\right)$, respectively. This implies, from \eqref{eq:ZHWleadingterm}, 
\begin{align*} 
 \bm Z(\mathcal H, G_n) \dto  \bm T(\mathcal H, W) . 
\end{align*} 
\end{prop}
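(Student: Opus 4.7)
The plan is to decompose each coordinate $T(H_i, G_n)$ via the expansions \eqref{eq:sum-exchange-3}--\eqref{eq:sum-exchange-1}, reduce everything to a common family of standardized statistics, establish a joint CLT for these statistics, and pass from truncated to full series via a tail argument. Using the elementary identities
\begin{align*}
\tilde S_{n,R}(\phi_s) &= \tfrac{(n-1)!}{(n-R)!}\sum_{j=1}^{n}\phi_s(U_j),\\
\tilde S_{n,R}(\phi_s\otimes\phi_t) &= \tfrac{(n-2)!}{(n-R)!}\sum_{1\le j\ne k\le n}\phi_s(U_j)\phi_t(U_k),\\
\tilde S_{n,R}(\psi_s) &= \tfrac{(n-2)!}{(n-R)!}\sum_{1\le j\ne k\le n}\psi_s(U_j,U_k,Y_{jk}),
\end{align*}
together with $(n-k)!/(n-R)! = n^{R-k}(1+o(1))$ and the normalizations in \eqref{eq:TnHiW}, every coordinate of $\bm T(\mathcal H, G_n)$ becomes, modulo $(1+o(1))$ factors, a linear combination of the three families
$A_n^{(s)}:=\tfrac{1}{\sqrt n}\sum_j\phi_s(U_j)$, $B_n^{(s,t)}:=\tfrac{1}{n}\sum_{j\ne k}\phi_s(U_j)\phi_t(U_k)$, and $C_n^{(s)}:=\tfrac{1}{n}\sum_{j\ne k}\psi_s(U_j,U_k,Y_{jk})$, weighted by the Fourier coefficients $\mathbb{E}[f_i\phi_s]$, $\mathbb{E}[f_i(\phi_s\otimes\phi_t)]$, and $\mathbb{E}[f_i\psi_s]$ appearing in the statement.

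For the joint limit, I first fix a truncation level $M$ and analyze the family $\{A_n^{(s)}, C_n^{(s)}\}_{s\le M}$. Since $\mathbb{E}[\phi_s\phi_t]=\delta_{st}$ and $\mathbb{E}[\phi_s]=0$, the multivariate CLT for i.i.d.\ sums gives $(A_n^{(s)})_{s\le M} \dto (\eta_s)_{s\le M}$ with independent $\eta_s\sim N(0,1)$. The $C_n^{(s)}$'s are completely degenerate second-order $U$-statistics: since $\psi_s\in M_{K_{\{1,2\}}}$ we have $\mathbb{E}[\psi_s\mid U_j]=0$, so distinct summands in $\sum_{j<k}\psi_s(U_j,U_k,Y_{jk})$ are pairwise uncorrelated with variance $\|\psi_s\|_2^2=1$, yielding $\Var(C_n^{(s)})=2(n-1)/n\to 2$; a martingale CLT (filtration obtained by ordering pairs lexicographically) then delivers $(C_n^{(s)})_{s\le M}\dto(\tilde\eta_s)_{s\le M}$ with independent $\tilde\eta_s\sim N(0,2)$, jointly independent of the $\eta_s$'s because $\mathbb{E}[\phi_s(U_j)\psi_t(U_{j'},U_{k'},Y_{j'k'})]=0$ by orthogonality of $M_{K_{\{1\}}}$ and $M_{K_{\{1,2\}}}$. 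Writing $B_n^{(s,t)}=A_n^{(s)}A_n^{(t)}-\tfrac{1}{n}\sum_j\phi_s(U_j)\phi_t(U_j)$ and using the strong law $\tfrac{1}{n}\sum_j\phi_s(U_j)\phi_t(U_j)\to\delta_{st}$, the continuous mapping theorem then gives $B_n^{(s,t)}\dto\eta_s\eta_t-\delta_{st}$ jointly with the previous convergences.

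The last step is to pass from the $M$-truncated to the full series. The Bessel inequalities \eqref{eq:finite-expansionK1}--\eqref{eq:finite-expansionK12} guarantee square-summability of the Fourier coefficient sequences with norms bounded by $\|f_i\|_2^2$. Combining this with the variance bound $\mathbb{E}[\tilde S_{n,R}(g)^2]\lesssim n^{2R-d}\|g\|_2^2$ for any $g$ of principal degree $d\in\{1,2\}$ (see \cite[Lemma 4]{janson1991asymptotic}), the $L^2$ norm of the truncation error in $T(H_i,G_n)$ beyond level $M$ is at most $C\|P_{>M}(f_i)_{\bullet}\|_2$, which vanishes as $M\to\infty$ uniformly in $n$. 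A standard $n\to\infty$-then-$M\to\infty$ argument (Theorem 3.2 of Billingsley) promotes the truncated convergence to the full joint limit $\bm T(\mathcal H,W)$. The main technical obstacles I anticipate are (i) verifying the martingale CLT for the degenerate $U$-statistics $C_n^{(s)}$ with the correct asymptotic variance and establishing their joint asymptotic independence from the $A_n^{(s)}$'s, and (ii) securing the uniform-in-$n$ tail estimates required to interchange the $n\to\infty$ and $M\to\infty$ limits, especially in the regular marginals where both $B_n^{(s,t)}$ and $C_n^{(s)}$ contributions must be controlled simultaneously.
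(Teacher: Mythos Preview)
Your approach is essentially the same as the paper's: truncate the basis expansions at a level $L$, establish joint convergence of the building blocks $A_n^{(s)}, B_n^{(s,t)}, C_n^{(s)}$ for $s,t\le L$, and then pass to the full series via uniform-in-$n$ $L^2$ tail bounds together with a Billingsley-type interchange of limits (the paper packages this last step as \cite[Lemma 6]{janson1991asymptotic}). Your obstacle (ii) is handled in the paper exactly as you outline, using Bessel's inequality and the variance bound from \cite[Lemma 4]{janson1991asymptotic}.

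The only substantive difference concerns your obstacle (i). You propose to prove the joint convergence of $(A_n^{(s)})_s$ and $(C_n^{(s)})_s$ from scratch, arguing asymptotic independence ``because $\mathbb{E}[\phi_s(U_j)\psi_t(U_{j'},U_{k'},Y_{j'k'})]=0$''. This is a genuine gap as written: zero covariance does not imply asymptotic joint independence, and separate marginal CLTs for the $A_n$'s and the $C_n$'s do not combine into a joint statement without further work (e.g.\ a single multivariate martingale CLT encompassing both families, or a conditioning argument exploiting $\mathbb{E}[\psi_s\mid U_1,U_2]=0$). The paper sidesteps this entirely by invoking \cite[Lemma 8]{janson1991asymptotic}, which already delivers the joint convergence
\[
\left(\{A_n^{(s)}\}_{s\le L},\ \{B_n^{(s,t)}\}_{s,t\le L},\ \{C_n^{(s)}\}_{s\le L}\right)\ \dto\ \left(\{\eta_s\},\ \{\eta_s\eta_t-\delta_{st}\},\ \{\tilde\eta_s\}\right)
\]
with $\{\eta_s\}$ i.i.d.\ $N(0,1)$ and $\{\tilde\eta_s\}$ i.i.d.\ $N(0,2)$, the two collections independent. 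So your obstacle (i) is a solved problem in the cited literature, and once you replace your from-scratch argument by that citation, your proof and the paper's coincide.
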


\subsubsection{Proof of Proposition \ref{ppn:linearTHW}}

Fix $L \geq 1$ and define the truncated version of $ \tilde{S}_{n,|V(H_i)|}((f_{i})_{K_{\{1\}}})$ (recall \eqref{eq:sum-exchange-3}) as follows: 
\begin{align}
    \tilde{S}_{n,|V(H_i)|}^{(L)}((f_{i})_{K_{\{1\}}}) = \sum_{s = 1}^L \mathbb{E}\left[f_{i}\phi_{s}\right]\tilde{S}_{n,|V(H_i)|}\left(\phi_{s}\right) , \nonumber 
\end{align} 
for $1\leq i\leq q$. Similarly, for $q+1 \leq i \leq r$, recalling \eqref{eq:sum-exchange-2} and \eqref{eq:sum-exchange-1} define the truncated versions: 
\begin{align}
    \tilde{S}_{n,|V(H_i)|}^{(L)}((f_{i})_{E_{\{1,2\}}}) 
    & = \sum_{s = 1}^L \mathbb{E}\left[f_{i}(\phi_{s}\otimes\phi_{s})\right]\tilde{S}_{n,|V(H_i)|}\left(\phi_{s}\otimes\phi_{s}\right)\nonumber\\
    & + 2\sum_{1 \leq s < t \leq L}\mathbb{E}\left[f_{i}(\phi_{s}\otimes\phi_{t})\right]\tilde{S}_{n,|V(H_i)|}\left(\phi_{s}\otimes\phi_{t}\right), \nonumber \\ 
    \tilde{S}_{n,|V(H_i)|}^{(L)}((f_{i})_{K_{\{1,2\}}}) & = \sum_{s = 1}^L \mathbb{E}\left[f_{i}\psi_{s}\right]\tilde{S}_{n,|V(H_i)|}(\psi_{s}) . \nonumber 
\end{align} 
and for $q+1\leq i\leq r$. Now, recalling \eqref{eq:TnHiW}, define the truncated version of $T(H_i, G_n)$ as follows: 
\begin{align}\label{eq:TnLHiW}
T^{(L)}(H_i, G_n) := 
\left\{
\begin{array}{cc}
  \dfrac{\tilde{S}_{n,|V(H_i)|}^{(L)}((f_i)_{K_{\{1\}}})}{(|V(H_i)|-1)!| n^{|V(H_i)|-\frac{1}{2}}} & \text{ for } 1 \leq i \leq q , \\ \\ 
 \dfrac{ \tilde{S}_{n,|V(H_i)|}^{(L)}((f_i)_{E_{\{1,2\}}}) + \tilde{S}_{n,|V(H_i)|}^{(L)}((f_i)_{{K_{\{1,2\}}}}) }{2(|V(H_i)|-2)!| n^{|V(H_i)|-1}} & \text{ for } q+1 \leq i \leq r , 
\end{array}
\right. 
\end{align} 
and 
\begin{align}\label{eq:TnLHW}
\bm T^{(L)}(\mathcal H, G_n) = (T^{(L)}(H_1, G_n), T^{(L)}(H_2, G_n), \ldots, T^{(L)}(H_r, G_n))^\top. 
\end{align}  
The following lemma shows that $\bm T^{(L)}(\mathcal H, G_n)$ converges to a truncated version of $\bm T(\cH, W)$.

\begin{lemma}\label{lem:linearTLHW}
Fix $L \geq 1$ and let $\bm T^{(L)}(\mathcal H, G_n)$ be as defined in \eqref{eq:TnLHW}. Then the following hold as $n \rightarrow \infty$: 
\begin{align*} 
 \bm T^{(L)}(\mathcal H, G_n) \dto  \bm T^{(L)}(\mathcal H, W) , 
\end{align*} 
where $\bm T^{(L)}(\mathcal H, W) = (T_1^{(L)}, T_2^{(L)}, \ldots, T_r^{(L)})^\top$ with 
\begin{align*}
T_i^{(L)}  = & \left\{
\begin{array}{*2{>{\displaystyle}l}}
 \dfrac{1}{(|V(H_{i})|-1)!} \sum_{s =1}^L \mathbb{E}\left[f_{i}\phi_{s}\right]\eta_{s}  & \hspace{-0.65in} \text{for } 1 \leq i \leq q , \\ 
 \dfrac{ 1 }{2(|V(H_{i})|-2)!} \Bigg\{ \sum_{s = 1}^L \mathbb{E}\left[f_{i}(\phi_{s}\otimes\phi_{s})\right]\left(\eta_{s}^2 - 1\right) + 2\sum_{ 1 \leq s < t \leq L }\mathbb{E}\left[f_{i} (\phi_{s}\otimes\phi_{t})\right]\eta_{s}\eta_{t} \\ 
 \hspace{3.05in} + \sum_{s = 1}^L\mathbb{E}\left[f_{i}\psi_{s}\right]\tilde \eta_{s} \Bigg\} & \hspace{-0.65in} \text{for } q+1 \leq i \leq r, 
\end{array} 
\right. 
\end{align*}  
where $\{\eta_{s}\}_{s \geq 1}$ and $\{\tilde \eta_{s}\}_{s \geq 1}$ are independent collections of $N(0, 1)$ and $N(0, 2)$ random variables, respectively. 
\end{lemma}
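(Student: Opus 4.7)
The plan is to exploit that, for fixed $L$, the truncated vector $\bm T^{(L)}(\mathcal H, G_n)$ is a deterministic finite linear combination (indexed by $1 \le s, t \le L$) of three families of elementary sums over the basis functions $\phi_s$, $\phi_s \otimes \phi_t$, and $\psi_s$, and then to identify the joint asymptotic law of these elementary sums via a multivariate CLT. As a first step, I will sum out the unused indices in $\tilde S_{n, R}(\cdot)$ where $R = |V(H_i)|$. Since $\phi_s$ depends on only one index while $\phi_s \otimes \phi_t$ and $\psi_s$ depend on only two, this reduction gives
\begin{align*}
\tilde S_{n, R}(\phi_s) &= (n - 1)_{R - 1} \sum_{j = 1}^n \phi_s(U_j), \\
\tilde S_{n, R}(\phi_s \otimes \phi_t) &= (n - 2)_{R - 2} \sum_{j_1 \neq j_2} \phi_s(U_{j_1}) \phi_t(U_{j_2}), \\
\tilde S_{n, R}(\psi_s) &= 2 (n - 2)_{R - 2} \sum_{j_1 < j_2} \psi_s(U_{j_1}, U_{j_2}, Y_{j_1 j_2}),
\end{align*}
the last identity using the $K_{\{1, 2\}}$-symmetry of $\psi_s$. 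Using $(n - k)_{R - k}/n^{R - k} \to 1$ and the normalizations in \eqref{eq:TnLHiW}, each entry of $\bm T^{(L)}(\mathcal H, G_n)$ then reduces, up to an additive $o_P(1)$ term, to a fixed linear combination of the building blocks
\begin{align*}
A_n^{(s)} := \frac{1}{\sqrt n} \sum_{j = 1}^n \phi_s(U_j), \quad C_n^{(s, t)} := \frac{1}{n} \sum_{j_1 \neq j_2} \phi_s(U_{j_1}) \phi_t(U_{j_2}), \quad D_n^{(s)} := \frac{2}{n} \sum_{j_1 < j_2} \psi_s(U_{j_1}, U_{j_2}, Y_{j_1 j_2}).
\end{align*}

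Next, I will identify the joint limit of $(A_n, C_n, D_n)$. Orthonormality of $\{\phi_s\}_{s = 1}^L$ implies that $(\phi_1(U_1), \ldots, \phi_L(U_1))$ is centered with covariance $I_L$, so the classical multivariate CLT yields $(A_n^{(s)})_{s = 1}^L \dto (\eta_s)_{s = 1}^L \sim N_L(\bm 0, I_L)$. For the bilinear pieces, the algebraic identity $C_n^{(s, t)} = A_n^{(s)} A_n^{(t)} - n^{-1} \sum_j \phi_s(U_j) \phi_t(U_j)$ combined with the SLLN (which sends the subtracted sum to $\mathbb E[\phi_s \phi_t] = \delta_{s, t}$) and the continuous mapping theorem yields $C_n^{(s, t)} \dto \eta_s \eta_t - \delta_{s, t}$ jointly with $A_n$; this contributes the $\eta_s^2 - 1$ (for $s = t$) and $\eta_s \eta_t$ (for $s \ne t$) terms to $T_i^{(L)}$. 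For the $\psi$-piece, since $\psi_s \in M_{K_{\{1, 2\}}}$ one has $\mathbb E[\psi_s \mid U_1, U_2] = 0$, so $D_n^{(s)}$ is a completely degenerate generalized $U$-statistic. A direct second-moment calculation---pairs indexed by $(j_1, j_2)$ and $(j_1', j_2')$ contribute nontrivially only when $\{j_1, j_2\} = \{j_1', j_2'\}$, by the vanishing conditional expectations---yields $\Cov(D_n^{(s)}, D_n^{(s')}) = \frac{2(n - 1)}{n} \delta_{s, s'} \to 2 \delta_{s, s'}$, matching independent $\tilde \eta_s \sim N(0, 2)$, and $\Cov(A_n^{(s)}, D_n^{(s')}) = 0$ exactly since $\mathbb E[\phi_s(U_1) \mathbb E[\psi_{s'} \mid U_1, U_2]] = 0$.

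To upgrade these matching moments to genuine joint distributional convergence, note that $\phi_s \in M_{K_{\{1\}}}$ and $\psi_s \in M_{K_{\{1, 2\}}}$ live in \emph{orthogonal} subspaces of $L^2(K_2)$, and $A_n$ and $D_n$ are precisely the generalized $U$-statistics built from these two orthogonal projections. The joint CLT for generalized $U$-statistics of \cite{janson1991asymptotic} then delivers $(A_n, D_n) \dto (\eta, \tilde \eta)$ with $\eta \sim N_L(\bm 0, I_L)$ independent of $\tilde \eta \sim N_L(\bm 0, 2 I_L)$, precisely because orthogonal projection subspaces produce asymptotically independent Gaussian limits. Combining this with the $C_n$ limit via Slutsky and continuous mapping yields $\bm T^{(L)}(\mathcal H, G_n) \dto \bm T^{(L)}(\mathcal H, W)$ in the stated form. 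The main obstacle is exactly this final joint CLT for the hybrid pair $(A_n, D_n)$, whose two components depend on different sources of randomness (vertex variables only versus both vertex and edge variables): vanishing cross-covariances make asymptotic independence plausible, but rigorously upgrading covariance orthogonality to joint Gaussian convergence requires the full generalized $U$-statistic machinery of \cite{janson1991asymptotic}, or, alternatively, a direct Cram\'er--Wold argument combined with a conditional CLT for $D_n$ given $\{U_j\}$.
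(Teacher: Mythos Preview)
Your proposal is correct and follows essentially the same approach as the paper: both reduce the truncated $\tilde S_{n,R}$ sums to the elementary building blocks $A_n^{(s)}$, $C_n^{(s,t)}$, $D_n^{(s)}$ by summing out the unused indices, and both invoke the joint CLT for generalized $U$-statistics from \cite{janson1991asymptotic} (the paper cites \cite[Lemma 8]{janson1991asymptotic} directly) to obtain the joint convergence of all three families to $\{\eta_s\}$, $\{\eta_s\eta_t-\delta_{s,t}\}$, $\{\tilde\eta_s\}$. Your write-up is slightly more explicit---you unpack the algebraic identity $C_n^{(s,t)}=A_n^{(s)}A_n^{(t)}-n^{-1}\sum_j\phi_s(U_j)\phi_t(U_j)$ and the covariance computations for $D_n$---whereas the paper absorbs all of this into the single citation of \cite[Lemma 8]{janson1991asymptotic}, but the underlying argument is the same.
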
 

\begin{proof}
Recalling the definition of $\tilde S_{n, \cdot}(\cdot)$ from \eqref{eq:def-Sn-star} we get 
\begin{align}\label{eq:SnL}
\frac{1}{n^{|V(H_i)|-\frac{1}{2}}}\tilde S_{n,|V(H_i)|}(\phi_s) = \frac{1+o(1)}{\sqrt n }\sum_{i=1}^n \phi_s(U_i), 
\end{align} 
for $s \geq 1$ and $1 \leq i \leq q$. Similarly, for $s , t \geq 1$ and $q+1 \leq i \leq r$, 
\begin{align}\label{eq:decompSnphiphi}
\frac{1}{n^{|V(H_i)|-1}}\tilde S_{n,|V(H_i)|}(\phi_s \otimes \phi_t) = \frac{1+o(1)}{n}\sum_{1 \leq i \ne j \leq n} \phi_s(U_i) \phi_t(U_j) , 
\end{align} 
and 
\begin{align}\label{eq:decompSnpsi}
\frac{1}{n^{|V(H_i)|-1}}\tilde S_{n,|V(H_i)|}(\psi_s) = \frac{1+o(1)}{n}\sum_{1 \leq i \ne j \leq n} \psi_s(U_i, U_j, Y_{ij}) . 
\end{align} 
Now, by \cite[Lemma 8]{janson1991asymptotic} the collection,
\begin{align*}
    \left\{\left\{\frac{1}{\sqrt{n}}\sum_{i=1}^{n}\phi_s(U_i)\right\}_{s=1}^{L}, \left\{\frac{1}{n}\sum_{1 \leq i \ne j \leq n} \phi_s(U_i) \phi_t(U_j)\right\}_{s=1}^L, \left\{\frac{1}{n}\sum_{1 \leq i \ne j \leq n} \psi_s(U_i, U_j, Y_{ij})\right\}_{s=1}^{L}\right\}
\end{align*}
converges jointly to 
$$\left\{ \left\{ \eta_s \right\}_{1 \leq s \leq L}, \left\{ \eta_s \eta_t - \mathbb E[\eta_s \eta_t] \right\}_{1 \leq s, t \leq L}, \left\{ \tilde\eta_s \right\}_{1 \leq s \leq L} \right\} . $$
The result in Lemma \ref{lem:linearTLHW} then follows by recalling the definition of  $\bm T^{(L)}(\mathcal H, G_n)$ from \eqref{eq:TnLHW} and \eqref{eq:TnLHiW} and the decompositions from \eqref{eq:SnL}, \eqref{eq:decompSnphiphi} and \eqref{eq:decompSnpsi}.

\end{proof}

Now, to complete the proof of Proposition \ref{ppn:linearTHW} it suffices to show the following: 
\begin{itemize} 
\item[$(1)$] $\bm T^{(L)}(\mathcal H, G_n)$ and $ \bm T(\mathcal H, G_n)$ are asymptotically close and

\item[$(2)$] $\bm T^{(L)}(\mathcal H, W)$ converges to $\bm T(\mathcal H, W)$, as $L \rightarrow \infty$. 
\end{itemize}
These are established in the following 2 lemmas, respectively. 
\begin{lemma}\label{lemma:unif-n-convg}
Let $\bm T(\cH, G_n)$ and $\bm T^{(L)}(\cH, G_n)$ be defined in \eqref{eq:TnHW} and \eqref{eq:TnLHW}, respectively. Then 
\begin{align*}
 \lim_{L \rightarrow \infty}  \sup_{n \geq 1} \mathbb E \left[ \left\| \bm T(\cH, G_n) - \bm T^{(L)}(\cH, G_n) \right\|_2^2 \right] = 0. 
\end{align*}
\end{lemma}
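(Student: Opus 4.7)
The plan is to decompose $T(H_i, G_n) - T^{(L)}(H_i, G_n)$ as a tail of the series expansions in \eqref{eq:sum-exchange-3}--\eqref{eq:sum-exchange-1}, bound the variance of each tail by a constant (independent of $n$) times the tail of a squared-coefficient series, and then invoke Bessel's inequality \eqref{eq:finite-expansionK1}--\eqref{eq:finite-expansionK12}. The orthogonality that makes the variance computation tractable is essentially \cite[Lemma 4]{janson1991asymptotic}: the symmetrized statistics $\tilde S_{n,R}(\cdot)$, when applied to orthogonal elements of the same subspace $M_G$, produce uncorrelated random variables, and analogously across different $M_G$'s. Summing the component-wise bounds across $1 \leq i \leq r$ then yields the claim.

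For an irregular index $1 \leq i \leq q$, write $R_i := |V(H_i)|$. Since $\phi_s \in M_{K_{\{1\}}}$ is a function of $U_1$ alone, we have $\tilde S_{n,R_i}(\phi_s) = (n-1)_{R_i-1}\sum_{j=1}^n \phi_s(U_j)$, so the difference reduces to
\begin{align*}
T(H_i, G_n) - T^{(L)}(H_i, G_n) = \frac{(n-1)_{R_i-1}}{(R_i-1)!\, n^{R_i - 1/2}} \sum_{j=1}^n g_L^{(i)}(U_j), \quad g_L^{(i)} := \sum_{s > L} \mathbb E[f_i \phi_s]\, \phi_s.
\end{align*}
Because $g_L^{(i)} \in M_{K_{\{1\}}}$ has mean zero, the cross terms for $j \ne k$ vanish, and orthonormality of $\{\phi_s\}$ gives $\mathbb E[g_L^{(i)}(U)^2] = \sum_{s > L} \mathbb E[f_i \phi_s]^2$. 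Hence
\begin{align*}
\Var\!\bigl(T(H_i, G_n) - T^{(L)}(H_i, G_n)\bigr)
= \frac{n\bigl((n-1)_{R_i-1}\bigr)^2}{((R_i-1)!)^2\, n^{2R_i - 1}} \sum_{s > L} \mathbb E[f_i \phi_s]^2 \;\leq\; \frac{1}{((R_i-1)!)^2} \sum_{s > L} \mathbb E[f_i \phi_s]^2,
\end{align*}
uniformly in $n$. Bessel's inequality \eqref{eq:finite-expansionK1} then makes the right-hand side vanish as $L \to \infty$.

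For a regular index $q+1 \leq i \leq r$, I would split $T(H_i, G_n) - T^{(L)}(H_i, G_n)$ into three tails: the diagonal $\phi_s \otimes \phi_s$ terms and the off-diagonal $\phi_s \otimes \phi_t$ terms (both in $M_{E_{\{1,2\}}}$), together with the edge-full $\psi_s$ terms (in $M_{K_{\{1,2\}}}$). Since $M_{E_{\{1,2\}}}$ and $M_{K_{\{1,2\}}}$ are orthogonal, Lemma 4 of \cite{janson1991asymptotic} implies the $\tilde S_{n,R_i}$ contributions from the two subspaces are uncorrelated, so one can treat the two pieces separately. Within each piece, direct computations using the reductions $\tilde S_{n,R_i}(\phi_s \otimes \phi_t) = (n-2)_{R_i-2}\sum_{j \ne k}\phi_s(U_j)\phi_t(U_k)$ and $\tilde S_{n,R_i}(\psi_s) = (n-2)_{R_i-2}\sum_{j \ne k}\psi_s(U_j, U_k, Y_{jk})$, together with orthonormality of $\{\phi_s \otimes \phi_t\}_{s \leq t}$ and $\{\psi_s\}$, collapse the second moment into a constant (depending only on $R_i$) times the tails $\sum_{\max(s,t) > L} \mathbb E[f_i (\phi_s\otimes\phi_t)]^2$ and $\sum_{s > L} \mathbb E[f_i \psi_s]^2$. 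Bessel's inequality \eqref{eq:finite-expansionK12} controls both tails.

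The main obstacle is the bookkeeping in the regular case: one has to verify that all the cross-covariances—between the diagonal and off-diagonal parts of $M_{E_{\{1,2\}}}$, between different basis elements within each part, and between $M_{E_{\{1,2\}}}$ and $M_{K_{\{1,2\}}}$—vanish exactly at order $n^{2R_i-2}$, so that the normalization by $n^{R_i - 1}$ in \eqref{eq:TnHiW} keeps every prefactor bounded uniformly in $n$. This is precisely the content of the Janson--Nowicki orthogonality framework applied to the projections $(f_i)_{E_{\{1,2\}}}$ and $(f_i)_{K_{\{1,2\}}}$, and once written out carefully it yields an $n$-independent constant $C_i$ such that
\begin{align*}
\sup_{n \geq 1} \mathbb E\bigl[(T(H_i, G_n) - T^{(L)}(H_i, G_n))^2\bigr] \;\leq\; C_i \Bigl(\textstyle\sum_{\max(s,t) > L}\mathbb E[f_i(\phi_s\otimes\phi_t)]^2 + \sum_{s > L}\mathbb E[f_i \psi_s]^2\Bigr),
\end{align*}
which vanishes as $L \to \infty$. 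Summing these bounds over $i$ completes the argument.
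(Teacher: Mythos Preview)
Your proposal is correct and follows essentially the same approach as the paper's proof: both express the difference $T(H_i,G_n)-T^{(L)}(H_i,G_n)$ as a tail of the basis expansion, use the orthogonality of $\tilde S_{n,R}(\cdot)$ applied to orthogonal elements of a fixed $M_G$ (and across different $M_G$'s) to reduce the second moment to an $n$-uniform constant times a tail of squared coefficients, and then invoke Bessel's inequality \eqref{eq:finite-expansionK1}--\eqref{eq:finite-expansionK12}. If anything, your treatment of the regular case is more explicit than the paper's, which dispatches that case with a ``Similarly \ldots'' and writes down only the resulting tail bound.
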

\begin{proof}
Note that for $1 \leq i \leq q$, 
\begin{align}\label{eq:TnL1pf}
\mathbb E \left[ | \bm T(H_i, G_n) - \bm T^{(L)}(H_i, G_n) |^2 \right]  
 & =   \frac{1}{n^{2 |V(H_i)| -2}} \mathbb E \left[  \left |\sum_{s = L+1}^N \mathbb{E}\left[f_{i}\phi_{s}\right]\tilde{S}_{n,|V(H_i)|}\left(\phi_{s}\right) \right |^2 \right] . 
\end{align} 
By the orthogonality of the basis $\{ \phi_s\}_{s \geq 1}$, it is easy to verify that 
$\mathbb E[\tilde{S}_{n,\cdot}(\phi_s) \tilde{S}_{n,\cdot}(\phi_t)] = 0$, for $s \ne t$. Moreover, from \eqref{eq:SnL}, it follows that $\frac{1}{n^{2|V(H_i)| - 2 }}\mathbb E[\tilde S_{n,|V(H_i)|}(\phi_s)^2] = 1 + o(1)$, for $s \geq 1$. Hence, from \eqref{eq:TnL1pf} and \eqref{eq:finite-expansionK1}, 
\begin{align}\label{eq:TnL1}
\lim_{L \rightarrow \infty} \sup_{n \geq 1}\mathbb E \left[ | \bm T(H_i, G_n) - \bm T^{(L)}(H_i, G_n) |^2 \right]   & \lesssim_{H_i} \lim_{L \rightarrow \infty} \sum_{m=L+1}^{\infty}\mathbb{E}\left[f_{i}\phi_{s}\right]^2 = 0. 
\end{align}
Similarly, for $q+1 \leq i \leq r$ it can be shown that 
\begin{align}\label{eq:TnL2}
& \lim_{L \rightarrow \infty} \sup_{n \geq 1}\mathbb E \left[ | \bm T(H_i, G_n) - \bm T^{(L)}(H_i, G_n) |^2 \right]   \nonumber \\ 
& \lesssim_{H_i} \lim_{L \rightarrow \infty}  
\left\{ \sum_{s=L+1}^{\infty}\mathbb{E}\left[f_{i}(\phi_{s}\otimes\phi_{s})\right]^2
+  \sum_{t=L+1}^\infty \sum_{1 \leq s<t} \mathbb{E}\left[f_{i}(\phi_{s}\otimes\phi_{t})\right]^2 
+ \sum_{s=L+1}^{\infty}\mathbb{E}\left[f_{i}\psi_{s}\right]^2 \right\} \nonumber \\ 
& = 0 ,  
\end{align} 
by \eqref{eq:finite-expansionK12}. 

Combining \eqref{eq:TnL1} and \eqref{eq:TnL2} the proof of Lemma \ref{lemma:unif-n-convg} follows.  
\end{proof} 

\begin{lemma}\label{lemma:Z_N-convg} 
Let $\bm T(\cH, W) = (T_1, T_2, \ldots, T_r)^\top$ and $\bm T^{(L)}(\cH, W) =  (T_1^{(L)}, T_2^{(L)}, \ldots, T_r^{(L)})^\top$ be as defined in Proposition \ref{ppn:linearTHW} and Lemma \ref{lem:linearTLHW}, respectively. Then 
\begin{align*}
    \lim_{L \rightarrow \infty} \mathbb E \left[ \| \bm T^{(L)}(\cH, W) - \bm T(\cH, W) \|_2^2 \right] = 0. 
\end{align*}
\end{lemma}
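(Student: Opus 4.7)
The plan is to reduce the claim to scalar convergence for each coordinate, since
\begin{align*}
\mathbb E\|\bm T^{(L)}(\cH,W) - \bm T(\cH,W)\|_2^2 = \sum_{i=1}^r \mathbb E|T_i^{(L)} - T_i|^2,
\end{align*}
so it suffices to show $\mathbb E |T_i - T_i^{(L)}|^2 \to 0$ as $L \to \infty$ for each $1 \leq i \leq r$. Throughout I would use that each $f_i = f^{(H_i)}$ defined in \eqref{eq:def_f} is uniformly bounded (by $2|\mathscr G_{H_i}|$, since it is a difference of a sum of indicators and a constant) and hence lies in $L^2$, which is the only integrability input needed.

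For the irregular marginals $1 \leq i \leq q$, the tail $T_i - T_i^{(L)} = \frac{1}{(|V(H_i)|-1)!}\sum_{s>L}\mathbb E[f_i \phi_s]\,\eta_s$ is a convergent sum of independent mean-zero Gaussians, so by independence
\begin{align*}
\mathbb E|T_i - T_i^{(L)}|^2 = \frac{1}{((|V(H_i)|-1)!)^2}\sum_{s>L}\mathbb E[f_i\phi_s]^2,
\end{align*}
which tends to zero because Bessel's inequality \eqref{eq:finite-expansionK1} yields $\sum_{s\ge 1}\mathbb E[f_i\phi_s]^2 \le \|f_i\|_2^2 < \infty$.

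For the regular marginals $q+1 \leq i \leq r$, the difference $T_i - T_i^{(L)}$ splits into three pieces: a diagonal quadratic part $\sum_{s>L}\mathbb E[f_i(\phi_s\otimes\phi_s)](\eta_s^2-1)$, an off-diagonal quadratic part $2\sum_{s<t,\,t>L}\mathbb E[f_i(\phi_s\otimes\phi_t)]\eta_s\eta_t$, and a first-order part $\sum_{s>L}\mathbb E[f_i\psi_s]\tilde\eta_s$. The key observation is that the three families $\{\eta_s^2-1\}_{s\ge 1}$, $\{\eta_s\eta_t\}_{s<t}$ and $\{\tilde\eta_s\}_{s\ge 1}$ are pairwise orthogonal in $L^2(\mathbb P)$: the first two live in the second Hermite chaos of $(\eta_s)_s$ and satisfy $\mathbb E[(\eta_s^2-1)(\eta_t^2-1)] = 2\delta_{s,t}$, $\mathbb E[\eta_s\eta_t\,\eta_u\eta_v] = \delta_{s,u}\delta_{t,v}$ for $s<t,\,u<v$, and $\mathbb E[(\eta_s^2-1)\eta_u\eta_v] = 0$; meanwhile $\tilde\eta$ is independent of $\eta$ by construction and $\mathbb E[\tilde\eta_s\tilde\eta_t] = 2\delta_{s,t}$. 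Expanding $\mathbb E|T_i - T_i^{(L)}|^2$ using these identities, all cross terms vanish and one is left with a constant multiple of
\begin{align*}
\sum_{s>L}\mathbb E[f_i(\phi_s\otimes\phi_s)]^2 \;+\; \sum_{\substack{s<t\\ t>L}}\mathbb E[f_i(\phi_s\otimes\phi_t)]^2 \;+\; \sum_{s>L}\mathbb E[f_i\psi_s]^2,
\end{align*}
each summand being the tail of a convergent series by the Bessel-type bounds in \eqref{eq:finite-expansionK12}, and therefore vanishing as $L\to\infty$.

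The main obstacle is simply the orthogonality bookkeeping in the regular case: one has to verify that the cross-terms between the three chaos families, as well as across distinct index configurations within the quadratic family (diagonal $s=s$ versus off-diagonal $s<t$), all vanish, so that the $L^2$ error cleanly decomposes into three independent Bessel remainders. Once this is organized, the conclusion follows immediately from the square-summability contained in \eqref{eq:finite-expansionK1} and \eqref{eq:finite-expansionK12}.
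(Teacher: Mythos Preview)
Your proposal is correct and follows essentially the same approach as the paper: reduce to each coordinate, then for the irregular part use independence of the $\eta_s$ together with the Bessel bound \eqref{eq:finite-expansionK1}, and for the regular part split into the diagonal quadratic, off-diagonal quadratic, and $\tilde\eta$ pieces and use orthogonality together with \eqref{eq:finite-expansionK12}. The only cosmetic difference is that you exploit the $L^2(\mathbb P)$-orthogonality of the three chaos families to decompose the squared error exactly, whereas the paper simply shows each of the three tail pieces vanishes in $L^2$ separately and then combines them via the triangle inequality; both routes are equivalent here.
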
 

\begin{proof} 

For $1\leq i\leq q$, by \cite[Lemma 8]{laxfunctional} we get,
\begin{align}\label{eq:LK1}
\mathbb E |T_i^{(L)} - T_i| \lesssim_{H_i} \mathbb E \left|\sum_{s = L+1}^{\infty}\mathbb{E}\left[f_{i}\psi_{s}\right] \eta_{s}\right|^{2} \leq \sum_{s=L+1}^\infty \mathbb{E}\left[f_{i}\psi_{s}\right]^2 \rightarrow 0 , 
\end{align}
as $L \rightarrow \infty$, by \eqref{eq:finite-expansionK1}. Similarly, for $q+1\leq i\leq r$,
\begin{align}\label{eq:LK12}
    \mathbb E \left|\sum_{s = L+1}^\infty \mathbb{E}\left[f_{i}\phi_{s}\right] \tilde{\eta}_{s} \right|^{2} \rightarrow 0 , 
\end{align} 
as $L \rightarrow \infty$, by \eqref{eq:finite-expansionK12}. Also,
since $\{\eta_{s}^2-1: s\geq 1\}$ are orthogonal and $\mathbb{E}(\eta_{s}^2-1)^2=2$,
\begin{align}\label{eq:LE12s}
  \mathbb E \left|\sum_{ s = L+1}^\infty \mathbb{E}\left[f_{i}(\phi_{s}\otimes\phi_{s})\right]\left(\eta_{s}^2 - 1\right)\right|^2 \leq  2\sum_{s= L+1}^\infty \mathbb{E}\left[f_{i}(\phi_{s}\otimes\phi_{s})\right]^2 \rightarrow 0 . 
\end{align} 
Once again by definition $\{\eta_{s}\eta_{t}: s < t\}$ are orthogonal and $\mathbb{E}\eta_{s}^2\eta_{t}^2=1$. Hence, as above, 
\begin{align}\label{eq:LE12st}
    \mathbb E \left| \sum_{t=1}^\infty \sum_{1 \leq s < t }\mathbb{E}\left[f_{i}(\phi_{s}\otimes\phi_{t})\right]\eta_{s}\eta_{t}  -  \sum_{t = L+1}^\infty \sum_{1 \leq s < t} \mathbb{E}\left[f_{i}(\phi_{s}\otimes\phi_{t})\right]\eta_{s}\eta_{t} \right| \rightarrow 0 , 
\end{align} 
as $L \rightarrow \infty$. Combining \eqref{eq:LK12}, \eqref{eq:LE12s}, and \eqref{eq:LE12st}, we get $\mathbb E |T_i^{(L)} - T_i|^2 \rightarrow 0$, as $L \rightarrow \infty$, for $q+1 \leq i \leq r$. This together with \eqref{eq:LK1} completes the proof of Lemma \ref{lemma:Z_N-convg}.
\end{proof}

Combining Lemma \ref{lem:linearTLHW}, Lemma \ref{lemma:unif-n-convg}, and Lemma \ref{lemma:Z_N-convg} along with 
\cite[Lemma 6]{janson1991asymptotic} the result in Proposition \ref{ppn:linearTHW} follows. \hfill $\Box$

\subsection{Equivalence of \texorpdfstring{$\bm{T}$}{T}$(\mathcal{H}, W)$ and \texorpdfstring{$\bm{Z}$}{Z}$(\mathcal{H}, W)$} 
\label{sec:equivalenceasymptoticdistribution}

For $1 \leq i \leq q$, define  
\begin{align}\label{eq:TQ}
Q_i  = \dfrac{1}{(|V(H_{i})|-1)!} \sum_{s \geq 1}\mathbb{E}\left[f_{i}\phi_{s}\right]\eta_{s} . 
 \end{align}
Also, for $q+1 \leq i \leq r$, define 
\begin{align}\label{eq:TRE12}
R_i & = \dfrac{ 1 }{2(|V(H_{i})|-2)!} \left\{ \sum_{s \geq 1}\mathbb{E}\left[f_{i}(\phi_{s}\otimes\phi_{s})\right]\left(\eta_{s}^2 - 1\right) + 2\sum_{s < t }\mathbb{E}\left[f_{i} (\phi_{s}\otimes\phi_{t})\right]\eta_{s}\eta_{t} \right\} ,  \\ 
\tilde R_i & = \dfrac{ 1 }{2(|V(H_{i})|-2)!} \sum_{s \geq 1}\mathbb{E}\left[f_{i}\psi_{s}\right]\tilde \eta_{s} , \label{eq:TRK12} 
\end{align}  
where $\{\eta_{s}\}_{s \geq 1}$ and $\{\tilde \eta_{s}\}_{s \geq 1}$ are independent collections of $N(0, 1)$ and $N(0, 2)$ random variables, respectively. Then recalling the definition of $T_i$ from Proposition \ref{ppn:linearTHW}, note that  
\begin{align*}
T_i  =  \left\{
\begin{array}{cc} 
Q_i  & \text{for } 1 \leq i \leq q , \\ 
R_i + \tilde R_i & \text{for } q+1 \leq i \leq r . 
\end{array} 
\right. 
\end{align*}  
Denote $\bm Q = (Q_1, Q_2, \ldots, Q_q)^\top$, $\bm R = (R_{q+1}, R_{q+2}, \ldots, R_r)^\top$ and $\tilde{\bm R} = (\tilde R_{q+1}, \tilde R_{q+2}, \ldots, \tilde R_r)^\top$.
In the following 2 propositions we identify $\bm Q$, $\bm R$, $\tilde{\bm R}$ with their corresponding components in $\bm Z(\cH, W)$ (as defined in Theorem \ref{thm:asymp-joint-dist}). 

\begin{prop}\label{ppn:R}
Let $\tilde {\bm R} = (\tilde R_{q+1}, \tilde R_{q+2}, \ldots, \tilde R_r)^\top$ be as defined in \eqref{eq:TRK12}. Then the following hold: 
\begin{align*}
  \tilde{\bm R} \sim N_{r-q}\left(\bm{0},\Sigma\right) , 
\end{align*}
where $\Sigma$ is as defined in Theorem \ref{thm:asymp-joint-dist}.
\end{prop}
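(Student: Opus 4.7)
The plan is to recognize $\tilde{\bm R}$ as a zero-mean Gaussian vector (being a linear combination of the independent Gaussians $\{\tilde\eta_s\}$) and then show that its covariance matrix equals $\Sigma$ by expanding via Parseval and matching with the graph-join densities in \eqref{eq:Gvariance}.

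First I would note that $\tilde \eta_s\sim N(0,2)$ are independent, so $\E[\tilde\eta_s\tilde\eta_t]=2\delta_{st}$, whence
\[
\Cov(\tilde R_i,\tilde R_j)=\frac{1}{2(|V(H_i)|-2)!\,(|V(H_j)|-2)!}\sum_{s\geq 1}\E[f_i\psi_s]\,\E[f_j\psi_s].
\]
By Lemma \ref{lm:fHiprojection} and Parseval applied to the basis $\{\psi_s\}$ of the $K_{\{1,2\}}$-symmetric functions in $M_{K_{\{1,2\}}}$, the inner sum equals $\E[(f_i)_{K_{\{1,2\}}}(f_j)_{K_{\{1,2\}}}]=\E[(f_i)_{K_{\{1,2\}}}f_j]$, since $(f_j)_{K_{\{1,2\}}}$ is the orthogonal projection of $f_j$ onto $M_{K_{\{1,2\}}}$ and $(f_i)_{K_{\{1,2\}}}$ already lies in that subspace. (Both projections are $K_{\{1,2\}}$-symmetric because $f_i,f_j$ are fully symmetric in their arguments.)

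The next step is a direct computation of $(f_i)_{K_{\{1,2\}}}$. Setting $M_{12}:=\one\{Y_{12}\le W(U_1,U_2)\}-W(U_1,U_2)$ and conditioning on $(U_1,U_2,Y_{12})$ in \eqref{eq:f-decompK12} shows that only those $H'\in\mathscr G_{H_i}$ containing the edge $(1,2)$ contribute, yielding
\[
(f_i)_{K_{\{1,2\}}}=M_{12}\cdot B_i(U_1,U_2),\qquad B_i(x,y):=\sum_{H'\in\mathscr G_{H_i}:\,(1,2)\in E(H')}\frac{t_{1,2}(x,y,H',W)}{W(x,y)}.
\]
Applying the same expansion to $\E[f_j\mid U_1,U_2,Y_{12}]$ and using $\E[M_{12}\one\{Y_{12}\le W\}\mid U_1,U_2]=W(1-W)$, a short conditioning argument gives $\E[M_{12}f_j\mid U_1,U_2]=W(1-W)B_j(U_1,U_2)$, and hence
\[
\E[(f_i)_{K_{\{1,2\}}}f_j]=\E\!\left[(W-W^2)B_iB_j\right]=\sum_{H',H''}\Big[t(H'\cup_w H'',W)-t(H'\cup_s H'',W)\Big],
\]
where the sum runs over $H'\in\mathscr G_{H_i},H''\in\mathscr G_{H_j}$ with $(1,2)\in E(H')\cap E(H'')$, and $\cup_w,\cup_s$ denote gluing at the edge $(1,2)$ keeping one or both copies of that edge, respectively.

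Finally, I would perform the combinatorial bookkeeping that converts this double sum over labeled copies into the sum over ordered edge pairs appearing in \eqref{eq:Gvariance}. For each $(a,b)\in E^+(H_i)$, the bijections $\sigma_i:V(H_i)\to\{1,\dots,|V(H_i)|\}$ with $\sigma_i(a)=1,\sigma_i(b)=2$ number $(|V(H_i)|-2)!$, and each resulting $H'\in\mathscr G_{H_i}$ carrying the edge $(1,2)$ is produced by exactly $|\Aut(H_i)|$ such pairs $(\sigma_i,(a,b))$; performing the analogous count for $H_j$, the $(1,2)$-gluings become precisely the weak and strong edge joins $H_i\bigominus_{(a,b),(c,d)}H_j$ and $H_i\bigoplus_{(a,b),(c,d)}H_j$. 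Combining the counts yields
\[
\E[(f_i)_{K_{\{1,2\}}}f_j]=2(|V(H_i)|-2)!\,(|V(H_j)|-2)!\,\sigma_{ij},
\]
and substituting into the formula for $\Cov(\tilde R_i,\tilde R_j)$ gives exactly $\sigma_{ij}$, completing the proof. The main obstacle is the combinatorial step of converting the sum over labeled copies $\mathscr G_{H_i}\times\mathscr G_{H_j}$ meeting at a prescribed edge into the ordered sum over $E^+(H_i)\times E^+(H_j)$ in \eqref{eq:Gvariance}, keeping track of the factors $|\Aut(H_i)|,|\Aut(H_j)|$ correctly.
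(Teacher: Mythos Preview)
Your proof is correct and follows essentially the same route as the paper: identify $\Cov(\tilde R_i,\tilde R_j)$ via Parseval as (a constant times) $\E[(f_i)_{K_{\{1,2\}}}(f_j)_{K_{\{1,2\}}}]$, compute the projection $(f_i)_{K_{\{1,2\}}}$ explicitly by conditioning on $(U_1,U_2,Y_{12})$, and then do the combinatorial bookkeeping to convert the double sum over $\mathscr G_{H_i,\{1,2\}}\times\mathscr G_{H_j,\{1,2\}}$ into the sum over $E^+(H_i)\times E^+(H_j)$ in \eqref{eq:Gvariance}. The paper packages the last step as Lemma~\ref{lemma:covariance-structure} and reaches the same identity you state, $\E[(f_i)_{K_{\{1,2\}}}(f_j)_{K_{\{1,2\}}}]=2(|V(H_i)|-2)!(|V(H_j)|-2)!\,\sigma_{ij}$.

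One small point of hygiene: your definition $B_i(x,y)=\sum_{H'} t_{1,2}(x,y,H',W)/W(x,y)$ is ill-defined on $\{W=0\}$. The paper avoids this by working directly with $t^-_{1,2}(x,y,H',W)$, the conditional density with the edge $(1,2)$ removed; since $t_{1,2}=W\cdot t^-_{1,2}$ when $(1,2)\in E(H')$, your $B_i$ equals $\sum_{H'} t^-_{1,2}$ wherever $W\neq 0$, and the product $M_{12}B_i$ vanishes on $\{W=0\}$ anyway. Replacing $t_{1,2}/W$ by $t^-_{1,2}$ makes the argument clean without changing anything else.
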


\begin{prop}\label{ppn:QR} Let $\bm Q = (Q_1, Q_2, \ldots, Q_q)^\top$ and ${\bm R} = (R_{q+1}, R_{q+2}, \ldots, R_r)^\top$ be as defined in \eqref{eq:TQ} and \eqref{eq:TRK12}, respectively. Suppose $\{B_{t}\}_{t\in [0,1]}$ be the standard Brownian motion in $[0, 1]$. Then the following hold: 
\begin{align}\label{eq:QBx}
\bm Q \stackrel{D} = \left( \dfrac{1}{|\mathrm{Aut}(H_{i})|}  \int_{0}^{1} \left\{ \sum_{a=1}^{|V(H_{i})|} \left( t_{a}(x,H_{i},W) - t(H_{i},W) \right) \right\} \mathrm d B_{x} \right)_{1 \leq i \leq q}, 
\end{align} 
and 
\begin{align}\label{eq:RBx}
\bm R \stackrel{D} = \left(\int_{0}^{1}\int_{0}^{1} \left\{ W_{H_{i}}(x,y) - \dfrac{|V(H_{i})|\left(|V(H_{i})-1|\right)}{2|\mathrm{Aut}(H_{i})|}t(H_{i},W) \right \} \mathrm d B_{x}\mathrm d B_{y} \right)_{ q+1 \leq i \leq r} . 
\end{align}
\end{prop}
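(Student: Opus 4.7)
The plan is to realize $\bm Q$ and $\bm R$ as Wiener-It\^o integrals against a single Brownian motion $\{B_t\}_{t \in [0,1]}$ that encodes the Gaussian variables $\{\eta_s\}_{s \geq 1}$ already present in the definitions \eqref{eq:TQ}--\eqref{eq:TRE12}. Since $\{\phi_s\}_{s\geq 1}$ is an orthonormal basis of $M_{K_{\{1\}}}$, completing it by $\phi_0\equiv 1$ yields an orthonormal basis of $L^2[0,1]$, and I would take $B_t := \eta_0 t + \sum_{s\geq 1}\eta_s \int_0^t \phi_s(u)\,\mathrm{d}u$ with an auxiliary $\eta_0\sim N(0,1)$ independent of $\{\eta_s\}_{s\geq 1}$; then $B$ is standard and $\eta_s = \int_0^1 \phi_s(u)\,\mathrm{d}B_u$ for each $s\geq 0$. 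Under this construction, the classical orthonormal expansions of Wiener-It\^o integrals give, for every $g\in L^2[0,1]$ with $\int g=0$ and every symmetric $K \in L^2([0,1]^2)$ with vanishing marginals,
$$\int_0^1 g(x)\,\mathrm{d}B_x = \sum_{s\geq 1}\langle g,\phi_s\rangle\,\eta_s, \qquad \int_0^1\!\!\int_0^1 K(x,y)\,\mathrm{d}B_x\,\mathrm{d}B_y = \sum_{s\geq 1}\langle K,\phi_s\otimes\phi_s\rangle\,(\eta_s^2-1) + 2\!\!\sum_{1\leq s<t}\!\!\langle K,\phi_s\otimes\phi_t\rangle\,\eta_s\eta_t.$$
Thus \eqref{eq:QBx}, \eqref{eq:RBx} reduce to computing the conditional expectations $\mathbb E[f_i|U_1]$ and $\mathbb E[f_i|U_1,U_2]$ in closed form in terms of $t_a$ and $t_{a,b}$, after which matching coefficients yields the stated integral representations.

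For \eqref{eq:QBx}, I would evaluate $\mathbb E[f_i|U_1=x]$ directly from \eqref{eq:def_f} by parametrizing each $H'\in\mathscr G_{H_i}$ as $\sigma(H_i)$ over bijections $\sigma\colon V(H_i)\to\{1,\ldots,|V(H_i)|\}$, with every $H'$ produced by exactly $|\mathrm{Aut}(H_i)|$ such $\sigma$. The conditional expectation of the $H'$-edge product given $U_1=x$ equals $t_{\sigma^{-1}(1)}(x, H_i, W)$, and partitioning the $|V(H_i)|!$ bijections according to $\sigma^{-1}(1) = a\in V(H_i)$ gives
$$\mathbb E[f_i | U_1 = x] = \frac{(|V(H_i)|-1)!}{|\mathrm{Aut}(H_i)|}\left[\sum_{a=1}^{|V(H_i)|} t_a(x, H_i, W) - |V(H_i)|\, t(H_i, W)\right].$$
Since Lemma \ref{lm:fHiprojection} yields $\mathbb E[f_i\phi_s] = \langle\mathbb E[f_i|U_1],\phi_s\rangle$, substituting into \eqref{eq:TQ} and cancelling $(|V(H_i)|-1)!$ gives $Q_i = \int_0^1 g_i(x)\,\mathrm{d}B_x$ with $g_i$ exactly the integrand of \eqref{eq:QBx}, proving the first identity.

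The proof of \eqref{eq:RBx} is analogous but two-dimensional. Grouping bijections $\sigma$ by the ordered pair $(\sigma^{-1}(1),\sigma^{-1}(2))=(a,b)\in V(H_i)^2$ with $a\neq b$ and invoking the definition \eqref{eq:WH} of $W_{H_i}$, a parallel computation yields
$$\mathbb E[f_i | U_1=x, U_2=y] = 2(|V(H_i)|-2)!\left[W_{H_i}(x,y) - \frac{|V(H_i)|(|V(H_i)|-1)}{2|\mathrm{Aut}(H_i)|}\, t(H_i,W)\right] =: 2(|V(H_i)|-2)!\,K_i(x,y),$$
with $K_i$ exactly the symmetric kernel inside the double integral of \eqref{eq:RBx}; moreover, \eqref{eq:degreeWH} together with the $H_i$-regularity of $W$ implies $\int K_i(x,y)\,\mathrm{d}y = 0$ for a.e.\ $x$, which is the marginal-vanishing condition needed for the double-integral expansion above. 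Hence $\mathbb E[f_i(\phi_s\otimes\phi_t)] = 2(|V(H_i)|-2)!\,\langle K_i,\phi_s\otimes\phi_t\rangle$; plugging into \eqref{eq:TRE12} and cancelling $2(|V(H_i)|-2)!$ produces $R_i = \int_0^1\!\!\int_0^1 K_i(x,y)\,\mathrm{d}B_x\,\mathrm{d}B_y$. Joint equality in distribution across $i$ is automatic because every coordinate of $\bm Q$ and $\bm R$ is expressed against the \emph{same} Brownian motion $B$, so the Gaussian dependence encoded in $\{\eta_s\}$ is preserved.

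The main technical subtlety I anticipate is the combinatorial bookkeeping when summing $t_a$ and $t_{a,b}$ over the isomorphism classes $\mathscr G_{H_i}$: one must carefully track the $|\mathrm{Aut}(H_i)|$-fold overcounting in the $\sigma\mapsto\sigma(H_i)$ parametrization so that the factors $(|V(H_i)|-1)!/|\mathrm{Aut}(H_i)|$ and $(|V(H_i)|-2)!/|\mathrm{Aut}(H_i)|$ precisely cancel the factorials in \eqref{eq:TQ}, \eqref{eq:TRE12} and produce the integrands of \eqref{eq:QBx}, \eqref{eq:RBx} with no stray constants. Once this double counting is pinned down, the remainder is a routine application of the orthonormal expansion of single and double Wiener-It\^o integrals.
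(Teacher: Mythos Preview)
Your proposal is correct and follows essentially the same route as the paper. The paper goes in the reverse direction---starting with a Brownian motion, it sets $\eta_s := I_1(\phi_s)$ and invokes Proposition~\ref{prop:I2-f_E} together with \eqref{eq:fgstochasticintegral} to obtain the same orthonormal expansions you wrote down, then computes $(f_i)_{K_{\{1\}}}(x)=\mathbb E[f_i\mid U_1=x]$ and $(f_i)_{E_{\{1,2\}}}(x,y)=\mathbb E[f_i\mid U_1=x,U_2=y]$ by exactly the permutation-grouping argument you describe (see \eqref{eq:f_K1-non-reg} and Lemma~\ref{lemma:f_E12-expression}); your explicit construction of $B$ from $\{\eta_s\}$ with an auxiliary $\eta_0$ is the inverse of this identification and works equally well.
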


The proofs of Proposition \ref{ppn:R} and Proposition \ref{ppn:QR} are given in Section \ref{sec:Rpf} and Section \ref{sec:QRpf}, respectively.  These 2 results combined establishes the equivalence of $\bm{T}(\mathcal{H}, W)$ and $\bm{Z}(\mathcal{H}, W)$.

\subsubsection{Proof of Proposition \ref{ppn:R}}
\label{sec:Rpf}

Fix $L \geq 1$ and define the truncated version of $\tilde R_i$ as follows: 
\begin{align*}
\tilde R_i^{(L)}  = \dfrac{1}{2(|V(H_{i})|-2)!} \sum_{s = 1}^L\mathbb{E}\left[f_{i}\psi_{s}\right]\tilde \eta_{s} , 
 \end{align*} 
 for $q+1 \leq i \leq r$. Denote $\tilde{\bm R}^{(L)} = (\tilde R_{q+1}^{(L)}, \tilde R_{q+2}^{(L)}, \ldots, \tilde R_r^{(L)})^\top$. Then from \eqref{eq:finite-expansionK12} it is follows that 
\begin{align}\label{eq:L2norm-vector-convg}
  \lim_{L \rightarrow \infty}  \mathbb E \left\| \tilde{\bm R} - \tilde{\bm R}^{(L)} \right\|_{2}^{2} = 0 . 
\end{align} 
Since the collection $\{\eta_s\}_{s \geq 1}$ are independent $N(0, 2)$, 
$$\tilde{\bm R}^{(L)} \stackrel{D} = N_{r-q}(0, \Gamma^{(L)}) , $$ 
where $\Gamma^{(L)} = ((\gamma_{ij}^{(L)}))_{q+1 \leq i, j \leq r}$ is given by 
$$
\gamma_{ij}^{(L)} = 
\left\{
\begin{array}{cc} 
\dfrac{1}{2(|V(H_{i})|-2)!^2} \sum_{s = 1}^L\mathbb{E}\left[f_{i}\psi_{s}\right]^2 & \text{for } q+1 \leq i = j \leq r , \\ 
\dfrac{1}{2(|V(H_{i})|-2)! (|V(H_{j})|-2)!} \sum_{s = 1}^L\mathbb{E}\left[f_{i}\psi_{s}\right] \mathbb{E}\left[f_{j}\psi_{s}\right] & \text{for } q+1 \leq i \ne j \leq r. 
\end{array} 
\right. 
$$
By computing characteristic functions and recalling \eqref{eq:finite-expansionK12} one has $\tilde{\bm R}^{(L)} \dto N_{r-q}(0, \Gamma)$, as $L \rightarrow \infty$, where 
$\Gamma = ((\gamma_{ij}))_{q+1 \leq i, j \leq r}$ is given by 
$$
\gamma_{ij} = 
\left\{
\begin{array}{cc} 
\dfrac{1}{2(|V(H_{i})|-2)!^2}  \mathbb E[ (f_i)_{K_{\{1, 2\} }}^2 ]  & \text{for } q+1 \leq i = j \leq r , \\ 
\dfrac{1}{2(|V(H_{i})|-2)! (|V(H_{j})|-2)!} \mathbb E[ (f_i)_{K_{\{1, 2\} }} (f_j)_{K_{\{1, 2\} }} ]  & \text{for } q+1 \leq i \ne j \leq r. 
\end{array} 
\right. 
$$
Here, we use the identity 
$\sum_{s = 1}^\infty \mathbb{E}\left[f_{i}\psi_{s}\right]^2 = \mathbb E[ (f_i)_{K_{\{1, 2\}}}^2]$ (recall \eqref{eq:finite-expansionK12}) and 
$$\sum_{s = 1}^\infty \mathbb{E}\left[f_{i}\psi_{s}\right] \mathbb{E}\left[f_{j}\psi_{s}\right] = \mathbb E[ (f_i)_{K_{\{1, 2\} }} (f_j)_{K_{\{1, 2\} }} ] , $$ 
which follows from the expansion \eqref{eq:fprojectionK12} and the orthogonality of the functions $\{ \psi_s \}_{s \geq 1}$.

The proof of Proposition now follows from Lemma \ref{lemma:covariance-structure}, which shows that the matrix $\Gamma$ is same as the matrix $\Sigma$ in Theorem \ref{thm:asymp-joint-dist}. \hfill $\Box$ 

\begin{lemma}\label{lemma:covariance-structure}
For all $q+1 \leq i,j\leq r$,
\begin{align*}
    \mathbb{E}\left[(f_{i})_{K_{\{1,2\}}}f_{j,K_{\{1,2\}}}\right] 
    =& \dfrac{(|V(H_{i})|-2)!(|V(H_{j})|-2)!}{\left|\mathrm{Aut}(H_{i})\right|\left|\mathrm{Aut}(H_{j})\right|}\\
    &\sum_{\substack{(a,b)\in E^{+}(H_{i})\\(c,d)\in E^{+}(H_{j})}}\left(t\left(H_{i}\bigominus_{(a,b),(c,d)}H_{j},W\right) - t\left(H_{i}\bigoplus_{(a,b),(c,d)}H_{j},W\right)\right) . 
\end{align*}
\end{lemma}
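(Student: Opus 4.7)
The plan is to start from the identity $(f_i)_{K_{\{1,2\}}}=\mathbb{E}[f_i\mid U_1,U_2,Y_{12}]-\mathbb{E}[f_i\mid U_1,U_2]$ established in \eqref{eq:fK12asEU}, and compute the conditional expectations term-by-term over the expansion of $f_i$ from \eqref{eq:def_f}. For a single term indexed by $H'\in\mathscr{G}_{H_i}$, the product $\prod_{(a,b)\in E(H')}\mathbf{1}\{Y_{ab}\leq W(U_a,U_b)\}$ depends on $Y_{12}$ only when $(1,2)\in E(H')$, so the difference of the two conditional expectations kills every term with $(1,2)\notin E(H')$. For the surviving terms, I factor out the $(1,2)$-edge indicator to obtain
\[
(f_i)_{K_{\{1,2\}}}=\xi_{12}\sum_{\substack{H'\in\mathscr{G}_{H_i}\\(1,2)\in E(H')}}g_{H'}(U_1,U_2),
\qquad \xi_{12}:=\mathbf{1}\{Y_{12}\leq W(U_1,U_2)\}-W(U_1,U_2),
\]
where $g_{H'}(U_1,U_2)=\mathbb{E}\bigl[\prod_{(a,b)\in E(H')\setminus\{(1,2)\}}W(U_a,U_b)\mid U_1,U_2\bigr]$.

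Next, I multiply $(f_i)_{K_{\{1,2\}}}$ by $(f_j)_{K_{\{1,2\}}}$ (using independent dummy variables for the vertex labels of $H_j$), take the expectation, and use the conditional variance $\mathbb{E}[\xi_{12}^2\mid U_1,U_2]=W(U_1,U_2)(1-W(U_1,U_2))$. This produces
\[
\mathbb{E}\bigl[(f_i)_{K_{\{1,2\}}}(f_j)_{K_{\{1,2\}}}\bigr]=\sum_{\substack{H'\in\mathscr{G}_{H_i},\,(1,2)\in E(H')\\H''\in\mathscr{G}_{H_j},\,(1,2)\in E(H'')}}\Bigl\{\mathbb{E}[W(U_1,U_2)\,g_{H'}g_{H''}]-\mathbb{E}[W(U_1,U_2)^2\,g_{H'}g_{H''}]\Bigr\}.
\]
The two expectations are precisely the homomorphism densities of the weak and strong edge joins of $H'$ and $H''$ along the common edge $(1,2)$, respectively, giving $t(H'\bigominus_{(1,2),(1,2)} H'',W)-t(H'\bigoplus_{(1,2),(1,2)} H'',W)$.

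The remaining (and most careful) step is a combinatorial reindexing that converts the sum over labeled subgraphs $(H',H'')$ containing the edge $(1,2)$ into the desired sum over oriented edges $(a,b)\in E^+(H_i)$ and $(c,d)\in E^+(H_j)$. The key counting identity is that for each $(a,b)\in E^+(H_i)$, the number of injective labelings $\phi:V(H_i)\to\{1,\ldots,|V(H_i)|\}$ with $\phi(a)=1,\phi(b)=2$ is $(|V(H_i)|-2)!$, while every fixed $H'\in\mathscr{G}_{H_i}$ with $(1,2)\in E(H')$ arises from exactly $|\mathrm{Aut}(H_i)|$ isomorphisms $H_i\to H'$ (each producing a unique oriented edge $(\phi^{-1}(1),\phi^{-1}(2))\in E^+(H_i)$). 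Thus summing $1$ over labeled pairs $(\phi,\psi)$ and pushing forward via the isomorphism invariance $t(\phi(H_i)\bigominus_{(1,2),(1,2)}\psi(H_j),W)=t(H_i\bigominus_{(a,b),(c,d)}H_j,W)$ yields the announced factor $(|V(H_i)|-2)!(|V(H_j)|-2)!/(|\mathrm{Aut}(H_i)||\mathrm{Aut}(H_j)|)$. The same bookkeeping applies verbatim to the strong edge join term, and the two identities combine to give the claim.

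The main obstacle is the final combinatorial identification: one has to be careful to distinguish between unlabeled subgraphs $H'\in\mathscr{G}_{H_i}$ and labelings $\phi:V(H_i)\to V(H')$, and to verify that the weighting $w((a,b),H')$ counting isomorphisms sending $(a,b)\mapsto(1,2)$ satisfies the two complementary marginal sums $\sum_{H'}w((a,b),H')=(|V(H_i)|-2)!$ and $\sum_{(a,b)}w((a,b),H')=|\mathrm{Aut}(H_i)|$. Once this double-counting is set up correctly, the rest is a direct calculation.
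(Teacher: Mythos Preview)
Your proposal is correct and follows essentially the same route as the paper: both compute $(f_i)_{K_{\{1,2\}}}$ via \eqref{eq:fK12asEU}, observe that only terms $H'\in\mathscr{G}_{H_i}$ with $(1,2)\in E(H')$ survive, factor out $\xi_{12}$, use $\mathbb{E}[\xi_{12}^2\mid U_1,U_2]=W(1-W)$ to identify the weak/strong edge-join densities, and then reindex combinatorially. Your double-counting via the weighting $w((a,b),H')$ is a slightly cleaner packaging of the paper's two-step argument (which introduces an auxiliary $\underline{t}$ notation and considers permutations $\pi_{(a,b)}$ and isomorphisms $\tau_{H_1}$ separately), but the content is identical.
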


\begin{proof} 
For $q+1 \leq i \leq r$, from Lemma \ref{lm:fHiprojection} we have 
\begin{align*}
    (f_{i})_{K_{\{1,2\}}} & = \mathbb{E}\left[f_{i}\middle| U_{1},U_{2},Y_{12}\right] - \mathbb{E}\left[f_{i}\middle| U_{1},U_{2}\right] \nonumber \\ 
    & = \sum_{H'\in\mathscr{G}_{H_{i},\{1,2\}}} t^-_{1,2}(U_{1},U_2,H',W) , 
\left(\one\{Y_{12}\leq W(U_{1},U_{2})\} -W(U_1, U_2)\right).
\end{align*}
where $\mathscr{G}_{H_{i},\{1,2\}} := \left\{H'\in \mathscr{G}_{H_{i}}:(1,2)\in E(H')\right\}$ for all $q+1\leq i\leq r$ and,
\begin{align*}
    t_{1,2}^{-}\left(U_1, U_2, H', W\right) = \E\left[\prod_{(i,j)\in E(H')\setminus\{(1,2)\}}W(U_i, U_j)\middle|U_1, U_2\right]\text{ for all }H'\in\mathscr{G}_{H_{i},\{1,2\}}.
\end{align*}
Then for $q+1 \leq i,j\leq r$, 
\begin{align}\label{eq:Cov-fifj-1}
    \mathbb{E}
    &\left[(f_{i})_{K_{\{1,2\}}}f_{j,K_{\{1,2\}}}\right]\nonumber\\
    & = \sum_{\substack{H_{1}\in\mathscr{G}_{H_{i},\{1,2\}}\\ H_{2}\in\mathscr{G}_{H_{j,\{1,2\}}}}}\mathbb{E}\bigg[t^-_{1,2}(U_{1},U_2,H_{1},W)t^-_{1,2}(U_{1},U_2,H_{2},W)\left(\one\{Y_{12}\leq W(U_{1},U_{2})\} -W(U_1, U_2)\right)^2\bigg]\nonumber\\
    & = \sum_{\substack{H_{1}\in\mathscr{G}_{H_{i},\{1,2\}}\\ H_{2}\in\mathscr{G}_{H_{j,\{1,2\}}}}}\mathbb{E}\bigg[t^-_{1,2}(U_{1},U_2,H_{1},W)t^-_{1,2}(U_{1},U_2,H_{2},W)W(U_{1},U_{2})(1-W(U_{1},U_{2}))\bigg]\nonumber\\
    & = \sum_{\substack{H_{1}\in\mathscr{G}_{H_{i},\{1,2\}}\\ H_{2}\in\mathscr{G}_{H_{j,\{1,2\}}}}}\left(t\left(H_{1}\bigominus_{(1,2),(1,2)}H_{2},W\right) - t\left(H_{1}\bigoplus_{(1,2),(1,2)}H_{2},W\right)\right) , 
\end{align}
recalling the join operations from Definition \ref{defn:H1H2ab}. Now, considering $V_{H_{\ell}}^{2} = \left\{(a,b)\in V(H_{\ell}):a\neq b\right\}$ for $\ell \in \{i,j\}$ define,
\begin{align*}
    \underline{t}\left(H_{1}\bigominus_{(a,b),(c,d)} H_{2},W\right) =  
    t\left(H_{1}\bigominus_{(a,b),(c,d)} H_{2},W\right) \bm 1\{(a,b) \in E^{+}(H_{1}) \text{ and }(c,d) \in E^{+}(H_{2})\}
 \end{align*} 
and similarly, 
\begin{align*}
\underline{t}\left(H_{1}\bigoplus_{(a,b),(c,d)} H_{2},W\right) = 
t\left(H_{1}\bigoplus_{(a,b),(c,d)} H_{2},W \right) \bm 1\{(a,b) \in E^{+}(H_{1}) \text{ and }(c,d) \in E^{+}(H_{2})\} .
\end{align*}
where $(a,b)\in V_{H_{i}}^{2}, (c,d)\in V_{H_{j}}^{2}$ and $H_{1}\in \mathscr{G}_{H_{i}}$ and $H_{2}\in \mathscr{G}_{H_{j}}$. Then we can rewrite \eqref{eq:Cov-fifj-1} as,
\begin{align}\label{eq:Cov-lowbar}
    \mathbb{E}
    &\left[(f_{i})_{K_{\{1,2\}}}f_{j,K_{\{1,2\}}}\right] = \sum_{\substack{H_{1}\in\mathscr{G}_{H_{i}}\\ H_{2}\in\mathscr{G}_{H_{j}}}}\left(\underline{t}\left(H_{1}\bigominus_{(1,2),(1,2)}H_{2},W\right) - \underline{t}\left(H_{1}\bigoplus_{(1,2),(1,2)}H_{2},W\right)\right)
\end{align}
Now consider the permutations $\pi_{(a, b)}:V(H_{i})\rightarrow V(H_{i})$ and  $\pi'_{(c, d)}:V(H_{j})\rightarrow V(H_{j})$ such that $\pi_{(a, b)}(a) = \pi'_{(c, d)}(c) = 1$ and $\pi_{(a, b)}(b) = \pi'_{(c, d)}(d) = 2$. Then

\begin{align}\label{eq:t-lowbar-1}
    \sum_{\substack{(a,b)\in V_{H_{i}}^{2}\\(c,d)\in V_{H_{j}}^{2}}}
    &\sum_{\substack{H_{1}\in\mathscr{G}_{H_{i}}\\ H_{2}\in\mathscr{G}_{H_{j,}}}}\underline{t}\left(H_{1}\bigominus_{(a,b),(c,d)}H_{2},W\right) \nonumber\\
    &= \sum_{\substack{(a,b)\in V_{H_{i}}^{2}\\(c,d)\in V_{H_{j}}^{2}}}\sum_{\substack{H_{1}\in\mathscr{G}_{H_{i}}\\ H_{2}\in\mathscr{G}_{H_{j,}}}}\underline{t}\left(\pi_{(a, b)}(H_{1})\bigominus_{(1,2),(1,2)}\pi'_{(c, d)}(H_{2}),W\right)\nonumber\\
    &= |V_{H_{i}}^{2}||V_{H_{j}}^{2}|\sum_{\substack{H_{1}\in\mathscr{G}_{H_{i}}\\ H_{2}\in\mathscr{G}_{H_{j,}}}}\underline{t}\left(H_{1}\bigominus_{(1,2),(1,2)}H_{2},W\right)
\end{align}
where the last equality follows by observing that 
\begin{align*}
    \left(H_{1},H_{2}\right)\rightarrow\left(\pi_{(a, b)}(H_{1}),\pi'_{(c, d)}(H_{2})\right)
\end{align*}
is a bijection from $\mathscr{G}_{H_{i}}\times \mathscr{G}_{H_{j}}$ to itself for all $(a,b)\in V_{H_{i}}^{2}$ and $(c,d)\in V_{H_{j}}^{2}$. By considering isomorphisms $\tau_{H_{1}}$ and $\tau'_{H_{1}}$ for $H_{1}\in\mathscr{G}_{H_{i}}$ and $H_{2}\in\mathscr{G}_{H_{j}}$ such that $\tau_{H_{1}}(H_{1}) = H_{i}$ and $\tau'_{H_{1}}(H_{2}) = H_{j}$ a similar argument as above shows that,
\begin{align}\label{eq:t-lowbar-2}
    \sum_{\substack{(a,b)\in V_{H_{i}}^{2}\\(c,d)\in V_{H_{j}}^{2}}}
    \sum_{\substack{H_{1}\in\mathscr{G}_{H_{i}}\\ H_{2}\in\mathscr{G}_{H_{j}}}}\underline{t}\left(H_{1}\bigominus_{(a,b),(c,d)}H_{2},W\right)
    & = |\mathscr{G}_{H_{i}}||\mathscr{G}_{H_{j}}|\sum_{\substack{(a,b)\in V_{H_{i}}^{2}\\(c,d)\in V_{H_{j}}^{2}}}\underline{t}\left(H_{i}\bigominus_{(a,b),(c,d)}H_{j},W\right) . 
\end{align}
Thus, combining \eqref{eq:t-lowbar-1} and \eqref{eq:t-lowbar-2} we find,
\begin{align}\label{eq:t-lowbar-minus}
    \sum_{\substack{H_{1}\in\mathscr{G}_{H_{i}}\\ H_{2}\in\mathscr{G}_{H_{j,}}}}\underline{t}\left(H_{1}\bigominus_{(1,2),(1,2)}H_{2},W\right) = \dfrac{|\mathscr{G}_{H_{i}}||\mathscr{G}_{H_{j}}|}{|V_{H_{i}}^{2}||V_{H_{j}}^{2}|}\sum_{\substack{(a,b)\in V_{H_{i}}^{2}\\(c,d)\in V_{H_{j}}^{2}}}\underline{t}\left(H_{i}\bigominus_{(a,b),(c,d)}H_{j},W\right) . 
\end{align}
Similarly, 
\begin{align}\label{eq:t-lowbar-plus}
    \sum_{\substack{H_{1}\in\mathscr{G}_{H_{i}}\\ H_{2}\in\mathscr{G}_{H_{j,}}}}\underline{t}\left(H_{1}\bigoplus_{(1,2),(1,2)}H_{2},W\right) = \dfrac{|\mathscr{G}_{H_{i}}||\mathscr{G}_{H_{j}}|}{|V_{H_{i}}^{2}||V_{H_{j}}^{2}|}\sum_{\substack{(a,b)\in V_{H_{i}}^{2}\\(c,d)\in V_{H_{j}}^{2}}}\underline{t}\left(H_{i}\bigoplus_{(a,b),(c,d)}H_{j},W\right) . 
\end{align}
Notice that by definition,
\begin{align}\label{eq:sum-V2=E+}
    \sum_{\substack{(a,b)\in V_{H_{i}}^{2}\\(c,d)\in V_{H_{j}}^{2}}}
    &\Bigg(\underline{t}\left(H_{i}\bigominus_{(a,b),(c,d)}H_{j},W\right)
    -\underline{t}\left(H_{i}\bigoplus_{(a,b),(c,d)}H_{j},W\right)\Bigg)\nonumber\\
    =& \sum_{\substack{(a,b)\in E^{+}(H_{i})\\(c,d)\in E^{+}(H_{j})}}\left(t\left(H_{i}\bigominus_{(a,b),(c,d)}H_{j},W\right)
    -t\left(H_{i}\bigoplus_{(a,b),(c,d)}H_{j},W\right)\right) . 
\end{align}
Recall that $|\mathscr{G}_{H_{\ell}}| = \frac{|V(H_{\ell})|!}{|\mathrm{Aut}(H_{\ell})|}$, for $\ell \in \{ i,j\}$.
Then observing that $|V_{H_{\ell}}^{2}| = |V(H_{\ell})|(|V(H_{\ell})|-1)$, for $\ell \in \{i,j\}$, and using \eqref{eq:t-lowbar-minus}, \eqref{eq:t-lowbar-plus} in combination with \eqref{eq:Cov-lowbar} and \eqref{eq:sum-V2=E+} completes the proof of Lemma \ref{lemma:covariance-structure}. 
\end{proof}

\subsubsection{Proof of Proposition \ref{ppn:QR}} 
\label{sec:QRpf} 

For $1 \leq i \leq q$, using the expansion of $(f_i)_{K_{\{ 1 \}}}$ in \eqref{eq:f-decompK1} and Proposition \ref{prop:I2-f_E} it follows that 
\begin{align}\label{eq:I1K1}
    I_1((f_{i})_{K_{\{1\}}}) \stackrel{a.s.} = \sum_{s \geq 1}\mathbb{E}\left[f_{i} \phi_{s}\right]I_1(\phi_{s}) ,  
\end{align}
where $I_1(\cdot)$ is the 1-dimensional stochastic integral as defined in Section \ref{sec:stochasticintegral}. Note that $\{I_1(\phi_s)\}_{s \geq 1}$ is a collection of independent $N(0, 1)$ random variables.  Hence,  
$$ \frac{1}{(|V(H_i)| - 1)!} I_1((f_{i})_{K_{\{1\}}}) \stackrel{D} = \frac{1}{(|V(H_i)| - 1)!}  \sum_{s \geq 1}\mathbb{E}\left[f_{i} \phi_{s}\right] \eta_s = Q_i ,$$
for $Q_i$ as defined in \eqref{eq:TQ}. 
Now, recalling \eqref{eq:f-decompK1} and Definition \ref{defn:tabxyHW} note that 
\begin{align}\label{eq:f_K1-non-reg}
    (f_{i})_{K_{\{1\}}}(x)
    &= \mathbb{E}\left[f_{i}|U_{1} = x\right] \nonumber\\
    &= \sum_{H'\in \mathscr{G}_{H_{i}}}t_{1}(x,H',W) - |\mathscr{G}_{H_{i}}|t(H_{i},W) \nonumber\\
    &= \dfrac{|\mathscr{G}_{H_{i}}|}{|V(H_{i})|}\sum_{a\in V(H_{i})}\left(t_{a}\left(x,H_{i},W\right) - t(H_{i},W)\right) , 
\end{align}
where the last equality follows by arguments similar to proof of \eqref{eq:t-lowbar-minus}. Hence, using \eqref{eq:f_K1-non-reg} in \eqref{eq:I1K1} and recalling that $|\mathscr{G}_{H_{i}}| = \frac{|V(H_{i})|!}{|\mathrm{Aut}(H_{i})|}$  gives, 
$$Q_i \stackrel{D} = \dfrac{1}{|\mathrm{Aut}(H_{i})|}  \int_{0}^{1} \left\{ \sum_{a=1}^{|V(H_{i})|} \left( t_{a}(x,H_{i},W) - t(H_{i},W) \right) \right\} \mathrm d B_{x} ,$$
for $1 \leq i \leq q$. This shows \eqref{eq:QBx}.

Now, suppose $q+1 \leq i \leq r$. Then from the expansion of $(f_i)_{E_{\{1, 2 \}}}$ in \eqref{eq:f-decompE12} and Proposition \ref{prop:I2-f_E},  
\begin{align*}
    I_{2}\left((f_{i})_{E_{\{1,2\}}}\right) & \stackrel{a.s.} = \sum_{s \geq 1}\mathbb{E}\left[f_{i}(\phi_{s}\otimes\phi_{s})\right] I_2(\phi_s \otimes \phi_s) + 2\sum_{s < t }\mathbb{E}\left[f_{i}(\phi_{s}\otimes\phi_{t})\right] I_2(\phi_s \otimes \phi_t) \nonumber \\ 
 & = \sum_{s \geq 1}\mathbb{E}\left[f_{i}(\phi_{s}\otimes\phi_{s})\right] \left( I_1(\phi_s)^2 -1 \right)  + 2\sum_{s < t }\mathbb{E}\left[f_{i}(\phi_{s}\otimes\phi_{t})\right] I_1(\phi_s) I_1(\phi_t) , 
    \end{align*} 
by \eqref{eq:fgstochasticintegral}, since $\mathbb E[I_1(\phi_s)] = 0$ and $\mathbb E[I_1(\phi_s)^2] = 1$. As before, noting that $\{I_1(\phi_s)\}_{s \geq 1}$ is a collection of independent $N(0, 1)$ random variables and recalling \eqref{eq:TRE12} gives, 
\begin{align}\label{eq:I2E12}
     & \dfrac{ 1 }{2(|V(H_{i})|-2)!}  I_{2}\left((f_{i})_{E_{\{1,2\}}}\right) \nonumber \\ 
     & \stackrel{D}= \dfrac{ 1 }{2(|V(H_{i})|-2)!} \left\{ \sum_{s \geq 1}\mathbb{E}\left[f_{i}(\phi_{s}\otimes\phi_{s})\right]\left(\eta_{s}^2 - 1\right) + 2\sum_{s < t }\mathbb{E}\left[f_{i} (\phi_{s}\otimes\phi_{t})\right]\eta_{s}\eta_{t} \right\} \nonumber \\ 
     & = R_i . 
\end{align}
 Recalling Definition \ref{defn:WH} and Lemma \ref{lemma:f_E12-expression} we have for all $q+1 \leq i\leq r$,
\begin{align*}
     (f_{i})_{E_{\{1,2\}}}(x,y)  
    & = 2(|V(H_{i})|-2)!W_{H_{i}}(x,y) - \dfrac{|V(H_{i})|!}{|\mathrm{Aut}(H_{i})|}t(H_{i},W)
\end{align*}
for almost every $(x,y)\in [0,1]^2$. Thus, for $q+1 \leq i\leq r$,
\begin{align}\label{eq:I2fi}
    & I_{2}\left((f_{i})_{E_{\{1,2\}}}\right) \nonumber \\ 
    & = 2(|V(H_{i})|-2)!\int_{[0, 1]^2} \left\{ W_{H_{i}}(x,y) - \dfrac{|V(H_{i})|\left(|V(H_{i})-1|\right)}{2|\mathrm{Aut}(H_{i})|}t(H_{i},W) \right\} \mathrm d B_{x}\mathrm d B_{y} . 
\end{align}
Combining \eqref{eq:I2E12} and \eqref{eq:I2fi}, the result in \eqref{eq:RBx} follows. This completes  the proof of Proposition \ref{ppn:QR}. \hfill $\Box$

\begin{lemma}\label{lemma:f_E12-expression}
For $q+1\leq i\leq r$,
\begin{align*}
    (f_{i})_{E_{\{1,2\}}}(x,y) = \frac{(|V(H_{i})|-2)!}{|\Aut(H_{i})|}\sum_{1 \leq a\neq b \leq |V(H)|}\left(t_{ab}(x,y,H_{i},W) - t(H_{i},W)\right) , 
\end{align*}
for almost every $(x,y)\in [0,1]^{2}$.
\end{lemma}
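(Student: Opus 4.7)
My plan is to start from the identity $(f_i)_{E_{\{1,2\}}} = \mathbb{E}[f_i \mid U_1, U_2]$ established in Lemma \ref{lm:fHiprojection} for the regular indices $q+1\leq i\leq r$, and then unfold the conditional expectation using the definition of $f_i = f^{(H_i)}$ from \eqref{eq:def_f}. By first integrating out the $Y_{ab}$'s (using independence and $\mathbb{E}[\mathbf{1}\{Y_{ab}\leq W(U_a,U_b)\}\mid U_a,U_b] = W(U_a,U_b)$) and then conditioning on $(U_1,U_2)$, the definition of the 2-point conditional homomorphism density gives
\begin{align*}
    (f_i)_{E_{\{1,2\}}}(x,y)
    = \sum_{H' \in \mathscr{G}_{H_i}} t_{1,2}(x,y,H',W) - |\mathscr{G}_{H_i}|\, t(H_i,W).
\end{align*}

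The core step, and the one where I expect the bookkeeping to be most delicate, is converting the sum $\sum_{H' \in \mathscr{G}_{H_i}} t_{1,2}(x,y,H',W)$ over isomorphic copies of $H_i$ in $K_{|V(H_i)|}$ into the target sum $\sum_{a\neq b} t_{a,b}(x,y,H_i,W)$ over ordered pairs of vertices of a single fixed $H_i$. I plan to do this by summing instead over all permutations $\sigma\in S_{|V(H_i)|}$: since $\sigma\mapsto \sigma(H_i)$ is $|\mathrm{Aut}(H_i)|$-to-one onto $\mathscr{G}_{H_i}$, one gets
\begin{align*}
    |\mathrm{Aut}(H_i)| \sum_{H' \in \mathscr{G}_{H_i}} t_{1,2}(x,y,H',W)
    = \sum_{\sigma\in S_{|V(H_i)|}} t_{1,2}(x,y,\sigma(H_i),W).
\end{align*}
Then by the change of variables $V_a := U_{\sigma(a)}$, one sees that $t_{1,2}(x,y,\sigma(H_i),W) = t_{\sigma^{-1}(1),\sigma^{-1}(2)}(x,y,H_i,W)$. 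As $\sigma$ ranges over $S_{|V(H_i)|}$, the ordered pair $(\sigma^{-1}(1),\sigma^{-1}(2))$ ranges over all ordered pairs of distinct vertices, with each pair attained exactly $(|V(H_i)|-2)!$ times, yielding
\begin{align*}
    \sum_{H' \in \mathscr{G}_{H_i}} t_{1,2}(x,y,H',W)
    = \frac{(|V(H_i)|-2)!}{|\mathrm{Aut}(H_i)|} \sum_{1\leq a\neq b\leq |V(H_i)|} t_{a,b}(x,y,H_i,W).
\end{align*}
This is essentially the same averaging trick that produced \eqref{eq:t-lowbar-1} and \eqref{eq:f_K1-non-reg}, but here applied to ordered vertex pairs rather than ordered edge pairs.

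To finish, I will absorb the constant term. Using $|\mathscr{G}_{H_i}| = |V(H_i)|!/|\mathrm{Aut}(H_i)|$, one writes
\begin{align*}
    |\mathscr{G}_{H_i}|\,t(H_i,W)
    = \frac{(|V(H_i)|-2)!}{|\mathrm{Aut}(H_i)|}\cdot |V(H_i)|(|V(H_i)|-1)\, t(H_i,W)
    = \frac{(|V(H_i)|-2)!}{|\mathrm{Aut}(H_i)|}\sum_{a\neq b} t(H_i,W),
\end{align*}
since the number of ordered pairs of distinct vertices is precisely $|V(H_i)|(|V(H_i)|-1)$. Subtracting term-by-term then produces the claimed formula. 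The main potential pitfall is being sloppy with the bijection between $\mathscr{G}_{H_i}$ and $S_{|V(H_i)|}/\mathrm{Aut}(H_i)$, so I would write that part out carefully.
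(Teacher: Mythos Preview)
Your proposal is correct and follows essentially the same approach as the paper's proof: both start from $(f_i)_{E_{\{1,2\}}}=\mathbb{E}[f_i\mid U_1,U_2]$ via Lemma~\ref{lm:fHiprojection}, expand the conditional expectation as a sum over $\mathscr{G}_{H_i}$, and then convert that sum into $\sum_{a\neq b} t_{a,b}(x,y,H_i,W)$ by summing over all permutations $\sigma\in S_{|V(H_i)|}$ and using the change of variables $t_{1,2}(x,y,\sigma(H_i),W)=t_{\sigma^{-1}(1),\sigma^{-1}(2)}(x,y,H_i,W)$. The absorption of the constant term via $|\mathscr{G}_{H_i}|=|V(H_i)|!/|\mathrm{Aut}(H_i)|$ is likewise identical.
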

\begin{proof}
From \eqref{eq:f-decompE12} and Definition \ref{defn:WH} we have,  
\begin{align}\label{eq:f_E12-1}
    (f_{i})_{E_{\{1,2\}}}(x,y)
    & = \mathbb{E}\left[f_{i}\middle| U_{1}=x,U_{2}=y\right]\nonumber\\
    & = \sum_{H'\in \mathscr{G}_{H_{i}}}t_{1,2}\left(x,y,H',W\right) - |\mathscr{G}_{H_{i}}|t(H_{i},W) , 
\end{align}
for almost every $(x,y)\in [0,1]^2$. Denote by $S_{|V(H_{i})|}$ the set of all $|V(H_{i})|!$ permutations of $V(H_{i})$. 
Then it is easy to observe that 
\begin{align}\label{eq:all_iso1}
    \sum_{\xi\in S_{|V(H_{i})|}}t_{12}(x,y,\xi(H_{i}),W)=|\Aut(H)|\sum_{H'\in\mathscr{G}_{H_{i}}}t_{12}(x,y,H',W).
\end{align}
where $\xi(H)$ is the graph obtained by permuting the vertex labels of $H$ according to the permutation $\xi$. Also,
\begin{align}
    \sum_{\xi\in S_{|V(H_{i})|}}t_{12}(x,y,\xi(H_{i}),W)
    &=\sum_{1\leq a\neq b\leq |V(H_{i})|}\sum_{\substack{\xi\in S_{|V(H_{i})|}\\\xi(a)=1,\xi(b)=2}}t_{12}(x,y,\xi(H_{i}),W)\nonumber\\
    &=\sum_{1\leq a\neq b\leq |V(H_{i})|}\sum_{\substack{\xi\in S_{|V(H_{i})|}\\\xi(a)=1,\xi(b)=2}}t_{\xi^{-1}(1)\xi^{-1}(2)}(x,y,H_{i},W)\nonumber\\
    &=\sum_{1\leq a\neq b\leq |V(H_{i})|}\sum_{\substack{\xi\in S_{|V(H_{i})|}\\\xi(a)=1,\xi(b)=2}}t_{a, b}(x,y,H_{i},W)\nonumber\\
    &=(|V(H_{i})|-2)!\sum_{1\leq a\neq b\leq |V(H_{i})|}t_{a,b}(x,y,H_{i},W)\label{eq:all_iso2}
\end{align}
Combining \eqref{eq:all_iso1} and \eqref{eq:all_iso2}, we have,
\begin{align}\label{eq:WH_alt}
    \sum_{H'\in\mathscr{G}_{H_{i}}}t_{12}(x,y,H',W)=\frac{(|V(H_{i})|-2)!}{|\Aut(H_{i})|}\sum_{1\leq a\neq b\leq |V(H_{i})|}t_{a, b}(x,y,H_{i},W).
\end{align}
Thus combining \eqref{eq:f_E12-1} and \eqref{eq:WH_alt} gives,
\begin{align*}
    (f_{i})_{E_{\{1,2\}}}(x,y)
    & = \frac{(|V(H_{i})|-2)!}{|\Aut(H_{i})|}\sum_{1\leq a\neq b\leq |V(H_{i})|}t_{a, b}(x,y,H_{i},W) - |\mathscr{G}_{H_{i}}|t(H_{i},W)\\
    & = \frac{(|V(H_{i})|-2)!}{|\Aut(H_{i})|}\sum_{a\neq b}\left(t_{ab}(x,y,H_{i},W) - t(H_{i},W)\right) 
\end{align*}
where the last equality follows by recalling that $|\mathscr{G}_{H_{i}}| = \frac{|V(H_{i})|!}{|\mathrm{Aut}(H_{i})|}$, for all $q+1\leq i\leq r$.
\end{proof}

\subsection{Completing the Proof of Theorem \ref{thm:asymp-joint-dist}} 

The result in Theorem \ref{thm:asymp-joint-dist} follows by combining Proposition \ref{ppn:linearTHW},  Proposition \ref{ppn:R}, Proposition \ref{ppn:QR}, and by noting that $\bm T(\cH, W) = (\bm Q^\top, (\bm R +  \tilde{\bm R})^\top)^\top$. 

\section{Moment Generating Function of the Limiting Distribution}

In this section we derive the moment generating function (MGF) of the limiting distribution $\bm Z(\cH, W)$ obtained in Theorem \ref{thm:asymp-joint-dist}. We begin by introducing some notation: For any symmetric function $U: [0, 1]^2 \rightarrow \R$, for $L \geq 2$ define its {\it $L$-th path composition} as follows: For $x, y \in [0, 1]$,  
\begin{align}\label{def:UL}
U^{(L)}(x, y) = \int_{[0, 1]^{L -1 }} U(x, w_1) U(w_1, w_2) \cdots U(w_{L-1}, y) \mathrm d w_1 \mathrm d w_2 \cdots \mathrm d w_{L-1} . 
\end{align} 
$\bm{\alpha} = (\alpha_1, \alpha_2, \ldots, \alpha_r)^\top \in\mathbb{R}^{r}$, define the functions $V_{\bm \alpha} : [0, 1] \rightarrow \mathbb R$ and $U_{\bm \alpha} : [0, 1]^2 \rightarrow \mathbb R$ as: 
\begin{align}\label{eq:def-V}
    V_{\bm \alpha}(x) := \sum_{i=1}^{q}\alpha_{i}\left[\dfrac{1}{\left|\text{Aut}(H_{i})\right|}\sum_{a=1}^{|V(H_{i})|}t_{a}(x,H_{i},W) - \dfrac{|V(H_{i})|}{\left|\text{Aut}(H_{i})\right|}t(H_{i},W)\right] , 
\end{align}
and 
\begin{align}\label{eq:def-U}
    U_{\bm \alpha} (x, y) := \sum_{i=q+1}^{r}\alpha_{i}\left(W_{H_{i}} (x, y) - c_{H_i}(W) \right)  , 
\end{align} 
where $c_{H_i}(W) = \frac{ |V(H_{i})| ( |V(H_i)| - 1) }{2 \left|\text{Aut}(H_{i})\right|}t(H_{i},W)$ and $W_{H_{i}}$ is as in Definition \ref{defn:WH}. We can now express the MGF of $Z(\cH, W)$, for $\mathcal{H} = \{H_1, H_2 , \ldots, H_r\}$ as in Theorem \ref{thm:asymp-joint-dist}, as follows:

\begin{prop}\label{prop:MGF-alpha-limit}
Fix $\bm{\alpha} = (\alpha_1, \alpha_2, \ldots, \alpha_r)^\top \in \mathbb{R}^{r}$ and let $\mathcal{C}:= \sum_{i=q+1}^{r}|\alpha_{i}|\frac{|V(H_{i}) |V(H_{i} - 1 )| }{\left|\text{Aut}(H_{i})\right|}$. Then for $|\theta|<\frac{1}{32\mathcal{C}}$, 
\begin{align*}
    \log&\ \mathbb{E}\left[ e^{\theta \bm{\alpha}^{\top}\bm{Z}(\mathcal{H},W)} \right]\\
    =&\left(  \eta_{\bm \alpha} + \tilde \eta_{\bm \alpha} \right)\dfrac{\theta^{2}}{2} + \sum_{L = 1}^{\infty}2^{L-1} \theta^{L + 2}\int_{[0,1]^{2}}V_{\bm \alpha}(x)V_{\bm \alpha}(y)U_{\bm \alpha}^{(L)}(x,y)\mathrm{d}x\mathrm{d}y + \dfrac{1}{2}\sum_{L=3}^{\infty}\dfrac{(2 \theta)^{L}}{L}\int_{0}^{1} U_{\bm \alpha}^{(L)}(x,x)\mathrm{d}x , 
\end{align*}
where $U_{\bm \alpha}^{(L)}$ is the $L$-th path composition of $U^{(L)}$ as defined in \eqref{def:UL} and 
\begin{align*} 
\eta_{\bm \alpha}  & = \sum_{1 \leq i,j\leq q}\dfrac{\alpha_{i}\alpha_{j}}{\left|\text{Aut}(H_{i})\right|\left|\text{Aut}(H_{j})\right|}\sum_{\substack{a\in V(H_{i})\\ b\in V(H_{j})}}\left(t\left(H
    _{i}\bigoplus_{a,b}H_{j},W\right) - t(H_{i},W)t(H_{j},W)\right) , \nonumber \\
\tilde \eta_{\bm \alpha} & = \sum_{q+1\leq i,j\leq r}\dfrac{\alpha_{i}\alpha_{j}}{2\left|\text{Aut}(H_{i})\right|\left|\text{Aut}(H_{j})\right|}\sum_{\substack{a\neq b\in V(H_{i})\\c\neq d\in V(H_{j})}}t\left(H_{i}\bigominus_{(a,b),(c,d)}H_{j},W\right) - 2\left(\sum_{i=q+1}^{r}\alpha_{i}c_{H_i}(W)\right)^2 . 
\end{align*}  
\end{prop}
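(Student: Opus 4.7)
The strategy is to decompose $\bm\alpha^\top\bm Z(\mathcal H,W)$ into three pieces via the representation of Theorem \ref{thm:asymp-joint-dist}, and then diagonalise the quadratic piece. Writing $I_1(V_{\bm\alpha}) := \int_0^1 V_{\bm\alpha}(x)\mathrm dB_x$ and $I_2(U_{\bm\alpha}) := \int_{[0,1]^2} U_{\bm\alpha}(x,y)\mathrm dB_x \mathrm dB_y$ for the kernels from \eqref{eq:def-V} and \eqref{eq:def-U}, Theorem \ref{thm:asymp-joint-dist} gives
\begin{align*}
\bm\alpha^\top\bm Z(\mathcal H,W) = I_1(V_{\bm\alpha}) + X_G + I_2(U_{\bm\alpha}),
\end{align*}
where $X_G := \sum_{i=q+1}^r \alpha_i G_i \sim N(0,\bm\alpha_{q+1:r}^\top\Sigma\bm\alpha_{q+1:r})$ is independent of $\{B_t\}_{t\in[0,1]}$.

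Since $U_{\bm\alpha}$ is a bounded symmetric $L^2$ kernel, Mercer's theorem yields an eigendecomposition $U_{\bm\alpha}(x,y) = \sum_k\lambda_k\phi_k(x)\phi_k(y)$ in an orthonormal basis $\{\phi_k\}$ of $L^2[0,1]$. Setting $v_k := \langle V_{\bm\alpha},\phi_k\rangle$ and $Z_k := I_1(\phi_k)$, so that $\{Z_k\}$ are i.i.d.\ $N(0,1)$, the standard multiple-integral calculus yields $I_2(\phi_k\otimes\phi_k) = Z_k^2 - 1$ and $I_2(\phi_k\otimes\phi_l) = Z_kZ_l$ for $k\neq l$, giving
\begin{align*}
I_1(V_{\bm\alpha}) + I_2(U_{\bm\alpha}) = \sum_k\bigl(v_kZ_k + \lambda_k(Z_k^2-1)\bigr).
\end{align*}
The bounds $\|W_{H_i}\|_\infty,|c_{H_i}(W)|\leq \tfrac{|V(H_i)|(|V(H_i)|-1)}{2|\mathrm{Aut}(H_i)|}$ give $\|U_{\bm\alpha}\|_\infty\leq \mathcal C/2$, and hence $|\lambda_k|\leq\mathcal C/2$ for all $k$; so for $|\theta|<1/(32\mathcal C)$ we have $|2\theta\lambda_k|\leq 1/32$ uniformly. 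The Gaussian identity $\mathbb E[e^{aZ+bZ^2}]=(1-2b)^{-1/2}\exp(a^2/(2(1-2b)))$ applied termwise, together with the independence of $X_G$, gives
\begin{align*}
\log\mathbb E\bigl[e^{\theta\bm\alpha^\top\bm Z(\mathcal H,W)}\bigr] = \tfrac{\theta^2}{2}\Var(X_G) + \sum_k\Bigl(-\theta\lambda_k - \tfrac12\log(1-2\theta\lambda_k) + \tfrac{\theta^2v_k^2}{2(1-2\theta\lambda_k)}\Bigr).
\end{align*}
Expanding the logarithm and the geometric series as power series in $2\theta\lambda_k$, the $-\theta\lambda_k$ terms cancel the $L=1$ contribution, and using the trace formulas $\sum_k\lambda_k^L = \int_0^1 U_{\bm\alpha}^{(L)}(x,x)\mathrm dx$ (for $L\geq 2$) and $\sum_k v_k^2\lambda_k^L = \int\int V_{\bm\alpha}(x)V_{\bm\alpha}(y)U_{\bm\alpha}^{(L)}(x,y)\mathrm dx\mathrm dy$ together with $\sum_k v_k^2 = \|V_{\bm\alpha}\|_2^2$, one recovers the proposed series with $\theta^2$-coefficient equal to $\tfrac12\bigl(\Var(X_G)+\|V_{\bm\alpha}\|_2^2+2\int_0^1U_{\bm\alpha}^{(2)}(x,x)\mathrm dx\bigr)$.

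The remaining step, which I expect to be the main obstacle, is verifying the combinatorial identity $\eta_{\bm\alpha}+\tilde\eta_{\bm\alpha} = \|V_{\bm\alpha}\|_2^2+\Var(X_G)+2\int_0^1 U_{\bm\alpha}^{(2)}(x,x)\mathrm dx$. For the linear part, expanding the square and using $\int_0^1 t_a(x,H_i,W)t_b(x,H_j,W)\mathrm dx = t(H_i\bigoplus_{a,b}H_j,W)$ directly yields $\|V_{\bm\alpha}\|_2^2 = \eta_{\bm\alpha}$. For the quadratic part, I would expand $2\int\int U_{\bm\alpha}^2 = 2\sum_{i,j}\alpha_i\alpha_j\int\int W_{H_i}W_{H_j}\mathrm dx\mathrm dy - 2(\sum_i\alpha_i c_{H_i}(W))^2$ (using $\int\int W_{H_i}\mathrm dx\mathrm dy = c_{H_i}(W)$), then apply Definition \ref{defn:WH}: pairs $(a,b),(c,d)$ with $(a,b)\in E(H_i)$ and $(c,d)\in E(H_j)$ contribute the strong-edge-join density $t(H_i\bigoplus_{(a,b),(c,d)}H_j,W)$ via the $W(x,y)^2$ factor, while all other pairs contribute weak-join-type densities via a single $W(x,y)$ factor. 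Adding $\Var(X_G)=\bm\alpha^\top\Sigma\bm\alpha$, whose entries in \eqref{eq:Gvariance} are differences of weak and strong joins summed over $E^+(H_i)\times E^+(H_j)$, the strong-join contributions cancel, and after reindexing the remaining weak-join contributions over $a\neq b,c\neq d$ one obtains exactly the sum defining $\tilde\eta_{\bm\alpha}$. The delicate accounting here is keeping track of which index pairs belong to $E^+$ and matching the multiplicities $|\mathrm{Aut}(H_i)|^{-1}$ correctly.
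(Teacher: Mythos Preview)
Your proposal is correct and follows the same route as the paper: decompose $\bm\alpha^\top\bm Z(\mathcal H,W)$ via the spectral expansion of $U_{\bm\alpha}$, compute the MGF termwise using the Gaussian quadratic identity, expand as a power series, and identify the coefficients through the trace formulas and the combinatorial identities $\|V_{\bm\alpha}\|_2^2=\eta_{\bm\alpha}$ and $2\|U_{\bm\alpha}\|_2^2+\bm\alpha_+^\top\Sigma\bm\alpha_+=\tilde\eta_{\bm\alpha}$. The paper fills in two points you gloss over: a truncation-plus-uniform-integrability argument (its Lemmas~\ref{lemma:log-MGF-YN} and~\ref{lemma:MGF-limit-1st}) to justify the infinite product of one-dimensional MGFs, and the observation (Remark~\ref{rem:def_ext_join}) that strong and weak edge-joins coincide whenever one of the index pairs is not an edge, which is precisely what extends the $\Sigma$-sum from $E^+(H_i)\times E^+(H_j)$ to all $a\neq b,\,c\neq d$ and makes the cancellation you describe for $\tilde\eta_{\bm\alpha}$ transparent.
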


\subsection{Proof of Proposition \ref{prop:MGF-alpha-limit}} 
\label{sec:ZHWpf}

Recalling the definition of $\bm{Z}(\mathcal{H},W)$ from Theorem \ref{thm:asymp-joint-dist} note that 
\begin{align}\label{eq:alpha-t-Z}
    \bm{\alpha}^{\top}\bm{Z}(\mathcal{H},W) = \sum_{i=q+1}^{r}\alpha_{i}G_{i} + \int\int U_{\bm \alpha}\mathrm{d}B_{x}\mathrm{d}B_{y} + \int V_{\bm \alpha}\mathrm{d}B_{x} , 
\end{align}
where $\{G_{i}: q+1\leq i\leq r\}\sim N_{r-q}(\bm{0},\Sigma)$ with $\Sigma$ as Definition \ref{def:Sigma} is independent of the standard Brownian motion $\{B_{t}:t\in [0,1]\}$. Now, observe that $U_{\bm \alpha} \in L^{2}([0,1]^{2})$ is symmetric and hence,  the operator,
\begin{align}\label{eq:def-TU}
    T_{U_{\bm \alpha}}f(x) = \int U_{\bm \alpha}(x,y)f(y)\mathrm{d}y, 
\end{align} 
where $f\in L^{2}[0,1]$, is a self-adjoint Hilbert-Schmidt integral operator. Then by the spectral theorem (see \cite[Theorem 8.94 and Theorem 8.83]{renardy2006introduction}) we can find a set of orthonormal eigenfunctions $\{\phi_{s}\}_{s \geq 1}$ corresponding to eigenvalues (with repetition) $\{\lambda_{s}\}_{s \geq 1}$ of $T_{U_{\bm \alpha}}$ which forms a basis of $L^{2}[0,1]$ and 
\begin{align}\label{eq:operator-expansion}
    U_{\bm \alpha}(x, y) = \sum_{s=1}^{\infty}\lambda_{s}\phi_{s}(x)\phi_{s}(y) , 
\end{align}
where the above sum converges in $L^{2}$. Further, we assume that $\{\lambda_{s}\}_{s \geq 1}$ are arranged according to non-increasing order of magnitude and $\lim_{s \rightarrow \infty} \lambda_{s} = 0$. 
Moreover, by the orthonormality of the eigenvectors (see, for example, \cite[Lemma 8, Chapter 6]{laxfunctional}),  
\begin{align}\label{eq:lambda-sum-finite}
   \sum_{s=1}^{\infty}\lambda_{s}^{2} = \|U_{\bm \alpha}\|_{2}^{2}<\infty.
\end{align}
Also, since $V_{\bm \alpha} \in L^{2}[0,1]$, expanding $V_{\bm \alpha}$ using the basis $\{\phi_s\}_{s \geq 1}$ 
we have the following, 
\begin{align}\label{eq:V-expansion}
    V_{\bm \alpha}(x) = \sum_{s=1}^{\infty} \gamma_{s}\phi_{s}(x), 
\end{align} 
where once again the above sum converges in $L^{2}$ and 
\begin{align}\label{eq:def-delta-i}
    \sum_{s=1}^{\infty} \gamma_{s}^{2} = \|V_{\bm \alpha}\|_{2}^{2}<\infty .
\end{align}

Hence, recalling the expression of $\bm{\alpha}^{\top}\bm{Z}(\mathcal{H},W)$ from \eqref{eq:alpha-t-Z} along with Proposition \ref{prop:I2-f_E} and the expansions of $U_{\bm{\alpha}}$ and $V_{\bm{\alpha}}$ from \eqref{eq:operator-expansion} and from \eqref{eq:V-expansion} respectively gives, 
\begin{align*}
    \bm{\alpha}^{\top}\bm{Z}(\mathcal{H},W) & \stackrel{a.s.}= \sum_{i=q+1}^r \alpha_i G_i + \sum_{s=1}^\infty \lambda_s I_2(\phi_s \times \phi_s) + \sum_{s=1}^\infty \gamma_s I_1(\phi_s) \nonumber \\ 
   & \stackrel{a.s.} = \sum_{i=q+1}^r \alpha_i G_i + \sum_{s=1}^\infty \lambda_s ( I_1(\phi_s)^2 -1 ) + \sum_{s=1}^\infty \gamma_s I_1(\phi_s) \tag*{ (by \eqref{eq:fgstochasticintegral}) } \nonumber \\ 
& \stackrel{D} = \sum_{i=q+1}^{r}\alpha_{i}G_{i} + \sum_{s=1}^{\infty}\lambda_{s}(\eta_{s}^{2}-1) + \sum_{s=1}^{\infty} \gamma_s \eta_{s} , 
\end{align*} 
where $\{\eta_{s}\}_{s \geq 1}$ is an independent collection of standard Gaussian random variables which is also independent of $\{G_{i}\}_{q+1 \leq i \leq r}$. Now, for $K \geq 1$, define the truncated version of $ \bm{\alpha}^{\top}\bm{Z}(\mathcal{H},W)$ as follows: 
\begin{align}\label{eq:YMGF}
    Y_{\bm \alpha, K}:= \sum_{i=q+1}^{r}\alpha_{i}G_{i} + \sum_{s=1}^{K}\lambda_{s}(\eta_{s}^{2}-1) + \sum_{s=1}^{K} \gamma_s \eta_{s} . 
\end{align} 
We begin by computing the MGF of $ Y_{\bm \alpha, K}$ in the following lemma:

\begin{lemma}\label{lemma:log-MGF-YN} 
Let $Y_{\bm \alpha, K}$ be as defined above. Then for $|\theta|< \frac{1}{16\mathcal{C}}$, where $\mathcal C$ as in Proposition \ref{prop:MGF-alpha-limit}, the MGF of $Y_{\bm \alpha, K}$ is given by 
\begin{align*}
    \log\mathbb{E}\left[ e^{\theta Y_{\bm \alpha, K}} \right] = \bm{\alpha}_{+}^{\top}\Sigma\bm{\alpha}_{+}\dfrac{\theta^{2}}{2} + \sum_{s=1}^{K}\sum_{L = 1}^{\infty}2^{L-2}\theta^{L+1}\gamma_{s}^{2}\lambda_{s}^{L-1} + \frac{1}{2}\sum_{s=1}^{K}\sum_{L=2}^{\infty}\dfrac{\left(2\lambda_{s}\theta\right)^{L}}{L} , 
\end{align*}
where $\bm{\alpha}_+ = (\alpha_{q+1},\cdots,\alpha_{r})^{\top}$. 
\end{lemma}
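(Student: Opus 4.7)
\textbf{Plan for the proof of Lemma \ref{lemma:log-MGF-YN}.} The plan is to exploit the independence structure built into $Y_{\bm\alpha,K}$ in order to factor the MGF into one Gaussian piece and $K$ one-dimensional quadratic-plus-linear Gaussian pieces, then Taylor-expand each factor and match the resulting power series to the claimed expression.

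First, I would observe that by construction $\{G_i\}_{q+1\le i\le r}$ is independent of $\{\eta_s\}_{s\ge 1}$, and the $\eta_s$ are mutually independent. Hence
\begin{align*}
\mathbb{E}\bigl[e^{\theta Y_{\bm\alpha,K}}\bigr]
= \mathbb{E}\Bigl[\exp\Bigl(\theta\sum_{i=q+1}^{r}\alpha_{i}G_{i}\Bigr)\Bigr]\cdot\prod_{s=1}^{K}\mathbb{E}\bigl[\exp\bigl(\theta\lambda_{s}(\eta_{s}^{2}-1)+\theta\gamma_{s}\eta_{s}\bigr)\bigr].
\end{align*}
The Gaussian factor evaluates to $\exp\!\bigl(\tfrac{\theta^{2}}{2}\bm{\alpha}_{+}^{\top}\Sigma\bm{\alpha}_{+}\bigr)$ directly from the definition of $\Sigma$. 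For each $s$, completing the square in the Gaussian integral gives the standard identity
\begin{align*}
\mathbb{E}\bigl[e^{a\eta^{2}+b\eta}\bigr]=\frac{1}{\sqrt{1-2a}}\exp\!\left(\frac{b^{2}}{2(1-2a)}\right),\qquad a<\tfrac12,
\end{align*}
applied with $a=\theta\lambda_{s}$ and $b=\theta\gamma_{s}$ (and an extra factor $e^{-\theta\lambda_{s}}$ coming from $-\eta_{s}^{2}\to\eta_{s}^{2}-1$), which is valid as soon as $|2\theta\lambda_{s}|<1$.

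Next I would take the logarithm, giving for each $s$
\begin{align*}
-\theta\lambda_{s}-\tfrac12\log(1-2\theta\lambda_{s})+\frac{\theta^{2}\gamma_{s}^{2}}{2(1-2\theta\lambda_{s})},
\end{align*}
and expand the two analytic functions as power series. Using $-\tfrac12\log(1-x)=\tfrac12\sum_{L\ge 1}x^{L}/L$ the linear ($L=1$) term cancels the $-\theta\lambda_{s}$, leaving $\tfrac12\sum_{L\ge 2}(2\theta\lambda_{s})^{L}/L$. Using $(1-x)^{-1}=\sum_{L\ge 0}x^{L}$ the last summand becomes $\sum_{L\ge 0}2^{L-1}\theta^{L+2}\gamma_{s}^{2}\lambda_{s}^{L}$, which after re-indexing $M=L+1$ is exactly $\sum_{M\ge 1}2^{M-2}\theta^{M+1}\gamma_{s}^{2}\lambda_{s}^{M-1}$. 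Summing over $s=1,\ldots,K$ and adding the Gaussian contribution yields precisely the expression stated in the lemma.

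The only delicate point is to justify convergence of the series on the range $|\theta|<1/(16\mathcal{C})$. Since $U_{\bm\alpha}$ is bounded termwise by $\|U_{\bm\alpha}\|_{\infty}\le\mathcal{C}$ (using $|W_{H_i}|\le\frac{|V(H_i)|(|V(H_i)|-1)}{2|\mathrm{Aut}(H_i)|}$ together with $c_{H_i}(W)\le\frac{|V(H_i)|(|V(H_i)|-1)}{2|\mathrm{Aut}(H_i)|}$), the operator norm of $T_{U_{\bm\alpha}}$ is bounded by $\mathcal{C}$, hence $|\lambda_{s}|\le\mathcal{C}$ for every $s$. Consequently $|2\theta\lambda_{s}|\le 2|\theta|\mathcal{C}<1/8$ on the stated range, which is comfortably inside the radius of convergence of both power series and makes the absolute convergence (and hence the legitimacy of interchanging $\log$ and the expansions) routine. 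This is the only step that needs any real care; everything else is a bookkeeping exercise, so I do not anticipate a genuine obstacle.
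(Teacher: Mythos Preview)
Your proposal is correct and follows essentially the same route as the paper: factor the MGF by independence, evaluate the Gaussian piece as $\exp(\tfrac{\theta^2}{2}\bm{\alpha}_+^\top\Sigma\bm{\alpha}_+)$, compute each one-dimensional quadratic-plus-linear Gaussian MGF in closed form, and then Taylor-expand. The only notable difference is that the paper completes the square as $\lambda_s(\eta_s+\gamma_s/(2\lambda_s))^2$ and therefore treats the cases $\lambda_s\neq 0$ and $\lambda_s=0$ separately, whereas your direct use of the identity $\mathbb{E}[e^{a\eta^2+b\eta}]=(1-2a)^{-1/2}\exp\!\bigl(b^2/(2(1-2a))\bigr)$ (valid for all $a<1/2$, including $a=0$) handles both cases at once and is slightly cleaner.
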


\begin{proof} 
For all $K \geq 1$ define,
\begin{align}\label{eq:YK}
    Y_{\bm \alpha}^{(1)} := \sum_{i=q+1}^{r}\alpha_{i}G_{i}, ~Y_{\bm \alpha, K}^{(2)} := \sum_{s=1}^{K}\lambda_{s}(\eta_{s}^{2}-1), \text{ and }  Y_{\bm \alpha, K}^{(3)} := \sum_{s=1}^{K} \gamma_s \eta_{s}.
\end{align}
 by the independence of $Y_{\bm \alpha}^{(1)}$ and $(Y_{\bm \alpha, K}^{(2)}, Y_{\bm \alpha, K}^{(3)})$ and the Cauchy-Schwartz inequality we have,
\begin{align*}
    \mathbb{E}\left[ e^{\theta Y_{\bm \alpha, K}} \right] \leq  \mathbb{E}\left[ e^{\theta Y_{\bm \alpha}^{(1)}} \right] \left( \mathbb{E}\left[ e^{ 2 \theta Y_{\bm \alpha, K}^{(2)}} \right]  \mathbb{E}\left[  e^{2 \theta Y_{\bm \alpha, K}^{(3)}} \right]  \right)^{\frac{1}{2}} . 
\end{align*}
Note that the MGFs of $Y_{\bm \alpha}^{(1)}$ and $Y_{\bm \alpha, K}^{(3)}$ exist for all $\theta$. Also, by \cite[Theorem 2.2]{hladky2019limit}, 
$$\mathbb{E}\left[ e^{ 2 \theta Y_{\bm \alpha, K}^{(2)}} \right] < \infty, \quad \text{ for $| \theta |<\min\left\{\frac{1}{16\sqrt{\sum_{s=1}^{K}\lambda_{s}^{2}}},\frac{1}{16\mathcal{C}}\right\}$}. 
$$
Recalling \eqref{eq:def-U} and \eqref{eq:lambda-sum-finite} observe that, 
\begin{align*}
\sum_{s=1}^{K}\lambda_{s}^{2} \leq \| U_{\bm \alpha}\|_2 \leq  \sum_{i=q+1}^{r}|\alpha_{i}| \frac{|V(H_{i})| ( |V(H_i)| -1 ) }{ |\text{Aut}(H_{i})|}  = \mathcal{C} , 
\end{align*}
since $|W_{H_{i}}|\leq \frac{|V(H_{i})| ( |V(H_i)| -1 ) }{2 |\text{Aut}(H_{i})|}$, for $q+1 \leq i \leq r$. 
This implies, 
\begin{align*}
    \mathbb{E}\left[ e^{\theta Y_{\bm \alpha, K}} \right] < \infty , \quad \text{ for }| \theta |<\frac{1}{16\mathcal{C}}.
\end{align*}

Now, we proceed to compute the MGF of $Y_{\bm \alpha, K}$. Observe that by independence, 
\begin{align}\label{eq:MGF-YN-1-2}
    \mathbb{E}\left[ e^{\theta Y_{\bm \alpha, K} } \right] = \mathbb{E}\left[ e^{\theta Y_{\bm \alpha}^{(1)} } \right] \mathbb{E}\left[ e^{ \theta (Y_{\bm \alpha, K}^{(2)} + Y_{\bm \alpha, K}^{(3)} ) } \right] . 
\end{align} 
We will consider the following 2 cases: 

\begin{itemize}  
\item[{\it Case} 1:] First, assume that $\lambda_{s}\neq 0$, for all $1\leq s\leq K$. Then recalling \eqref{eq:YK} and completing the square,
\begin{align}\label{eq:def-YN2}
    Y_{\bm \alpha, K}^{(2)} + Y_{\bm \alpha, K}^{(3)}  & = \sum_{s=1}^{K}\lambda_{s}\left(\eta_{s}^{2} + \dfrac{\gamma_{s}\eta_{s}}{\lambda_{s}}\right) - \sum_{s=1}^{K}\lambda_{s} \nonumber \\ 
   & = \sum_{s=1}^{K}\lambda_{s}\left(\eta_{s} + \dfrac{\gamma_{s}}{2\lambda_{s}}\right)^{2} - \sum_{s=1}^{K}\dfrac{\gamma_{s}^{2}}{4\lambda_{s}} - \sum_{s=1}^{K}\lambda_{s}
\end{align}
Recall that $\{\eta_{s}\}_{s \geq 1}$ are i.i.d. $N(0,1)$. Hence, from the MGF of non-central chi-squared distribution and \eqref{eq:lambda-sum-finite} we have, 
\begin{align*}
    \mathbb{E}\left[ e^{\theta \sum_{s=1}^{K}\lambda_{s} (\eta_{s} + \frac{\gamma_{s}}{2\lambda_{s}} )^{2} } \right] 
    & = \prod_{s=1}^{K}\dfrac{ e^{ \frac{ \theta \gamma_{s}^{2}}{4\lambda_{s}\left(1-2\lambda_{s} \theta\right)} } }{\sqrt{1-2\lambda_{s} \theta}} \quad 
    \text{ for all }| \theta |<\frac{1}{16\mathcal{C}} \leq \frac{1}{2 \max_{1 \leq s \leq K} | \lambda_s | } . 
\end{align*}
This implies, for all $|\theta|<\frac{1}{16\mathcal{C}}$, 
\begin{align*}
    \log \mathbb{E}\left[ e^{\theta \sum_{s=1}^{K}\lambda_{s} (\eta_{s} + \frac{\gamma_{s}}{2\lambda_{s}} )^{2} } \right]
    & = \sum_{s=1}^{K}\frac{s\gamma_{s}^{2}}{4\lambda_{s}}(1-2\lambda_{s} \theta)^{-1} - \frac{1}{2}\sum_{s=1}^{K}\log\left(1-2\lambda_{s} \theta\right)\nonumber\\
    & = \sum_{s=1}^{K}\frac{s\gamma_{s}^{2}}{4\lambda_{s}}\sum_{L=0}^{\infty}(2\lambda_{s} \theta)^{L} + \frac{1}{2}\sum_{s=1}^{K}\sum_{L=1}^{\infty}\dfrac{(2\lambda_{s} \theta)^{L}}{L} \nonumber \\ 
    & = \sum_{s=1}^{K}\sum_{L=0}^{\infty}2^{L-2}\theta^{L+1}\gamma_{s}^{2}\lambda_{s}^{L-1} + \sum_{s=1}^{K}\lambda_{s} \theta + \frac{1}{2}\sum_{s=1}^{K}\sum_{L=2}^{\infty}\frac{(2\lambda_{s} \theta)^{L}}{L} . \nonumber 
\end{align*}
Recalling \eqref{eq:def-YN2},  this implies that 
\begin{align}\label{eq:MGF-Y2-nz}
    \log\mathbb{E}\left[ e^{ \theta \left( Y_{\bm \alpha, K}^{(2)} + Y_{\bm \alpha, K}^{(3)}\right) } \right] = \sum_{s=1}^{K}\sum_{L=1}^{\infty}2^{L-2}\theta^{L+1}\gamma_{s}^{2}\lambda_{s}^{L-1}
    & + \frac{1}{2}\sum_{s=1}^{K}\sum_{L=2}^{\infty}\frac{(2\lambda_{s} \theta)^{L}}{L}.
\end{align}

\item[{\it Case} 2:] There exists $1\leq t \leq K$ such that $\lambda_{t} = 0$. Recall that $\{\lambda_{s}\}_{s \geq 1}$ are arranged according to non-increasing order of magnitude. Hence, $\lambda_{s} = 0$ for all $t \leq s \leq K$. 
In this case,
\begin{align*}
    Y_{\bm \alpha, K}^{(2)} + Y_{\bm \alpha, K}^{(3)} =  Y_{\bm \alpha, t-1}^{(2)} + Y_{\bm \alpha, t-1}^{(3)}  + \sum_{s=t}^{K} \gamma_{s} \eta_{s} . 
\end{align*} 
Clearly, $Y_{\bm \alpha, t-1}^{(2)} + Y_{\bm \alpha, t-1}^{(3)}$ is independent of 
$\sum_{s=t}^{K} \gamma_{s} \eta_{s}$. 
Moreover, note that $\sum_{s=t}^{K} \gamma_{s} \eta_{s} \sim N(0, \sum_{s=t}^{K} \gamma_s^2)$. Hence, using \eqref{eq:MGF-Y2-nz} we have,
\begin{align}\label{eq:MGF-Y2-z}
    \log\mathbb{E}\left[ e^{\theta (Y_{\bm \alpha, K}^{(2)} + Y_{\bm \alpha, K}^{(3)})} \right] 
    &= \sum_{s=1}^{t-1}\sum_{L=1}^{\infty}2^{L-2}\theta^{L+1}\gamma_{s}^{2}\lambda_{s}^{L-1}
    + \frac{1}{2}\sum_{s=1}^{t-1}\sum_{L=2}^{\infty}\frac{(2\lambda_{s} \theta)^{L}}{L} + \frac{\theta^{2}}{2} \sum_{s=t}^{K} \gamma_{s}^{2} \nonumber \\
    & = \sum_{s=1}^{K}\frac{1}{2} \theta^{2}\gamma_{s}^{2} + \sum_{s=1}^{t-1}\sum_{L=2}^{\infty}2^{L-2}\theta^{L+1}\gamma_{s}^{2}\lambda_{s}^{L-1}
    + \frac{1}{2}\sum_{s=1}^{K}\sum_{L=2}^{\infty}\frac{(2\lambda_{s} \theta)^{L}}{L}\nonumber \\
    & = \sum_{s=1}^{K}\sum_{L=1}^{\infty}2^{L-2}\theta^{L+1}\gamma_{s}^{2}\lambda_{s}^{L-1}
    + \frac{1}{2}\sum_{s=1}^{K}\sum_{L=2}^{\infty}\frac{(2\lambda_{s} \theta)^{L}}{L} . 
\end{align}

\end{itemize}

Combining \eqref{eq:MGF-Y2-nz} and \eqref{eq:MGF-Y2-z} we can conclude that for all $K \geq 1$,
\begin{align}\label{eq:MGF-Y2}
    \log\mathbb{E}\left[ e^{\theta (Y_{\bm \alpha, K}^{(2)} + Y_{\bm \alpha, K}^{(3)} ) } \right]  = \sum_{s=1}^{K}\sum_{L=1}^{\infty}2^{L-2}\theta^{L+1}\gamma_{s}^{2}\lambda_{s}^{L-1} 
    + \frac{1}{2}\sum_{s=1}^{K}\sum_{L=2}^{\infty}\frac{(2\lambda_{s} \theta)^{L}}{L} . 
\end{align}
Finally, recall that $\bm{G} = (G_{q+1},\cdots, G_{r})^{\top}\sim N_{r-q}(\bm{0},\Sigma)$. This implies, 
\begin{align}\label{eq:MGF-Y1}
    \log\mathbb{E}\left[ e^{\theta Y_{\bm \alpha}^{(1)}} \right] = \log\mathbb{E}\left[ e^{\theta \bm{\alpha}_{+}^{\top}\bm{G} } \right] = \frac{\theta^{2}}{2}\bm{\alpha}_{+}^{\top}\Sigma\bm{\alpha}_{+}.
\end{align} 
Now by \eqref{eq:MGF-YN-1-2}, \eqref{eq:MGF-Y2}, and \eqref{eq:MGF-Y1} we conclude that,
\begin{align*}
    \log\mathbb{E}\left[ e^{\theta Y_{\bm \alpha, K}} \right] = \frac{\theta^{2}}{2}\bm{\alpha}_{+}^{\top}\Sigma\bm{\alpha}_{+} + \sum_{s=1}^{K}\sum_{L=1}^{\infty}2^{L-2}\theta^{L+1}\gamma_{s}^{2}\lambda_{s}^{L-1}
    & + \frac{1}{2}\sum_{s=1}^{K}\sum_{L=2}^{\infty}\frac{(2\lambda_{s} \theta)^{L}}{L} , 
\end{align*}
for all $|\theta|<\frac{1}{16\mathcal{C}}$. 
\end{proof}

Now, we compute the MGF of $\bm{\alpha}^{\top}\bm{Z}(\bm{\mathcal{H}},W))$. 

\begin{lemma}\label{lemma:MGF-limit-1st}
The moment generating function of $\bm{\alpha}^{\top}\bm{Z}(\bm{\mathcal{H}},W))$ exists for all $|\theta|<\frac{1}{32\mathcal{C}}$ and is given by,
\begin{align}\label{eq:YMGFlimit}
    \log\mathbb{E}\left[ e^{\theta \bm{\alpha}^{\top}\bm{Z}(\bm{\mathcal{H}},W)} \right] 
    &= \dfrac{\theta^{2} c_2}{2}  + \sum_{L = 1}^{\infty} 2^{L-1}\theta^{L+2} \sum_{s=1}^{\infty} \gamma_{s}^{2}\lambda_{s}^{L} + \frac{1}{2}\sum_{L=3}^{\infty}\dfrac{\left(2 \theta\right)^{L}}{L} \sum_{s=1}^{\infty} \lambda_{s}^L , 
\end{align}  
where $c_2: = \bm{\alpha}_{+}^{\top}\Sigma\bm{\alpha}_{+} + \|V_{\bm \alpha}\|_2^2 + 2 \| U_{\bm \alpha} \|_2^2$. 
\end{lemma}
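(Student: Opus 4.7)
The plan is to take the limit $K \to \infty$ in the formula for $\log \mathbb{E}[e^{\theta Y_{\bm\alpha,K}}]$ derived in Lemma \ref{lemma:log-MGF-YN} and identify the resulting series with the claimed expression. First, recalling the definition of $Y_{\bm\alpha,K}$ in \eqref{eq:YMGF} together with the $L^2$ expansions \eqref{eq:operator-expansion} and \eqref{eq:V-expansion} and the isometric properties of the multiple stochastic integrals (Proposition \ref{prop:I2-f_E}), one checks that $Y_{\bm\alpha,K} \to \bm\alpha^\top \bm Z(\mathcal{H},W)$ in $L^2$, and hence in distribution, as $K\to\infty$.

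To upgrade this to convergence of moment generating functions for $|\theta| < \frac{1}{32\mathcal{C}}$, I would establish uniform integrability of $\{e^{\theta Y_{\bm\alpha,K}}\}_{K\geq 1}$ by applying Lemma \ref{lemma:log-MGF-YN} at the doubled argument $2\theta$: since $|2\theta| < \frac{1}{16\mathcal{C}}$, that lemma furnishes a bound on $\mathbb{E}[e^{2\theta Y_{\bm\alpha,K}}]$ which is uniform in $K$ once one notes that the tail contributions $\sum_{s=1}^K \gamma_s^2 \lambda_s^{L-1}$ and $\sum_{s=1}^K \lambda_s^L$ are controlled by $\|V_{\bm\alpha}\|_2^2 \cdot \mathcal{C}^{L-1}$ and $\|U_{\bm\alpha}\|_2^2 \cdot \mathcal{C}^{L-2}$, respectively, via $|\lambda_s| \leq \mathcal{C}$ and \eqref{eq:lambda-sum-finite}. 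This is precisely why the radius of validity shrinks from $\frac{1}{16\mathcal{C}}$ to $\frac{1}{32\mathcal{C}}$. Combined with convergence in distribution, this yields $\log\mathbb{E}[e^{\theta Y_{\bm\alpha,K}}] \to \log\mathbb{E}[e^{\theta \bm\alpha^\top \bm Z(\mathcal{H},W)}]$.

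The remaining work is algebraic: identify the limit of the RHS of the formula in Lemma \ref{lemma:log-MGF-YN} with \eqref{eq:YMGFlimit}. I would extract the quadratic-in-$\theta$ contributions and collect them: the $L=1$ term in the first double sum contributes $\tfrac{\theta^2}{2}\sum_s \gamma_s^2 = \tfrac{\theta^2}{2}\|V_{\bm\alpha}\|_2^2$ by Parseval (recall \eqref{eq:def-delta-i}), and the $L=2$ term in the second double sum contributes $\theta^2 \sum_s \lambda_s^2 = \theta^2 \|U_{\bm\alpha}\|_2^2$ by \eqref{eq:lambda-sum-finite}; adding the Gaussian contribution $\tfrac{\theta^2}{2}\bm\alpha_+^\top \Sigma \bm\alpha_+$ produces exactly $\tfrac{\theta^2}{2} c_2$. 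For the higher-order $L\geq 2$ terms in the first sum, re-indexing via $L \mapsto L+1$ yields $\sum_{L\geq 1} 2^{L-1}\theta^{L+2} \sum_s \gamma_s^2 \lambda_s^L$; for $L\geq 3$ in the second sum, interchanging the summation orders (justified by absolute convergence from $\sum_s |\lambda_s|^L \leq \mathcal{C}^{L-2}\|U_{\bm\alpha}\|_2^2$) gives $\tfrac{1}{2}\sum_{L\geq 3}\tfrac{(2\theta)^L}{L}\sum_s \lambda_s^L$.

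The main obstacle is the uniform integrability step, since one needs to pass to the limit inside the expectation simultaneously in $K$ and in the series representation; the trick of invoking Lemma \ref{lemma:log-MGF-YN} at $2\theta$ and applying Cauchy-Schwarz (exactly as in its proof) should handle this. A secondary but routine obstacle is justifying the term-by-term passage to the limit in $K$ inside the two double sums; this is immediate from monotone or dominated convergence since each of the inner sums $\sum_{s=1}^K \gamma_s^2 \lambda_s^{L-1}$ and $\sum_{s=1}^K \lambda_s^L$ is absolutely convergent as $K \to \infty$ under the bounds $|\lambda_s| \leq \mathcal{C}$, $\sum_s \gamma_s^2 < \infty$, and $\sum_s \lambda_s^2 < \infty$.
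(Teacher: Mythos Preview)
Your proposal is correct and follows essentially the same route as the paper: $L^2$ convergence of $Y_{\bm\alpha,K}$ to $\bm\alpha^\top\bm Z(\mathcal H,W)$, uniform integrability of $e^{\theta Y_{\bm\alpha,K}}$ obtained by invoking Lemma~\ref{lemma:log-MGF-YN} at $2\theta$ (which is exactly why the radius drops to $\tfrac{1}{32\mathcal C}$), and then the algebraic regrouping of the $L=1$ and $L=2$ terms via Parseval to form $c_2$ followed by the re-indexing $L\mapsto L+1$. The paper carries out the tail estimate for the double sums more explicitly (the bound in \eqref{eq:logMGF-YN-convg-exp}), but your dominated-convergence justification using $|\lambda_s|\le\mathcal C$, $\sum_s\gamma_s^2<\infty$, and $\sum_s\lambda_s^2<\infty$ amounts to the same thing.
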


\begin{proof} Define,
\begin{align}\label{eq:YK}
    Y_{\bm \alpha}^{(1)} := \sum_{i=q+1}^{r}\alpha_{i}G_{i}, ~Y_{\bm \alpha}^{(2)} := \sum_{s=1}^{\infty}\lambda_{s}(\eta_{s}^{2}-1), \text{ and }  Y_{\bm \alpha}^{(3)} := \sum_{s=1}^{\infty} \gamma_s \eta_{s}.
\end{align}
Observe that $\{\frac{\eta_{s}^{2}-1}{\sqrt{2}}\}_{ s \geq 1 }$ and $\{\eta_{s}\}_{s \geq 1}$ are orthonormal. Hence, for $Y_{\bm \alpha, K}$ as defined in \eqref{eq:YMGF} we have, 
\begin{align*}
   \mathbb{E} [(\bm{\alpha}^{\top}\bm{Z}(\bm{\mathcal{H}},W) - Y_{\bm \alpha , K})^2] & \leq \mathbb{E} \left[\sum_{s=K+1}^{\infty}\lambda_{s}(\eta_{s}^{2}-1)\right] + \mathbb{E} \left[ \sum_{s=K+1}^{\infty}\gamma_{s}\eta_{s} \right] \nonumber \\
   & \leq 2\sum_{s=K+1}^{\infty}\lambda_{i}^{2} + \sum_{s=K+1}^{\infty}\gamma_{s}^{2} \rightarrow 0 , 
\end{align*}
as $K \rightarrow \infty$, by \eqref{eq:lambda-sum-finite} and \eqref{eq:def-delta-i}.Thus, 
\begin{align*}
    e^{\theta Y_{\bm \alpha , K} } \overset{P}{\rightarrow} e^{\theta \bm{\alpha}^{\top}\bm{Z}(\bm{\mathcal{H}},W)} , \quad \text{ for all }|\theta|<\frac{1}{32\mathcal{C}}. 
\end{align*} 
From the proof of Lemma \ref{lemma:log-MGF-YN} it follows that $\mathbb{E}[e^{2 \theta Y_{\bm \alpha , K} }] < \infty$ for $|\theta|<\frac{1}{32\mathcal{C}}$. Hence, $\{ e^{\theta Y_{\bm \alpha, K}} : K \geq 1\}$ is uniformly integral for $|\theta|<\frac{1}{32\mathcal{C}}$ and 
\begin{align}\label{eq:MGF-convg}
   \lim_{K \rightarrow \infty} \log\mathbb{E}\left[ e^{\theta Y_{\bm \alpha, K}} \right] = \log\mathbb{E}\left[ e^{\theta \bm{\alpha}^{\top}\bm{Z}(\bm{\mathcal{H}},W)} \right] , \quad \text{ for all } |\theta|<\frac{1}{32\mathcal{C}} . 
\end{align}
Now, note that 
\begin{align}\label{eq:logMGF-YN-convg-exp}
    \bigg|\bm{\alpha}_{+}^{\top}\Sigma\bm{\alpha}_{+}\dfrac{\theta^{2}}{2}
    & + \sum_{L = 1}^{\infty}\sum_{s=1}^{\infty}2^{L-2}\theta^{L+1}\gamma_{s}^{2}\lambda_{s}^{L-1}
     + \frac{1}{2}\sum_{L=2}^{\infty}\sum_{s=1}^{\infty}\dfrac{\left(2\lambda_{s}\theta\right)^{L}}{L} - \log\mathbb{E}\left[ e^{\theta Y_{\bm \alpha, K}} \right]\bigg|\nonumber\\
     &\leq \left|\sum_{s=K+1}^{\infty}\sum_{L = 1}^{\infty}2^{L-2}\theta^{L+1}\gamma_{s}^{2}\lambda_{s}^{L-1} + \frac{1}{2}\sum_{s=K+1}^{\infty}\sum_{L=2}^{\infty}\dfrac{\left(2\lambda_{s}\theta\right)^{L}}{L}\right|\nonumber\\ 
       & \leq \sum_{s=K+1}^{\infty}\sum_{L = 1}^{\infty}2^{L-2}|\theta|^{L+1} \gamma_{s}^{2}|\lambda_{s}|^{L-1}
     + \frac{1}{2}\sum_{s=K+1}^{\infty}\sum_{L=2}^{\infty}\dfrac{\left|2\lambda_{s} \theta\right|^{L}}{L}\nonumber\\
    &\leq \frac{1}{8}\sum_{s=K+1}^{\infty}\sum_{L=1}^{\infty}\frac{2^{L+1}}{32^{L+1}\mathcal{C}^{L+1}}\gamma_{s}^{2}|\lambda_{s}|^{L-1} + \frac{1}{32^{2}}\sum_{s=K+1}^{\infty}\frac{\lambda_{s}^{2}}{\mathcal{C}^{2}} + \frac{1}{2}\sum_{s=K+1}^{\infty}\sum_{L=3}^{\infty}\frac{2^{L}\mathcal{C}^{L}}{L s^{L/2}32^{L}\mathcal{C}^{L}}\nonumber\\
     &\leq \frac{1}{8}\sum_{s=K+1}^{\infty}\gamma_{s}^{2}\sum_{L=1}^{\infty}\frac{1}{16^{L+1}\mathcal{C}^{2}}  + \sum_{s=K+1}^{\infty}\frac{\lambda_{s}^{2}}{\mathcal{C}^{2}}  + \frac{1}{2}\sum_{s=K+1}^{\infty}\frac{1}{s^\frac{3}{2}}\sum_{L=3}^{\infty}\frac{1}{L16^{L}}   \rightarrow 0,
\end{align}
as $K \rightarrow \infty$.
Thus,  combining \eqref{eq:MGF-convg} and \eqref{eq:logMGF-YN-convg-exp},  for $|\theta|<\frac{1}{32\mathcal{C}}$,   
\begin{align*}
    & \log\mathbb{E}\left[ e^{\theta \bm{\alpha}^{\top}\bm{Z}(\bm{\mathcal{H}},W)} \right] \\ 
    &= \bm{\alpha}_{+}^{\top}\Sigma\bm{\alpha}_{+}\dfrac{\theta^{2}}{2} + \sum_{L = 1}^{\infty}\sum_{s=1}^{\infty}2^{L-2}\theta^{L+1}\gamma_{s}^{2}\lambda_{s}^{L-1}
     + \frac{1}{2}\sum_{L=2}^{\infty}\sum_{s=1}^{\infty}\dfrac{\left(2\lambda_{s}\theta\right)^{L}}{L}\\
    & = \dfrac{\theta^{2}}{2}\left(\bm{\alpha}_{+}^{\top}\Sigma\bm{\alpha}_{+} + \sum_{s=1}^{\infty}\gamma_{s}^{2} + 2\sum_{s=1}^{\infty}\lambda_{s}^{2}\right) + \sum_{L = 2}^{\infty}\sum_{s=1}^{\infty}2^{L-2}\theta^{L+1}\gamma_{s}^{2}\lambda_{s}^{L-1} + \frac{1}{2}\sum_{L=3}^{\infty}\sum_{s=1}^{\infty}\dfrac{\left(2\lambda_{s}\theta\right)^{L}}{L} \\ 
       & = \dfrac{\theta^{2}}{2}\left(\bm{\alpha}_{+}^{\top}\Sigma\bm{\alpha}_{+} + \sum_{s=1}^{\infty}\gamma_{s}^{2} + 2\sum_{s=1}^{\infty}\lambda_{s}^{2}\right) + \sum_{L = 1}^{\infty} 2^{L-1}\theta^{L+2} \sum_{s=1}^{\infty} \gamma_{s}^{2}\lambda_{s}^{L} + \frac{1}{2}\sum_{L=3}^{\infty}\dfrac{\left(2 \theta\right)^{L}}{L} \sum_{s=1}^{\infty} \lambda_{s}^L . 
\end{align*} 
Now, observing that $\sum_{s=1}^{\infty}\gamma_{s}^{2} = \|V_{\bm \alpha}\|_2^2$ and $\sum_{s=1}^{\infty}\lambda_{s}^{2} = \| U_{\bm \alpha} \|_2^2$, the result in Lemma \ref{lemma:MGF-limit-1st} follows.
\end{proof} 

Now we relate the terms in \eqref{eq:YMGFlimit} with those in Proposition \ref{prop:MGF-alpha-limit}. To begin with note that $\int_{0}^{1} U_{\bm \alpha}^{(L)}(x, x) \mathrm d x $ can be interpreted as the density of the $L$-cycle for the function $U_{\bm \alpha}$. Since $U_{\bm \alpha}: [0, 1]^2 \rightarrow \mathbb R$, is a symmetric and bounded function, it follows from arguments in \cite[Section 7.5]{lovasz2012large} that, for $L \geq 3$, 
\begin{align}\label{eq:ULcycle}
\sum_{s=1}^\infty \lambda_s^L = \int_{0}^{1} U_{\bm \alpha}^{(L)}(x, x) \mathrm d x , 
\end{align}
where $\{\lambda_{s}\}_{s \geq 1}$ are the eigenvalues, with eigenfunctions $\{\phi_{s}\}_{s \geq 1}$, for the operator $T_{U_{\bm \alpha}}$ as defined in \eqref{eq:def-TU}. 
This implies, by the spectral theorem, 
\begin{align}\label{eq:ULxy} 
 U_{\bm \alpha}^{(L)}(x,y) = \sum_{s=1}^\infty \lambda_s^{L} \phi_s(x) \phi_s(y) , 
\end{align} 
Using this we relate $\sum_{s=1}^{\infty}\gamma_{s}^{2}\lambda_{s}^{L}$ in terms of the functions $U_{\bm \alpha}$ and $V_{\bm \alpha}$.

\begin{lemma}\label{lemma:sum-delta-lambda}
For all $L \geq 1$,
\begin{align*}
    \sum_{s=1}^{\infty}\gamma_{s}^{2}\lambda_{s}^{L} = \int_{[0, 1]^2} V_{\bm \alpha}(x)V_{\bm \alpha}(y)U_{\bm \alpha}^{(L)}(x,y)\mathrm{d}x\mathrm{d}y . 
\end{align*}
\end{lemma}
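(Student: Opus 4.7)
The plan is to leverage the spectral expansion of $U_{\bm\alpha}$ in \eqref{eq:operator-expansion} together with the coefficient expansion of $V_{\bm\alpha}$ in \eqref{eq:V-expansion}. The first step is to establish \eqref{eq:ULxy}, namely
\[
U_{\bm \alpha}^{(L)}(x,y) = \sum_{s=1}^{\infty}\lambda_{s}^{L}\phi_{s}(x)\phi_{s}(y),
\]
with convergence in $L^2([0,1]^2)$. This follows because $U_{\bm\alpha}^{(L)}$ is exactly the integral kernel of the operator $T_{U_{\bm\alpha}}^{L}$ (the $L$-fold composition of $T_{U_{\bm\alpha}}$ in \eqref{eq:def-TU}), and since $T_{U_{\bm\alpha}}\phi_s=\lambda_s\phi_s$ we obtain $T_{U_{\bm\alpha}}^{L}\phi_s=\lambda_s^L\phi_s$. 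Alternatively, one can plug the expansion \eqref{eq:operator-expansion} into each factor in the definition \eqref{def:UL} and iteratively apply the orthonormality $\int\phi_s\phi_t=\delta_{st}$ to collapse the intermediate integrals.

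Once \eqref{eq:ULxy} is in hand, the second step is a direct computation. Using \eqref{eq:V-expansion} and Fubini (justified below),
\begin{align*}
\int_{[0,1]^{2}}V_{\bm \alpha}(x)V_{\bm \alpha}(y)U_{\bm \alpha}^{(L)}(x,y)\,\mathrm{d}x\,\mathrm{d}y
&=\sum_{s=1}^{\infty}\lambda_{s}^{L}\Bigl(\int_{0}^{1}V_{\bm\alpha}(x)\phi_{s}(x)\,\mathrm{d}x\Bigr)\Bigl(\int_{0}^{1}V_{\bm\alpha}(y)\phi_{s}(y)\,\mathrm{d}y\Bigr) \\
&=\sum_{s=1}^{\infty}\gamma_{s}^{2}\lambda_{s}^{L},
\end{align*}
since $\gamma_s=\int_0^1 V_{\bm\alpha}(x)\phi_s(x)\,\mathrm d x$ is precisely the Fourier coefficient of $V_{\bm\alpha}$ in the orthonormal basis $\{\phi_s\}_{s\geq 1}$.

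The main technical obstacle is the interchange of summation and integration, which needs to be justified for every $L\geq 1$ (the case $L=1$ being the most delicate since $\sum_s|\lambda_s|$ need not be finite for a general Hilbert--Schmidt kernel). For this, note that $T_{U_{\bm\alpha}}$ is a bounded operator, so $\Lambda:=\sup_s|\lambda_s|<\infty$, whence $|\lambda_s^L|\leq \Lambda^L$. Combined with $\sum_s\gamma_s^2=\|V_{\bm\alpha}\|_2^2<\infty$ from \eqref{eq:def-delta-i}, this yields $\sum_s|\gamma_s^2\lambda_s^L|\leq \Lambda^L\|V_{\bm\alpha}\|_2^2<\infty$, so the right-hand sum converges absolutely. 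On the integral side, the partial sums $U_{\bm\alpha}^{(L),K}(x,y):=\sum_{s\leq K}\lambda_s^L\phi_s(x)\phi_s(y)$ converge to $U_{\bm\alpha}^{(L)}$ in $L^2([0,1]^2)$ (using $\sum_s\lambda_s^{2L}<\infty$ for $L\geq 1$, which follows from $\sum_s\lambda_s^2<\infty$ and $\Lambda<\infty$), and $V_{\bm\alpha}\otimes V_{\bm\alpha}\in L^2([0,1]^2)$, so Cauchy--Schwarz gives the desired convergence of $\int V_{\bm\alpha}(x)V_{\bm\alpha}(y)U_{\bm\alpha}^{(L),K}(x,y)\,\mathrm d x\,\mathrm d y$ to $\int V_{\bm\alpha}(x)V_{\bm\alpha}(y)U_{\bm\alpha}^{(L)}(x,y)\,\mathrm d x\,\mathrm d y$, completing the proof.
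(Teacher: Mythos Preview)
Your proposal is correct and follows essentially the same approach as the paper: both use the spectral expansion \eqref{eq:ULxy} of $U_{\bm\alpha}^{(L)}$ together with the identification $\gamma_s=\int_0^1 V_{\bm\alpha}\phi_s$ to reduce the double integral to $\sum_s\gamma_s^2\lambda_s^L$. Your write-up is in fact more careful than the paper's, which states \eqref{eq:ULxy} just before the lemma and then carries out the computation in one line without explicitly justifying the interchange of sum and integral; your $L^2$-convergence/Cauchy--Schwarz argument fills that gap cleanly.
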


\begin{proof} 
Since $V_{\bm \alpha}(x) = \sum_{s=1}^{\infty} \gamma_{s}\phi_{s}(x)$ (recall \eqref{eq:V-expansion}), we have $\gamma_{s} = \int_{0}^{1} V_{\bm \alpha}(x) \phi_{s}(x) \mathrm{d}x$, for $s \geq 1$, by the orthonormality of the eigenvectors. 
Hence, 
\begin{align*} 
\sum_{s=1}^{\infty}\gamma_{s}^{2}\lambda_{s}^{L} = \int_{[0, 1]^2} \left(\sum_{s=1}^\infty \lambda_{s}^{L} \phi_s(x) \phi_s(y) \right) V_{\bm \alpha}(x) V_{\bm \alpha}(y)  \mathrm d x \mathrm dy = \int_{[0, 1]^2} V_{\bm \alpha}(x)V_{\bm \alpha}(y)U_{\bm \alpha}^{(L)}(x,y)\mathrm{d}x\mathrm{d}y , 
\end{align*}
by \eqref{eq:ULxy}.
\end{proof}

Combining Lemma \ref{lemma:MGF-limit-1st}, \eqref{eq:ULcycle}, and Lemma \ref{lemma:sum-delta-lambda} gives, 
\begin{align*}
    & \log\mathbb{E}\left[ e^{\theta \bm{\alpha}^{\top}\bm{Z}(\bm{\mathcal{H}},W)} \right] \nonumber \\ 
    &= \dfrac{\theta^{2} c_2}{2}  + \sum_{L = 1}^{\infty} 2^{L-1}\theta^{L+2} \int_{[0, 1]^2} V_{\bm \alpha}(x)V_{\bm \alpha}(y)U_{\bm \alpha}^{(L)}(x,y)\mathrm{d}x\mathrm{d}y + \frac{1}{2}\sum_{L=3}^{\infty}\dfrac{\left(2 \theta\right)^{L}}{L} \int_{0}^{1} U_{\bm \alpha}^{(L)}(x, x)\mathrm{d}x , 
\end{align*}  
where $c_2 = \bm{\alpha}_{+}^{\top}\Sigma\bm{\alpha}_{+} + \|V_{\bm \alpha}\|_{2}^{2} + 2 \|U_{\bm \alpha}\|_{2}^{2}$.
The result in Proposition \ref{prop:MGF-alpha-limit} now follows from the next lemma.

\begin{lemma}\label{lemma:norm-U-2}
 $\eta_{\bm \alpha}$, $\tilde \eta_{\bm \alpha}$ as defined in Proposition \ref{prop:MGF-alpha-limit} the following hold: 
\begin{align*}
 \|V_{\bm \alpha}\|_{2}^{2} = \eta_{\bm \alpha} \quad \text{ and } \quad   2 \|U_{\bm \alpha} \|_{2}^{2} + \bm{\alpha}_{+}^{\top}\Sigma\bm{\alpha}_{+} = \tilde \eta_{\bm \alpha}  . 
\end{align*} 
\end{lemma}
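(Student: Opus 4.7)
The plan is to prove the two claimed identities separately, each via direct expansion of the defining quadratic forms.

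\textbf{Part 1: $\|V_{\bm \alpha}\|_2^2 = \eta_{\bm \alpha}$.} I would expand the square of the finite sum defining $V_{\bm \alpha}(x)$ and integrate over $x$. The workhorse identity will be
\begin{align*}
\int_0^1 t_a(x, H_i, W) \, t_b(x, H_j, W) \, \mathrm d x = t\left(H_i \bigoplus_{a, b} H_j, W\right),
\end{align*}
which follows from Definition \ref{defn:tabxyHW} and Definition \ref{defn:H1H2ab}: conditioning on $U_a = x = U_b$ and integrating over $x$ identifies vertex $a$ of $H_i$ with vertex $b$ of $H_j$. Combining this with $\int_0^1 t_a(x, H_i, W) \, \mathrm d x = t(H_i, W)$, the four bilinear terms in the expansion of $V_{\bm \alpha}^2$ will collect to exactly the double sum defining $\eta_{\bm \alpha}$.

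\textbf{Part 2: $2 \|U_{\bm \alpha}\|_2^2 + \bm{\alpha}_+^\top \Sigma \bm{\alpha}_+ = \tilde \eta_{\bm \alpha}$.} Two preliminary observations will simplify the bookkeeping. First, by Remark \ref{remark:regular}, $\int_0^1 W_{H_i}(x, y) \, \mathrm d y = c_{H_i}(W)$ for every $i \geq q + 1$, so expanding the integrand of $\|U_{\bm \alpha}\|_2^2$ and integrating yields
\begin{align*}
\|U_{\bm \alpha}\|_2^2 = \sum_{i, j = q+1}^r \alpha_i \alpha_j \int_0^1 \!\! \int_0^1 W_{H_i}(x,y) W_{H_j}(x,y) \, \mathrm d x \, \mathrm d y - \Big(\sum_{i = q+1}^r \alpha_i c_{H_i}(W)\Big)^2.
\end{align*}
Second, inserting Definition \ref{defn:WH} reduces $\int\!\!\int W_{H_i} W_{H_j}$ to a weighted sum over pairs $(a, b), (c, d)$ of $\int\!\!\int t_{a, b}(x, y, H_i, W) \, t_{c, d}(x, y, H_j, W) \, \mathrm d x \, \mathrm d y$, which equals the homomorphism density of the (multi)graph formed by identifying $a \leftrightarrow c$ and $b \leftrightarrow d$ and taking the union of edge sets with multiplicity. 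When both $(a, b) \in E^+(H_i)$ and $(c, d) \in E^+(H_j)$, a factor $W(x, y)^2$ will appear at the identified edge and the integral will equal $t(H_i \bigoplus_{(a, b), (c, d)} H_j, W)$; in every other case the power of $W$ at the identified edge will be $0$ or $1$, and the integral will equal $t(H_i \bigominus_{(a, b), (c, d)} H_j, W)$ under the natural extension of $\ominus$ to non-edge pairs tacitly used in the statement of $\tilde \eta_{\bm \alpha}$. Adding $\sigma_{ij}$, which by Definition \ref{def:Sigma} contributes $t(\ominus) - t(\oplus)$ exactly on the edge-edge pairs, will cancel all strong-join terms and yield
\begin{align*}
2 \int_0^1 \!\! \int_0^1 W_{H_i} W_{H_j} \, \mathrm d x \, \mathrm d y + \sigma_{ij} = \frac{1}{2 |\mathrm{Aut}(H_i)| \, |\mathrm{Aut}(H_j)|} \sum_{\substack{a \neq b \\ c \neq d}} t\left(H_i \bigominus_{(a, b), (c, d)} H_j, W\right).
\end{align*}
Multiplying by $\alpha_i \alpha_j$, summing over $i, j \geq q + 1$, and then subtracting $2 (\sum_i \alpha_i c_{H_i}(W))^2$ via the first preliminary will match $\tilde \eta_{\bm \alpha}$ term by term.

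The main obstacle will be the case analysis for $\int\!\!\int t_{a, b} t_{c, d}$ in Part 2: carefully tracking which of $W^0$, $W^1$, $W^2$ appears at the identified edge, and verifying that the extended $\ominus$-notation (for pairs $(a, b) \notin E(H_i)$ or $(c, d) \notin E(H_j)$) is consistent with simple vertex-pair identification, so that $\sigma_{ij}$, which only sums over edge-edge pairs, repairs exactly those strong-join contributions and leaves every other term untouched.
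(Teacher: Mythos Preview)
Your proposal is correct and follows essentially the same route as the paper's proof. For Part 1 the paper expands $\int V_{\bm\alpha}^2$ and uses exactly the identity $\int_0^1 t_a(x,H_i,W)t_b(x,H_j,W)\,\mathrm dx = t(H_i\bigoplus_{a,b}H_j,W)$; for Part 2 the paper likewise expands $\|U_{\bm\alpha}\|_2^2$, identifies $\int\!\!\int t_{a,b}\,t_{c,d}$ with $t(H_i\bigoplus_{(a,b),(c,d)}H_j,W)$, and then adds $\bm\alpha_+^\top\Sigma\bm\alpha_+$ extended to all vertex pairs via the observation (Remark \ref{rem:def_ext_join}) that weak and strong joins coincide when either pair is a non-edge---which is precisely your case analysis, just packaged the other way around.
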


\begin{proof} 
Recalling the definition of $V_{\bm \alpha}$ from \eqref{eq:def-V} we have, 
\begin{align}\label{eq:V2-form}
    & \int_{0}^{1} V_{\bm \alpha}^{2}(x)\mathrm{d}x\nonumber\\
    & = \sum_{1 \leq i , j \leq q } \kappa_i \kappa_j \int_{0}^{1} \sum_{\substack{a\in V(H_{i})\\b\in V(H_{j})}}\left(t_{a}(x,H_{i},W) - t(H_{i},W)\right)\left(t_{a}(x,H_{j},W) - t(H_{j},W)\right)\mathrm{d}x , 
\end{align} 
where $\kappa_i := \frac{\alpha_{i}}{|\text{Aut}(H_{i})|}$. Observe that,
\begin{align}\label{eq:V2-norm-term-expression}
    & \int_0^1 \left(t_{a}(x,H_{i},W) - t(H_{i},W)\right) \left(t_{a}(x,H_{j},W) - t(H_{j},W)\right)\mathrm{d}x\nonumber\\
    & = \int_0^1 t_{a}(x,H_{i},W)t_{b}(x,H_{j},W)  - t(H_{i},W)t_{b}(x,H_{j},W) - t(H_{j},W)t_{a}(x,H_{j},W) + t(H_{i},W)t(H_{j},W)\mathrm{d}x\nonumber\\
    & = t\left(H_{i}\bigoplus_{a,b}H_{j},W\right) - t(H_{i},W)t(H_{j},W) . 
\end{align}
Combining \eqref{eq:V2-form} and \eqref{eq:V2-norm-term-expression} gives $\|V_{\bm \alpha}\|_{2}^{2}=\eta_{\bm \alpha}$.

Next, recalling the definition of $U_{\bm \alpha}$ from \eqref{eq:def-U} we have,
\begin{align}\label{eq:U2-first}
    \int_{[0, 1]^2} U_{\bm \alpha}^{2}\mathrm{d}x\mathrm{d}y 
    & = \int_{[0, 1]^2}\left(\sum_{i=q+1}^{r}\alpha_{i}W_{H_{i}}(x,y)\right)^{2}\mathrm{d}x\mathrm{d}y - \left(\sum_{i=q+1}^{r}\alpha_{i}c(H_{i},W)\right)^{2} , 
\end{align}
since $\int_{[0, 1]^2}W_{H_{i}}(x,y)\mathrm{d}x\mathrm{d}y = c(H_{i},W)$. By definition,
\begin{align}\label{eq:alphaWH-sq-norm}
    \int_{[0, 1]^2}\left(\sum_{i=q+1}^{r}\alpha_{i}W_{H_{i}}(x,y)\right)^{2}\mathrm{d}x\mathrm{d}y = \sum_{q+1 \leq i,j \leq r} \alpha_{i}\alpha_{j}\int_{[0, 1]^2}W_{H_{i}}(x,y)W_{H_{j}}(x,y)\mathrm{d}x\mathrm{d}y , 
\end{align}
and 
\begin{align}\label{eq:WhiWhj-integral}
    & \int_{[0, 1]^2} W_{H_{i}}(x,y)
    W_{H_{j}}(x,y)\mathrm{d}x\mathrm{d}y \nonumber\\
    & = \dfrac{1}{4\left|\text{Aut}(H_{i})\right|\left|\text{Aut}(H_{j})\right|}\sum_{\substack{a\neq b\in V(H_{i})\\ c\neq d\in V(H_{j})}}\int_{[0, 1]^2}t_{a,b}(x,y,H_{i},W)t_{c,d}(x,y,H_{j},W)\mathrm{d}x\mathrm{d}y\nonumber\\
    & = \dfrac{1}{4\left|\text{Aut}(H_{i})\right|\left|\text{Aut}(H_{j})\right|}\sum_{\substack{a\neq b\in V(H_{i})\\ c\neq d\in V(H_{j})}} t\left(H_{i}\bigoplus_{(a,b),(c,d)}H_{j},W\right).
\end{align}
Finally, recalling the definition of $\Sigma$ from Definition \ref{def:Sigma} note that 
\begin{align}\label{eq:gaussian-var-expression}
    & \bm{\alpha}_{+}^{\top}\Sigma\bm{\alpha}_{+} \nonumber \\ 
    & = \sum_{q+1 \leq i, j \leq r} \frac{\kappa_{i}\kappa_{j}}{2} \sum_{\substack{(a,b)\in E^{+}(H_{i})\\(c,d)\in E^{+}(H_{j})}}\left(t(H_{i}\bigominus_{(a,b),(c,d)}H_{j},W) - t(H_{i}\bigoplus_{(a,b),(c,d)}H_{j},W)\right)\nonumber\\
    & = \sum_{q+1 \leq i, j \leq r} \frac{\kappa_{i}\kappa_{j}}{2}\sum_{\substack{a\neq b\in V(H_{i})\\c\neq d\in V(H_{j})}}\left(t(H_{i}\bigominus_{(a,b),(c,d)}H_{j},W) - t(H_{i}\bigoplus_{(a,b),(c,d)}H_{j},W)\right) . 
\end{align}
where the last equality follows from Remark \ref{rem:def_ext_join}. 
Combining \eqref{eq:U2-first}, \eqref{eq:alphaWH-sq-norm}, \eqref{eq:WhiWhj-integral}, and \eqref{eq:gaussian-var-expression} we have $2 \|U_{\bm \alpha}\|_{2}^{2} +  \bm{\alpha}_{+}^{\top}\Sigma\bm{\alpha}_{+} =\tilde \eta_{\bm \alpha}$. 
\end{proof}

\begin{remark}\label{rem:def_ext_join} 
Note that 
both the {weak} and {strong} edge join
operations can be extended to arbitrary $(a,b)\in V(H_{1})^{2}$ and $(c,d)\in
V(H_{2})^2$,  with $ a\neq b$ and $ c\neq d$ as follows: For the strong join we keep all edges, while for the weak join we keep the joined graph simple by merging any resulting double edge. In particular, if either $(a,b)\not\in E^{+}(H_{1})$ or $(c,d)\not\in
E^{+}(H_{2})$, then the {weak} and {strong} edge joins are the same graph. This implies, 
$$t(H_{1}\bigominus_{(a,b),(c,d)}H_{2},W) = t(H_{1}\bigoplus_{(a,b),(c,d)}H_{2},W) , $$
which explains the step in \eqref{eq:gaussian-var-expression}. 
\end{remark}

\section{Proof of Corollary \ref{cor:irregjoint} and Corollary \ref{cor:marginaldist}}\label{sec:proofofmarginaldist}

\subsection{Proof of Corollary \ref{cor:irregjoint}} 

Since $W$ is $H$-irregular for all $H \in \mathcal H$, by Theorem \ref{thm:asymp-joint-dist}, 
\begin{align*}
    \bm Z(\cH, G_n)\dto \bm Z(\cH, W) = (Z(H_{1},W),\cdots, Z(H_{r},W))
\end{align*}
where 
\begin{align*}
    Z(H_{i},W) = \int_{0}^{1} \left\{ \dfrac{1}{|\mathrm{Aut}(H_{i})|}\sum_{a=1}^{|V(H_{i})|}t_{a}(x,H_{i},W) - \dfrac{|V(H_{i})|}{|\mathrm{Aut}(H_{i})|}t(H_{i},W) \right\} \mathrm d B_{x} ,  
\end{align*} 
for $1\leq i\leq r$. Since a linear stochastic integral has a centered Gaussian distribution, $$\bm Z(\cH, W) \stackrel{D}= N_r( \bm 0, \Gamma),$$ where $\Gamma = (( \tau_{ij} ))_{1 \leq i, j \leq r}$, with  
\begin{align*}
\tau_{ij}  & =    \Cov(Z(H_{i},W),Z(H_{j},W)) \\ 
    & = \dfrac{1}{|\mathrm{Aut}(H_{i})| |\mathrm{Aut}(H_{j})|} \Bigg\{ \sum_{a=1}^{|V(H_{i})|}\sum_{b=1}^{|V(H_{j})|}\int_0^1 t_{a}(x,H_{i},W)t_{b}(x,H_{j},W)\mathrm{d}x   \\ 
      & \hspace{2.75in} - |V(H_{i})| t(H_{i},W) |V(H_{j})| t(H_{j},W) \Bigg \}.
\end{align*} 
A direct computation shows that, for all $1\leq i,j\leq r$,
\begin{align*}
    \int_0^1 t_{a}(x,H_{i},W)t_{b}(x,H_{j},W)\mathrm{d}x = t\left(H_i\bigoplus_{a,b}H_j,W\right) . 
\end{align*}
This shows, $\tau_{ij} = \Cov(Z(H_{i},W),Z(H_{j},W))$ equals to the expression in the statement of Corollary \ref{cor:irregjoint}.

\subsection{Proof of Corollary \ref{cor:marginaldist}}

Note that when $W$ is $H$-irregular, the result is immediate from Corollary \ref{cor:irregjoint}. Hence, suppose $W$ is $H$-regular. 
In this case, from Theorem \ref{thm:asymp-joint-dist} we know that,
\begin{align}\label{eq:limitdist1}
    Z(H, G_n)\dto G
    + \underbrace{\int_{0}^{1}\int_{0}^{1} \left\{ W_{H}(x,y) - \dfrac{|V(H)|\left(|V(H)-1|\right)}{2|\mathrm{Aut}(H)|} t(H,W) \right\} \mathrm d B_{x}\mathrm d B_{y}}_{Z(H,W)} , 
\end{align}
where $G\sim N(0,\eta_{H,W}^2)$, with 
\begin{align}\label{eq:exprvarreg}
    \eta_{H,W}^2 = \frac{1}{2|\Aut(H)|^{2}}\sum_{(a,b),(c,d)\in
    E^{+}(H)}\left[t\left(H\bigominus_{(a,b),(c,d)}
    H,W\right)-t\left(H\bigoplus_{(a,b),(c,d)} H,W\right)\right] ,
\end{align}
and $G$ is independent of the Brownian motion $\{B_t:0\leq t\leq 1\}$. Note that the expression of $\eta_{H,W}$ follows from Theorem \ref{thm:asymp-joint-dist} and \eqref{eq:Gvariance}. From \eqref{eq:degreeH} recall that,
\begin{align*}
    d_{W_H} = \dfrac{|V(H)|\left(|V(H)-1|\right)}{2|\mathrm{Aut}(H)|} t(H,W),
\end{align*}
is an eigenvalue of the kernel $W_H$ with corresponding eigenfunction $1$. Now, considering the spectral decomposition of $W_H$ notice,
\begin{align*}
    W_{H}(x,y) - \dfrac{|V(H)|\left(|V(H)-1|\right)}{2|\mathrm{Aut}(H)|} t(H,W) = \sum_{\lambda\in \Spec^{-}(W_H)}\lambda\phi_{\lambda}(x)\phi_{\lambda}(y) , 
\end{align*} 
almost everywhere. 
Then,
\begin{align*}
    Z(H,W) = \int_{0}^{1}\int_{0}^{1} \sum_{\lambda\in \Spec^{-}(W_H)}\lambda\phi_{\lambda}(x)\phi_{\lambda}(y) \mathrm d B_{x}\mathrm d B_{y} = \sum_{\lambda\in \Spec^{-}(W_H)}\int_{0}^{1}\int_{0}^{1}\lambda\phi_{\lambda}(x)\phi_{\lambda}(y) \mathrm d B_{x}\mathrm d B_{y} , 
\end{align*}
where the last equality follows by Proposition \ref{prop:I2-f_E}. Now, using \eqref{eq:fgstochasticintegral} and $\int_{0}^{1}\phi_\lambda(x)^2\mathrm{d}x =1$, we get 
\begin{align*}
    \int_{0}^{1}\int_{0}^{1}\lambda\phi_{\lambda}(x)\phi_{\lambda}(y) \mathrm d B_{x}\mathrm d B_{y} = \lambda\left[\left(\int_{0}^{1}\phi_\lambda(x)\mathrm{d}B_{x}\right)^2-1\right], \text{ for all }\lambda\in \Spec^{-}(W_H).
\end{align*}
The orthonormality of the eigenvectors $\{\phi_{\lambda}\}_{\lambda\in \Spec^{-}(W_H)}$ implies,
\begin{align}\label{eq:eqdistZ2}
    Z(H,W) = \sum_{\lambda\in\Spec^{-}(W_H)}\lambda \left[\left(\int_{0}^{1}\phi(x)\mathrm{d}B_{x}\right)^2-1\right]\overset{D}{=} \sum_{\lambda\in \Spec^{-}(W_H)}\lambda(Z_\lambda^2-1) , 
\end{align}
where $\{Z_\lambda:\lambda\in \Spec^{-}(W_H)\}$ are i.i.d. $N(0,1)$ which are independent from $G$. The proof of Corollary \ref{cor:marginaldist} is now complete by collecting \eqref{eq:limitdist1}, \eqref{eq:exprvarreg}, and \eqref{eq:eqdistZ2}.

\section{Proof of Theorem \ref{thm:ZnHGn}} 
\label{sec:limitWGnpf}

We begin by expressing the estimated distributions $\hat {\bm Z}(\cH, G_{n})$ (recall \eqref{eq:ZHGnestimate}) in terms of stochastic integrals. For this, suppose $I_1, I_2, \ldots, I_n$ be a partition of $[0, 1]$ into intervals of length $1/n$, that is, $I_s = [ \frac{s-1}{n}, \frac{s}{n})$, for $1 \leq s \leq n$. Let $\eta_s = \int_{I_s} \mathrm d B_s$, where $\left(B_{t}\right)_{t\in[0,1]}$ is a standard Brownian motion on $[0,1]$ independent of $\{G_{n}\}_{n\geq 1}$. Then $\{\eta_1, \eta_2, \ldots, \eta_n\}$ is a collection of i.i.d. $N(0, 1/n)$ random variables.  With notations as in \eqref{eq:ZHestimate}, define 
\begin{align}\label{eq:ZHGnB}
    \hat Z'(H_{i}, G_{n}) : = 
    \begin{cases}
        \sum_{v=1}^{n} ( \hat t(v, H_i, G_n) -  \bar t(H_i, G_n) ) \eta_v  & \text{ if }  1 \leq i \leq q, \\ \\ 
   \sum_{1 \leq u, v \leq n} ( \hat W^{G_n}_{H_i}(u, v) - \bar W^{G_n}_{H_i} ) \left(\eta_{u}\eta_{v} - \frac{\delta_{u, v}}{n} \right) & \text{ if } q+1 \leq i \leq  r , 
    \end{cases}
\end{align} 
where, recall that, $\delta_{u, v} = \bm 1\{u=v\}$, $\bar t(H_i, G_n) = \frac{1}{n} \sum_{v=1}^n \hat t(v, H_i, G_n)$, and $\bar W^{G_n}_{H_i} = \frac{1}{n^2} \sum_{1 \leq u, v \leq n} \hat W^{G_n}_{H_i}(u, v)$. Denote, 
\begin{align}\label{eq:ZHB}
\hat{\bm Z}'( \mathcal{H}, G_{n}) = (\hat Z'( H_1, G_{n}), \hat Z'( H_2, G_{n}), \ldots, \hat Z'( H_r, G_{n})) ^\top . 
\end{align}  
Note that $\hat{\bm Z}'( \mathcal H, G_{n})$ has the same distribution as $\hat{\bm Z}(\mathcal H, G_n)$ and $\hat{\bm Z}'( \mathcal H, G_{n})$ is defined on the same probability space as $\{B_t\}_{t \in [0, 1]}$. Now, recalling \eqref{eq:tHGnestimate}, for $x \in [0, 1]$, define 
\begin{align}\label{eq:tHGnestimatefn} 
\hat t( x , H_i, G_n) = \hat t( \lceil n x \rceil, H, G_n) . 
\end{align} 
Note that $\int_0^1 \hat t( x, H_i, G_n) \mathrm d x = \bar t(H_i, G_n)$. Also, recalling \eqref{eq:WGnuv}, for $x, y \in [0, 1]$, define 
\begin{align}\label{eq:WGnHestimate} 
\hat W^{G_n}_{H_i}(x, y) = \hat W^{G_n}_{H_i}( \lceil n x \rceil, \lceil n y \rceil) . 
\end{align} 
Observe that $\int_{[0, 1]^2} \hat W^{G_n}_{H_i}(x, y) \mathrm d x \mathrm d y = \bar W^{G_n}_{H_i}$. Hence, \eqref{eq:ZHGnB} can be expressed as: 
\begin{align*}
    \hat Z'(H_{i}, G_{n}) : = 
    \begin{cases}
      \int_0^1 \left( \hat t( x , H_i, G_n) -  \int_0^ 1 \hat t( x , H_i, G_n) \mathrm d  x\right) \mathrm dB_x  & \text{ if }  1 \leq i \leq q, \\ \\ 
  \int_{[0,1]^2 }  \left( \hat W^{G_n}_{H_i}(x, y) - \int_{[0, 1]^2} \hat W^{G_n}_{H_i}(x, y) \mathrm d x \mathrm d y \right) \mathrm d B_x \mathrm d B_y & \text{ if } q+1 \leq i \leq  r . 
    \end{cases}
\end{align*}

In the next lemma we show that distribution of $ \hat Z'(H_{i}, G_{n})$ remains unchanged in the limit when $\hat t( x , H_i, G_n)$ is replaced by $\overline{t}(x,H,W^{G_n})$ (recall \eqref{eq:H_regular}) and $\hat W^{G_n}_{H_i}(x, y)$ (recall \eqref{eq:WGnHestimate}) is replaced by $W^{G_n}_{H_{i}}$, the 2-point conditional homomorphism kernel of the empirical graphon $W^{G_n}$ (recall \eqref{eq:WH}).
This is because the difference between all homomorphisms and injective homomorphisms is negligible in the limit.

\begin{lemma}\label{lm:ZHGndistribution} For $1 \leq i \leq r$, as $n \rightarrow \infty$, 
  \begin{align}\label{eq:ZhatprimeYdiffo1}
        \E\left[\left|\hat Z'(H_i,G_n) - Y(H_i,W^{G_n})\right|^2\middle| G_n\right] \stackrel{a.s.}{\ra} 0 ,  
    \end{align}
where $Y(H_i, W^{G_{n}})$ is defined as follows: 
\begin{itemize}
\item for  $1 \leq i \leq q$, 
\begin{align*}
  Y(H_i, W^{G_{n}}):= \int_{0}^{1} \left\{ \dfrac{1}{|\mathrm{Aut}(H_{i})|}\sum_{a=1}^{|V(H_{i})|}t_{a}(x,H_{i},W^{G_{n}}) - \dfrac{|V(H_{i})|}{|\mathrm{Aut}(H_{i})|}t(H_{i},W^{G_{n}}) \right\} \mathrm d B_{x} , 
\end{align*} 
\item for $q+1\leq i\leq  r$, 
\begin{align*}
    Y(H_i, W^{G_{n}})    & := \int_{0}^{1}\int_{0}^{1} \left\{ W^{G_n}_{H_{i}}(x,y) - \dfrac{|V(H_{i})|\left(|V(H_{i})-1|\right)}{2|\mathrm{Aut}(H_{i})|}t(H_{i},W^{G_{n}}) \right\} \mathrm d B_{x}\mathrm d B_{y} . 
\end{align*}
\end{itemize}
\end{lemma}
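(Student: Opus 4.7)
The plan is to use the Wiener--Itô isometry (once conditionally on $G_n$) to reduce the $L^2$ bound to a deterministic $L^2$ estimate on the integrand, and then to exhibit a uniform $O(1/n)$ discrepancy between the injective--homomorphism kernels built from $G_n$ and the all--homomorphism kernels built from the empirical graphon $W^{G_n}$.

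First I would observe that, conditional on $G_n$, both $\hat Z'(H_i,G_n)$ and $Y(H_i,W^{G_n})$ are stochastic integrals against the same Brownian motion $\{B_t\}$ with $G_n$-measurable (hence deterministic given $G_n$) integrands. Indeed, writing the piecewise--constant functions $\hat t(x,H_i,G_n)$ in \eqref{eq:tHGnestimatefn} and $\hat W^{G_n}_{H_i}(x,y)$ in \eqref{eq:WGnHestimate}, one has $\hat Z'(H_i,G_n)=I_1(\hat t(\cdot,H_i,G_n)-\bar t(H_i,G_n))$ for $1\le i\le q$ and $\hat Z'(H_i,G_n)=I_2(\hat W^{G_n}_{H_i}-\bar W^{G_n}_{H_i})$ for $q+1\le i\le r$ (this follows from the identification $\int_{I_u\times I_v}\mathrm dB_x\mathrm dB_y=\eta_u\eta_v-\delta_{u,v}/n$). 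Consequently, by the isometry properties recalled in Section \ref{sec:stochasticintegral},
\[
\E\bigl[|\hat Z'(H_i,G_n)-Y(H_i,W^{G_n})|^2\bigm|G_n\bigr]\;\le\;c\,\|\Delta_n^{(i)}\|_{L^2}^{2},
\]
where $\Delta_n^{(i)}$ is the difference between the two (deterministic) integrands on $[0,1]$ (for $1\le i\le q$) or on $[0,1]^2$ (for $q+1\le i\le r$), with $c=1$ or $c=2$ respectively.

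For $1\le i\le q$, I would expand $t_a(x,H_i,W^{G_n})$ for $x\in I_v$ using the piecewise--constant structure of $W^{G_n}$: it becomes $n^{-(|V(H_i)|-1)}$ times the sum of $\prod w_{v s_b}\prod w_{s_c s_d}$ over all tuples $\bm s_{\{a\}^c}\in[n]^{|V(H_i)|-1}$, whereas $X_a(v,H_i,G_n)/n^{|V(H_i)|-1}$ is the same sum restricted to injective tuples not containing $v$. The number of excluded (non-injective or $v$-containing) tuples is $O(n^{|V(H_i)|-2})$ and each summand lies in $[0,1]$, so the pointwise gap is $O(1/n)$ uniformly in $x$. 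Summing over $a\in V(H_i)$, dividing by $|\Aut(H_i)|$, and integrating the identity $\bar t(H_i,G_n)=\int_0^1\hat t(x,H_i,G_n)\mathrm dx$ against its graphon counterpart (which equals $|V(H_i)|\,t(H_i,W^{G_n})/|\Aut(H_i)|$ by the tower/averaging property of conditional densities) shows that both the unshifted and shifted pieces of $\Delta_n^{(i)}$ are $O(1/n)$ uniformly. Hence $\|\Delta_n^{(i)}\|_{L^2}^2=O(1/n^2)\to 0$, giving \eqref{eq:ZhatprimeYdiffo1} in the irregular case.

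For $q+1\le i\le r$, the same combinatorial comparison is carried out at the 2-point level: expanding $t_{a,b}(x,y,H_i,W^{G_n})$ on each product cell $I_u\times I_v$ with $u\ne v$ yields the all--tuples average, while $\hat W^{G_n}_{H_i}(u,v)$ uses only injective tuples disjoint from $\{u,v\}$, so again the pointwise gap is $O(1/n)$ on the off-diagonal cells (which cover Lebesgue measure $1-1/n$). On the diagonal strip $\{\lceil nx\rceil=\lceil ny\rceil\}$ the convention $\hat W^{G_n}_{H_i}(u,u)=0$ creates a bounded discrepancy, but that strip has Lebesgue measure $1/n$ and $W^{G_n}_{H_i}$ is uniformly bounded by $|V(H_i)|(|V(H_i)|-1)/(2|\Aut(H_i)|)$, so it contributes $O(1/n)$ to $\|\Delta_n^{(i)}\|_{L^2}^2$. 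The two center terms $\bar W^{G_n}_{H_i}$ and $d_{H_i}(W^{G_n})$ differ by the integral of the same off-diagonal and diagonal discrepancies, hence by $O(1/n)$. Summing, $\|\Delta_n^{(i)}\|_{L^2}^2=O(1/n)\to 0$ almost surely as required.

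The main bookkeeping obstacle lies in matching the symmetrization factor $\frac{1}{2|\Aut(H_i)|}\sum_{a\ne b}$ used in the definition of $W^{G_n}_{H_i}$ with the sum over $1\le a\ne b\le|V(H_i)|$ defining $\hat W^{G_n}_{H_i}$; the identity is the empirical version of \eqref{eq:WH_alt} from Lemma \ref{lemma:f_E12-expression} and once this matching is written out, the pointwise $O(1/n)$ comparison propagates cleanly through.
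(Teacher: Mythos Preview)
Your proposal is correct and follows essentially the same route as the paper: reduce the conditional $L^2$ bound to an $L^2$ bound on the difference of (deterministic, given $G_n$) integrands via the boundedness of $I_1$ and $I_2$, then exploit the $O(1/n)$ pointwise gap between the injective-homomorphism counts $X_a(v,\cdot)/n^{|V(H_i)|-1}$, $X_{a,b}(u,v,\cdot)/n^{|V(H_i)|-2}$ and their all-tuple graphon analogues $t_a(x,\cdot,W^{G_n})$, $t_{a,b}(x,y,\cdot,W^{G_n})$, handling the diagonal strip $\{\lceil nx\rceil=\lceil ny\rceil\}$ separately in the bivariate case. The paper streamlines the centering step slightly by observing that the difference of the two centered integrands is itself the centering of $\hat t-\tfrac{1}{|\Aut(H_i)|}\sum_a t_a$ (and centering is an $L^2$ contraction), rather than separately bounding the two constants; and your worry about matching the $\tfrac{1}{2|\Aut(H_i)|}\sum_{a\ne b}$ factors is unnecessary, since $\hat W^{G_n}_{H_i}$ and $W^{G_n}_{H_i}$ carry identical normalizations by definition.
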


\begin{proof}
    We start the proof by showing \eqref{eq:ZhatprimeYdiffo1} for $1\leq i\leq q$, that is, when $W$ is $H_i$-irregular. 

    Towards this,  notice that $t(H_i, W^{G_n}) = \int t_a(x, H_i, W^{G_n})\mathrm d x$, for all $1\leq a\leq |V(H_i)|$, and hence, 
    \begin{align}\label{eq:ZY}
        & \E\left[\left|\hat Z'(H_i,G_n) - Y(H_i,W^{G_n})\right|^2\middle|G_n\right] \nonumber \\ 
        &\lesssim \int_0^1 \left(\hat t(x, H_i, G_n) - \dfrac{1}{|\mathrm{Aut}(H_{i})|}\sum_{a=1}^{|V(H_{i})|}t_{a}\left(x,H_{i},W^{G_{n}}\right)\right)^2\mathrm dx \nonumber \\
        =&\sum_{v=1}^{n}\int_{I_v}\left(\hat t(v, H_i, G_n) - \dfrac{1}{|\mathrm{Aut}(H_{i})|}\sum_{a=1}^{|V(H_{i})|}t_{a}\left(x,H_{i},W^{G_{n}}\right)\right)^2\mathrm dx , 
    \end{align} 
    almost surely, where $I_v = [ \frac{v-1}{n}, \frac{v}{n})$, for $1 \leq v \leq n$. Note that the first inequality follows by the boundedness property of stochastic integrals (see Section \ref{sec:stochasticintegral}) and the second equality follows from the definition of $\hat t(\cdot, H_i, G_n)$ in \eqref{eq:tHGnestimatefn}. From \eqref{eq:tHGnestimate} recall that,
    \begin{align*}
        \hat t(v, H_i, G_n) = \frac{1}{|\Aut(H_i)|}\sum_{a=1}^{|V(H_i)|}\dfrac{X_a(v, H_i, G_n)}{n^{|V(H_i)|-1}}, 
    \end{align*}
    for $1\leq v\leq n$. Then from \eqref{eq:ZY} we get, 
    \begin{align}\label{eq:ZhatprimeYdiff}
        & \E\left[\left|\hat Z'(H_i,G_n) - Y(H_i,W^{G_n})\right|^2\middle|G_n\right] \nonumber \\ 
        &\lesssim_{H_i}\sum_{v=1}^{n}\sum_{a=1}^{|V(H_i)|}\int_{I_v}\left(\dfrac{X_a(v, H_i, G_n)}{n^{|V(H_i)|-1}} - t_{a}\left(x,H_{i},W^{G_{n}}\right)\right)^2\mathrm dx , 
    \end{align} 
    almost surely. Now, recall Definition \ref{defn:tabxyHW} to write,  
    \begin{align*}
        t_a(x, H_i, W^{G_n}) = \int_{[0,1]^{|V(H_i)|-1}}\prod_{u \in N_{H_i}(a)}W^{G_n}(x,x_u)\prod_{(u, v)\in E(H_i\setminus \{a\})}W^{G_n}(x_u, x_v)\prod_{v \in V(H_i\setminus\{a\})}\mathrm dx_v .
    \end{align*}
    Recalling the construction of empirical graphon $W^{G_n}$ from \eqref{eq:emp_graph}, $t_a(x, H_i, W^{G_n})$ can be equivalently written as follows:   
    \begin{align*}
        t_a(x, H_i, W^{G_n}) = \frac{1}{n^{|V(H_i)|-1}}\sum_{\tilde{\bm s}_{\{a\}^c}}\prod_{y \in N_{H_i}(a)}w_{vs_y}(G_n)\prod_{(x, y)\in E(H_i\setminus \{a\})}w_{ s_x s_y }(G_n) ,
    \end{align*} 
    for all $x\in I_v$ and $1\leq v\leq n$,  where the sum is over tuples $\tilde{\bm s}_{\{a\}^c} = (s_x)_{x\in V(H_i)\setminus\{a\}}\in \{[n]\setminus\{v\}\}^{|V(H_i)|-1}$. From Definition \ref{def:Xav} recall that $X_{a}(v,H_i, G_n)$ is sum over tuples $\bm s_{\{a\}^c} = (s_x)_{x\in V(H_i)\setminus\{a\}}$ where the elements are all distinct. Thus, for all $x\in I_v$ and $1\leq v\leq n$, 
    \begin{align}\label{eq:Xav-tavO1/n}
        \left|\dfrac{X_a(v, H_i, G_n)}{n^{|V(H_i)|-1}} - t_{a}\left(x,H_{i},W^{G_{n}}\right)\right| \lesssim_{H_i}\frac{1}{n} ,
    \end{align}
    since the difference in the LHS above counts the number of non-injective homomorphisms $\phi:V(H_i)\ra V(G_n)$ such that $\phi(a) = v$, up to a constant depending on $H_i$. Substituting the bound from \eqref{eq:Xav-tavO1/n} in \eqref{eq:ZhatprimeYdiff} gives,
    \begin{align*}
        \E\left[\left|\hat Z'(H_i,G_n) - Y(H_i,W^{G_n})\right|^2\middle|G_n\right] 
        &\lesssim_{H_i}\frac{1}{n^2} , 
    \end{align*}
    almost surely. This proves \eqref{eq:ZhatprimeYdiffo1}, for $1\leq i\leq q$.

    Next, consider $q+1\leq i\leq r$, that is, $W$ is $H_i$-regular. Note that 
    \begin{align*}
        t(H_i, W^{G_n}) = \int_0^1\int_0^1 W_{H_i}^{G_n}(x,y)\mathrm dx\mathrm dy.
    \end{align*}
    Then once again by the boundedness property of stochastic integrals we get,
    \begin{align}
        & \E\bigg[\bigg|\hat Z'(H_i,G_n) - Y(H_i,W^{G_n})\bigg|^2 \bigg| G_n \bigg] \nonumber \\ 
        &\lesssim \int_0^1\int_0^1\left(\hat W_{H_i}^{G_n}(x,y) - W_{H_i}^{G_n}(x,y)\right)^2\mathrm dx\mathrm dy \nonumber \\
        &\lesssim_{H_i}\sum_{1\leq u\neq v\leq n}\int_{I_u\times I_v}\left(\hat W_{H_i}^{G_n}(x,y) - W_{H_i}^{G_n}(x,y)\right)^2 + \frac{1}{n} \label{eq:ZYr} \\ 
        &\lesssim_{H_i} \sum_{\substack{1\leq u\neq v\leq n\\1\leq a\neq b\leq |V(H_i)|}}\int_{I_u\times I_v}\left(\dfrac{X_{a,b}(u,v,H_i,G_n)}{n^{|V(H_i)|-2}} - W_{H_i}^{G_n}(x,y)\right)^2 + \frac{1}{n} , \label{eq:ZYWr} 
    \end{align}
    almost surely. Note that the inequality in \eqref{eq:ZYr} follows since $\hat W_{H_i}^{G_n} = 0$ on $I_u\times I_u$, for all $1\leq u\leq n$, and $W_{H_i}^{G_n}$ is bounded. Furthermore, \eqref{eq:ZYWr} follows from the definition of $\hat W_{H_i}^{G_n}$ in \eqref{eq:WGnuv}. 

Now, recalling $X_{a,b}$ from Definition \ref{def:Xabuv} and by counting arguments similar to \eqref{eq:Xav-tavO1/n} gives, 
    \begin{align*}
        \E\bigg[\bigg|\hat Z'(H_i,G_n) - Y(H_i,W^{G_n})\bigg|^2 \bigg| G_n \bigg]
        &\lesssim_{H_i}\frac{1}{n^2} ,
    \end{align*}
    almost surely. This completes the proof of Lemma \ref{lm:ZHGndistribution}. 
    \end{proof}

Now, define 
\begin{align}\label{eq:def-Yn}
    \bm{Y}(\mathcal{H},W^{G_{n}}):=\left(Y(H_{1},W^{G_{n}}),\cdots, Y(H_{r},W^{G_{n}})\right)^{\top} . 
\end{align} 
Note that although $\bm{Y}(\mathcal{H},W^{G_{n}})$ resembles the empirical analogue of $\bm{Z}(\mathcal{H},W)$ (recall \eqref{eq:ZnHWlimit}), with $W$ replaced with $W^{G_{n}}$, one important difference is that in the regular regime, that is, when $q+1\leq i\leq  r$, $Y(H_i, W^{G_{n}})$ does not have the Gaussian component $G_i$, unlike in its population counterpart $Z(H_i, W^{G_{n}})$.  

Since $\hat{\bm Z}'( \mathcal{H}, G_{n})$ has the same distribution as $\hat{\bm Z}( \mathcal{H}, G_{n})$ (recall \eqref{eq:ZHB}), Lemma \ref{lm:ZHGndistribution} implies that to prove Theorem \ref{thm:ZnHGn} it suffices to show that $\bm{Y}(\mathcal{H},W^{G_{n}})|G_n$ converges in distribution (almost surely) to $\bm{Z}(\mathcal{H},W)$. We will establish this by showing that the MGF of $\bm \alpha^\top \bm{Y}(\mathcal{H},W^{G_n})$ conditioned on the graph $G_n$ will converge the MGF of $\bm{\alpha}^\top \bm{Z}(\mathcal{H},W)$ almost surely in a neighborhood of zero, for all $\bm{\alpha} \in \mathbb R^r$. This is formalized in the following Proposition \ref{prop:MGF-convg}, which is proved in Section \ref{sec:mgfWGnpf}.

\begin{prop}\label{prop:MGF-convg} 
For any $\bm \alpha \in\mathbb{R}^{r}$ and $|\theta|<\frac{1}{32\mathcal{C}}$, where $\mathcal{C}$ is defined in Proposition \ref{prop:MGF-alpha-limit}, the following hold: 
\begin{align*}
    \lim_{n \rightarrow \infty }\log\mathbb{E}\left[ e^{\theta \bm{\alpha}^{\top}\bm{Y}(\mathcal{H},W^{G_{n}})}\middle|G_{n}\right] = \log\mathbb{E}\left[ e^{ \theta \bm{\alpha}^{\top}\bm{Z}(\mathcal{H},W) } \right] , 
\end{align*}
on a set $\mathcal{A}$ (not depending on $\bm \alpha$) such that $\mathbb P(\mathcal A) = 1$. 
\end{prop}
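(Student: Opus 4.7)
My strategy is to compute in closed form the conditional MGF of $\bm\alpha^\top \bm Y(\mathcal H, W^{G_n})$ given $G_n$, following the spectral-theoretic computation of Proposition \ref{prop:MGF-alpha-limit}, and then to match it term-by-term with the MGF of $\bm\alpha^\top \bm Z(\mathcal H, W)$ by exploiting the almost-sure convergence $\hat t(F, G_n)\to t(F, W)$ for every simple graph $F$ (Corollary 10.4 of \cite{lovasz2012large}).

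Conditional on $G_n$, the functions $\overline{t}(x, H_i, W^{G_n})$ and the kernels $W^{G_n}_{H_i}(x, y) - c_{H_i}(W^{G_n})$ are deterministic, so $\bm{\alpha}^\top \bm{Y}(\mathcal{H}, W^{G_n})$ is a fixed linear combination of a single Wiener integral with integrand $V^{(n)}(x) := V_{\bm\alpha}(x)\bigr|_{W\to W^{G_n}}$ and a double Wiener--It\^o integral with kernel $U^{(n)}(x,y) := U_{\bm\alpha}(x,y)\bigr|_{W\to W^{G_n}}$, in the notation of \eqref{eq:def-V}--\eqref{eq:def-U}. Diagonalizing $U^{(n)}$ via the spectral theorem and repeating the computations of Lemma \ref{lemma:log-MGF-YN} and Lemma \ref{lemma:MGF-limit-1st} verbatim, with $W$ replaced by $W^{G_n}$ and the Gaussian $\bm G$ component absent, yields
\begin{align*}
\log \mathbb{E}\bigl[e^{\theta \bm{\alpha}^\top \bm{Y}(\mathcal{H}, W^{G_n})} \bigm| G_n\bigr]
&= \tfrac{\theta^2}{2}\bigl(\|V^{(n)}\|_2^2 + 2\|U^{(n)}\|_2^2\bigr) \\
&\quad + \sum_{L=1}^\infty 2^{L-1}\theta^{L+2}\int_{[0,1]^2} V^{(n)}(x)V^{(n)}(y)(U^{(n)})^{(L)}(x,y)\,\mathrm dx\, \mathrm dy \\
&\quad + \tfrac{1}{2}\sum_{L=3}^\infty \frac{(2\theta)^L}{L}\int_0^1 (U^{(n)})^{(L)}(x,x)\,\mathrm dx,
\end{align*}
uniformly in $n$ for $|\theta|<1/(32\mathcal C)$, since $|U^{(n)}|\le\mathcal C$ by the a priori bound $|W^{G_n}_{H_i}|\le |V(H_i)|(|V(H_i)|-1)/(2|\mathrm{Aut}(H_i)|)$.

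I then establish a.s.\ term-by-term convergence of every piece above to the corresponding term in Proposition \ref{prop:MGF-alpha-limit}. The functions $t_a(x, H_i, W^{G_n})$ are block-constant and, by counting arguments like those in the proof of Lemma \ref{lm:ZHGndistribution}, agree up to $O(1/n)$ with the empirical $\hat t(\lceil nx\rceil, H_i, G_n)$; since the latter are polynomials in $\hat t(F, G_n)$ for simple graphs $F$ arising as vertex-joins of $H_i$, they converge a.s.\ to their population analogues. Similarly writing $(U^{(n)})^{(L)}$ as a polynomial in $\hat t(F, G_n)$ for the iterated edge/vertex joins built from $H_{q+1},\dots,H_r$ --- and using that any multi-edge arising from a strong edge join contributes the same value in the $\{0,1\}$-valued $W^{G_n}$ as the corresponding simple edge, up to $O(1/n)$ corrections --- I obtain $\|V^{(n)}\|_2^2\to\eta_{\bm\alpha}$ and convergence of every $\int V^{(n)}V^{(n)}(U^{(n)})^{(L)}$ and $\int (U^{(n)})^{(L)}(x,x)\,\mathrm dx$ for each fixed $L$. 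The sole non-obvious term is $\|U^{(n)}\|_2^2$: the product $W^{G_n}_{H_i}(x,y) W^{G_n}_{H_j}(x,y)$ enumerates pairs of embeddings of $H_i$ and $H_j$ sharing two distinct vertices joined by simple edges (the \emph{weak} edge join, because $(W^{G_n})^2 = W^{G_n}$), whereas $\int W_{H_i}W_{H_j}$ corresponds to the \emph{strong} edge join. The resulting combinatorial discrepancy produces exactly
\begin{align*}
2\|U^{(n)}\|_2^2 \;\xrightarrow{\text{a.s.}}\; 2\|U_{\bm\alpha}\|_2^2 + \bm{\alpha}_+^\top \Sigma \bm{\alpha}_+ = \tilde\eta_{\bm\alpha},
\end{align*}
supplying the ``missing'' Gaussian variance on the right-hand side of Proposition \ref{prop:MGF-alpha-limit}; the identity here is essentially the bookkeeping of Lemma \ref{lemma:covariance-structure}.

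To exchange the limit and the infinite summation in $L$, I dominate tails uniformly in $n$: for $|\theta|<1/(32\mathcal C)$ the bounds driving Lemma \ref{lemma:MGF-limit-1st} (see \eqref{eq:logMGF-YN-convg-exp}) show that the $L$-th term of each series is controlled by $(|\theta|\mathcal C)^L$ up to a harmless constant, giving geometric summability independent of $n$. Since the almost-sure convergences for the countable collection $\{\hat t(F, G_n)\}_F$ hold simultaneously on a single full-probability event $\mathcal A$ (independent of $\bm\alpha$ and $\theta$), term-wise a.s.\ convergence together with uniform domination upgrades to convergence of the full series, proving the proposition. The main obstacle is precisely the identification displayed above: verifying that the combinatorial gap between the strong and weak edge joins appearing in $\int W^{G_n}_{H_i} W^{G_n}_{H_j}$ versus $\int W_{H_i}W_{H_j}$ yields exactly $\tfrac{1}{2}\sigma_{ij}$ of Definition \ref{def:Sigma}, which requires a careful enumeration of injective homomorphism pairs sharing two vertices and tracking how $t(H_i \bigoplus_{(a,b),(c,d)} H_j, W) - t(H_i \bigominus_{(a,b),(c,d)} H_j, W) = \int W(1{-}W)(\cdots)$ factors emerge only when both $(a,b)\in E^+(H_i)$ and $(c,d)\in E^+(H_j)$ (cf.\ Remark \ref{rem:def_ext_join}).
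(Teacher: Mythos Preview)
Your proposal is correct and follows the paper's proof closely: both apply Proposition~\ref{prop:MGF-alpha-limit} to $W^{G_n}$ with the Gaussian component absent, both isolate the key identity $2\|U^{(n)}\|_2^2\to\tilde\eta_{\bm\alpha}=2\|U_{\bm\alpha}\|_2^2+\bm\alpha_+^\top\Sigma\bm\alpha_+$ via $(W^{G_n})^2=W^{G_n}$ collapsing strong edge joins to weak ones (this is exactly the paper's computation in \eqref{eq:UGn}), and both finish by dominated convergence in $L$ using the uniform bound $|U^{(n)}|\le\mathcal C$.

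The one methodological difference is how term-by-term convergence of $\int (U^{(n)})^{(L)}(x,x)\,\mathrm dx$ and $\int V^{(n)}V^{(n)}(U^{(n)})^{(L)}$ is established. The paper packages this as Lemma~\ref{lemma:Unl-convg} and proves it through the cut metric: a telescoping/counting-lemma argument gives $\|U_{\bm\alpha,G_n}-U_{\bm\alpha}\|_\square\lesssim\|W^{G_n}-W\|_\square\to 0$ a.s., and cycle and bilinear functionals are continuous in $\|\cdot\|_\square$. You instead expand each integral as a polynomial in $t(F,W^{G_n})$ and invoke pointwise a.s.\ convergence for simple $F$. This works, but your stated justification is slightly off: you suggest multi-edges might arise in these expansions and then collapse harmlessly under $W^{G_n}$. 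In fact, for the $L\ge 3$ cycle terms and all $VVU^{(L)}$ terms the gluings are at \emph{single vertices} only (consecutive factors $U(x_j,x_{j+1})$ and $U(x_{j+1},x_{j+2})$ share just $x_{j+1}$, and each $V$ shares one vertex with the path endpoint), so every $F$ is already simple and convergence is immediate---if multi-edges did appear, collapsing them in $W^{G_n}$ would send the limit to $t(\underline F,W)$ rather than $t(F,W)$ and you would not recover $\int U_{\bm\alpha}^{(L)}$. The multi-edge issue is confined to $\|U^{(n)}\|_2^2$, precisely where you single it out. The cut-metric route avoids this bookkeeping entirely, which is its main advantage.
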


Proposition \ref{prop:MGF-convg} implies that 
$$ \bm{Y}(\mathcal{H},W^{G_{n}})|G_n \stackrel {D} \rightarrow \bm{Z}(\mathcal{H},W), $$
for all $\bm{\alpha} \in \R^r$, on the set $\mathcal A$. Since the above convergence  holds for all $\bm{\alpha} \in \R^r$, the result in Theorem \ref{thm:ZnHGn} follows from the Cram\'er-Wold device, Lemma \ref{lm:ZHGndistribution}, and recalling that $\hat{\bm Z}'( \mathcal{H}, G_{n})$ has the same limiting distribution as $\hat{\bm Z}( \mathcal{H}, G_{n})$.

\subsection{Proof of Proposition \ref{prop:MGF-convg} }
\label{sec:mgfWGnpf}

Let $\bm \alpha = (\alpha_{1},\alpha_{2},\cdots,\alpha_{r})^\top \in\mathbb{R}^r$. Similar to \eqref{eq:def-V} we define,
\begin{align}\label{eq:def-Vn}
    V_{\bm \alpha, G_n}(x) := \sum_{i=1}^{q}\alpha_{i}\left[\dfrac{1}{\left|\text{Aut}(H_{i})\right|}\sum_{a=1}^{|V(H_{i})|}t_{a}(x,H_{i},W^{G_{n}}) - \dfrac{|V(H_{i})|}{\left|\text{Aut}(H_{i})\right|}t(H_{i},W^{G_{n}})\right] , 
\end{align}
and, similar to \eqref{eq:def-U}, let us define, 
\begin{align}\label{eq:def-Un}
    U_{\bm \alpha, G_n}(x, y) := \sum_{i=q+1}^{r}\alpha_{i}\left(W^{G_n}_{H_{i}} (x, y)  - c(H_{i},W^{G_{n}})\right) , 
\end{align}
where $c(H_{i},W^{G_{n}}):= \frac{|V(H_{i})|(|V(H_{i})| -1) }{2|\text{Aut}(H_{i})|}t(H_{i},W^{G_{n}})$. 

\begin{lemma}\label{lemma:Unl-convg}
There exists a set $\mathcal{A}$ such that $\mathbb P(\mathcal{A}) = 1$ on which  
the following hold:
\begin{itemize}
 \item[$(1)$] For $L \geq 3$, 
 $$\lim_{n \rightarrow \infty} \int U_{\bm \alpha, G_n}^{(L)}(x, x) \mathrm{d}x = \int U_{\bm \alpha}^{(L)}(x,x)\mathrm{d}x,$$
where $U_{\bm \alpha, G_n}^{(L)}$ and $U_{\bm \alpha}^{(L)}$ are the $L$-th path composition of $U_{\bm \alpha, G_n}$ and $U_{\bm \alpha}$, respectively. 

\item[$(2)$] For $L \geq 1$, 
 $$\lim_{n \rightarrow \infty} \int V_{\bm \alpha, G_n}(x) U_{\bm \alpha, G_n}^{(L)}(x, y) V_{\bm \alpha, G_n}(y) \mathrm{d}x \mathrm {d} y = \int V_{\bm \alpha}(x) U_{\bm \alpha}^{(L)}(x, y) V_{\bm \alpha}(y) \mathrm{d}x \mathrm {d} y . $$

\end{itemize} 
\end{lemma}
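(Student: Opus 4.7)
The plan is to express each integral as a finite linear combination of homomorphism densities $t(F, W^{G_n})$ for explicit simple graphs $F$ constructed from the $H_i$'s, and then invoke the almost-sure convergence $t(F, W^{G_n}) \to t(F, W)$ coming from the standard sampling law for graphons.

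First, using the identity $t(F, W^{G_n}) = t(F, G_n)$ recorded after \eqref{eq:emp_graph}, Corollary 10.4 of \cite{lovasz2012large} yields $t(F, W^{G_n}) \to t(F, W)$ almost surely for every fixed simple graph $F$. Since there are only countably many finite simple graphs, a diagonal argument produces a single event $\mathcal A$ with $\mathbb P(\mathcal A) = 1$ on which $t(F, W^{G_n}) \to t(F, W)$ holds simultaneously for all simple graphs $F$. The set $\mathcal A$ will serve for both parts of the lemma and manifestly does not depend on $\bm \alpha$.

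For part~(1), I would substitute the expansion \eqref{eq:WH} for $W^{G_n}_{H_i}$ into $U_{\bm \alpha, G_n}$, expand the $L$-fold path composition, and interchange sums and integrals. Up to polynomial corrections in the one-point densities $t(H_i, W^{G_n})$ coming from the centering offsets $c(H_i, W^{G_n})$, every surviving term has the form
\[
C \int_{[0,1]^L} \prod_{s=1}^{L} t_{a_s,b_s}\bigl(x_{s-1}, x_s, H_{i_s}, W^{G_n}\bigr) \, \mathrm d x_0 \cdots \mathrm d x_{L-1}, \qquad x_L := x_0,
\]
with a constant $C$ depending only on $\bm \alpha$ and the automorphism group sizes $|\mathrm{Aut}(H_{i_s})|$. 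Writing out each $t_{a_s,b_s}(\cdot, \cdot, H_{i_s}, W^{G_n})$ via Definition~\ref{defn:WH} and carrying out the remaining integrals, this quantity equals $t(F, W^{G_n})$, where $F$ is obtained by taking disjoint copies of $H_{i_1}, \dots, H_{i_L}$ and cyclically identifying $b_s \in V(H_{i_s})$ with $a_{s+1} \in V(H_{i_{s+1}})$. Because $L \geq 3$ and the gluing identifies only one pair of vertices at a time, any two distinct copies share at most one vertex, so $F$ is simple. On $\mathcal A$, each of the finitely many such summands converges to its population analog, proving (1). Part~(2) is analogous: expanding $V_{\bm \alpha, G_n}(x)\, U_{\bm \alpha, G_n}^{(L)}(x,y)\, V_{\bm \alpha, G_n}(y)$ and integrating produces a finite sum of terms $t(F', W^{G_n})$, where $F'$ is constructed by attaching two ``cap'' copies from $\{H_1, \ldots, H_q\}$ (via their one-point conditional densities) to the two endpoints of a vertex-join chain of $L$ copies from $\{H_{q+1}, \ldots, H_r\}$; all these gluings identify only single vertices, so every $F'$ is again simple, and convergence on $\mathcal A$ gives the claim.

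The main obstacle will be the combinatorial bookkeeping: carefully tracking the multiplicities with which each simple graph $F$ arises, the factors coming from $|\mathrm{Aut}(H_{i_s})|$ in \eqref{eq:WH}, and the polynomial contributions of the centering constants $c(H_i, W^{G_n})$. Once this identification as a finite polynomial in $\{t(F, W^{G_n}) : F \text{ simple}\}$ is established, the almost-sure convergence reduces immediately to the countable-union argument on $\mathcal A$, and the uniform boundedness of $W^{G_n}_{H_i}$ and $t_a(\cdot, H_i, W^{G_n})$ ensures no regularity issues in passing to the limit.
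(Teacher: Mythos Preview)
Your approach is correct and genuinely different from the paper's. The paper works analytically via the cut metric: it first records that $\delta_\square(W^{G_n},W)\to 0$ almost surely (\cite[Lemma 10.16]{lovasz2012large}), then shows by a telescoping argument that $\|U_{\bm\alpha,G_n}-U_{\bm\alpha}\|_\square\lesssim\|W^{G_n}-W\|_\square$, and finally applies a counting-lemma bound (adapted to general bounded symmetric kernels) to control the $L$-cycle integrals; part~(2) is handled similarly, ultimately invoking \cite[Lemma 8.22]{lovasz2012large}. Your route is combinatorial: you expand both sides into polynomials in $\{t(F,W^{G_n}):F\text{ simple}\}$ and invoke the sampling law $t(F,W^{G_n})\to t(F,W)$ a.s.\ for each fixed simple $F$, intersecting over the countable family of simple graphs to get a single event $\mathcal A$ independent of $\bm\alpha$.

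Your argument buys you an elementary proof that sidesteps the need to check counting-lemma estimates for signed bounded kernels and the passage from $\|\cdot\|_\square$ to $\delta_\square$ via invariance. The paper's approach, on the other hand, is more systematic and less sensitive to the detailed combinatorial bookkeeping you correctly flag as the main obstacle (tracking which simple graphs arise, with what multiplicities, after expanding the centering constants). Both are valid; your observation that the cyclic or chain gluings at single vertices always produce simple graphs (for $L\ge 3$ in part~(1) and $L\ge 1$ in part~(2)) is exactly what makes the combinatorial reduction go through.
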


The proof of Lemma \ref{lemma:Unl-convg} is given in Section \ref{sec:Unlpf}. Here, we apply this lemma to complete the proof of Proposition \ref{prop:MGF-convg}. 

To begin with note that the Proposition \ref{prop:MGF-alpha-limit}  holds for any graphon, in particular, it holds for the emprical graphon $W^{G_{n}}$. Hence, using this expression, with $W$ replaced by $W^{G_{n}}$ and $\Sigma$ replaced by zero (recalling the definition of $\bm{Y}(\mathcal{H},W^{G_{n}})$ from \eqref{eq:def-Yn}) we get,
\begin{align*}
    & \log\mathbb{E}\left[ e^{\theta \bm{\alpha}^{\top}\bm{Y}(\bm{\mathcal{H}},W^{G_n})} | G_n \right] \nonumber \\ 
    &= \dfrac{\theta^{2} c_{2, n}}{2}  + \sum_{L = 1}^{\infty} 2^{L-1}\theta^{L+2} \int_{[0, 1]^2} V_{\bm \alpha, G_n}(x)V_{\bm \alpha, G_n}(y)U_{\bm \alpha, G_n}^{(L)}(x,y)\mathrm{d}x\mathrm{d}y + \frac{1}{2}\sum_{L=3}^{\infty}\dfrac{\left(2 \theta\right)^{L}}{L} \int_{0}^{1} U_{\bm \alpha, G_n}^{(L)}(x, x)\mathrm{d}x , 
\end{align*}  
where 
\begin{align}\label{eq:cn}
c_{2, n} = \| V_{\bm \alpha, G_n} \|_2^2 + 2 \| U_{\bm \alpha, G_n} \|_2^2.
\end{align} 
Note that $|U_{\bm \alpha, G_n}|\leq \mathcal{C}$, where $\mathcal{C}$ is defined in Proposition \ref{prop:MGF-alpha-limit}. Then, for all $|\theta|<\frac{1}{32\mathcal{C}}$ and $L \geq 3$ we get,
\begin{align*}
    \frac{2^{L}|\theta|^{L}}{L}\left|\int U_{\bm \alpha, G_n}^{(L)}(x,x)\mathrm{d}x\right|
    &\leq \frac{2^{L}}{\mathcal{C}^{L}32^{L}L}\mathcal{C}^{L}\leq \frac{1}{16^{L}L}.
\end{align*}
Observe that $\sum_{L=3}^{\infty}\frac{1}{16^{L}L}<\infty$. Hence, using the Dominated Convergence Theorem and Lemma \ref{lemma:Unl-convg} we conclude,
\begin{align}\label{eq:rest-term-convg-1}
    \lim_{n \rightarrow \infty} \frac{1}{2}\sum_{L=3}^{\infty}\frac{(2 \theta)^{L}}{L}\int U_{\bm{\alpha}, G_n}^{(L)}(x,x)\mathrm{d}x = \frac{1}{2}\sum_{L=3}^{\infty}\frac{(2 \theta)^{L}}{L}\int_{0}^{1} U_{\bm \alpha}^{(L)}(x,x)\mathrm{d}x 
\end{align} 
on the set $\mathcal{A}$. 
Next, note that 
\begin{align*}
    2^{L-1}|\theta|^{L+2}\left|\int V_{\bm \alpha, G_n}(x)V_{\bm \alpha, G_n}(y)U_{\bm \alpha, G_n}^{(L)}(x,y)\mathrm{d}x\mathrm{d}y\right| \leq 2^{L-1}\frac{1}{32^{L+2}\mathcal{C}^{L+2}}\tilde{\mathcal{C}}^{2}\mathcal{C}^{L}\leq \frac{1}{16^{L-1}\mathcal{C}^{2}} \tilde{\mathcal{C}}^{2} , 
\end{align*} 
where $\tilde{\mathcal{C}} = 2\sum_{i=1}^{q}|\alpha_{i}|\dfrac{|V(H_{i})|}{|\text{Aut}(H_{i})|}$. Now, observe that $\frac{\tilde{\mathcal{C}}^{2}}{\mathcal{C}^{2}} \sum_{L=1}^{\infty}\frac{1}{16^{L-1}} <\infty$. Hence, using DCT and Lemma \ref{lemma:Unl-convg} we conclude,
\begin{align}\label{eq:rest-term-convg-2}
   & \lim_{n \rightarrow \infty} \sum_{L=1}^{\infty}2^{L-1}s^{L+2}\int V_{\bm \alpha, G_n}(x)V_{\bm \alpha, G_n}(y)U_{\bm \alpha, G_n}^{(L)}(x,y)\mathrm{d}x\mathrm{d}y \nonumber \\ 
   & = \sum_{L=1}^{\infty}2^{L-1}s^{L+2}\int V_{\bm \alpha}(x)V_{\bm \alpha}(y)U_{\bm \alpha}^{(L)}(x,y)\mathrm{d}x\mathrm{d}y , 
\end{align} 
on $\mathcal{A}$.
Now, denote $\kappa_i = \frac{\alpha_i}{|\mathrm{Aut}(H_i)|}$. Then from the proof of Lemma \ref{lemma:norm-U-2} we get, as $n \rightarrow \infty$
\begin{align}\label{eq:VGn}
    \|V_{\bm \alpha, G_n}\|_{2}^{2} & = \sum_{1 \leq i,j \leq q} \kappa_i \kappa_j \sum_{\substack{a\in V(H_{i})\\b\in V(H_{j})}}\left[t\left(H_{i}\bigoplus_{a,b}H_{j},W^{G_{n}}\right) - t(H_{i},W^{G_{n}})t(H_{j},W^{G_{n}})\right] \nonumber \\ 
    & \rightarrow \sum_{1 \leq i,j \leq q} \kappa_i \kappa_j \sum_{\substack{a\in V(H_{i})\\b\in V(H_{j})}}\left[t\left(H_{i}\bigoplus_{a,b}H_{j},W\right) - t(H_{i},W)t(H_{j},W)\right] = \eta_{\bm \alpha} ,  
\end{align}
on the set $\mathcal A$, since the vertex-join of 2 simple graphs produces another simple graph. 
Also, from the proof of Lemma \ref{lemma:norm-U-2}, with $W^{G_{n}}$ in place of $W$ (see in \eqref{eq:U2-first} and \eqref{eq:WhiWhj-integral}) 
\begin{align*}
    \|U_{\bm \alpha, G_n}\|_{2}^{2} = \sum_{q+1 \leq i,j \leq r} \dfrac{ \kappa_i \kappa_j }{4} \sum_{\substack{a\neq b\in V(H_{i})\\c\neq d\in V(H_{j})}}t\left(H_{i}\bigoplus_{(a,b),(c,d)}H_{j},W^{G_{n}}\right) - \left(\sum_{i=1}^{q}\alpha_{i}c(H_{i},W^{G_{n}})\right)^{2}
\end{align*}
Now, since $W^{G_{n}}$ is the empirical graphon corresponding to the graph $G_{n}$, $W^{G_{n}}$ is $\{0,1\}$-valued. Thus, $W^{G_{n}}(x, y)^2= W^{G_{n}} (x, y)$, for all $x, y \in [0, 1]$,  hence, 
\begin{align*}
    t\left(H_{i}\bigoplus_{(a,b),(c,d)}H_{j},W^{G_{n}}\right) = t\left(H_{i}\bigominus_{(a,b),(c,d)}H_{j},W^{G_{n}}\right) , 
\end{align*}
for $a\neq b\in V(H_{i})$, $c\neq d\in V(H_{j})$, and $q+1\leq i,j\leq r$. Thus, on the set $\mathcal A$, 
\begin{align}\label{eq:UGn}
    2 \|U_{\bm \alpha, G_n}\|_{2}^{2} & = \sum_{ q+1 \leq i,j \leq r} \dfrac{\kappa_i \kappa_j}{2}\sum_{\substack{a\neq b\in V(H_{i})\\c\neq d\in V(H_{j})}}t\left(H_{i}\bigominus_{(a,b),(c,d)}H_{j},W^{G_{n}}\right) - \left(\sum_{i=1}^{q}\alpha_{i}c(H_{i},W^{G_{n}})\right)^{2} \nonumber \\ 
       & \rightarrow \sum_{ q+1 \leq i,j \leq r} \dfrac{\kappa_i \kappa_j}{2}\sum_{\substack{a\neq b\in V(H_{i})\\c\neq d\in V(H_{j})}}t\left(H_{i}\bigominus_{(a,b),(c,d)}H_{j},W \right) - \left(\sum_{i=1}^{q}\alpha_{i}c(H_{i},W)\right)^{2} 
       = \tilde{\eta}_{\bm \alpha} , 
\end{align} 
as the weak edge-join of 2 simple graphs produces a simple graph. 
Hence, recalling \eqref{eq:cn}, \eqref{eq:VGn}, and \eqref{eq:UGn} gives, $c_{2, n} \rightarrow \eta_{\bm \alpha} + \tilde{\eta}_{\bm \alpha}$. Combining this with \eqref{eq:rest-term-convg-1}, \eqref{eq:rest-term-convg-2}, and Proposition \ref{prop:MGF-alpha-limit}, the result in Proposition \ref{sec:mgfWGnpf} follows.

\subsection{Proof of Lemma \ref{lemma:Unl-convg}} 
\label{sec:Unlpf}

We begin by recalling the definitions of {\it cut-distance} and {\it cut-metric}. 

\begin{definition}\label{defn:Wconvergence} \cite[Chapter 8]{lovasz2012large}
The {\it cut-distance} between two bounded functions $W_1, W_2 : [0, 1]^2 \rightarrow \mathbb R$ is 
\begin{align*}
||W_1-W_2||_{\square}:=\sup_{f, g: [0, 1] \rightarrow [0, 1]} \left|\int_{[0, 1]^2} \left(W_1(x, y)-W_2(x, y)\right) f(x) g(y) \mathrm dx \mathrm dy \right|. 
\end{align*} 
The {\it cut-metric} between $W_1, W_2$ is defined as,  
\begin{align*}
\delta_{\square}(W_1, W_2):= \inf_{\psi}||W_1^{\psi}-W_2||_{\square}, 
\end{align*} 
with the infimum taken over all measure-preserving bijections $\psi: [0, 1] \rightarrow [0, 1]$, and  $W_1^\psi(x, y):= W_1(\psi(x), \psi(y))$, for $x, y \in [0, 1]$. 
\end{definition}

By \cite[Lemma 10.16]{lovasz2012large} we known that $\delta_{\square}(W^{G_n}, W) \rightarrow 0$ almost surely. 

\subsubsection{Proof of Lemma \ref{lemma:Unl-convg} (1)} 

Recall from \eqref{eq:def-Un}, 
\begin{align*}
    U_{\bm \alpha, G_n}(x, y) := \sum_{i=q+1}^{r}\alpha_{i}\left(W^{G_n}_{H_{i}} (x, y)  - c(H_{i},W^{G_{n}})\right) , 
\end{align*}
where $c(H_i, W) = \frac{|V(H_{i})|(|V(H_{i})| -1) }{2|\text{Aut}(H_{i})|}t(H_{i},W^{G_{n}})$. Denote $\nu:=\max_{1 \leq i \leq r} |V(H_i) (|V(H_i)| -1) $. Then, by the counting lemma (see \cite[Theorem 3.7]{borgs2008convergent}), 
for $1 \leq i \leq r$, 
$$|c(H_{i},W^{G_{n}})- c(H_{i},W)|  = \frac{|V(H_i) (|V(H_i)| -1)}{2 |\mathrm{Aut}(H_i)|} |t(H_i, W^{G_{n}}) - t(H_i, W)| \lesssim_{\nu} \| W^{G_{n}} - W \|_{\square} . $$
Hence, given $f, g: [0, 1] \rightarrow [0, 1]$, by the triangle inequality, 
\begin{align}\label{eq:UGnxy}
& \left|\int f(x) (U_{\bm \alpha, G_n}(x, y) - U_{\bm \alpha}(x, y)) g(y) \mathrm d x \mathrm d y \right| \nonumber \\ 
& \lesssim_{\nu} \sum_{i=q+1}^r |\alpha_i| \left(\left|\int f(x) (W^{G_n}_{H_{i}} (x, y) - W_{H_{i}} (x, y)) g(y) \mathrm d x \mathrm d y \right| + \| W^{G_{n}} - W \|_{\square} \right) . 
\end{align}
Now, by a telescoping argument, replacing $W^{G_n}$ with $W$ one at a time, as in the proof of the counting lemma \cite[Theorem 3.7]{borgs2008convergent}, it can be shown that 
$$\left|\int f(x) (W^{G_n}_{H_{i}} (x, y) - W_{H_{i}} (x, y)) g(y) \mathrm d x \mathrm d y \right| \lesssim_{\nu} \| W^{G_n} - W \|_{\square} . $$
Hence, from \eqref{eq:UGnxy}, 
\begin{align}\label{eq:Uxy}
\| U_{\bm \alpha, G_n} - U_{\bm \alpha}\|_{\square} \lesssim_{\nu} \| W^{G_n} - W \|_{\square}
\end{align}
Again, by the counting lemma (adapted to general bounded functions) we have, for $L \geq 3$, 
\begin{align}\label{eq:UalphaGn}
\left| \int U_{\bm \alpha, G_n}^{(L)}(x, x) \mathrm d x -  \int U_{\bm \alpha}^{(L)}(x, x) \mathrm d x \right| \lesssim_{L} \| U_{\bm \alpha, G_n} - U_{\bm \alpha}\|_{\square} \lesssim \| W^{G_n} - W\|_\square, 
\end{align}
where the last inequality uses \eqref{eq:Uxy}. Since $\int U_{\bm \alpha}^{(L)}(x, x) \mathrm d x$ is invariant to measure preserving transformations of $W$ and $\delta_{\square}(W^{G_n}, W) \rightarrow 0$ almost surely, as $n \rightarrow \infty$, from \eqref{eq:UalphaGn} the result in Lemma \ref{lemma:Unl-convg} (1) follows.

\subsubsection{Proof of Lemma \ref{lemma:Unl-convg} (2)} 

Recall from \eqref{eq:def-Vn}, 
\begin{align*}
    V_{\bm \alpha, G_n}(x) := \sum_{i=1}^{q}\alpha_{i}\left[\dfrac{1}{\left|\text{Aut}(H_{i})\right|}\sum_{a=1}^{|V(H_{i})|} t_{a}(x,H_{i},W^{G_{n}}) - \dfrac{|V(H_{i})|}{\left|\text{Aut}(H_{i})\right|}t(H_{i},W^{G_{n}})\right] .  
\end{align*} 
Note that $ t(H_{i},W^{G_{n}}) \rightarrow t(H_{i},W)$, for all $1 \leq i \leq r$, and 
$$\lim_{n \rightarrow \infty} \int U_{\bm \alpha, G_n}^{(L)}(x, y) \mathrm{d}x \mathrm {d} y = \int U_{\bm \alpha}^{(L)}(x, y) \mathrm{d}x \mathrm {d} y , $$ 
by \eqref{eq:Uxy}, almost surely. Hence, to establish Lemma \ref{lemma:Unl-convg} (2) it suffices to show the following for $1 \leq i, j\leq q$, 
\begin{align}\label{eq:UVtab}
\lim_{n \rightarrow \infty} & \int t_{a}(x,H_{i},W^{G_{n}}) U_{\bm \alpha, G_n}^{(L)}(x, y) t_{b}(y,H_{j},W^{G_{n}}) \mathrm{d}x \mathrm {d} y  \nonumber \\ 
&= \int t_{a}(x,H_{i},W) U_{\bm \alpha}^{(L)}(x, y) t_{b}(y,H_{j},W) \mathrm{d}x \mathrm {d} y , 
\end{align} 
and 
\begin{align}\label{eq:UVta}
& \lim_{n \rightarrow \infty} \int t_{a}(x,H_{i},W^{G_{n}}) U_{\bm \alpha, G_n}^{(L)}(x, y)  \mathrm{d}x \mathrm {d} y = \int t_{a}(x,H_{i},W) U_{\bm \alpha}^{(L)}(x, y)  \mathrm{d}x \mathrm {d} y .  
\end{align} 
where $a \in V(H_i)$ and $b \in V(H_j)$.

We will prove \eqref{eq:UVtab}. The proof of \eqref{eq:UVta} follows similarly. For this, note that by a telescoping argument, 
\begin{align} 
 \int t_{a}(x,H_{i},W^{G_{n}}) (U_{\bm \alpha, G_n}^{(L)}(x, y) - U_{\bm \alpha}^{(L)}(x, y)) t_{b}(y,H_{j},W^{G_{n}}) \mathrm{d}x \mathrm {d} y 
 & \lesssim \| U_{\bm \alpha, G_n} - U_{\bm \alpha} \|_{\square} \nonumber \\
 & \lesssim \| W^{G_{n}} - W \|_{\square}\nonumber, 
\end{align} 
where the last step uses \eqref{eq:Uxy}. Hence, to prove \eqref{eq:UVtab} it suffices to show that 
\begin{align} \label{eq:tabUL}
\lim_{n \rightarrow \infty }\int t_{a}(x,H_{i},W^{G_{n}}) U_{\bm \alpha}^{(L)}(x, y) t_{b}(y,H_{j},W^{G_{n}}) \mathrm{d}x \mathrm {d} y .
\end{align} 
Consider the functions (not necessarily symmetric) $B_{n}(x, y) : = t_{a}(x,H_{i},W^{G_{n}}) t_{b}(y,H_{j},W^{G_{n}})$ and $B(x, y) : = t_{a}(x,H_{i},W) t_{b}(y,H_{j},W)$. By a telescoping argument it can shown that 
$$\| B_{n} - B \|_{\square} \lesssim \| W^{G_{n}} - W \|_{\square}.$$
The result in \eqref{eq:tabUL} then follows from \cite[Lemma 8.22]{lovasz2012large}.

\section{Proofs from Section \ref{sec:LHW}}

\subsection{Proof of Proposition \ref{prop:H01}} \label{sec:H01pf} 

To prove Proposition \ref{prop:H01} we first replace with $R(H,G_n)$ by 
\begin{align*}
    R\left(H, W^{G_n}\right) = \sum_{1\leq a,b\leq |V(H)|}t\left(H\bigoplus_{a,b}H,W^{G_n}\right)-|V(H)|^2t(H,W^{G_n})^2 , 
\end{align*}
where $t(\cdot, W^{G_n})$ is defined in \eqref{eq:tHWGn}. For a finite subgraph $F = (V(F), E(F))$, recalling \eqref{eq:tHWGn} and \eqref{eq:XHWGn} notice,
\begin{align}\label{eq:tGntWGn}
    \left|\hat{t}(F,G_n) - t(F,W^{G_n})\right| 
    &= \left|\frac{1}{n^{|V(F)|}}\sum_{\bm{s} \in [n]^{|V(F)|}}\prod_{(i,j)\in E(F)}w_{s_is_j} - \frac{1}{(n)_{|V(F)|}}\sum_{ \bm{s} \in ([n])_{|V(F)|}}\prod_{(i,j)\in E(F)}w_{s_is_j}\right|\nonumber\\
    &\leq \frac{1}{n^{|V(F)|}}\left|\sum_{\bm{s} \in [n]^{|V(F)|}\setminus([n])_{|V(F)|}}\prod_{(i,j)\in E(F)}w_{s_is_j}\right| + O\left(\frac{1}{n}\right) = O\left(\frac{1}{n}\right).
\end{align}
This implies,
\begin{align}\label{eq:RHGnWGn}
    \left|R(H,G_n) - R(H,W^{G_n})\right| = O\left(\frac{1}{n}\right).
\end{align} 

Notice that, by definition, $R(H,W)\geq 0$ (see \eqref{eq:defRHW}) for any graphon $W$ and in particular for the empirical graphon $W^{G_{n}}$. Now, we consider the following 2 cases: 

\begin{itemize} 

\item \textit{$W$ is $H$-regular:} Recalling that $R(H,W^{G_{n}})\geq 0$ it is now enough to show that 
\begin{align}\label{eq:RHGnexpectation}
\mathbb{E}R(H,W^{G_{n}})=O(1/n).
\end{align}
Towards that, recalling \eqref{eq:RGn} note that,
\begin{align}\label{eq:exprERHWn}
    & \E\left[R(H,W^{G_{n}})\right] \nonumber \\ 
    & = \sum_{ 1 \leq a,b \leq |V(H)| }\E\left[t\left(H\bigoplus_{a,b}H,W^{G_{n}}\right)\right]-|V(H)|^2\E\left[t(H \bigsqcup H,W^{G_{n}})\right] , 
\end{align}
where $H \bigsqcup H$ is the disjoint union of two copies of $H$. Lemma 2.1 and Lemma 2.4 in \cite{lovasz2006limits} implies that 
\begin{align*}
	\mathbb{E}\left[t(H \bigsqcup H,W^{G_{n}})\right]
	&\geq t(H \bigsqcup H,W)-\frac{1}{n}{2|V(H)|\choose 2} \nonumber \\ 
	& =t(H,W)^{2}-\frac{1}{n}{2|V(H)|\choose 2}
\end{align*}
and 
\begin{align*}
	& \sum_{ 1 \leq a,b \leq |V(H)| }\mathbb{E}\left[t\left(H\bigoplus_{a,b}H,W^{G_{n}}\right)\right] \nonumber \\ 
	& \leq \sum_{ 1 \leq a,b \leq |V(H)| }\left[t\left(H\bigoplus_{a,b}H,W\right)+\frac{1}{n}{2|V(H)|-1\choose 2}\right] .
\end{align*}
Substituting these bounds in \eqref{eq:exprERHWn} give, 
\begin{align*}
	\mathbb{E}R(H,W^{G_{n}})
	&\leq \sum_{a,b=1}^{|V(H)|}t\left(H\bigoplus_{a,b}H,W\right)-|V(H)|^2t(H,W)^2 + O(1/n)\\
    & = R(H,W) + O(1/n) , 
\end{align*}
where the last equality follows by recalling the definition of $R(H,W)$ from \eqref{eq:defRHW}. The proof of Proposition \ref{prop:H01} (1) is now complete by noticing that $R(H,W) = 0$ when $W$ is $H$-regular. 

\item \textit{$W$ is $H$-irregular:} From Corollary 10.4 in \cite{lovasz2012large} we know that $R(H,W^{G_{n}}) \overset{P}{\rightarrow} R(H,W)$. Since $R(H,W)>0$ whenever $W$ is $H$-irregular, this implies $\sqrt{n}R(H,W^{G_n})\overset{P}{\rightarrow}\infty$. This completes the proof Proposition \ref{prop:H01} (2). 
\end{itemize}

\begin{remark} 
Note that \eqref{eq:RHGnexpectation} and \eqref{eq:RHGnWGn} implies $R(H,G_{n}) = O_P(n)$, when $W$ is $H$-regular. Hence, $a_n R(H,G_{n}) \overset{P}{\rightarrow} 0$, for any sequence $\{a_n\}_{n \geq 1}$ such that $a_n \rightarrow \infty$ and $a_n/n \rightarrow 0$. While choosing $a_n = \sqrt n$ shows Proposition \ref{prop:H01} (1), the results in Section \ref{sec:LHW} would continue to hold whenever $a_n/n \rightarrow 0$. 
\end{remark}

\subsection{Proof of Theorem \ref{thm:LHW}}\label{sec:confidencesetpf} 

Without loss of generality assume that $W$ is irregular with respect to $H_{1}, H_{2}, \ldots, H_{q}$ and regular with respect to $H_{q+1}, H_{q+2}, \ldots, H_{r}$. To proceed with the proof we first show that the distribution of $\tilde{\bm Z}(\cH, G_n)$ (recall \eqref{eq:ZHconfidenceset}) converges to $\bm Z(\cH, W)$. 

\begin{lemma}\label{lemma:Ztildeconvg}
    Let $\tilde{\bm Z}(\cH, G_{n})$ and $\bm Z(\cH,W)$ be as defined in \eqref{eq:ZHconfidenceset} and  \eqref{eq:ZnHWlimit}, respectively. Then, as $n\ra\infty$,
    \begin{align}\label{eq:ZSHGn}
        \tilde{\bm Z}(\cH, G_{n})\dto \bm Z(\cH,W).
    \end{align}
\end{lemma}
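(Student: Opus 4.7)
The plan is to show that, with probability tending to one, the data-driven index set $S(\cH, G_n)$ coincides with the true set of irregular indices, and then to transport convergence from $\bm Z(\cH,G_n)$ to $\tilde{\bm Z}(\cH,G_n)$ by a standard coupling argument.

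First I would invoke Proposition \ref{prop:H01} entrywise. Under our (WLOG) labeling, $W$ is $H_i$-irregular for $1\leq i\leq q$ and $H_i$-regular for $q+1\leq i\leq r$. Proposition \ref{prop:H01} gives $\sqrt{n}\,R(H_i,G_n)\stackrel{P}{\to}\infty$ for $1\leq i\leq q$ and $\sqrt{n}\,R(H_i,G_n)\stackrel{P}{\to}0$ for $q+1\leq i\leq r$. Taking a union bound over the finite collection $\cH$, the event
\[
\mathcal{E}_n := \bigl\{ S(\cH,G_n) = \{1,2,\ldots,q\} \bigr\}
\]
satisfies $\P(\mathcal{E}_n)\to 1$ as $n\to\infty$. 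In other words, the testimation procedure selects the correct normalization in each coordinate with probability tending to one.

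Next, on the event $\mathcal{E}_n$, comparing \eqref{eq:ZHconfidenceset} with \eqref{eq:ZnHW} we see that $\tilde{Z}(H_i,G_n) = Z(H_i,G_n)$ for every $1\leq i\leq r$, so $\tilde{\bm Z}(\cH,G_n)=\bm Z(\cH,G_n)$ on $\mathcal{E}_n$. Therefore, for any bounded continuous function $\varphi:\R^r\to\R$,
\[
\bigl|\,\mathbb{E}\varphi(\tilde{\bm Z}(\cH,G_n)) - \mathbb{E}\varphi(\bm Z(\cH,G_n))\bigr| \leq 2\|\varphi\|_\infty\,\P(\mathcal{E}_n^c) \longrightarrow 0.
\]
Combined with the conclusion of Theorem \ref{thm:asymp-joint-dist}, namely $\bm Z(\cH,G_n)\dto \bm Z(\cH,W)$, this yields \eqref{eq:ZSHGn} by the Portmanteau theorem.

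The only substantive step is the first one, namely the consistency of the regularity test at the $\sqrt{n}$-rate; all subsequent steps are a routine coupling and an application of the already-established Theorem \ref{thm:asymp-joint-dist}. No additional technical difficulty is expected beyond keeping track of the two normalizations in \eqref{eq:ZnHW} and \eqref{eq:ZHconfidenceset}.
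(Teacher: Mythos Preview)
Your proposal is correct and follows essentially the same approach as the paper: both define the event that $S(\cH,G_n)$ equals the true irregular index set, show it has probability tending to one via Proposition~\ref{prop:H01} and a union bound, observe that $\tilde{\bm Z}(\cH,G_n)=\bm Z(\cH,G_n)$ on this event, and then invoke Theorem~\ref{thm:asymp-joint-dist}. The only cosmetic difference is that the paper phrases the final step via characteristic functions whereas you use bounded continuous test functions, which is of course equivalent.
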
 

\begin{proof}
Recall that $S(\cH, G_n) = \{ 1 \leq i \leq r: \sqrt n R(H,G_{n})  > 1 \} $ is the set of indices where the hypothesis of $H_i$-regularity is rejected. 
Define the event 
\begin{align}\label{eq:defcSn}
    \mathcal{D}_{n} := \{S(\cH, G_n) = \{1, 2, \ldots, q\} \} . 
\end{align}

Now, by Proposition \ref{prop:H01} notice that,
\begin{align}\label{eq:Snco1}
    \mathbb P(\mathcal D_n^c) 
    &\leq \sum_{i=1}^{q}\mathbb P\left(i\not\in S(\cH,G_n)\right) + \sum_{i=q+1}^{r}\mathbb P\left(i\in S(\cH,G_n)\right)\nonumber\\
    & = \sum_{i=1}^{q}\mathbb P\left(\sqrt{n}R(H,G_{n})\leq 1\right) + \sum_{i=q+1}^{r}\mathbb P\left(\sqrt{n}R(H,G_{n})> 1\right) = o(1) , 
\end{align}
since $W$ is irregular with respect to $H_{1}, H_{2}, \ldots, H_{q}$ and regular with respect to $H_{q+1}, H_{q+2}, \ldots, H_{r}$. Hence, to prove \eqref{eq:ZSHGn} it is enough to show that the characteristic functions converge. For this, note that for any $\bm t\in \R^r$,
\begin{align*}
    \E\left[ e^{\iota \bm t^{\top}\tilde{\bm Z}(\cH, G_n) } \right] = \E\left[ e^{\iota \bm t^{\top}\tilde{\bm Z}(\cH, G_n)} \one\left\{\mathcal{D}_n\right\}\right] + o(1) . 
\end{align*}
Note that on the event $\mathcal{D}_n$, $\tilde{\bm Z}_n(\cH,G_n) = \bm Z(\cH, G_n)$ (recall \eqref{eq:graphW}). Therefore, \eqref{eq:Snco1} and Theorem \ref{thm:asymp-joint-dist} gives,
\begin{align*}
    \E\left[ e^{\iota \bm t^{\top}\tilde{\bm Z}(\cH, G_n)} \right] 
     = \E\left[ e^{\iota \bm t^{\top}\bm Z(\cH, G_n)}\one\left\{\mathcal{D}_n\right\}\right] + o(1) 
    & = \E\left[ e^{\iota \bm t^{\top}\bm Z(\cH, G_n) } \right] + o(1) \\ 
    & \rightarrow \E\left[ e^{\iota \bm t^{\top}\bm Z(\cH, W) } \right] . 
\end{align*} 
This completes the proof of Lemma \ref{lemma:Ztildeconvg}.

\end{proof}

Now, we show that conditional on $G_n$ the distribution of $\bm Q(\cH, G_n)$ converges to $\bm Z(\cH, W)$ as well.

\begin{lemma}\label{lemma:convgQ}
    Let $\bm Q(\cH, G_n)$ and $\bm Z(\cH,W)$ be as defined in  \eqref{eq:def-QWGn} and \eqref{eq:ZnHWlimit}, respectively. Then, as $n\ra\infty$, 
    \begin{align*}
        \bm Q(\cH, G_n)\mid G_n\dto \bm Z(\cH,W) , 
    \end{align*}
    in probability.
\end{lemma}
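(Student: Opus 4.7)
The plan is to reduce Lemma \ref{lemma:convgQ} directly to Theorem \ref{thm:ZnHGn} by exploiting the consistency of the regularity test established in Proposition \ref{prop:H01}. The key structural observation is that $\bm Q(\cH, G_n)$ (recall \eqref{eq:def-QWGn}) and $\hat{\bm Z}(\cH, G_n)$ (recall \eqref{eq:ZHGnestimate}) are defined by the \emph{same} pair of formulas in terms of the Gaussian multipliers $Z_1, \ldots, Z_n$; the only difference is that $\bm Q$ applies the linear (``irregular'') formula to indices in the data-dependent set $S(\cH, G_n) = \{1 \leq i \leq r : \sqrt n R(H_i, G_n) > 1\}$, whereas $\hat{\bm Z}$ applies it to the unknown but deterministic set $\{1, \ldots, q\}$.

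First, I will introduce the event $\mathcal{D}_n := \{S(\cH, G_n) = \{1, 2, \ldots, q\}\}$, which is $\sigma(G_n)$-measurable, and invoke Proposition \ref{prop:H01} together with a union bound (exactly as in \eqref{eq:Snco1} from the proof of Lemma \ref{lemma:Ztildeconvg}) to conclude $\mathbb{P}(\mathcal{D}_n^c) \to 0$. On the event $\mathcal{D}_n$, the estimators coincide as random variables, so for every bounded continuous $f: \mathbb{R}^r \to \mathbb{R}$,
$$\mathbf{1}_{\mathcal{D}_n}\, \mathbb{E}[f(\bm Q(\cH, G_n)) \mid G_n] = \mathbf{1}_{\mathcal{D}_n}\, \mathbb{E}[f(\hat{\bm Z}(\cH, G_n)) \mid G_n] \quad \text{almost surely.}$$

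Next, Theorem \ref{thm:ZnHGn} asserts that, almost surely as $n \to \infty$,
$$\mathbb{E}[f(\hat{\bm Z}(\cH, G_n)) \mid G_n] \to \mathbb{E}[f(\bm Z(\cH, W))].$$
Combining this almost-sure convergence with $\mathbf{1}_{\mathcal{D}_n} \xrightarrow{P} 1$ yields
$$\mathbb{E}[f(\bm Q(\cH, G_n)) \mid G_n] \xrightarrow{P} \mathbb{E}[f(\bm Z(\cH, W))],$$
for every bounded continuous $f$, which is precisely the definition of conditional convergence in distribution in probability. Since all of the analytic work has already been carried out in Theorem \ref{thm:ZnHGn} and Proposition \ref{prop:H01}, there is no substantial technical obstacle; the only point requiring care is recording that the identification $\bm Q(\cH, G_n) = \hat{\bm Z}(\cH, G_n)$ on $\mathcal{D}_n$ holds as an equality of random variables (not merely in distribution), which is immediate from the definitions once the index sets coincide, and that convergence of conditional expectations only upgrades from almost-sure to in probability here because $\mathcal{D}_n$ itself holds only with probability tending to one.
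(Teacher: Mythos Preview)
Your proposal is correct and follows essentially the same route as the paper's proof: define the event $\mathcal{D}_n = \{S(\cH, G_n) = \{1,\ldots,q\}\}$, use Proposition \ref{prop:H01} to get $\mathbb{P}(\mathcal{D}_n^c)\to 0$, observe that $\bm Q(\cH,G_n)=\hat{\bm Z}(\cH,G_n)$ on $\mathcal{D}_n$, and then invoke Theorem \ref{thm:ZnHGn}. The only cosmetic difference is that the paper works with conditional characteristic functions whereas you phrase it via general bounded continuous test functions, which is equivalent.
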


\begin{proof}
Recall the event $\mathcal{D}_n$ from \eqref{eq:defcSn}. It follows from \eqref{eq:Snco1} that,
\begin{align}\label{eq:Sncop1}
    \one\left\{\mathcal{D}_n^c\right\} = o_P(1).
\end{align}
Then for any $\bm t\in \R^r$,
\begin{align*}
    \E\left[e^{\iota \bm t^{\top}\bm Q(\cH, G_n)} \middle| G_n \right] = \E\left[e^{\iota \bm t^{\top}\bm Q(\cH, G_n)}\middle|G_n\right]\one\left\{\mathcal{D}_n\right\} + o_P(1) . 
\end{align*} 
As before, on the event $\mathcal{D}_n$, we have $\bm Q(\cH,G_n) = \hat{\bm Z}(\cH, G_n)$, where $\hat{\bm Z}(\cH, G_n)$ is defined in \eqref{eq:ZHGnestimate}. Hence, by \eqref{eq:Sncop1} and Theorem \ref{thm:ZnHGn}, 
\begin{align*}
    \E\left[e^{\iota \bm t^{\top}\bm Q(\cH, G_n)} \middle| G_n \right] 
     = \E\left[ e^{ \iota \bm t^{\top}\hat{\bm Z}(\cH, G_n) } \middle|G_n\right]\one\left\{\mathcal{D}_n\right\} + o_P(1) & = \E\left[ e^{ \iota \bm t^{\top}\hat{\bm Z}(\cH, G_n) } \middle|G_n\right] + o_P(1) \\ 
     & \overset{P} \rightarrow \E\left[ e^{ \iota \bm t^{\top} \bm Z(\cH, W) } \right] . 
\end{align*} 
This completes the proof of Lemma \ref{lemma:convgQ}.

\end{proof}
We now proceed to complete the proof of Theorem \ref{thm:LHW}. To this end, using the continuous mapping theorem along with Lemma  \ref{lemma:Ztildeconvg} and Lemma \ref{lemma:convgQ} gives,
\begin{align*}
    \left\|\tilde{\bm Z}(\cH, G_{n})\right\|_{2}\dto \left\|\bm Z(\cH,W)\right\|_{2}\text{ and }\left\|\bm Q(\cH, G_n)\right\|_{2}\mid G_{n}\dto \left\|\bm Z(\cH,W)\right\|_{2}\text{ in probability.}
\end{align*}
Hence, recalling \eqref{eq:Hconfidenceset} and by Polya's Theorem, 
\begin{align*}
   \mathbb P( C(\cH, G_n) ) =  \mathbb P\left(\left\|\tilde{\bm Z}(\cH, G_{n})\right\|_{2}\leq\hat{q}_{1-\alpha,\cH,G_{n}}\right)  \rightarrow 1-\alpha , 
   \end{align*}
where $\hat{q}_{1-\alpha,\cH,G_{n}}$ is the $(1-\alpha)$-quantile of the distribution of $\left\|\bm Q(\cH, G_n)\right\|_{2}\mid G_{n}$. \hfill $\Box$

\section{Proofs from Section \ref{sec:structure}} 
\label{sec:structurepf}

This section is organized as follows: In Section \ref{sec:structureH0pf}
we prove Proposition \ref{ppn:C4}. Proposition \ref{prop:consistency}  is proved in Section \ref{sec:structureH1pf}. In Section \ref{sec:structureGn} we derive the distribution of $\hat f(G_n)$ under the alternative. 
\subsection{Proof of Proposition \ref{ppn:C4}}
\label{sec:structureH0pf}

For notational convenience define $$T_{n}^{K_2} := \tfrac{1}{2} \hat{t}(K_{2}, G_n) \quad \text{ and } \quad T_{n}^{C_4} := \tfrac{1}{8} \hat{t}(C_{4}, G_n) , $$ 
where $\hat{t}(H, G_n)$ is defined in \eqref{eq:tHGncount}. Recalling \eqref{eq:definitionfGn}, note that 
\begin{align}
\label{eq:fGn}
\hat f(G_n) = h(T_{n}^{K_2}, T_{n}^{C_4}),
\end{align} where $h(x,y) = 16x^4 - 8y$. 

We begin by deriving the joint distribution of $(T_{n}^{K_2},T_{n}^{C_4})$ under $H_0$ as in \eqref{eq:structureH0Wp}. Note that under $H_0$, the $G_n$ is distributed as an Erd\H{o}s-R\'enyi random graph $G(n, p)$, for $p = t(K_{2}, W) \in (0, 1)$. Hence, recalling Example \ref{example:randomgraph}, in particular from \eqref{eq:ErdosRenyidistconvg} and \eqref{eq:ErdosRenyicovmat} we get, 
\begin{align}\label{eq:bmTnconvg}
    n \begin{pmatrix}
    T_{n}^{K_2} - \tfrac{1}{2} p \\ 
    T_{n}^{C_4} - \tfrac{1}{8} p^4 
    \end{pmatrix}
    \dto N_2(0,\Sigma), \quad \text{ where }\Sigma  = \tfrac{1}{2}p(1-p)
    \begin{pmatrix}
    1 & p^{3}\\
    p^{3} & p^{6}
    \end{pmatrix} .
\end{align}
Now, a Taylor expansion of the function $h$ around the point $(\tfrac{1}{2} p,\tfrac{1}{8} p^4)$ gives,
\begin{align*}
    h(T_{n}^{K_2},T_{n}^{C_4}) - h(\tfrac{1}{2} p,\tfrac{1}{8} p^4) = \nabla h(\tfrac{1}{2} p,\tfrac{1}{8} p^4)^{\top}\bm{T}_{n} + \bm{T}_{n}^{\top}\nabla^{2}h(\tfrac{1}{2} p,\tfrac{1}{8} p^4)\bm{T}_{n}
    + o_P\left(\|\bm{T}_{n}\|^{2}\right) , 
\end{align*}
where
\begin{align}\label{eq:defTn}
    \bm T_n := \left(T_{n}^{K_2} - \tfrac{1}{2} p,T_{n}^{C_4} - \tfrac{1}{8} p^4\right)^{\top} ,
\end{align}
and $\nabla h, \nabla^2 h$ denote the gradient and hessian of $h$ evaluated at the corresponding points. By definition, $h(\tfrac{1}{2} p, \tfrac{1}{8} p^4) = 0$. Hence, a direct computation of $\nabla^{2}h(\tfrac{1}{2} p,\tfrac{1}{8} p^4)$ along with the convergence in \eqref{eq:bmTnconvg} gives,
\begin{align}\label{eq:gexpandfinal}
    h(T_{n}^{K_2},T_{n}^{C_4}) = \nabla h(\tfrac{1}{2} p,\tfrac{1}{8} p^4)^{\top}\bm{T}_{n} + O_P\left( \frac{1}{n^{2}} \right) .
\end{align}
Since by definition $\hat f(G_n) = h(T_{n}^{K_2},T_{n}^{C_4})$ (recall \eqref{eq:fGn}), the result in Proposition \ref{ppn:C4} follows from \eqref{eq:gexpandfinal} and the following lemma: 

\begin{lemma}\label{lemma:convgderivativeg}
    Under $H_0$ as in \eqref{eq:structureH0Wp},
    \begin{align*}
        n^\frac{3}{2}\nabla h(\tfrac{1}{2} p,\tfrac{1}{8} p^4)^{\top}\bm{T}_{n}\dto N\left(0, 32p^6(1-p)^2\right) , 
    \end{align*}
    where $\bm T_n$ is defined in \eqref{eq:defTn}.
\end{lemma}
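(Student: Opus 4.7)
A direct computation gives $\nabla h(p/2, p^4/8) = (8p^3, -8)$, so
\[
\nabla h(\tfrac{p}{2}, \tfrac{p^4}{8})^\top \bm{T}_n \;=\; 4p^3 \bigl(\hat t(K_2, G_n) - p\bigr) \;-\; \bigl(\hat t(C_4, G_n) - p^4\bigr).
\]
The rank-one structure of $\Sigma$ in \eqref{eq:bmTnconvg} implies $\nabla h^\top \Sigma\, \nabla h = 0$: this particular linear combination already lies in the null space of the first-order Gaussian fluctuation. Hence the $n^{3/2}$ scaling is forced, and the limit must arise from the next-order (degenerate) term in a Hoeffding/chaos expansion.

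The first step will be to write both quantities explicitly in the centered Bernoullis $e_{ij} := \one\{Y_{ij}\leq p\} - p$. With $A := \sum_{i<j} e_{ij}$, we have the exact identity $\hat t(K_2, G_n) - p = 2A/[n(n-1)]$. For $\hat t(C_4, G_n) - p^4$, expand $\prod(p+e)$ over the three labelled copies of $C_4$ on each distinct ordered 4-tuple and collect terms by the shape of the subset $S \subseteq \{12,23,34,41\}$ of positions contributing an $e$. Summing out the free indices, only four shapes appear and one finds
\[
\hat t(C_4, G_n) - p^4 \;=\; \frac{8 p^3 A}{n(n-1)} \;+\; \frac{4p^2 B}{n(n-1)(n-2)} \;+\; \frac{2p^2 C + 4pD + E}{(n)_4},
\]
where $B := \sum_{(i,j,k)\text{ dist}} e_{ij} e_{jk}$ (two adjacent edges), $C := \sum_{(i,j,k,l)\text{ dist}} e_{ij}e_{kl}$ (two disjoint edges), $D := \sum_{(i,j,k,l)\text{ dist}} e_{ij}e_{jk}e_{kl}$ (3-path), $E := \sum_{(i,j,k,l)\text{ dist}} e_{ij}e_{jk}e_{kl}e_{li}$ (4-cycle). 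The identity $8p^3 A / [n(n-1)] = 4p^3(\hat t(K_2, G_n) - p)$ cancels the first-order piece, leaving
\[
\nabla h^\top \bm T_n \;=\; -\,\frac{4p^2 B}{n(n-1)(n-2)} \;-\; \frac{2p^2 C + 4pD + E}{(n)_4}.
\]

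Routine second-moment computations (counting edge-matching patterns in $e_{ij}$-products) give $\Var(C), \Var(D), \Var(E) = O(n^4)$, so each of $C/(n)_4, D/(n)_4, E/(n)_4$ is $O_P(n^{-2}) = o_P(n^{-3/2})$; whereas for $B$ the only nonvanishing matchings are the identity and the reversal of $(i,j,k)$, yielding $\E[B]=0$ and $\Var(B) = 2(n)_3 p^2(1-p)^2 \sim 2n^3 p^2(1-p)^2$. Consequently
\[
n^{3/2}\, \nabla h^\top \bm T_n \;=\; -\,\frac{4p^2}{n^{3/2}}\, B \;+\; o_P(1),
\]
and it suffices to establish $B/n^{3/2} \dto N(0, 2p^2(1-p)^2)$. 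Writing $B = \sum_j d_j^2 - 2\sum_{i<j} e_{ij}^2$ with $d_j = \sum_{i\neq j} e_{ij}$, this is a mean-zero second-order chaos in iid Bernoullis; its Gaussian limit follows from the classical CLT for degenerate $U$-statistics (e.g.\ de Jong's theorem), or, more internally to the paper, from Proposition \ref{ppn:Sn} applied to the generalized $U$-statistic representation of $B$. Slutsky's lemma then delivers
$n^{3/2}\, \nabla h^\top \bm T_n \dto N\bigl(0,\, 16 p^4 \cdot 2 p^2(1-p)^2\bigr) = N\bigl(0, 32 p^6(1-p)^2\bigr)$, as claimed.

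\textbf{Main obstacle.} The delicate points are the combinatorial bookkeeping that isolates a clean constant-times-$B$ contribution at the $n^{-3/2}$ scale, and the CLT for the degenerate quadratic form $B$ itself. The latter is the real substance: it requires a non-trivial CLT for second-order Wiener/Walsh chaos, which either must be quoted (de Jong) or re-derived via the Janson--Nowicki framework already in play in Section~\ref{sec:distributionpf}.
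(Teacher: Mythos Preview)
Your proof is correct and follows essentially the same route as the paper's. The paper works through the abstract Janson--Nowicki projection framework, computing $f_{K_2}$, $g_{K_2}$, $g_{K_3}$, $g_{K_{1,2}}$ and showing that the $K_2$-projections cancel, $g_{K_3}=0$, and all $\Gamma_4$ terms are $o_P(n^{-3/2})$, leaving only the $K_{1,2}$ piece $g_{K_{1,2}}=p^2 e_{12}e_{13}$; you do the identical decomposition explicitly in the centered Bernoullis $e_{ij}$, with your $A,B,C,D,E$ corresponding precisely to the projections onto $K_2$, $K_{1,2}$, two disjoint edges, $P_3$, and $C_4$. In both versions the substance is the CLT for the second-order chaos $B/n^{3/2}$, which the paper obtains from \cite[Lemma~7]{janson1991asymptotic} and you from de~Jong (note that Proposition~\ref{ppn:Sn} itself only gives the leading-order approximation, not the CLT, so your parenthetical pointer should really be to the same Janson--Nowicki lemma the paper invokes).
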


\subsubsection{Proof of Lemma \ref{lemma:convgderivativeg}} 

Notice that $\nabla h(\tfrac{1}{2} p,\tfrac{1}{8} p^4)^{\top} = (8p^{3},-8)$. Hence, recalling the definitions of $\hat t(K_2, G_n)$ and $\hat t(C_4, G_n)$ from \eqref{eq:tHGncount} gives,
\begin{align}\label{eq:Tn}
    \nabla h(\tfrac{1}{2} p,\tfrac{1}{8} p^4)^{\top}\bm{T}_{n} = \frac{8p^{3}}{(n)_{2}}\left(X(K_{2}, G_n) - \frac{(n)_{2}}{2}p\right) - \frac{8}{(n)_{4}}\left(X(C_{4}, G_n) - \frac{(n)_{4}}{8}p^{4}\right) , 
\end{align} 
where $(n)_{2} = n(n-1)$ and $(n)_{4} = n(n-1)(n-2)(n-3)$. 
We will now compute the orthogonal decomposition of \eqref{eq:Tn} using the framework described in Section \ref{sec:generalized_U}. 

To this end, recall the definition of $S_{n,\cdot}(\cdot)$ from \eqref{eq:def-Sn}. Then using \eqref{eq:Snfcount} we can rewrite \eqref{eq:Tn} as, 
\begin{align}\label{eq:nablagTSn}
    \nabla h(\tfrac{1}{2} p,\tfrac{1}{8} p^4)^{\top}\bm{T}_{n} = \frac{8p^{3}}{(n)_{2}}S_{n,2}(f) - \frac{8}{(n)_{4}}S_{n,4}(g) , 
\end{align}
where the functions $f$ and $g$ are defined as follows: 
\begin{itemize}
\item   $f(Y_{12}) = \one\left\{Y_{12}\leq p\right\} - p$. 
\item $g(\{Y_{ij}: (i, j) \in E(K_4)\}) = \sum_{i=1}^{3}\prod_{e\in E(G_i)}\one\left\{Y_e\leq p\right\} - 3p^4$, 
where $G_1,G_2,G_3$ are cycles of length four with edges $E(G_1) = \{(1,2),(2,3),(3,4),(4,1)\}$, $E(G_2) = \{(1,2),(2,4),(4,3),(3,1)\}$ and $E(G_3) = \{(1,3),(3,2),(2,4),(4,1)\}$, respectively, and $\{Y_{ij} = Y_{ji}: 1\leq i<j\leq 4\}$ are independently generated from $U[0,1]$. 
\end{itemize} 

Now, recalling \eqref{eq:SnfGR} and following \cite[Example 2]{janson1991asymptotic} we get,
\begin{align}\label{eq:Sn2tilde}
    S_{n,2}(f) = \frac{1}{2}\tilde{S}_{n,2}(f_{K_{2}}),
\end{align}
and 
\begin{align}\label{eq:Sn4tilde}
    S_{n,4}(g) = \frac{1}{4}\tilde S_{n,4}(g_{K_{2}}) + \frac{1}{6}\tilde S_{n,4}(g_{K_{3}}) + \frac{1}{2}\tilde S_{n,4}(g_{K_{1, 2}}) + \sum_{G\in \Gamma_{4}}\frac{\tilde S_{n,4}(g_{G})}{|\Aut{(G)}|} .
\end{align}
where $f_G$ and $g_G$ are the projections of $f$ and $g$ on the subspace $M_G$ (recall \eqref{eq:def-MG}). A direct counting argument gives, $\tilde{S}_{n,2}(f_{K_{2}}) = \frac{(n-4)!}{(n-2)!}\tilde S_{n,4}(f_{K_{2}})$. Hence, from \eqref{eq:Sn2tilde} and \eqref{eq:Sn4tilde}, the RHS of \eqref{eq:nablagTSn} can be rewritten as, 
\begin{align}\label{eq:gradgTnexpr}
    & \nabla h(\tfrac{1}{2} p,\tfrac{1}{8} p^4)^{\top}\bm{T}_{n} \nonumber \\ 
    & = \frac{8}{(n)_{4}} \left( \tilde S_{n,4}\left( \frac{1}{2} p^{3} f_{K_{2}} -  \frac{1}{4} g_{K_{2}} \right)  -  \frac{1}{6}\tilde S_{n,4}( g_{K_{3}}) - \frac{1}{2}\tilde S_{n,4}( g_{K_{1, 2}}) - \sum_{G\in \Gamma_{4}}\frac{\tilde S_{n,4}( g_{G})}{|\Aut{(G)}|} \right) .
\end{align} 
Using \eqref{eq:PL2-decomp} we now compute the differents projections. To begin with, note that 
$$f_{K_{2}} = (\one\left\{Y_{12}\leq p\right\}-p) \text{ and } g_{K_{2}} = 2p^{3}(\one\left\{Y_{12}\leq p\right\}-p).$$ This shows that $\frac{1}{2}p^{3} f_{K_{2}} - \frac{1}{4} g_{K_{2}}=0$. Also, by computations similar to those in \eqref{eq:fE12asEU} and \eqref{eq:fK12asEU}, it can shown that $g_{K_{3}} = 0$. Further, by  \cite[Lemma 4]{janson1991asymptotic},
\begin{align*}
    \frac{8}{(n)_{4}}\sum_{G\in \Gamma_{4}}\frac{\tilde S_{n,4}(g_{G})}{|\Aut{(G)}|} = o_P(n^{-\frac{3}{2}}).
\end{align*}
Hence, \eqref{eq:gradgTnexpr} can be simplified as follows,
\begin{align}\label{eq:hTn}
    \nabla h(\tfrac{1}{2} p,\tfrac{1}{8} p^4)^{\top}\bm{T}_{n} = \frac{4}{(n)_{4}}\tilde S_{n,4}(g_{K_{1, 2}}) + o_P(n^{-\frac{3}{2}}).
\end{align}
Once again, computing the projection of $g$ on $M_{K_{1, 2}}$ using \eqref{eq:PL2-decomp} we get, 
\begin{align}\label{eq:gK12}
g_{K_{1, 2}} = p^{2}(\one\left\{Y_{12}\leq p\right\}-p)(\one\left\{Y_{13}\leq p\right\}-p). 
\end{align}
Using \eqref{eq:hTn} and \eqref{eq:gK12} together with the distributional convergence result in \cite[Lemma 7]{janson1991asymptotic}, we have    
\begin{align*}
    n^\frac{3}{2}\nabla h(\tfrac{1}{2} p,\tfrac{1}{8} p^4)^{\top}\bm{T}_{n} = n^\frac{3}{2}\frac{4}{(n)_{4}}\tilde S_{n,4}(g_{K_{1, 2}}) + o_P(1)\dto N\left(0,32p^{6}(1-p)^{2}\right) . 
\end{align*}
This completes the proof of Lemma \ref{lemma:convgderivativeg}. \hfill $\Box$

\subsection{Proof of Proposition \ref{prop:consistency}} 
\label{sec:structureH1pf}

Recall the definition of $\hat f(G_n)$ from \eqref{eq:definitionfGn}. Then, since $\hat t(H, G_n) \overset{P} \rightarrow t(H, W)$ for every fixed graph $H$, it follows that $$\hat f(G_n) = \hat t(K_2,G_n)^4 - \hat t(C_4,G_n) \prob f(W) = t(K_2,W)^4 - t(C_4,W).$$ The quasi-randomness result of Chung, Graham, and Wilson \cite[Theorem 1]{chung1989cycle}, formulated in terms of graphons (see \cite[Section 11.8]{lovasz2012large}), implies that $f(W) \ne 0$, for any graphon $W$ that is not constant almost everywhere. Hence, under $H_1$,  
\begin{align*}
    \dfrac{ \hat f (G_n)}{4\sqrt{2}\ \hat{t}(K_{2},G_n)^{3}(1- \hat{t}(K_{2},G_n))}\prob \dfrac{f(W)}{4\sqrt{2}t(K_2,W)^3(1-t(K_2,W))}>0 . 
\end{align*}
Hence, recalling \eqref{eq:defTnstructure}, $T_n\prob\infty$, which now completes the proof.

\subsection{Distribution of $\hat f(G_n)$ under the Alternative}
\label{sec:structureGn}

In this section we apply Theorem \ref{thm:asymp-joint-dist} to derive the limiting of $\hat f(G_n)$ under the alternative.

\begin{prop}\label{prop:H0alternative} Suppose $|t(K_{2},W)^{4} - t(C_{4},W)|>0$. Then the following hold: 

\begin{itemize} 

\item If $W$ is irregular with respect to $K_2$ and $C_4$, then 
$$\sqrt n (\hat f(G_n) - f(W) ) \stackrel{D} \rightarrow N( 0 , \tau_1^2), $$ 
where $\tau_1^2$ is defined in \eqref{eq:varianceirregular}.  

\item If $W$ is irregular with respect to $K_2$ and regular with respect to $C_4$, then 
$$\sqrt n (\hat f(G_n) - f(W) ) \stackrel{D} \rightarrow N( 0 , \tau_2^2), $$ 
where $\tau_2^2$ is defined in \eqref{eq:varianceirregularregular}.

\item If $W$ is regular with respect to $K_2$ and irregular with respect to $C_4$, then 
$$\sqrt n (\hat f(G_n) - f(W) ) \stackrel{D} \rightarrow N( 0 , \tau_3^2), $$ 
where $\tau_3^2$ is defined in \eqref{eq:varianceregularirregular}.

\item If $W$ is regular with respect to $K_2$ and $C_4$, then 
$$n (\hat f(G_n) - f(W)) \stackrel{D} \rightarrow Z, $$
where the random variable $Z$ is defined in \eqref{eq:distributionregular}. 
\end{itemize}

\end{prop}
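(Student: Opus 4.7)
The plan is to reduce Proposition \ref{prop:H0alternative} to an application of Theorem \ref{thm:asymp-joint-dist} and the delta method with the function $g(x,y) := x^4 - y$, so that $\hat f(G_n) = g(\hat t(K_2,G_n), \hat t(C_4, G_n))$ and $f(W) = g(t(K_2,W), t(C_4,W))$. First I would rewrite
\begin{align*}
\hat f(G_n) - f(W) = \nabla g(t(K_2,W), t(C_4, W))^\top \bm{D}_n + \frac{1}{2} \bm{D}_n^\top \nabla^2 g(t(K_2,W), t(C_4,W))\, \bm{D}_n + o_P(\|\bm{D}_n\|_2^2),
\end{align*}
where $\bm{D}_n = (\hat t(K_2,G_n) - t(K_2,W),\, \hat t(C_4,G_n) - t(C_4,W))^\top$ and $\nabla g(t(K_2,W),t(C_4,W)) = (4 t(K_2,W)^3,\, -1)^\top$. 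Since $\nabla^2 g$ is bounded, the quadratic remainder is of order $\|\bm{D}_n\|_2^2$, which is the key quantity to track in each of the four regularity regimes.

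In Cases $1$--$3$ (at least one of $K_2, C_4$ is $W$-irregular), Theorem \ref{thm:asymp-joint-dist} together with \eqref{eq:tHGncount} implies $\hat t(H, G_n) - t(H,W) = O_P(n^{-1/2})$ when $W$ is $H$-irregular and $O_P(n^{-1})$ when $W$ is $H$-regular. Hence $\|\bm{D}_n\|_2^2 = O_P(n^{-1})$ and the quadratic remainder is $O_P(n^{-1}) = o_P(n^{-1/2})$, so
\begin{align*}
\sqrt{n}(\hat f(G_n) - f(W)) = 4 t(K_2,W)^3 \sqrt{n}(\hat t(K_2,G_n) - t(K_2,W)) - \sqrt{n}(\hat t(C_4,G_n) - t(C_4,W)) + o_P(1).
\end{align*}
In each of Cases $1$--$3$ the dominant term(s) come from the irregular marginal(s): the limit is a linear functional of the joint irregular Gaussian limit in Corollary \ref{cor:irregjoint} (Case 1), a univariate irregular Gaussian (Cases 2 and 3, where the regular marginal contributes $o_P(1)$ after scaling by $\sqrt n$), and the variances $\tau_1^2, \tau_2^2, \tau_3^2$ are read off by plugging in the explicit covariances given by vertex joins and 1-point kernels of $K_2$ and $C_4$.

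Case 4 is the main obstacle and the most delicate. Here both marginals are $H$-regular so $\bm{D}_n = O_P(n^{-1})$ and the quadratic remainder is genuinely $O_P(n^{-2})$, so I still get the first-order expansion
\begin{align*}
n(\hat f(G_n) - f(W)) = 4 t(K_2,W)^3 \cdot n(\hat t(K_2,G_n) - t(K_2,W)) - n(\hat t(C_4,G_n) - t(C_4,W)) + o_P(1),
\end{align*}
but now Theorem \ref{thm:asymp-joint-dist} gives a joint non-Gaussian limit with entangled components: both marginals have independent Gaussian pieces (with covariance structure from Definition \ref{def:Sigma} applied to $\{K_2, C_4\}$) and a common bivariate stochastic integral driven by the same Brownian motion $\{B_x\}_{x\in[0,1]}$ using the 2-point kernels $W_{K_2}$ and $W_{C_4}$. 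The random variable $Z$ is therefore a linear combination $4 t(K_2, W)^3 \cdot Z(K_2,W) - Z(C_4, W)$ of the two components of the joint regular limit, which I would simply record in the explicit form indicated by \eqref{eq:distributionregular}; the fact that it is non-degenerate in general is guaranteed by the alternative assumption $|t(K_2,W)^4 - t(C_4,W)| > 0$ together with the quasi-randomness dichotomy invoked in the proof of Proposition \ref{prop:consistency}. The only subtlety is to verify that the coefficient $4 t(K_2,W)^3$ does not accidentally annihilate both components simultaneously under the alternative, which I would rule out by the same quasi-randomness argument.
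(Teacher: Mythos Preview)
Your approach is correct and follows the same high-level route as the paper: first obtain the joint limit of $(\hat t(K_2,G_n),\hat t(C_4,G_n))$ from Theorem~\ref{thm:asymp-joint-dist} (or Corollary~\ref{cor:irregjoint} in Case~1), then pass to $\hat f(G_n)$ by a linear combination. Your use of the delta method with gradient $\nabla g(t(K_2,W),t(C_4,W))=(4t(K_2,W)^3,-1)^\top$ is the right way to handle the nonlinearity $x\mapsto x^4$. The paper's proof skips this linearization step and writes, for each case, $\sqrt n(\hat f(G_n)-f(W))=\bm 1^\top\bm Z_n$ (and similarly $Z:=\bm 1^\top\bm Z$ in Case~4), recording for instance $\tau_1^2=\tau_{11}+\tau_{22}-2\tau_{12}$ and $\tau_2^2=t(K_{1,2},W)-t(K_2,W)^2$ without the gradient weight $4t(K_2,W)^3$. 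Your formulas for $\tau_j^2$ and for $Z$ will therefore differ from the paper's by these coefficients; yours are the correct ones, since $\hat f(G_n)-f(W)$ is not literally $\bm 1^\top\bm D_n$.

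One small clean-up: drop the closing remarks about non-degeneracy of $Z$ in Case~4. The proposition only asserts convergence in distribution to the random variable $Z$ defined in \eqref{eq:distributionregular}; it does not claim $Z$ is non-degenerate. Moreover, the quasi-randomness dichotomy you invoke concerns whether $f(W)=0$, not whether the fluctuation limit is degenerate, so it does not help here in any case.
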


        \begin{figure}[h!]
            \centering
            \includegraphics[scale=0.825]{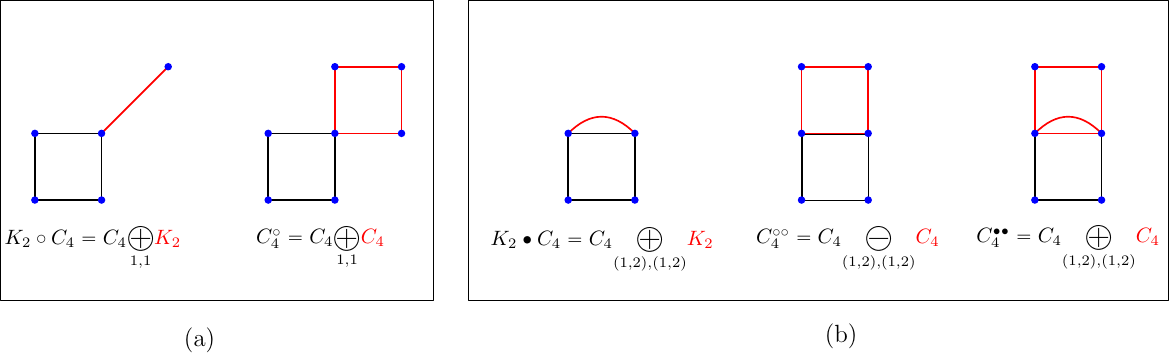}
            \caption{\small{ Graphs obtained from (a) vertex join operations and (b) edge join operations, between a copy of $K_2$ and a copy of $C_4$. }} 
            \label{fig:join42}
        \end{figure}

\begin{proof} 
Fix a graphon $W$ such that $|t(K_{2},W)^{4} - t(C_{4},W)|>0$.  We consider the 4 cases separately, depending on whether or not $W$ is $K_2$ or $C_4$-regular. 
 
\begin{itemize}

\item[{\it Case} 1:] {\it $W$ is irregular with respect to $K_{2}$ and $C_{4}$.} In this case, Corollary \ref{cor:irregjoint} gives, 
\begin{align*}
    \bm Z_{n} = \sqrt{n}
    \begin{pmatrix}
    \hat{t}(K_{2}, G_n) - t(K_{2},W)\\
    \hat{t}(C_{4}, G_n) - t(C_{4},W)
    \end{pmatrix}
    \overset{D}{\rightarrow}
    N_2\left(\bm{0}, \begin{pmatrix}
       \tau_{11} & \tau_{12}\\
        \tau_{21} & \tau_{22}
    \end{pmatrix} \right) , 
    \end{align*}
where 
\begin{itemize}

\item $\tau_{11} = t(K_{1, 2},W) - t(K_{2},W)^{2}$,

\item  $\tau_{22} = \frac{1}{4}\left[t\left(C_{4}^\circ,W\right) - t(C_{4},W)^{2}\right]$, where $C_{4}^\circ$ is the graph obtained by the vertex join of 2 copies of $C_4$ (as shown in Figure \ref{fig:join42}(a)).

\item $\tau_{12} = \tau_{21} = \frac{1}{2}\left(t(C_{4} \circ K_{2}) - t(C_{4},W)t(K_{2},W)\right)$, where $C_{4} \circ K_{2}$ is the graph obtained the vertex join of $C_4$ and $K_2$ (as shown in Figure \ref{fig:join42}(a)) . 
\end{itemize}
Note that $\sqrt{n}(\hat f(G_n) - f(W)) = \bm{1}^\top \bm Z_{n}$. Hence, by the continuous mapping theorem, 
\begin{align*}
    \sqrt{n}(\hat f(G_n) - f(W))\dto N\left(0,\sigma_{1}^2\right) , 
\end{align*}
with 
\begin{align}\label{eq:varianceirregular}
    \tau_1^2 := \tau_{11} + \tau_{22} - 2 \tau_{12}.
\end{align}

\item[{\it Case} 2:] {\it $W$ is irregular with respect to $K_{2}$ and regular with respect to $C_{4}$.} In this case, $\sqrt{n} (\hat{t}(K_{2}, W^{G_{n}}) - t(K_{2}, W))$ has a non-degenerate Gaussian limit, but $\sqrt{n} (\hat{t}(C_{4}, G_n) - t(C_{4},W))$ has a degenerate limit. In particular, from Theorem \ref{thm:asymp-joint-dist} we know that 
\begin{align*}
    \bm Z_n = \sqrt{n}
    \begin{pmatrix}
    \hat{t}(K_{2}, G_n) - t(K_{2},W)\\
    \hat{t}(C_{4}, G_n) - t(C_{4},W)
    \end{pmatrix}
    \overset{D}{\rightarrow}
    \begin{pmatrix}
    G_{1}\\
    0
    \end{pmatrix} , 
\end{align*}
where $G_{1}\sim N(0, t(K_{1, 2},W)-t(K_{2},W)^{2})$. Since $\sqrt{n} (\hat f(G_n) - f(W)) = \bm{1}^\top \bm Z_{n}$, this implies, 
\begin{align*}
    \sqrt{n}( \hat f(G_n) - f(W) )\dto N(0,\tau_2^2) , 
\end{align*}
where 
\begin{align}\label{eq:varianceirregularregular}
    \tau_2^2 := t(K_{1, 2},W)-t(K_{2},W)^{2} . 
\end{align}

\item[{\it Case} 3:] {\it $W$ is regular with respect to $K_{2}$ and irregular with respect to $C_{4}$.} In this case, $\sqrt{n} (\hat{t}(K_{2}, G_n) - t(K_{2},W))$ has a degenerate limit, but $\sqrt{n} (\hat{t}(C_{4}, G_n) - t(C_{4},W))$ has a non-degenerate Gaussian limit. Hence, applying Theorem \ref{thm:asymp-joint-dist} we have,
\begin{align*}
  \bm{Z}_n =  \sqrt{n}
    \begin{pmatrix}
    \hat{t}(K_{2}, G_n) - t(K_{2},W)\\
    \hat{t}(C_{4}, G_n) - t(C_{4},W)
    \end{pmatrix}
    \overset{D}{\rightarrow}
    \begin{pmatrix}
    0\\
    G_{2}
    \end{pmatrix} , 
\end{align*}
where $G_{2}\sim N\left(0, \frac{1}{4}\left(t(C_4^{\circ},W) - t(C_{4},W)^{2}\right)\right)$. Taking inner product of $\bm{Z}_n$ with $\bm 1$, then gives $$\sqrt{n} (\hat f(G_n) - f(W) ) \dto N(0, \tau_3^2),$$ where 
\begin{align}\label{eq:varianceregularirregular}
    \tau_3^2 := \frac{1}{4}\left(t(C_{4}^{\circ},W) - t(C_{4},W)^{2}\right) . 
\end{align}

\item[{\it Case} 4:] {\it $W$ is regular with respect to both $K_{2}$ and $C_{4}$.} Then from Theorem \ref{thm:asymp-joint-dist}, 
\begin{align}\label{eq:distributionregularjoint}
    \bm{Z}_n = n
    \begin{pmatrix}
    \hat{t}(K_{2}, G_n) - t(K_{2},W)\\
    \hat{t}(C_{4}, G_n) - t(C_{4},W)
    \end{pmatrix} 
    \overset{D}{\rightarrow} \bm Z := 
    \begin{pmatrix}
    G_{1}\\
    G_{2}
    \end{pmatrix} +
    \begin{pmatrix}
    \int\int \left(W(x, y) - \frac{1}{2}t(K_{2},W) \right) \mathrm{d}B_{x} \mathrm{d}B_{y}\\
    \int\int \left(W_{C_{4}}(x, y) - \frac{3}{4}t(C_{4},W) \right) \mathrm{d}B_{x} \mathrm{d}B_{y}
    \end{pmatrix} , 
\end{align}
where the Brownian motion $\{B_t\}_{t \in [0, 1]}$ and $(G_{1}, G_{2})^{\top}
\sim N_2( \bm 0, \Sigma)$ are independent, and the matrix $\Sigma$ is given by: 
\begin{align*}
    \Sigma = 
    \frac{1}{2}\begin{pmatrix}
    t(K_{2},W) - t(C_{2},W) & t(C_{4},W) - t(K_{2} \bullet C_{4} ,W)\\
    t(C_{4},W) - t( K_{2} \bullet C_{4},W) & t(C_{4}^{\circ \circ},W) - t(C_{4}^{\bullet \bullet},W)  
    \end{pmatrix} 
    \end{align*}
  In this case, as shown in Figure \ref{fig:join42}(b), $K_{2} \bullet C_{4}$ is the graph obtained by the strong edge join of $K_2$ and $C_4$, 
 $C_{4}^{\circ \circ}$ is the graph obtained by the weak edge join of 2 copies of $C_4$, and $C_{4}^{\bullet \bullet}$ is the graph obtained by the strong edge join of 2 copies of $C_4$. 

Therefore, 
\begin{align}\label{eq:distributionregular}
\sqrt{n}(\hat f(G_n) - f(W)) = \bm{1}^\top \bm{Z}_{n} \stackrel{D} \rightarrow \bm{1}^\top \bm{Z} : = Z, 
\end{align}
where $\bm{Z}$ as defined in \eqref{eq:distributionregularjoint}. 
\end{itemize}

\end{proof}

\section{Multiple Weiner-It\^{o} Stochastic Integrals}
\label{sec:stochasticintegral}

In this section we recall the basic properties of multiple Weiner-It\^{o} stochastic integrals as presented in \cite{stochasticintegral}. To begin with, let $\{B_t\}_{t \in [0, 1]}$ be the standard Brownian motion in $[0, 1]$. We interpret the Brownian motion as a stochastic measure on $([0, 1], \mathscr{B}([0, 1])$, 
where $\mathscr{B}([0, 1])$ is the sigma-algebra generated by open sets of $[0, 1]$. Specifically, suppose $\{B(A): A\in \mathscr{B}([0, 1])\}$ is a collection of random variables defined on a common probability space $(\Omega, \cF, \mu)$ such that 
\begin{itemize}
\item $B(A)\sim N(0, \lambda(A))$, for all $A\in \mathscr{B}([0, 1])$, where $\lambda(A)$ is the Lebesgue measure of $A$. 

\item For any finite collection of disjoint sets $A_{1},\cdots, A_{t} \in \mathscr B(\mathcal X)$, the random variables $\{B(A_1), B(A_2), \ldots, \mathcal B(A_t)\}$ are independent and 
\begin{align*}
B\left(\bigcup_{s=1}^{t}A_{s}\right) = \sum_{s=1}^{t}B(A_{s}).
\end{align*}
\end{itemize}

For $d \geq 1$, denote by $L^2([0, 1]^d)$ the space of measurable functions $f: \mathcal X^d \rightarrow \R$ such that 
$$\|f\|_2^2:= \int_{[0, 1]^d} |f(x_1, x_2, \ldots, x_d)|^2 \mathrm d x_1 \mathrm d x_2 \ldots, \mathrm d x_d < \infty.$$ 
Define $\mathcal{E}_d \subseteq L^2([0, 1]^d)$ as the set of all elementary functions having the form
\begin{equation}\label{eq:itelty}
f(t_1, t_2, \ldots, t_d) =\sum_{1 \leq i_1, i_2, \ldots, i_d \leq m} a_{i_1, i_2, \ldots, i_d} \bm 1\{ (t_1, t_2, \ldots,  t_d) \in A_{i_1}\times \cdots \times A_{i_d} \}, 
\end{equation}
where $A_1, A_2, \ldots, A_m \in [0, 1]$ are measurable sets which are pairwise disjoint and $ a_{i_1, i_2, \ldots, i_d}$ is zero if two indices are equal. 
The multiple Weiner-It\^{o} integral for functions in $\mathcal{E}_d$ is defined as follows: 

\begin{definition}\label{defn:integral_elementary} (Multiple Weiner-It\^{o} integral for elementary functions) The $d$-dimensional Weiner-It\^{o} stochastic integral, with respect to the standard Brownian motion $\{B_t\}_{t \in [0, 1]}$, for the function $f \in \mathcal{E}_d $ in  \eqref{eq:itelty} is defined as 
$$I_d(f):=\int_{[0, 1]^d} f(x_1, x_2, \ldots, x_d) \prod_{a=1}^d \mathrm d B(x_a):=\sum_{1 \leq i_1, i_2, \ldots, i_d \leq m} a_{i_1, i_2, \ldots, i_d} B(A_{i_1})\times \cdots \times B(A_{i_d}).$$ 
\end{definition} 

The multiple Weiner-It\^{o} integral for elementary functions satisfies the following two properties \citep{stochasticintegral}: 

\begin{itemize}

\item (Boundedness) For $f \in  \mathcal{E}_d$, $\E[I_d(f)^2] \leq d! \|f\|^2 < \infty$.  

\item (Linearity) For $f,g\in \mathcal{E}_d$, 
$I_d(f+g) \stackrel{a.s.}{=} I_d(f)+I_d(g)$. 

\end{itemize}
This shows that $I_d$ is a bounded linear operator from $\mathcal{E}_d $ to $L^2(\Omega, \cF, \mu)$, the collection of square-integrable random variables defined on $(\Omega, \cF, \mu)$. Since $\mathcal{E}_d$ is dense in $L^2([0, 1]^d, \mathscr{B}([0, 1]^d), \lambda^d)$ (by \cite[Theorem 2.1]{stochasticintegral}), using the BLT theorem (see \cite[Theorem I.7]{reedsimon}) $I_d$ can be uniquely extended to $L^2([0, 1]^d, \mathscr{B}([0, 1]^d), \lambda^d)$ by taking limits. (Here, $ \lambda^d$ denotes the Lebesgue measure on $[0, 1]^d$.) This leads to the following definition: 

\begin{definition}\label{defn:integral_II} (Multiple Weiner-It\^{o} integral for general $L^{2}$-functions) The $d$-dimensional Weiner-It\^{o} stochastic integral, with respect to the standard Brownian motion $\{B_t\}_{t \in [0, 1]}$, for a function $f \in L^2([0, 1]^d)$ is defined as the $L^{2}$ limit of the sequence $\{I_d(f_n)\}_{n \geq 1}$,  where $\{f_n\}_{n \geq 1}$ is a sequence such that $f_n \in \mathcal{E}_d$ with $\lim_{n \rightarrow \infty}\|f_n-f\|_2 = 0$. 
This is denoted by:  
\begin{align}\label{eq:integral}
I_d(f):=\int_{[0, 1]^d} f(x_1, x_2, \ldots, x_d) \prod_{a=1}^d \mathrm d B(x_a). 
\end{align}
\end{definition}

As in the case of elementary functions, it can be easily checked that $I_d(f)$ satisfies the following properties: 
\begin{itemize}

\item (Boundedness) For $f \in L^2([0, 1]^d)$, $\E[I_d(f)^2] \leq d! \|f\|_2^2 < \infty$.  

\item (Linearity) For $f, g\in L^2([0, 1]^d)$, 
$I_d(f+g) \stackrel{a.s.}{=} I_d(f)+I_d(g)$. 

\end{itemize}
It is also important to note that multiple Weiner-It\^{o} integrals do not behave like classical (non-stochastic) integrals with respect to product measures, since by definition diagonal sets do not contribute to the stochastic integral. Nevertheless, one can express the multiple Weiner-It\^{o} integral for a product function in terms of univariate stochastic integrals using the Wick product (cf.~\cite[Theorem 7.26]{gaussianhilbert}). In the bivariate case, with 2 functions $f, g \in L^2([0, 1]^2)$, this simplifies to  
\begin{align}\label{eq:fgstochasticintegral}
\int_{[0, 1]} \int_{[0, 1]} f(x) g(y) \mathrm d B(x) \mathrm d B(y) = \int_{[0, 1]} f(x) \mathrm d B(x)  \int_{[0, 1]} g(y) \mathrm d B(y) - \int_{[0, 1]} f(x) g(x) \mathrm d x . 
\end{align}

Another important property is that one can interchange stochastic integrals with infinite sums over an orthonormal and symmetric set of functions, as shown in the following result: 
\begin{prop}\label{prop:I2-f_E} 
Let $f \in L^2([0, 1]^d)$ and $\{\varphi_s\}_{s \geq 1}$ is an orthonormal and symmetric set of functions in  $L^2([0, 1]^d)$. Suppose there exists constants $\{\alpha_s\}_{s \geq 1}$ such that
\begin{align*}
f = \sum_{s \geq 1} \alpha_s \varphi_s,
\end{align*}
is well-defined. Then 
\begin{align}\label{eq:exchangeId}
I_d(f) \stackrel{a.s.}= \sum_{s \geq 1}  \alpha_s I_d(\varphi_s).
\end{align}

\end{prop}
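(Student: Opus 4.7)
The plan is to prove \eqref{eq:exchangeId} in two stages: first establish convergence of the partial sums in $L^2(\Omega)$ using the linearity and boundedness of $I_d$, and then upgrade to almost sure convergence via an orthogonal-series argument. For each $N \geq 1$, set $f_N := \sum_{s=1}^N \alpha_s \varphi_s$. Since $I_d$ is linear on elementary functions (Definition~\ref{defn:integral_elementary}) and linearity is preserved under the $L^2$ extension obtained via the BLT theorem (Definition~\ref{defn:integral_II}), one immediately has
\[
I_d(f_N) \stackrel{a.s.}{=} \sum_{s=1}^N \alpha_s I_d(\varphi_s)
\]
for every finite $N$.

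The orthonormality of $\{\varphi_s\}_{s \geq 1}$ together with the hypothesized expansion $f = \sum_{s \geq 1} \alpha_s \varphi_s$ in $L^2([0,1]^d)$ forces $\sum_{s \geq 1}\alpha_s^2 < \infty$ and $\|f - f_N\|_2^2 = \sum_{s > N} \alpha_s^2 \to 0$. Invoking the boundedness estimate from Section~\ref{sec:stochasticintegral}, I would then write
\[
\mathbb{E}\bigl[(I_d(f) - I_d(f_N))^2\bigr] \leq d!\,\|f - f_N\|_2^2 \longrightarrow 0,
\]
so $I_d(f_N) \to I_d(f)$ in $L^2(\Omega)$. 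Combined with the first display, this shows that the partial sums $S_N := \sum_{s=1}^N \alpha_s I_d(\varphi_s)$ converge to $I_d(f)$ in $L^2(\Omega)$, and therefore also in probability.

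To upgrade to the almost sure equality in \eqref{eq:exchangeId}, I would invoke the isometry for symmetric functions, $\mathbb{E}[I_d(\varphi_s)I_d(\varphi_t)] = d!\,\delta_{st}$, which identifies $\{\alpha_s I_d(\varphi_s)\}_{s \geq 1}$ as an orthogonal sequence in $L^2(\Omega)$ with $\sum_s \alpha_s^2 \mathbb{E}[I_d(\varphi_s)^2] = d!\sum_s \alpha_s^2 < \infty$. Fixing an orthonormal basis $\{\psi_j\}_{j \geq 1}$ of $L^2([0,1])$ and applying Gram--Schmidt if necessary, one can express each $\varphi_s$ in the Wiener chaos generated by $\{\psi_j\}$, making $\{S_N\}$ an $L^2$-bounded martingale with respect to the natural filtration $\mathcal{F}_N := \sigma(I_1(\psi_1), \ldots, I_1(\psi_{k_N}))$ for a suitable $k_N \to \infty$. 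Doob's martingale convergence theorem then yields almost sure convergence of $S_N$ to a limit which, by uniqueness of the $L^2$ limit, coincides with $I_d(f)$ almost surely.

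The main obstacle lies in this final step. When $d = 1$, the $I_1(\varphi_s)$ are independent centered Gaussians and Kolmogorov's three-series theorem delivers almost sure convergence of $S_N$ immediately. For $d \geq 2$, however, the variables $I_d(\varphi_s)$ inhabit the $d$-th Wiener chaos and are merely orthogonal, not independent, so verifying the martingale property demands an explicit expansion of each $\varphi_s$ in the chaos basis generated by $\{\psi_j\}$. An attractive alternative route is to exploit hypercontractivity on the Wiener chaos (so that $L^2$ convergence on $\mathcal{H}_d$ upgrades to $L^p$ convergence for every $p < \infty$) combined with the It\^o--Nisio theorem for Gaussian-measurable tail fields; either approach requires some care, but avoids any reliance on independence.
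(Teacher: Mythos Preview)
Your first two paragraphs are correct and already constitute a complete proof; they match the paper's argument almost verbatim. Once you have shown $S_N \to I_d(f)$ in $L^2(\Omega)$, you are done: the infinite series $\sum_{s \geq 1} \alpha_s I_d(\varphi_s)$ is, by definition, the $L^2$ limit of its partial sums, and $L^2$ limits are unique up to almost-sure equality, so $I_d(f) \stackrel{a.s.}{=} \sum_{s \geq 1} \alpha_s I_d(\varphi_s)$ follows immediately. The paper's proof differs only cosmetically: it invokes the isometry $J_d$ from \cite{gaussianhilbert} (via $I_d = d!\, J_d$ on symmetric functions) to verify separately that $\{S_N\}$ is $L^2$-Cauchy, but this is the same content as your orthogonality observation $\mathbb{E}[I_d(\varphi_s)I_d(\varphi_t)] = d!\,\delta_{st}$.

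Your third and fourth paragraphs stem from a misreading of the conclusion. The symbol $\stackrel{a.s.}{=}$ asserts that two already-defined random variables agree on a set of full probability; it does \emph{not} demand that the partial sums $S_N$ converge pointwise almost surely. No ``upgrade'' from $L^2$ to almost-sure convergence is required, and the martingale and hypercontractivity machinery you sketch is superfluous here. It is also incomplete as written: the filtration $\mathcal{F}_N$ you propose need not make $\{S_N\}$ a martingale, since the index $s$ of the given orthonormal system $\{\varphi_s\}$ bears no a priori relationship to any chaos expansion in a fixed univariate basis $\{\psi_j\}$, so the conditional-expectation identity $\mathbb{E}[S_{N+1}\mid \mathcal{F}_N]=S_N$ is not justified.
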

\begin{proof}
    For $N \geq 1$, define the truncated the truncated version of $f$:  
    \begin{align*}
        f_{N} := \sum_{s=1}^{N}\alpha_{s}\varphi_{s}.
    \end{align*} 
    By Bessel's inequality, $\sum_{s\geq 1}\alpha_{s}^2\leq \|f\|^2_2 <\infty$.
    Then by \cite[Lemma 6.8]{laxfunctional} along with the orthonormality of $\{\varphi_{s}\}_{s\geq 1}$ gives,
    \begin{align*}
        \left\|f-f_{N}\right\|_2^2 = o(1).
    \end{align*}
    By the linearity of stochastic integrals we know that $I_{d}(f_{N}) = \sum_{s=1}^{N}\alpha_{s}I_{d}(\varphi_{s})$. To complete the proof it is now enough to show that both RHS and LHS of \eqref{eq:exchangeId} are $L^{2}$ limits of $I_{d}(f_{N})$,  as $N\rightarrow\infty$. For this, using the boundedness property of stochastic integrals note that,
    \begin{align*}
        \E\left[\left(I_{d}(f) - I_{d}(f_{N})\right)^2\right] = \E\left[I_{d}(f - f_{N})^2\right] \leq d!\left\|f-f_{N}\right\|_2^2 = o(1).
    \end{align*}
    This shows that the LHS of \eqref{eq:exchangeId} is the $L^{2}$ limit of $I_{d}(f_{N})$, as $N\rightarrow\infty$. For the RHS note that by definition the functions $\{\varphi_{s}\}_{s\geq 1}$ are symmetric in their arguments. Hence, by \cite[Theorem 7.29]{gaussianhilbert}, for all $s\geq 1$, $I_{d}(\varphi_{s}) = d!J_{d}(\varphi_{s})$, where
    the operator $J_{d}(\cdot)$ is defined as, 
    \begin{align*}
        J_{d}(g) := \int_{D_{d}}g(x_{1},x_{2},\ldots,x_{d})\mathrm{d} B_{x_{1}}\cdots\mathrm{d} B_{x_{d}},  
    \end{align*}
   for $g\in L^{2}(D_{d})$, with 
    \begin{align*}
        D_{d} := \left\{(x_{1},\ldots,x_{d})\in [0,1]^d:0< x_{1}<x_{2}<\cdots<x_{d}< 1\right\}.
    \end{align*}
    By Theorem 7.6 and Theorem 7.3 from \cite{gaussianhilbert} we know that $J_{d}$ is an isometry. Hence,
    \begin{align*}
        \E\left[\left(\sum_{s\geq 1}\alpha_{s}I_{d}(\varphi_{s}) - \sum_{s\geq 1}^{N}\alpha_{s}I_{d}(\varphi_{s})\right)^2\right] = d!^2\E\left[\left(\sum_{s\geq 1}\alpha_{s}J_{d}(\varphi_{s}) - \sum_{s\geq 1}^{N}\alpha_{s}J_{d}(\varphi_{s})\right)^2\right] = o(1) , 
    \end{align*}
    where the last equality follows by noticing that $\{\varphi_{s}\}_{s\geq 1}$ are orthonromal and $J_{d}$ is an isometry. This shows, recalling the linearity of stochastic integrals, that the RHS of \eqref{eq:exchangeId} is the $L^2$ limit of $I_{d}(f_{N})$, which completes the proof.
\end{proof}
\end{document}